\newtheorem{thm}{Theorem}[section]
\newtheorem{cor}[thm]{Corollary}
\newtheorem{lem}[thm]{Lemma}
\newtheorem{prop}[thm]{Proposition}
\newtheorem{thmintro}{Theorem}
\theoremstyle{definition}
\newtheorem{defn}[thm]{Definition}
\newtheorem{rem}[thm]{Remark}
\newtheorem{ex}[thm]{Example}
\providecommand{\norm}[1]{\left\| #1 \right\|}
\newcommand{\enuma}[1]{\begin{enumerate}[\textup{(}a\textup{)}] {#1} \end{enumerate}}
\newcommand{\mb}{\mathbf}
\newcommand{\bW}{{\mathbf W}}
\newcommand{\mh}{\mathbb}
\newcommand{\mr}{\mathrm}
\newcommand{\mc}{\mathcal}
\newcommand{\mf}{\mathfrak}
\newcommand{\N}{\mathbb N}
\newcommand{\Z}{\mathbb Z}
\newcommand{\Q}{\mathbb Q}
\newcommand{\R}{\mathbb R}
\newcommand{\C}{\mathbb C}
\newcommand{\rN}{\mathrm N}
\newcommand{\rZ}{\mathrm Z}
\newcommand{\inp}[2]{\langle #1 \hspace{1pt},\hspace{1pt} #2 \rangle}
\newcommand{\matje}[4]{\left(\begin{smallmatrix} #1 & #2 \\
#3 & #4 \end{smallmatrix}\right)}
\newcommand{\q}{/\!/}
\newcommand{\vq}{z}
\def\re{{\mathrm e}}
\def\Std{{\rm Std}}
\def\Hom{{\rm Hom}}
\def\End{{\rm End}}
\def\Irr{{\rm Irr}}
\def\Jord{{\rm Jord}}
\def\SO{{\rm SO}}
\def\O{{\rm O}}
\def\Sp{{\rm Sp}}
\def\GL{{\rm GL}}
\def\rs{{\rm rs}}
\def\PGL{{\rm PGL}}
\def\SL{{\rm SL}}
\def\cC{{\mathcal C}}
\def\cG{{\mathcal G}}
\def\cS{{\mathcal S}}
\def\cL{{\mathcal L}}
\def\cH{{\mathcal H}}
\def\cO{{\mathcal O}}
\def\cE{{\mathcal E}}
\def\cF{{\mathcal F}}
\def\cR{{\mathfrak R}}
\def\ffi{{\mathfrak i}}
\def\Fr{{\rm Frob}}
\def\disc{{\rm disc}}
\def\diag{{\rm diag}}
\def\ind{{\rm ind}}
\def\Mod{{\rm Mod}}
\def\nr{{\rm nr}}
\def\rU{{\rm U}}
\def\fs{{\mathfrak s}}
\def\ft{{\mathfrak t}}
\def\Rep{{\rm Rep}}
\def\Mod{{\rm Mod}}
\def\Res{{\rm Res}}
\def\stab{{\rm st}}
\def\Ad{{\rm Ad}}
\def\Lie{{\rm Lie}}
\def\der{{\rm der}}
\def\ad{{\rm ad}}
\def\sc{{\rm sc}}
\def\temp{{\rm temp}}
\def\red{{\rm red}}
\def\cusp{{\rm cusp}}
\def\uni{{\rm un}}
\def\fB{{\mathfrak B}}
\def\IM{{\rm{IM}}}
\def\restriction#1#2{\mathchoice
              {\setbox1\hbox{${\displaystyle #1}_{\scriptstyle #2}$}
              \restrictionaux{#1}{#2}}
              {\setbox1\hbox{${\textstyle #1}_{\scriptstyle #2}$}
              \restrictionaux{#1}{#2}}
              {\setbox1\hbox{${\scriptstyle #1}_{\scriptscriptstyle #2}$}
              \restrictionaux{#1}{#2}}
              {\setbox1\hbox{${\scriptscriptstyle #1}_{\scriptscriptstyle #2}$}
              \restrictionaux{#1}{#2}}}
\def\restrictionaux#1#2{{#1\,\smash{\vrule height .8\ht1 depth .85\dp1}}_{\,#2}}
\def\RS{{\rm RS}}
\def\rB{{\rm B}}
\def\rC{{\rm C}}
\def\rU{{\rm U}}
\def\rBC{{\rm {BC}}}
\def\cM{{\mathcal M}}
\begin{document}

\title[Affine Hecke algebras for Langlands parameters]{Affine Hecke algebras \\
for Langlands parameters}

\author[A.-M. Aubert]{Anne-Marie Aubert}
\address{Sorbonne Universit\'e and Universit\'e Paris Cit\'e, CNRS,
IMJ-PRG, F-75005 Paris, France}
\email{anne-marie.aubert@imj-prg.fr}
\author[A. Moussaoui]{Ahmed Moussaoui}
\address{Laboratoire de Math\'ematiques et Applications, Universit\'e de Poitiers, 
11 Boulevard Marie et Pierre Curie,
B\^{a}timent H3 - TSA 61125, 86073 Poitiers Cedex 9, France}
\email{ahmed.moussaoui@math.univ-poitiers.fr}
\author[M. Solleveld]{Maarten Solleveld}
\address{IMAPP, Radboud Universiteit Nijmegen, Heyendaalseweg 135,
6525AJ Nijmegen, the Netherlands}
\email{m.solleveld@science.ru.nl}
\date{\today}
\thanks{The second author thanks the FMJH for their support. The third author was supported by a
NWO Vidi grant ``A Hecke algebra approach to the local Langlands correspondence" (nr. 639.032.528).}
\subjclass[2010]{20C08,14F43,20G20}
\maketitle

\begin{abstract}
It is well-known that affine Hecke algebras are very useful to describe the smooth representations
of any connected reductive $p$-adic group $\cG$, in terms of the supercuspidal representations
of its Levi subgroups.
The goal of this paper is to create a similar role for affine Hecke algebras on the Galois side
of the local Langlands correspondence.

To every Bernstein component of enhanced Langlands
parameters for $\cG$ we canonically associate an affine Hecke algebra (possibly extended with a
finite $R$-group). We prove that the irreducible representations of this algebra are naturally in
bijection with the members of the Bernstein component, and that the set of central characters of
the algebra is naturally in bijection with the collection of cuspidal supports of these
enhanced Langlands parameters. These bijections send tempered or (essentially) square-integrable
representations to the expected kind of Langlands parameters.

Furthermore we check that for many reductive $p$-adic groups, if a Bernstein component $\mf B$ for
$\cG$ corresponds to a Bernstein component $\mf B^\vee$ of enhanced Langlands parameters via the
local Langlands correspondence, then the affine Hecke algebra that we associate to $\mf B^\vee$ is
Morita equivalent with the Hecke algebra associated to $\mf B$. This constitutes a generalization
of Lusztig's work on unipotent representations. It might be useful to establish a local Langlands
correspondence for more classes of irreducible smooth representations.
\end{abstract}

\tableofcontents

\section*{Introduction}

Let $F$ be a non-archimedean local field and let $\mc G$ be a connected reductive
algebraic group defined over $F$. The conjectural local Langlands correspondence (LLC)
provides a bijection between the set of irreducible smooth $\mc G (F)$-representations
$\Irr (\mc G (F))$ and the set of enhanced L-parameters $\Phi_\re (\mc G (F))$, see
\cite{Bor,Vog,ABPS7}.

Let $\fs$ be an inertial equivalence class for $\mc G (F)$ and let $\Irr (\mc G (F))^\fs$
be the associated Bernstein component. Similarly, inertial equivalence classes $\fs^\vee$
and Bernstein components $\Phi_\re (\mc G (F))^{\fs^\vee}$ for enhanced L-parameters were
developed in \cite{AMS}. It can be expected that every $\fs$ corresponds to a unique
$\fs^\vee$ (an ``inertial Langlands correspondence"), such that the LLC restricts to
a bijection
\begin{equation}\label{eq:1.1}
\Irr (\mc G (F))^\fs \longleftrightarrow \Phi_\re (\mc G (F))^{\fs^\vee} .
\end{equation}
The left hand side can be identified with the space of irreducible representations of
a direct summand $\mc H (\mc G (F))^\fs$ of the full Hecke algebra of $\mc G (F)$. It is
known that in many cases $\mc H (\mc G (F))^\fs$ is Morita equivalent to an affine Hecke
algebra, see \cite[\S 2.4]{ABPS7} and the references therein for an overview.

To improve our understanding of the LLC, we would like to canonically associate to
$\fs^\vee$ an affine Hecke algebra $\mc H (\fs^\vee)$ whose irreducible representations
are naturally parametrized by $\Phi_\re (\mc G (F))^{\fs^\vee}$.
Then \eqref{eq:1.1} could be written as
\begin{equation}\label{eq:1.2}
\Irr (\mc G (F))^\fs \cong \Irr \big( \mc H (\mc G (F))^\fs \big)  \longleftrightarrow
\Irr (\mc H (\fs^\vee)) \cong \Phi_\re (\mc G (F))^{\fs^\vee} ,
\end{equation}
and the LLC for this Bernstein component would become a comparison between two algebras
of the same kind. If moreover $\mc H (\fs^\vee)$ were Morita equivalent to
$\mc H (\mc G (F))^\fs$, then \eqref{eq:1.1} could even be categorified to
\begin{equation}\label{eq:1.3}
\Rep (\mc G (F))^\fs \cong \Mod (\mc H (\fs^\vee)) .
\end{equation}
Such algebras $\mc H (\fs^\vee)$ would also be useful to establish the LLC in new cases.
Suppose one would like to match $\mf s^\vee$ (essentially a set of cuspidal enhanced Langlands 
parameters for a Levi subgroup $\mc L (F)$) with a yet unknown supercuspidal Bernstein block for
$\mc L (F)$. Motivated by some examples, we increase the scope of \eqref{eq:1.3} by considering 
it only for the full subcategories of finite length objects:
\begin{equation}\label{eq:1.5}
\Rep_{\mr{fl}} (\mc G (F))^\fs \cong \Mod_{\mr{fl}} (\mc H (\fs^\vee)) .
\end{equation}
One could compare $\mc H (\fs^\vee)$ with the algebras $\mc H (\mc G (F))^\fs$ 
for various $\fs = [\mc L (F),\sigma]$, and only the Bernstein components $\Irr (\mc G (F))^\fs$ 
for which \eqref{eq:1.5} holds would be good candidates for the image of 
$\Phi_\re (\mc G (F))^{\fs^\vee}$ under the LLC. If one would know a lot about $\mc H (\fs^\vee)$, 
this could substantially reduce the number of possibilities for a LLC for both 
$\Phi_\re (\mc L (F))^{\mf s^\vee}$ and $\Phi_\re (\mc G (F))^{\fs^\vee}$.

This strategy was already employed by Lusztig, for unipotent representations
\cite{Lus6,Lus8}. Bernstein components of enhanced L-parameters had not yet been
defined when the papers \cite{Lus6,Lus8} were written, but the constructions in them
can be interpreted in that way. Lusztig found a bijection between:
\begin{itemize}
\item the set of (``arithmetic") affine Hecke algebras associated to unipotent
Bernstein blocks of adjoint, unramified groups;
\item the set of (``geometric") affine Hecke algebras associated to unramified enhanced
L-parameters for such groups.
\end{itemize}
However, the comparison of these two families of Hecke algebras is not enough to specify 
a canonical bijection between Bernstein components on the $p$-adic and the Galois sides. 
The problem is that one affine Hecke algebra can appear (up to isomorphism) several times on 
either side. This already happens in the unipotent case for exceptional groups, and the issue
seems to be outside the scope of these techniques. In \cite[6.6--6.8]{Lus6} Lusztig wrote
down some remarks about this problem, but he does not work it out completely. \\

The main goal of this paper is the construction of an affine Hecke algebra for
any Bernstein component of enhanced L-parameters, for any $\mc G$. But it quickly
turns out that this is not exactly the right kind of algebra. Firstly, our geometric
construction, which relies on \cite{Lus3,AMS2}, naturally includes some complex
parameters $\mb z_i$, which we abbreviate to $\vec{\mb z}$. Secondly, an affine
Hecke algebra with (indeterminate) parameters is still too simple. In general one must
consider the crossed product of such an object with a twisted group algebra (of some
finite ``$R$-group"). We call this a twisted affine Hecke algebra, see Proposition
\ref{prop:4.2} for a precise definition. Like for reductive groups, there are good
notions of tempered representations and of (essentially) discrete series representations
of such algebras (Definition \ref{defn:4.8}).

\begin{thmintro}\label{thm:A} {\rm [see Theorem \ref{thm:5.13}]}
\enuma{
\item To every Bernstein component of enhanced L-parameters $\fs^\vee$ one can
canonically associate a twisted affine Hecke algebra $\mc H (\fs^\vee,\vec{\mb z})$.
\item For every choice of parameters $z_i \in \R_{>0}$ there exists a natural bijection
\[
\Phi_\re (\mc G (F))^{\fs^\vee} \longleftrightarrow
\Irr \big( \mc H (\fs^\vee,\vec{\mb z}) / ( \{ \mb z_i - z_i\}_i ) \big)
\]
\item For every choice of parameters $z_i \in \R_{\geq 1}$ the bijection from part (b)
matches enhanced bounded L-parameters with tempered irreducible representations.
\item Suppose that $\Phi_\re (\mc G (F))^{\fs^\vee}$ contains enhanced discrete
L-parameters, and that $z_i \in \R_{> 1}$ for all $i$. Then the bijection from part (b)
matches enhanced discrete L-parameters with irreducible essentially discrete series
representations.
\item The bijection in part (b) is equivariant with respect to the canonical actions
of the group of unramified characters of $\mc G (F)$.
}
\end{thmintro}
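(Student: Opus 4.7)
The plan is to build $\mc H(\fs^\vee, \vec{\mb z})$ via the geometric construction of \cite{AMS2} applied to a cuspidal support, and then derive parts (b)--(e) from a parametrization theorem combined with standard structure theory for affine Hecke algebras. First I would fix a Bernstein component $\fs^\vee$ represented by a cuspidal pair $(\mc L^\vee, \phi_L, \rho_L)$ in some Levi subgroup $\mc L^\vee$ of $\mc G^\vee$. The unramified twist orbit of $\phi_L$ carves out a complex torus $T_{\fs^\vee}$ on which $W_{\fs^\vee} := N_{\mc G^\vee}(\mc L^\vee, \phi_L, \rho_L)/\mc L^\vee$ acts. For part (a) I would feed the equivariant data coming from the reductive centralizer of $\phi_L|_{W_F}$ inside $\mc G^\vee$, together with the unipotent element coming from $\phi_L|_{\mr{SL}_2}$, into the construction of \cite{AMS2}: this produces a root system on $T_{\fs^\vee}$, while the generalized Springer correspondence for the (possibly disconnected) component group supplies both the labels governing the parameters $\mb z_i$ and the 2-cocycled R-group. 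Rescaling via the Bernstein--Lusztig presentation then yields the twisted affine Hecke algebra $\mc H(\fs^\vee, \vec{\mb z})$, and canonicity follows because every ingredient depends only on the inertial class.

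For part (b), I would invoke the geometric parametrization theorem from \cite{AMS2}: after specializing $\mb z_i \mapsto z_i \in \R_{>0}$, the irreducible modules of $\mc H(\fs^\vee, \vec{\mb z})$ are classified by triples consisting of a semisimple element in the relevant torus, a unipotent class in its centralizer, and an irreducible representation of the component group of the pair. This is precisely the data of an enhanced L-parameter in $\fs^\vee$: the semisimple part recovers $\phi|_{W_F}$ up to unramified twist, the unipotent part recovers $\phi|_{\mr{SL}_2}$, and the component-group representation matches the enhancement $\rho$. Matching cuspidal supports on both sides identifies central characters of $\mc H(\fs^\vee, \vec{\mb z})$ with $\mc G^\vee$-orbits of cuspidal supports inside $\fs^\vee$, which is the bijection in part (b).

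For (c) and (d), I would combine (b) with the Casselman-type temperedness criterion for affine Hecke algebras with real parameters $z_i \geq 1$, and with Opdam's residual-point criterion for essentially discrete series when $z_i > 1$. On the parameter side these translate respectively into the image of $\phi|_{W_F}$ being bounded, and into the image of $\phi$ not being contained in any proper Levi of $\mc G^\vee$, so the match is tautological once the central characters have been identified. For (e), the unramified character group of $\mc G(F)$ acts on $\fs^\vee$ by tensoring L-parameters and on $\Irr(\mc H(\fs^\vee, \vec{\mb z}))$ by translation on $T_{\fs^\vee}$ through the Bernstein center; the two actions agree because $T_{\fs^\vee}$ is built from unramified twists on either side. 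The main obstacle is part (b), specifically ensuring that the bijection is fully natural: it must match not only the underlying sets but also cuspidal supports, central characters, R-group intertwiners and the 2-cocycle. This requires a careful identification of the geometric data used to define $\mc H(\fs^\vee, \vec{\mb z})$ with the parameter-theoretic data defining $\fs^\vee$, which is where most of the technical bookkeeping lies; once accomplished, parts (c)--(e) reduce to applying standard affine Hecke algebra theorems term by term.
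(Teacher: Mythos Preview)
Your proposal has two substantial gaps that the paper addresses explicitly.

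First, you propose to build $\mc H(\fs^\vee,\vec{\mb z})$ from ``the reductive centralizer of $\phi_L|_{W_F}$ inside $\mc G^\vee$''. This group is $G_\phi = Z^1_{\mc G^\vee_\sc}(\phi|_{\mb W_F})$, and its root system $R(G_\phi^\circ,T)$ depends on $\phi(\Fr_F)$, which varies as $\phi$ ranges over the Bernstein component. So your construction does not produce a single algebra attached to $\fs^\vee$. The paper instead works with $J = Z^1_{\mc G^\vee_\sc}(\phi|_{\mb I_F})$, the centralizer of the \emph{inertia} image, and proves (Proposition~\ref{prop:5.9}) that $R(J^\circ,T)$ is a root system containing every $R(G_\phi^\circ,T)$; this is what makes the root datum $\mc R_{\fs^\vee}$ well-defined. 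A further issue you do not address is that $T=Z(M)^\circ$ is not the correct torus: the paper shows (Lemma~\ref{lem:5.7}) that $T\times X_\nr({}^L\mc G)\to X_\nr({}^L\mc L)$ is only a finite cover, and the algebra must be built on the quotient torus $T_{\fs^\vee}$, with carefully chosen label functions $\lambda,\lambda^*$ (Lemma~\ref{lem:5.15}) that are not those arising directly from any single $G_\phi$.

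Second, you invoke ``the geometric parametrization theorem from \cite{AMS2}'' for the bijection in part (b). But \cite{AMS2} classifies irreducible modules of twisted \emph{graded} Hecke algebras, not affine ones. The paper's argument is genuinely two-layered: it first proves a parametrization for $\mh H(\phi_b,v,q\epsilon,\vec{\mb r})$ at each bounded basepoint $\phi_b$ (Theorem~\ref{thm:5.8}, which is where \cite{AMS2} enters), and then glues these over all $\phi_b$ using Lusztig's two reduction theorems, extended to the twisted setting (Theorems~\ref{thm:4.5} and~\ref{thm:4.7}). Parts (c) and (d) are then obtained not from Casselman or Opdam-type criteria but by tracking temperedness and discrete series through these reductions (Proposition~\ref{prop:4.9}, Theorem~\ref{thm:4.7}.d) back to the geometric criteria of \cite{AMS2}. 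Your sketch collapses this structure, and in particular does not explain why the various graded Hecke algebras at different $\phi_b$ assemble into a single affine one with the correct parameters---which is precisely the content of \eqref{eq:5.17}, \eqref{eq:5.16}, and the proof of Theorem~\ref{thm:5.13}.a.
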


This can be regarded as a far-reaching generalization of parts of \cite{Lus6,Lus8}:
we allow any reductive group over a non-archimedean local field, and all enhanced
L-parameters for that group. We check (see Section \ref{sec:examples}) that in several
cases where the LLC is known, indeed
\begin{equation}\label{eq:1.4}
\mc H (\mc G (F))^\fs \quad  \text{is Morita equivalent to} \quad
\mc H (\fs^\vee, \vec{\mb z}) / ( \{ \mb z_i - z_i\}_i ) 
\end{equation}
for suitable $z_i \in \R_{>1}$, obtaining \eqref{eq:1.3}. Notice that on the $p$-adic
side the parameters $z_i$ are determined by $\mc H (\mc G (F))^\fs$, whereas on the
Galois side we specify them manually. In fact, in all our examples we can take
$z_i = q_F^{1/2}$. That is a good sign, which indicates that in general $z_i = q_F^{1/2}$
could be the best specialization of the parameters to compare with an affine
Hecke algebra coming from a $p$-adic group.

Yet in general the categorification \eqref{eq:1.3} is asking for too much. We
discovered that for inner twists of $\SL_n (F)$ \eqref{eq:1.4} does not always hold.
Rather, these algebras are equivalent in a weaker sense: the category of finite
length modules of $\mc H (\mc G (F))^\fs$ (i.e. the finite length objects in $\Rep 
(\mc G (F))^\fs$) is equivalent to the category of finite dimensional representations 
of $\mc H (\fs^\vee, \vec{\mb z}) \big/ \big( \{ \mb z_i - q_F^{1/2}\}_i \big)$.\\

Let us describe the contents of the paper more concretely. Our starting point is
a triple $(G,M,q\cE)$ where
\begin{itemize}
\item $G$ is a possibly disconnected complex reductive group,
\item $M$ is a quasi-Levi subgroup of $G$ (the $G$-centralizer of the connected centre 
of a Levi subgroup of $G^\circ$),
\item $q\cE$ is a $M$-equivariant cuspidal local system on a unipotent orbit
$\cC_v^M$ in $M$.
\end{itemize}
To these data we attach a twisted affine Hecke algebra $\mc H (G,M,q\cE,\vec{\mb z})$. This
algebra can be specialized by setting $\vec{\mb z}$ equal to some $\vec{z} \in (\C^\times )^d$.
Of particular interest is the specialization at $\vec{z} = \vec{1}$:
\[
\mc H (G,M,q\cE,\vec{\mb z}) / (\{ \mb z_i - 1 \}_i ) =
\mc O (T) \rtimes \C [ W_{q\cE} , \natural ] ,
\]
where $T = \rZ(M)^\circ$, while the subgroup $W_{q\cE} \subset \rN_G (M) / M$ and the 2-cocycle\\
$\natural \colon W_{q\cE}^2 \to \C^\times$ also come from the data.

The goal of Section \ref{sec:TAHA} is to understand and parametrize representations of the algebra
$\mc H (G,M,q\cE,\vec{\mb z})$. We follow a strategy similar to that in \cite{Lus4}.
The centre naturally contains $\mc O (T)^{W_{q\cE}} = \mc O (T / W_{q\cE})$, so we can
study $\Mod (\mc H (G,M,q\cE,\vec{\mb z}))$ via localization at suitable subsets of
$T / W_{q\cE}$. In Paragraph \ref{par:Rrcc} we reduce to representations with
$\mc O (T)^{W_{q\cE}}$-character in $W_{q\cE} T_{\rs}$, where $T_{\rs}$ denotes the
maximal real split subtorus of $T$. This involves replacing
$\mc H (G,M,q\cE,\vec{\mb z})$ by an algebra of the same kind, but for a smaller $G$.

In Paragraph \ref{par:irreps} we reduce further, to representations of a (twisted)
graded Hecke algebra $\mh H (G,M,q\cE,\vec{\mb r})$. We defined and studied such
algebras in our previous paper \cite{AMS2}. But there we only considered the case with
a single parameter $\mb r$, here we need $\vec{\mb r} = (\mb r_1, \ldots, \mb r_d)$.
The generalization of the results of \cite{AMS2} to a multi-parameter setting is
carried out in Section \ref{sec:0}. With that at hand we can use the construction of
``standard" $\mh H (G,M,q\cE,\vec{\mb r})$-modules and the classification of
irreducible $\mh H (G,M,q\cE,\vec{\mb r})$-modules from \cite{AMS2} to achieve the
same for $\mc H (G,M,q\cE,\vec{\mb z})$. For the parametrization we use triples
$(s,u,\rho)$ where:
\begin{itemize}
\item $s \in G^\circ$ is semisimple,
\item $u \in \rZ_G (s)^\circ$ is unipotent,
\item $\rho \in \Irr \big( \pi_0 (\rZ_G (s,u)) \big)$ such that the quasi-cuspidal
support of $(u,\rho)$, as defined in \cite[\S 5]{AMS}, is $G$-conjugate to
$(M,\cC_v^M,q\cE)$.
\end{itemize}

\begin{thmintro}\label{thm:B} {\rm [see Theorem \ref{thm:4.10}]}
\enuma{
\item Let $\vec{z} \in \R_{\geq 0}^d$. There exists a canonical bijection,
say $(s,u,\rho) \mapsto \bar{M}_{s,u,\rho,\vec{z}}$, between:
\begin{itemize}
\item $G$-conjugacy classes of triples $(s,u,\rho)$ as above,
\item $\Irr \big( \mc H (G,M,q\cE,\vec{\mb z}) / (\{ \mb z_i - z_i \}_i ) \big)$.
\end{itemize}
\item Let $\vec{z} \in \R_{\geq 1}^d$. The module $\bar{M}_{s,u,\rho,\vec{z}}$ is
tempered if and only if $s$ is contained in a compact subgroup of $G^\circ$.
\item Let $\vec{z} \in \R_{> 1}^d$. The module $\bar{M}_{s,u,\rho,\vec{z}}$ is
essentially discrete series if and only if $u$ is distinguished unipotent in $G^\circ$
(i.e. does not lie in a proper Levi subgroup).
}
\end{thmintro}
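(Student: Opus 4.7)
The plan is to deduce Theorem \ref{thm:B} from the corresponding classification for twisted graded Hecke algebras $\mh H(G,M,q\cE,\vec{\mb r})$, which by Section~\ref{sec:0} is available in the multi-parameter setting as a direct generalization of the single-parameter results of \cite{AMS2}. The bridge between the two algebras is the localization/reduction procedure developed in Paragraphs~\ref{par:Rrcc} and~\ref{par:irreps}. First I would fix $\vec z \in \R_{\geq 0}^d$ and note that the center of $\mc H(G,M,q\cE,\vec{\mb z})$ contains $\mc O(T)^{W_{q\cE}} = \mc O(T/W_{q\cE})$, so irreducibles decompose according to their $\mc O(T)^{W_{q\cE}}$-character, viewed as a $W_{q\cE}$-orbit in $T$. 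Writing an element of $T$ in its polar decomposition $t = t_u \cdot t_{\rs}$ (compact part times real split part), the reduction in Paragraph~\ref{par:Rrcc} lets me replace $G$ by $Z_G(t_u)$ and assume the central character lies in $W_{q\cE} T_{\rs}$.

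Next, the exponential map identifies $T_{\rs}$ with the real form of $\Lie(T)$, and the analytic localization of Paragraph~\ref{par:irreps} yields an equivalence between the category of $\mc H(G,M,q\cE,\vec{\mb z})$-modules with central character in $W_{q\cE} \exp(\mf t_{\R})$ and the corresponding category of modules of the twisted graded Hecke algebra $\mh H(G,M,q\cE,\vec{\mb r})$, specialised at $\vec r = \log(\vec z)$. Under this equivalence, tempered modules correspond to tempered modules and (essentially) discrete series correspond to (essentially) discrete series, by the standard matching of Casselman criteria: the real parts of the $\mc O(T)$-weights on the affine side coincide, via $\log$, with the weights of the polynomial subalgebra on the graded side. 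Combining with the reduction to $Z_G(t_u)$, part (a) will follow once I show that the triples $(s,u,\rho)$ for $G$ with semisimple part of $s$ equal to $t_u$ biject canonically with the triples for $Z_G(t_u)$ parametrising $\mh H(Z_G(t_u),M,q\cE,\vec{\mb r})$-modules with central character in $W_{q\cE} \exp(\mf t_\R)$; this is the content of the multi-parameter extension of the classification in \cite{AMS2}, applied to $Z_G(t_u)$.

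For part (b), I would fix $\vec z \in \R_{\geq 1}^d$. Temperedness of $\bar M_{s,u,\rho,\vec z}$ is characterised, by the multi-parameter version of the tempered criterion from \cite{AMS2}, in terms of the semisimple parameter attached to the corresponding standard module on the graded side having purely imaginary cocharacter-component. Under the reduction above, this translates to $t_{\rs} = 1$, i.e.\ $s = t_u$ lies in the maximal compact subgroup of $T$; the full statement (``$s$ lies in a compact subgroup of $G^\circ$'') then follows by $G$-conjugating the triple. For part (c), with $\vec z \in \R_{>1}^d$, the essentially discrete series criterion of \cite{AMS2}, transported through both reductions, states that $\bar M_{s,u,\rho,\vec z}$ is essentially discrete series iff $u$ is a distinguished unipotent element of $Z_G(s)^\circ$; combined with the temperedness reduction this is equivalent to $u$ being distinguished in $G^\circ$.

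The main obstacle is to verify that the reductions of Paragraphs~\ref{par:Rrcc} and~\ref{par:irreps}, together with the classification from Section~\ref{sec:0}, all carry over to the multi-parameter case in a compatible way: in particular one must check that the 2-cocycle $\natural$ and the $R$-group contributions match on both sides of the analytic localization, and that the bijection $(s,u,\rho) \mapsto \bar M_{s,u,\rho,\vec z}$ is indeed canonical, i.e.\ independent of the auxiliary choice of $t_u$ inside its $W_{q\cE}$-orbit. The matching of temperedness/discrete series is comparatively routine once the algebraic classification is established, since the Casselman-type criteria are stated directly in terms of weights of $\mc O(T)$ respectively of the polynomial part, and logarithm converts one into the other.
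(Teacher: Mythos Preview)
Your overall strategy matches the paper's: reduce via Theorems~\ref{thm:4.5} and~\ref{thm:4.7} to the graded Hecke algebra $\mh H(G_t, M, q\cE, \vec{\mb r})$ for $t = s|s|^{-1} \in T_\uni$, then invoke the classification of Section~\ref{sec:0}. Parts (a) and (b) go through essentially as you describe; the canonicity issue you flag is precisely Lemma~\ref{lem:4.6}. Two refinements: the first reduction (Theorem~\ref{thm:4.5}) lands in $\cH(\tilde{Z_G}(t),M,q\cE,\vec{\mb z})$, not in an algebra for $Z_G(t)$, and only the second step (Theorem~\ref{thm:4.7}) reaches $\mh H(G_t, M, q\cE, \vec{\mb r})$; and the paper's definition of $\bar M_{s,u,\rho,\vec z}$ incorporates a twist by the Iwahori--Matsumoto involution on the graded side, which you do not mention but which is exactly what makes the temperedness criterion come out as $|s|=1$ rather than its opposite.

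Your argument for (c) has a genuine gap. After both reductions the graded-side criterion (Proposition~\ref{prop:0.5}.c) says $u$ is distinguished in $Z_G(s|s|^{-1})^\circ$, not in $Z_G(s)^\circ$. More importantly, transporting essential discrete series back through Theorem~\ref{thm:4.5} is governed by Proposition~\ref{prop:4.9}.c, which is \emph{not} an unconditional equivalence: it says $\ind(V)$ is essentially discrete series iff $V$ is \emph{and} $R(\tilde{Z_G}(t)^\circ,T)$ has full rank in $R(G^\circ,T)$. Your phrase ``combined with the temperedness reduction'' does not supply this full-rank condition; temperedness is neither assumed in part~(c) nor something you may invoke at this point. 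The paper closes the gap directly: if $u$ is distinguished in $G^\circ$ then, since $u$ commutes with $t=s|s|^{-1}$, the group $Z_G(t)^\circ$ cannot have a positive-dimensional central torus (else $u$ would sit in a proper Levi of $G^\circ$), forcing full rank; conversely, if $\bar M_{s,u,\rho,\vec z}$ is essentially discrete series then Proposition~\ref{prop:4.9}.c yields the full-rank condition, so $Z_{G^\circ_\der}(t)^\circ$ is semisimple and ``distinguished in $Z_G(t)^\circ$'' coincides with ``distinguished in $G^\circ$''.
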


In the case $M = T ,\, \cC_v^M = \{1\}$ and $q\cE$ trivial, the irreducible
representations in $\mc H (G^\circ, T, q\cE = \mr{triv})$ were already classified in
the landmark paper \cite{KaLu}, in terms of similar triples. In Paragraph \ref{par:KaLu}
we check that the parametrization from Theorem \ref{thm:B} agrees with the
Kazhdan--Lusztig parametrization for these algebras.

Remarkably, our analysis also reveals that \cite{KaLu} does not agree with the classification
of irreducible representations in \cite{Lus6}. To be precise, the difference consists of
a twist with a version of the Iwahori--Matsumoto involution.
Since \cite{KaLu} is widely regarded (see for example \cite{Ree,Vog}) as the correct local
Langlands correspondence for Iwahori-spherical representations, this entails that the
parametrizations obtained by Lusztig in \cite{Lus6,Lus8} can be improved by composition
with a suitable involution. In the special case $G = \Sp_{2n}(\C)$, that already transpired
from work of M\oe glin and Waldspurger \cite{Wal}.\\

Having obtained a good understanding of affine Hecke algebras attached to disconnected
reductive groups, we turn to Langlands parameters. Let
\[
\phi \colon \mb W_F \times \SL_2 (\C) \to {}^L \mc G
\]
be a L-parameter and let $\rho$ be an enhancement of $\phi$. (See Section \ref{sec:Langlands}
for the precise notions.) Let $\cG^\vee_\ad$ be the adjoint group of the complex dual group
$\cG^\vee$ and  let $\cG^\vee_\sc$ be the simply connected cover of $\cG^\vee_\ad$. Let
$\rZ_{\cG^\vee_\ad}(\phi (\mb I_F))$ be the centralizer of $\phi (\mb I_F)$ in
$\cG^\vee_\ad$, and let $J_\phi= Z^1_{\cG^\vee_\sc}(\phi (\mb I_F))$ denote its
inverse image in $\cG^\vee_\sc$. Similarly, we consider the group $G_\phi$ defined to be
inverse image in $\cG^\vee_\sc$ of the centralizer of $\phi (\mb W_F)$ in
$\cG^\vee_\ad$. We emphasize that the complex groups $J_\phi$ and $G_\phi$ can be disconnected --
this is the main reason why we have to investigate Hecke algebras for disconnected reductive groups.

Recall that $\phi$ is determined up to $\mc G^\vee$-conjugacy by $\phi |_{\mb W_F}$ and the
unipotent element $u_\phi = \phi \big( 1, \matje{1}{1}{0}{1} \big)$. As the image of a Frobenius
element is allowed to vary within one Bernstein component, $(\phi|_{\mb I_F},u_\phi)$ contains
almost all information about such a Bernstein component.

The cuspidal support
of $(u_\phi,\rho)$ for $G=G_\phi$ is a triple $(M,\cC_v^M,q\cE)$ as before. Thus we can associate
to $(\phi,\rho)$ the twisted affine Hecke algebra $\mc H (G,M,q\cE,\vec{\mb z})$.
This works quite well in several cases, but in general it is too simple, we encounter
various technical difficulties. The main problem is that the torus $T = \rZ(M)^\circ$ will
not always match up with the torus from which the Bernstein component of $\Phi_\re (\mc G (F))$
containing $(\phi,\rho)$ is built.

Instead we consider the twisted graded Hecke algebra $\mh H (G, M,q\cE,\vec{\mb r})$,
and we tensor it with the coordinate ring of a
suitable vector space to compensate for the difference between $\mc G_\sc^\vee$ and
$\mc G^\vee$. In Paragraph \ref{par:LGHA} we prove that the irreducible representations
of the ensuing algebra are naturally parametrized by a subset of the Bernstein
component $\Phi_\re (\mc G (F))^{\fs^\vee}$ containing $(\phi,\rho)$. In Paragraph
\ref{par:LAHA} we glue fa\-mi\-lies of such algebras together, to obtain the twisted
affine Hecke algebras $\mc H(\fs^\vee, \vec{\mb z})$ featuring in Theorem \ref{thm:A}.
This requires careful analysis of the involved tori and root systems, which we perform
in Paragraph \ref{par:roots}.

We discuss then, in Section \ref{sec:stable}, the relation of the above theory with
the stable Bernstein center on the Galois side of the LLC.
In Section \ref{sec:examples} we explain and work out the examples of general linear,
special linear and classical groups. It turns out that, for general linear groups (and
their inner twists) and classical groups, the extended affine Hecke algebras for enhanced
Langlands parameters (with a suitable specialization of the parameters) are Morita 
equivalent to those obtained from representations of reductive $p$-adic groups. In the case 
of inner twists of special linear groups we establish a slightly weaker result.\\

Let us compare our paper with similar work by other authors. Several mathematicians have noted
that, when two Bernstein components give rise to isomorphic affine Hecke algebras, this often
has to do with the centralizers of the corresponding Langlands parameters. It is known from
the work of Bushnell--Kutzko (see in particular \cite{BuKu2}) that every affine Hecke algebra
associated to a semisimple type for $\GL_n (F)$ is isomorphic to the Iwahori--spherical
Hecke algebra of some $\prod_i \GL_{n_i} (F_i)$, where $\sum_i n_i \leq n$ and $F_i$ is
a finite extension of the field $F$. A similar statement holds for Bernstein components
in the principal series of $F$-split reductive groups \cite[Lemma 9.3]{Roc}.

Dat \cite[Corollary 1.1.4]{Dat} has generalized this to groups of ``GL-type", and in
\cite[Theorem 1.1.2]{Dat} he proves that for such a group $\rZ_{\mc G^\vee}(\phi (\mb I_F))$
determines $\prod_\fs \Rep (\mc G (F))^\fs$, where $\fs$ runs over all Bernstein components
that correspond to extensions of $\phi |_{\mb I_F}$ to $\mb W_F \times \SL_2 (\C)$.
In \cite[\S 1.3]{Dat} Dat discusses possible generalizations of these results to other
reductive groups, but he did not fully handle the cases where
$\rZ_{\mc G^\vee}(\phi (\mb I_F))$ is disconnected. (It is always connected for groups of
GL-type.) Theorem \ref{thm:A}, in combination with the considerations about inner twists
of $\GL_n (F)$ in Paragraph \ref{par:innGLn}, provide explanations for all the equivalences
between Hecke algebras and between categories found by Dat.

Heiermann \cite[\S 1]{Hei} has associated affine Hecke algebras (possibly extended with a
finite $R$-group) to certain collections of enhanced L-parameters for classical groups
(essentially these sets constitute unions of Bernstein components).
Unlike Lusztig he does not base this on geometric constructions in complex groups, rather
on affine Hecke algebras previously found on the $p$-adic side in \cite{Hei1}. In his
setup \eqref{eq:1.2} holds true by construction, but the Hecke algebras are only related
to L-parameters via the LLC, so not in an explicit way.

In \cite[\S 2]{Hei} it is shown that every Bernstein component of enhanced L-parameters for
a classical group is in bijection with a Bernstein component of enhanced unramified 
L-parameters for a product of classical groups of smaller rank. (Some cases require extending
the relevant notions to full orthogonal groups, which is straightforward.)
So in the context of \cite{Hei} the data that we use for affine
Hecke algebras are present, and the algebras appear as well (at least up to Morita
equivalence), but the link between them is not yet explicit.
In Paragraph \ref{par:classical} we discuss how our results clarify this.\\[2mm]

\textbf{Acknowledgements.}\\
We thank referee for his detailed reports, which were very helpful to clarify parts of the
paper.

\renewcommand{\theequation}{\arabic{section}.\arabic{equation}}
\counterwithin*{equation}{section}

\section{Twisted graded Hecke algebras}
\label{sec:0}
We will recall some aspects of the (twisted) graded Hecke algebras studied
in \cite{AMS2}. Let $G$ be a complex reductive group, possibly disconnected.
Let $M$ be a quasi-Levi subgroup of $G$, that is, a group of the form
$M = \rZ_G(\rZ(L)^\circ)$ where $L$ is a Levi subgroup of $G^\circ$. Notice
that $M^\circ = L$ in this case.

We write $T = \rZ(M)^\circ = \rZ(M^\circ)^\circ$, a torus in $G^\circ$.
Let $P^\circ = M^\circ U$ be a parabolic subgroup of $G^\circ$ with Levi
factor $M^\circ$ and unipotent radical $U$. We put $P = M U$.
Let $\mf t^*$ be the dual space of the Lie algebra $\mf t = \Lie (T)$.

Let $v \in \mf m = \Lie (M)$ be nilpotent, and denote its adjoint orbit
by $\cC_v^M$. Let $q\cE$ be an irreducible $M$-equivariant cuspidal local
system on $\cC_v^M$. Then the stalk $q\epsilon = q\cE |_v$ is an irreducible
representation of $A_M (v) = \pi_0 (\rZ_M (v))$. Conversely, $v$ and $q\epsilon$
determine $\cC_v^M$ and $q\cE$. By definition the cuspidality means that
$\Res^{A_M (v)}_{A_{M^\circ}(v)} q\epsilon$ is a direct sum of irreducible
cuspidal $A_{M^\circ}(v)$-representations. Let $\epsilon \in \Irr (A_{M^\circ}(v))$
be one of them, and let $\cE$ be the corresponding $M^\circ$-equivariant cuspidal
local system on $\cC_v^{M^\circ}$. Then $\cE$ is a subsheaf of $q\cE$.
See \cite[\S 5]{AMS} for more background.\\
The triple $(M,\cC_v^M,q\cE)$ (or $(M,v,q\epsilon)$) is called a cuspidal
quasi-support for $G$. We denote its $G$-conjugacy class by $[M,\cC_v^M,q\cE]_G$.
To these data we associate the groups
\begin{equation}\label{eq:0.20}
\begin{array}{l}
W_{q\cE} = \rN_G (q\cE ) / M , \quad \text{where $\rN_G (q\cE) = \mr{Stab}_{\rN_G (M)}(q\cE )$,} \\
W_{q\cE}^\circ =  \mr{Stab}_{\rN_{G^\circ} (M)}(q\cE )/M=\rN_{G^\circ M}(M) / M \\
W_\cE = \mr{Stab}_{\rN_{G^\circ} (M)}(\cE )/M=\rN_{G^\circ}(M^\circ) / M^\circ,\\
\cR_{q\cE} = \rN_G (P,q\cE) / M ,\quad \text{where $\rN_G (P,q\cE) = \rN_G (q\cE) \cap \rN_G (P)$.}
\end{array}
\end{equation}
The group $W_{q\cE}$ acts naturally on the set
\[
R(G^\circ,T) := \{ \alpha \in X^* (T) \setminus \{0\} : \alpha
\text{ appears in the adjoint action of } T \text{ on } \mf g \} .
\]
By \cite[Theorem 9.2]{Lus2} (see also \cite[Lemma 2.1]{AMS2})
$R(G^\circ,T)$ is a root system with Weyl group $W_{q\cE}^\circ$.
The group $\cR_{q\cE}$ is the stabilizer of the set of
positive roots determined by $P$ and
\[
W_{q\cE} = W_{q\cE}^\circ \rtimes \cR_{q\cE} .
\]
We choose semisimple subgroups $G_j \subset G^\circ$, normalized by $\rN_G (q \cE)$,
such that the derived group $G^\circ_\der$ is the almost direct product of the $G_j$.
In other words, every $G_j$ is semisimple, normal in $G^\circ M$, normalized by
$W_{q\cE}$ (which makes sense because it is already normalized by $M$),
and the multiplication map
\begin{equation}\label{eq:0.1}
m_{G^\circ} \colon \rZ(G^\circ )^\circ \times G_1 \times \cdots \times G_d \to G^\circ
\end{equation}
is a surjective group homomorphism with finite central kernel. The number $d$ is not
specified in advance, it indicates the number of independent variables in our
upcoming Hecke algebras.
Of course there are in general many ways to achieve \eqref{eq:0.1}. Two choices are
always canonical:
\begin{equation}\label{eq:0.18}
\begin{array}{ll}
\bullet & G_1 = G^\circ_\der, \text{ with } d = 1; \\
\bullet & \text{every } G_j \text{ is of the form } N_1 N_2 \cdots N_k, \text{ where }
\{ N_1, \ldots, N_k\} \\
& \text{ is a } \rN_G (q\cE) \text{-orbit of simple normal subgroups of } G^\circ.
\end{array}
\end{equation}
In any case, \eqref{eq:0.1} gives a decomposition
\begin{equation}\label{eq:0.6}
\mf g = \rZ(\mf g) \oplus \mf g_1 \oplus \cdots \oplus \mf g_d \quad
\text{where } \rZ(\mf g) = \Lie (\rZ(G^\circ)), \mf g_j = \Lie (G_j) .
\end{equation}
Each root system
\[
R_j := R(G_j T,T) = R(G_j, G_j \cap T)
\]
is a $W_{q\cE}$-stable union of irreducible components of $R(G^\circ,T)$.
Thus we obtain an orthogonal, $W_{q\cE}$-stable decomposition
\begin{equation}\label{eq:0.2}
R(G^\circ,T) = R_1 \sqcup \cdots \sqcup R_d .
\end{equation}
We let $\vec{\mb r} = (\mb r_1, \ldots, \mb r_d)$ be an array of variables,
corresponding to \eqref{eq:0.1} and \eqref{eq:0.2} in the sense that $\mb r_j$
is relevant for $G_j$ and $R_j$ only. We abbreviate
\[
\C [\vec{\mb r}] = \C [\mb r_1,\ldots, \mb r_d] .
\]
Let $\natural \colon (W_{q\cE} / W_{q\cE}^\circ)^2 \to \C^\times$ be a 2-cocycle.
Recall that the twisted group algebra $\C[W_{q\cE},\natural]$ has a $\C$-basis
$\{ N_w : w \in W_{q\cE} \}$ and multiplication rules
\[
N_w \cdot N_{w'} = \natural (w,w') N_{w w'} .
\]
In particular it contains the group algebra of $W_{q\cE}^\circ$.

Let $c \colon R(G^\circ,T)_\red \to \C$ be a $W_{q\cE}$-invariant function.

\begin{prop}\label{prop:0.1}
There exists a unique structure of an associative graded algebra on 
$\C [W_{q\cE},\natural] \otimes S(\mf t^*) \otimes \C[\vec{\mb r}]$, such that:
\begin{itemize}
\item the twisted group algebra $\C[W_{q\cE},\natural]$ is embedded as subalgebra in degree 0;
\item the algebra $S(\mf t^*) \otimes \C[\vec{\mb r}]$ of polynomial functions on
$\mf t \oplus \C^d$ is embedded as a subalgebra, with twice the usual grading on
$S(\mf t^*)$ and each $\mb r_j$ in degree 2;
\item $\C[\vec{\mb r}]$ is central;
\item the braid relation $N_{s_\alpha} \xi - {}^{s_\alpha}\xi N_{s_\alpha} =
c (\alpha) {\mb r}_j (\xi - {}^{s_\alpha} \xi) / \alpha$ \\
holds for all $\xi \in S(\mf t^*)$ and all simple roots $\alpha \in R_j$
\item $N_w \xi N_w^{-1} = {}^w \xi$ for all $\xi \in S (\mf t^*)$ and $w \in \cR_{q\cE}$.
\end{itemize}
\end{prop}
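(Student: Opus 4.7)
The plan is to follow the proof of the single-parameter version \cite[Proposition 2.2]{AMS2} (itself modelled on Lusztig \cite{Lus3}) and observe that the introduction of several parameters $\mb r_j$ is routine thanks to the orthogonal, $W_{q\cE}$-stable decomposition \eqref{eq:0.2}: each $\mb r_j$ intervenes only in cross-relations attached to roots of $R_j$, and roots of $R_i$ and $R_j$ with $i \ne j$ are mutually orthogonal, so the variables $\vec{\mb r}$ do not interfere with one another.

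For the uniqueness statement I would argue that the four listed bullets suffice to rewrite every product of generators in the normal form $\sum_{w \in W_{q\cE}} N_w \otimes \xi_w \otimes p_w(\vec{\mb r})$ with $\xi_w \in S(\mf t^*)$ and $p_w \in \C[\vec{\mb r}]$. Using the semidirect decomposition $W_{q\cE} = W_{q\cE}^\circ \rtimes \cR_{q\cE}$ together with a reduced expression in $W_{q\cE}^\circ$, the braid relation in the third bullet moves each $\xi \in S(\mf t^*)$ past each $N_{s_\alpha}$ at the cost of a correction in $S(\mf t^*) \otimes \C[\vec{\mb r}]$, and the relation in the fourth bullet handles the $\cR_{q\cE}$-factor. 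Centrality of $\C[\vec{\mb r}]$ makes the placement of those factors irrelevant, so the multiplication is fully determined.

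For existence I would construct operators on the vector space $V = \C[W_{q\cE},\natural] \otimes S(\mf t^*) \otimes \C[\vec{\mb r}]$ itself: elements of $S(\mf t^*) \otimes \C[\vec{\mb r}]$ act by right multiplication in the two rightmost factors, elements $N_r$ for $r \in \cR_{q\cE}$ act by $N_r \cdot (N_w \otimes \xi \otimes p) = \natural(r,w)\, N_{rw} \otimes {}^r\xi \otimes p$, and each simple reflection $s_\alpha \in W_{q\cE}^\circ$ attached to $R_j$ acts via the Demazure--Lusztig-type operator
\[
N_{s_\alpha} \cdot (N_w \otimes \xi \otimes p) = \natural(s_\alpha,w)\, N_{s_\alpha w} \otimes {}^{s_\alpha}\xi \otimes p \,+\, N_w \otimes c(\alpha)\, \mb r_j \, \bigl(\xi - {}^{s_\alpha}\xi\bigr)/\alpha \otimes p ,
\]
which is well-defined by the divisibility of $\xi - {}^{s_\alpha}\xi$ by $\alpha$ in $S(\mf t^*)$. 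The four listed relations then hold by direct inspection of these formulas.

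The main obstacle is verifying that the $N_{s_\alpha}$ satisfy the braid relations of the Coxeter group $W_{q\cE}^\circ$, so that $N_w$ is unambiguously defined on $V$ for any $w \in W_{q\cE}^\circ$ via a reduced expression. For simple roots $\alpha, \beta$ lying in the same component $R_j$, this is precisely the rank-two computation carried out in \cite[\S 2]{AMS2}; it involves only the subsystem spanned by $\alpha, \beta$ and the single variable $\mb r_j$, so it goes through unchanged. For $\alpha \in R_i$ and $\beta \in R_j$ with $i \ne j$, the orthogonality of $R_i$ and $R_j$ gives $s_\alpha s_\beta = s_\beta s_\alpha$ in $W_{q\cE}^\circ$, while the two Demazure--Lusztig operators commute because ${}^{s_\beta}\alpha = \alpha$ and ${}^{s_\alpha}\beta = \beta$ eliminate every cross term that could mix the variables $\mb r_i$ and $\mb r_j$. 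Compatibility with $\cR_{q\cE}$ follows from the $W_{q\cE}$-invariance of $c$, the preservation of each $R_j$ by $\cR_{q\cE}$, and the cocycle identity for $\natural$. Faithfulness of the resulting representation together with the uniqueness argument above then yields the desired associative algebra structure.
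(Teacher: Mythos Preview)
Your proposal is correct and takes essentially the same approach as the paper: both defer to the single-parameter case \cite[Proposition 2.2]{AMS2} and note that the multi-parameter version goes through in the same way, the key point being that the orthogonal $W_{q\cE}$-stable decomposition \eqref{eq:0.2} makes the variables $\mb r_j$ non-interacting. The paper's proof is in fact just a one-line reference to \cite[Proposition 2.2]{AMS2}, so your write-up is a more detailed unpacking of exactly the argument the authors have in mind.
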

\begin{proof}
For $d = 1, G_1 = G^\circ_\der$ this is \cite[Proposition 2.2]{AMS2}. The general
case can be shown in the same way.
\end{proof}

We denote the algebra just constructed by $\mh H (\mf t, W_{q\cE}, c \vec{\mb r},
\natural)$. When $W_{q\cE}^\circ = W_{q\cE}$, there is no 2-cocycle, and write
simply $\mh H (\mf t, W_{q\cE}^\circ, c \vec{\mb r})$.
It is clear from the defining relations that
\begin{equation}\label{eq:0.3}
S(\mf t^*)^{W_{q\cE}} \otimes \C [\vec{\mb r}] = \mc O (\mf t \times \C^d )^{W_{q\cE}}
\text{ is a central subalgebra of } \mh H (\mf t, W_{q\cE}, c \vec{\mb r}, \natural) .
\end{equation}
By a central character of an $\mh H (\mf t, W_{q\cE}, c \vec{\mb r},\natural)$-module we 
shall mean an element of $\mf t / W_{q\cE} \times \C^d$ by which 
$\mc O (\mf t \times \C^d )^{W_{q\cE}}$ acts on that module.
For $\zeta \in \mf t^{W_{q\cE}} = Z (\mf g )^{\mf R_{q\cE}}$ and $(\pi,V) \in \mr{Mod}
(\mh H (\mf t, W_{q\cE}, c \vec{\mb r}, \natural) )$ we define $(\zeta \otimes \pi,V)
\in \mr{Mod} (\mh H (\mf t, W_{q\cE}, c \vec{\mb r}, \natural) )$ by
\[
(\zeta \otimes \pi )(f_1 f_2 N_w) = f_1 (\zeta) \pi (f_1 f_2 N_w)
\qquad f_1 \in S(\mf t^*), f_2 \in \C[\vec{\mb r}] , w \in W_{q\cE} .
\]
To the cuspidal quasi-support $[M,\cC_v^M,q\cE]_G$ we associated a particular 2-cocycle
\[
\natural_{q\cE} \colon (W_{q\cE} / W_{q\cE}^\circ)^2 \to \C^\times ,
\]
see \cite[Lemma 5.3]{AMS}. The pair $(M^\circ,v)$ also gives rise to a
$W_{q\cE}$-invariant function $c \colon R(G^\circ,T)_\red \to \Z$, see
\cite[Proposition 2.10]{Lus3} or \cite[(12)]{AMS2}. We denote the algebra
$\mh H (\mf t, W_{q\cE}, c \vec{\mb r}, \natural_{q\cE})$, with this particular $c$,
by $\mh H (G,M,q\cE,\vec{\mb r})$.

In \cite{AMS2} we only studied the case $d=1, R_1 = R(G^\circ,T)$, and we denoted
that algebra by $\mh H (G,M,q\cE)$. Fortunately the difference with
$\mh H (G,M,q\cE,\vec{\mb r})$ is so small that almost all properties of
$\mh H (G,M,q\cE)$ discussed in \cite{AMS2} remain valid for $\mh H (\mf t,
W_{q\cE}, c \vec{\mb r}, \natural_{q\cE})$. We will proceed to make this precise.

Write $v = v_1 + \cdots + v_d$ with $v_j \in \mf g_j = \Lie (G_j)$. Then
\[
\cC_v^{M^\circ} = \cC_{v_1}^{M_1} + \cdots + \cC_{v_d}^{M_d} \text{ , where }
M_j = M^\circ \cap G_j .
\]
The $M^\circ$-action on $(\cC_v^{M^\circ},\cE)$ can be inflated to
$\rZ(G^\circ)^\circ \times M_1 \times \cdots \times M_d$, and the pullback of $\cE$
becomes trivial on $\rZ(G^\circ)^\circ$ and decomposes uniquely as
\begin{equation}\label{eq:0.9}
m^*_{G^\circ} \cE = \cE_1 \otimes \cdots \otimes \cE_d
\end{equation}
with $\cE_j$ a $M_j$-equivariant cuspidal local system on $\cC_{v_j}^{M_j}$. From
Proposition \ref{prop:0.1} and \cite[Proposition 2.2]{AMS2} we see that
\begin{equation}\label{eq:0.4}
\mh H (G^\circ,M^\circ,\cE,\vec{\mb r}) = \mh H (G_1,M_1,\cE_1) \otimes \cdots
\otimes \mh H (G_d,M_d,\cE_d) .
\end{equation}
Furthermore the proof of \cite[Proposition 2.2]{AMS2} shows that
\begin{equation}\label{eq:0.5}
\mh H (G,M,q\cE,\vec{\mb r}) = \mh H (G^\circ,M^\circ,\cE,\vec{\mb r}) \rtimes
\C [ \cR_{q\cE} , \natural_{q\cE}] .
\end{equation}

\subsection{Standard modules} \

To parametrize the irreducible representations of the above algebras we use some
elements of the Lie algebras of the involved algebraic groups. Let $\sigma_0 \in
\mf g$ be semisimple and $y \in \rZ_{\mf g}(\sigma_0)$ be nilpotent. We decompose
them along \eqref{eq:0.6}:
\begin{align*}
& \sigma_0 = \sigma_z + \sigma_{0,1} + \cdots + \sigma_{0,d} \quad \text{with }
\sigma_{0,j} \in \mf g_j, \sigma_z \in \rZ( \mf g) , \\
& y = y_1 + \cdots + y_d \quad \text{with } y_j \in \mf g_j .
\end{align*}
Choose algebraic homomorphisms $\gamma_j \colon\SL_2 (\C) \to \rZ_{G_j}(\sigma_{0,j})$
with d$\gamma_j \matje{0}{1}{0}{0} = y_j$. Given $\vec{r} \in \C^d$, we write
$\sigma_j = \sigma_{0,j} + \textup{d}\gamma_j \matje{r_j}{0}{0}{-r_j}$ and
\begin{equation}\label{eq:0.16}
\begin{aligned}
& \textup{d} \vec{\gamma} \matje{\vec r}{0}{0}{-\vec r} = \textup{d}\gamma_1
\matje{r_1}{0}{0}{-r_1} + \cdots + \textup{d}\gamma_d \matje{r_d}{0}{0}{-r_d} , \\
& \sigma = \sigma_0 + \textup{d} \vec{\gamma} \matje{\vec r}{0}{0}{-\vec r} .
\end{aligned}
\end{equation}
Notice that $[\sigma,y_j] = [\sigma_j,y_j] = 2 r_j y_j$. Let us recall
the construction of the standard modules from \cite{Lus3,AMS2}.
We need the groups
\begin{align*}
& M_j (y_j) = \big\{ (g_j,\lambda_j) \in G_j \times \C^\times :
\Ad (g_j) y_j = \lambda_j^2 y_j \big\} , \\
& \vec{M^\circ}(y) =  \big\{ (g,\vec{\lambda}) \in G^\circ \times (\C^\times )^d :
\Ad (g) y_j = \lambda_j^2 y_j \: \forall j = 1,\ldots,d \big\} , \\
& \vec{M}(y) =  \big\{ (g,\vec{\lambda}) \in G^\circ \rN_G (q\cE) \times (\C^\times )^d :
\Ad (g) y_j = \lambda_j^2 y_j \: \forall j = 1,\ldots,d \big\} ,
\end{align*}
and the varieties
\begin{align*}
& \mc P_{y_j} = \big\{ g (P^\circ \cap G_j) \in G_j / (P^\circ \cap G_j) :
\Ad (g^{-1}) y_j \in \cC_{v_j}^{M_j} + \Lie (U \cap G_j) \big\} , \\
& \mc P_y^\circ = \big\{ g P^\circ \in G^\circ / P^\circ :
\Ad (g^{-1}) y \in \cC_v^{M^\circ} + \Lie (U) \big\} , \\
& \mc P_y = \big\{ g P \in G^\circ \rN_G (q\cE) / P :
\Ad (g^{-1}) y \in \cC_v^M + \Lie (U) \big\} .
\end{align*}
The local systems $\cE_j, \cE$ and $q\cE$ give rise to local systems
$\dot{\cE_j}, \dot{\cE}$ and $\dot{q\cE}$ on $\mc P_{y_j}, \mc P_y^\circ$
and $\mc P_y$, respectively. The groups $M_j (y_j), \vec{M^\circ}(y)$ and
$\vec{M}(y)$ act naturally on, respectively, $(\mc P_{y_j},\dot{\mc E_j}),
(\mc P_y^\circ, \dot{\mc E})$ and $(\mc P_y, \dot{q\cE})$.
With the method from \cite{Lus3} and \cite[\S 3.1]{AMS2} we can define
an action of $\mh H (G,M,q\cE,\vec{\mb r}) \times \vec{M}(y)$ on the
equivariant homology $H_*^{\vec{M}(y)^\circ} (\mc P_y,\dot{q\cE})$, and
similarly for $H_*^{\vec{M^\circ}(y)^\circ} (\mc P_y^\circ,\dot{\cE})$ and
$H_*^{M_j(y)^\circ} (\mc P_{y_j},\dot{\cE_j})$. As in \cite{Lus3} we build
\[
E^\circ_{y_j,\sigma_j,r_j} = \C_{\sigma_j,r_j} \underset{H^*_{M_j (y_j)^\circ}
(\{y_j\})}{\otimes} H_*^{M_j(y)^\circ} (\mc P_{y_j},\dot{\cE_j}) .
\]
Similarly we introduce
\begin{align*}
& E^\circ_{y,\sigma,\vec{r}} = \C_{\sigma,\vec{r}} \underset{H^*_{\vec{M^\circ}(y)^\circ}
(\{y\})}{\otimes} H_*^{\vec{M^\circ}(y)^\circ} (\mc P_y^\circ,\dot{\cE}) , \\
& E_{y,\sigma,\vec{r}} = \C_{\sigma,\vec{r}} \underset{H^*_{\vec{M} (y)^\circ}
(\{y\})}{\otimes} H_*^{\vec{M}(y)^\circ} (\mc P_y,\dot{q\cE}) .
\end{align*}
By \cite[Theorem 3.2 and Lemma 3.6]{AMS2} these are modules over, respectively,\\
$\mh H (G_j,M_j,\cE_j) \times \pi_0 (\rZ_{G_j}(\sigma_{0,j},y_j))$,
$\mh H (G^\circ,M^\circ,\cE,\vec{\mb r}) \times \pi_0 (\rZ_{G^\circ}(\sigma_0,y))$ and\\
$\mh H (G,M,q\cE,\vec{\mb r}) \times \pi_0 (\rZ_{G^\circ \rN_G (q\cE)} (\sigma_0 ,y))$.
This last action is the reason to use $G^\circ \rN_G (q\cE)$ instead of $G$ in the
definition of $\mc P_y$.

In terms of \eqref{eq:0.5}, there is a natural module isomorphism
\begin{equation}\label{eq:0.8}
E_{y,\sigma,\vec{r}} \cong \ind_{\mh H (G^\circ,M^\circ, \cE, \vec{\mb r})}^{
\mh H (G,M,q\cE,\vec{\mb r})} E^\circ_{y,\sigma,\vec{r}} .
\end{equation}
It can be proven in the same way as the analogous statement with only
one variable $\mb r$, which is \cite[Lemma 3.3]{AMS2}.

\begin{lem}\label{lem:0.2}
With the identifications \eqref{eq:0.4} there is a natural isomorphism of\\
$\mh H (G^\circ,M^\circ, \cE, \vec{\mb r})$-modules
\[
E^\circ_{y,\sigma,\vec{r}} \cong  \C_{\sigma_z} \otimes E^\circ_{y_1,\sigma_1,r_1}
\otimes \cdots \otimes E^\circ_{y_d,\sigma_d,r_d} ,
\]
which is equivariant for the actions of the appropriate subquotients of $\vec{M^\circ}(y)$.
\end{lem}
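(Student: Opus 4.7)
The plan is to trace the almost-direct product decomposition \eqref{eq:0.1} through every geometric ingredient entering the definition of $E^\circ_{y,\sigma,\vec{r}}$, and then invoke a Künneth formula. Since $m_{G^\circ}$ has finite central kernel, and rational equivariant (co)homology is insensitive to finite central covers, one may freely replace $G^\circ$ by $Z(G^\circ)^\circ \times G_1 \times \cdots \times G_d$ throughout.

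First I would decompose the parabolic. Each $G_j$ is normal in $G^\circ$, so $M^\circ = Z(G^\circ)^\circ \cdot M_1 \cdots M_d$ with $M_j = M^\circ \cap G_j$, and $U = \prod_j (U \cap G_j)$. Hence
\[
G^\circ / P^\circ \;\cong\; \prod_{j=1}^d G_j / (P^\circ \cap G_j).
\]
Using the $W_{q\cE}$-stable orthogonal decomposition \eqref{eq:0.6}, and the fact that each $G_j$ acts only on $\mf g_j$ while $Z(G^\circ)^\circ$ acts trivially on $[\mf g,\mf g]$, the condition $\Ad(g^{-1}) y \in \cC_v^{M^\circ} + \Lie(U)$ decouples into $d$ independent conditions $\Ad(g_j^{-1}) y_j \in \cC_{v_j}^{M_j} + \Lie(U \cap G_j)$. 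Thus $\mc P_y^\circ \cong \prod_j \mc P_{y_j}$ as algebraic varieties, and by \eqref{eq:0.9} the pullback of $\dot{\cE}$ is naturally $\dot{\cE_1} \boxtimes \cdots \boxtimes \dot{\cE_d}$.

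Next I would decompose the equivariance group. The defining conditions of $\vec{M^\circ}(y)$ decouple across the factors, and $Z(G^\circ)^\circ$ acts trivially on every $y_j$, so up to finite central cover
\[
\vec{M^\circ}(y)^\circ \;\cong\; Z(G^\circ)^\circ \times \prod_{j=1}^d M_j(y_j)^\circ,
\]
acting on $\prod_j \mc P_{y_j}$ as the product of its factor-wise actions. The Künneth formula for equivariant Borel--Moore homology then yields
\[
H_*^{\vec{M^\circ}(y)^\circ}\!\bigl(\mc P_y^\circ,\dot{\cE}\bigr) \;\cong\; H_*^{Z(G^\circ)^\circ}(\mathrm{pt}) \otimes \bigotimes_{j=1}^d H_*^{M_j(y_j)^\circ}\!\bigl(\mc P_{y_j}, \dot{\cE_j}\bigr),
\]
and analogously for $H^*_{\vec{M^\circ}(y)^\circ}(\{y\})$. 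The character $\C_{\sigma,\vec{r}}$ factors as $\C_{\sigma_z} \otimes \bigotimes_j \C_{\sigma_j,r_j}$ by the decomposition of $\sigma$ in \eqref{eq:0.16}, and since $\C_{\sigma_z} \otimes_{H^*_{Z(G^\circ)^\circ}(\mathrm{pt})} H_*^{Z(G^\circ)^\circ}(\mathrm{pt}) = \C_{\sigma_z}$, the base change performed factor by factor produces the desired isomorphism of vector spaces.

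The main obstacle is verifying that the module structure matches \eqref{eq:0.4}: one must check that the convolution action of $\mh H(G^\circ,M^\circ,\cE,\vec{\mb r})$ of \cite{Lus3,AMS2} respects the product decomposition. This I would do by tracing through the geometric construction of the generators, using that the generators of each factor $\mh H(G_j,M_j,\cE_j)$ are built from correspondences supported in $G_j / (P^\circ \cap G_j)$, while $S(\mf t^*) \otimes \C[\vec{\mb r}]$ acts through the Künneth-compatible factorization of the equivariant cohomology of the point. Equivariance under the appropriate subquotients of $\vec{M^\circ}(y)$ is then automatic from the naturality of equivariant (co)homology with respect to the projections onto each factor.
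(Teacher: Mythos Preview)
Your proposal is correct and follows essentially the same route as the paper: decompose $\mc P_y^\circ$ and $\dot{\cE}$ as products along \eqref{eq:0.1}, identify $\vec{M^\circ}(y)^\circ$ with the product of the $M_j(y_j)^\circ$ and $Z(G^\circ)^\circ$ up to a finite central cover, use invariance of equivariant (co)homology under finite central extensions to factorize $H_*^{\vec{M^\circ}(y)^\circ}(\mc P_y^\circ,\dot{\cE})$, and then tensor with $\C_{\sigma,\vec{r}}$ factor-wise. The paper spells out the finite-central-extension invariance in more detail (tracing through the definition in \cite[\S 1]{Lus3}) rather than asserting it, and conversely it leaves the $\mh H$-module compatibility implicit in naturality rather than discussing the convolution generators as you do; but these are differences of emphasis, not of method.
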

\begin{proof}
From \eqref{eq:0.1} and $Z (G^\circ) Z (G_j) \subset P^\circ$ we get natural isomorphisms
\begin{equation}\label{eq:0.10}
\mc P_{y_1} \times \cdots \times \mc P_{y_d} \to \mc P_y^\circ .
\end{equation}
Looking at \eqref{eq:0.9} and the construction of $\dot{\cE}$ in \cite[\S 3.4]{Lus3},
we deduce that
\begin{equation}\label{eq:0.12}
\dot{\cE} \cong \dot{\cE_1} \otimes \cdots \otimes \dot{\cE_d}
\text{ as sheaves on } \mc P_y^\circ .
\end{equation}
From \eqref{eq:0.1} we also get a central extension
\begin{equation}\label{eq:0.11}
1 \to  \ker m_{G^\circ} \to \rZ(G^\circ)^\circ \times M_1 (y_1) \times \cdots
\times M_d (y_d)  \to \vec{M^\circ}(y) \to 1.
\end{equation}
Here $\ker m_{G^\circ}$ refers to the kernel of \eqref{eq:0.1}, a finite central
subgroup which acts trivially on the sheaf $\cE_1 \otimes \cdots \otimes \cE_d \cong
m_{G^\circ}^* \cE$. Restricting to connected components, we obtain a
central extension of $\vec{M^\circ}(y)^\circ$ by
\[
\tilde M := \rZ(G^\circ)^\circ \times M_1 (y_1)^\circ \times \cdots
\times M_d (y_d)^\circ
\]
In fact, equivariant (co)homology is inert under finite central extensions, for all
groups and all varieties. We sketch how this can be deduced from \cite[\S 1]{Lus3}.
By definition
\[
H^*_{\vec{M^\circ}(y)^\circ} (\mc P_y^\circ,\dot{\cE}) = H^* \big(\vec{M^\circ}(y)^\circ
\backslash (\Gamma \times \mc P_y^\circ), {}_\Gamma \dot{\cE} \big)
\]
for a suitable (in particular free) $\vec{M^\circ}(y)^\circ$-variety $\Gamma$ and
a local system derived from $\dot{\cE}$.
On the right hand side we can replace $\vec{M^\circ}(y)^\circ$ by $\tilde M$ without
changing anything. If $\tilde \Gamma$ is a suitable variety for $\tilde M$, then
$\tilde \Gamma \times \Gamma$ is also one. (The freeness is preserved because
\eqref{eq:0.11} is an extension of finite index.) The argument in \cite[p. 149]{Lus3}
shows that
\[
H^* \big(\tilde{M} \backslash (\Gamma \times \mc P_y^\circ), {}_\Gamma \dot{\cE} \big)
\cong H^* \big(\tilde{M} \backslash (\tilde \Gamma \times \Gamma \times \mc P_y^\circ),
{}_{\tilde \Gamma \times \Gamma} \dot{\cE} \big) = H^*_{\tilde{M}} (\mc P_y^\circ,\dot{\cE}) .
\]
In a similar way, using \cite[Lemma 1.2]{Lus3}, one can prove that
\begin{equation}\label{eq:0.13}
H_*^{\vec{M^\circ}(y)^\circ} (\mc P_y^\circ,\dot{\cE}) \cong
H_*^{\tilde M} (\mc P_y^\circ,\dot{\cE}) .
\end{equation}
The upshot of \eqref{eq:0.10}, \eqref{eq:0.12} and \eqref{eq:0.13} is that we can factorize
the entire setting along \eqref{eq:0.4}, which gives
\begin{equation}\label{eq:0.7}
H_*^{M_1 (y)^\circ} (\mc P_{y_1},\dot{\cE_1}) \otimes \cdots \otimes
H_*^{M_d (y)^\circ} (\mc P_{y_d},\dot{\cE_d}) \cong
H_*^{\vec{M^\circ}(y)^\circ} (\mc P_y^\circ,\dot{\cE}) .
\end{equation}
The equivariant cohomology of a point with respect to a connected group depends
only on the Lie algebra \cite[\S 1.11]{Lus3}, so \eqref{eq:0.11}
implies a natural isomorphism
\[
H^*_{Z (G^\circ)^\circ} (\{1\}) \times H^*_{M_1 (y_1)^\circ} (\{y_1\}) \times \cdots
\times H^*_{M_d (y_d)^\circ} (\{y_d\}) \cong
H^*_{\vec{M^\circ}(y)^\circ} (\{y\}) .
\]
Thus we can tensor both sides of \eqref{eq:0.7} with
$\C_{\sigma,\vec{r}}$ and preserve the isomorphism.
\end{proof}

Given $\rho_j \in \Irr \big( \pi_0 (\rZ_{G_j}(\sigma_{0,j},y_j)) \big)$, we can form
the standard $\mh H (G_j,M_j,\cE_j)$-module
\[
E^\circ_{y_j,\sigma_j,r_j,\rho_j} :=
\Hom_{\pi_0 (\rZ_{G_j}(\sigma_{0,j},y_j))} (\rho_j, E^\circ_{y_j,\sigma_j,r_j}) .
\]
Similarly $\rho^\circ \in \Irr \big( \pi_0 (\rZ_{G^\circ}(\sigma_0,y)) \big)$ and
$\rho \in \Irr \big( \pi_0 (\rZ_{G^\circ \rN_G (q\cE)}(\sigma_0,y)) \big)$ give rise to
\begin{equation}\label{eq:0.15}
\begin{aligned}
& E^\circ_{y,\sigma,\vec{r},\rho^\circ} :=
\Hom_{\pi_0 (\rZ_{G^\circ}(\sigma_0 ,y))} (\rho^\circ, E^\circ_{y,\sigma,\vec{r}}) , \\
& E_{y,\sigma,\vec{r},\rho} := \Hom_{\pi_0 (\rZ_{G^\circ \rN_G (q\cE)}(\sigma_0 ,y))}
(\rho, E_{y,\sigma,\vec{r}}) .
\end{aligned}
\end{equation}
We call these standard modules for respectively
$\mh H (G^\circ,M^\circ,\cE,\vec{\mb r})$ and $\mh H (G,M,q\cE,\vec{\mb r})$.

The canonical map \eqref{eq:0.1} induces a surjection
\begin{equation}\label{eq:0.14}
\pi_0 (\rZ_{G_1} (\sigma_{0,1}, y_1)) \times \cdots \times
\pi_0 (\rZ_{G_d} (\sigma_{0,d}, y_d)) \to \pi_0 (\rZ_{G^\circ}(\sigma_0,y)) .
\end{equation}

\begin{lem}\label{lem:0.3}
Let $\rho^\circ \in \Irr \big( \pi_0 (\rZ_{G^\circ}(\sigma_0,y)) \big)$ and let
$\bigotimes_{j=1}^d \rho_j$ be its inflation to \\$\prod_{j=1}^d \pi_0 (\rZ_{G_j}
(\sigma_{0,j}, y_j))$ via \eqref{eq:0.14}. There is a natural isomorphism of
$\mh H (G^\circ,M^\circ,\cE,\vec{\mb r})$-modules
\[
E^\circ_{y,\sigma,\vec{r},\rho^\circ} \cong  \C_{\sigma_z} \otimes E^\circ_{y_1,
\sigma_1,r_1,\rho_1} \otimes \cdots \otimes E^\circ_{y_d,\sigma_d,r_d,\rho_d} .
\]
Every $\bigotimes_{j=1}^d \rho_j \in \Irr \big( \prod_{j=1}^d \pi_0 (\rZ_{G_j}
(\sigma_{0,j}, y_j)) \big)$ for which $\bigotimes_{j=1}^d
E^\circ_{y_j,\sigma_j,r_j,\rho_j}$ is nonzero comes from
$\pi_0 (\rZ_{G^\circ} (\sigma_0,y))$ via \eqref{eq:0.14}.
\end{lem}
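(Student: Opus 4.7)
The plan is to reduce to Lemma \ref{lem:0.2}, which has already established the corresponding factorization at the level of the homology modules $E^\circ_{y,\sigma,\vec{r}}$, and then to pass to isotypic components under the relevant component groups. First, I would invoke the equivariance statement from Lemma \ref{lem:0.2}: the isomorphism
\[
E^\circ_{y,\sigma,\vec{r}} \cong \C_{\sigma_z} \otimes E^\circ_{y_1,\sigma_1,r_1} \otimes \cdots \otimes E^\circ_{y_d,\sigma_d,r_d}
\]
intertwines the action of $\pi_0(Z_{G^\circ}(\sigma_0,y))$ on the left with the action of (a relevant subquotient of) $\prod_{j=1}^d \pi_0(Z_{G_j}(\sigma_{0,j},y_j))$ on the right, where the compatibility of the two actions is precisely that given by the surjection \eqref{eq:0.14}.

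Next, since $\rho^\circ$ inflates to $\bigotimes_j \rho_j$ along \eqref{eq:0.14}, taking $\Hom_{\pi_0(Z_{G^\circ}(\sigma_0,y))}(\rho^\circ,-)$ on both sides of the isomorphism of Lemma \ref{lem:0.2} yields
\[
E^\circ_{y,\sigma,\vec{r},\rho^\circ} \cong \Hom_{\prod_j \pi_0(Z_{G_j}(\sigma_{0,j},y_j))}\Bigl(\textstyle\bigotimes_j \rho_j,\ \C_{\sigma_z} \otimes \bigotimes_j E^\circ_{y_j,\sigma_j,r_j} \Bigr) .
\]
Because each factor group $\pi_0(Z_{G_j}(\sigma_{0,j},y_j))$ acts only on the $j$-th tensor factor, this $\Hom$-space splits as a tensor product, giving
\[
\C_{\sigma_z} \otimes \textstyle\bigotimes_j \Hom_{\pi_0(Z_{G_j}(\sigma_{0,j},y_j))}(\rho_j, E^\circ_{y_j,\sigma_j,r_j}) = \C_{\sigma_z} \otimes E^\circ_{y_1,\sigma_1,r_1,\rho_1} \otimes \cdots \otimes E^\circ_{y_d,\sigma_d,r_d,\rho_d}.
\]
By \eqref{eq:0.4} this is already an isomorphism of $\mh H(G^\circ,M^\circ,\cE,\vec{\mb r})$-modules, which handles the first claim.

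For the second claim, the point is to show that when the product of individual Hom spaces is nonzero, the kernel of \eqref{eq:0.14} necessarily acts trivially on $\bigotimes_j \rho_j$. Let $K$ denote this kernel; it is a finite central subgroup of $\prod_j \pi_0(Z_{G_j}(\sigma_{0,j},y_j))$, so by Schur's lemma it acts on the irreducible representation $\bigotimes_j \rho_j$ via a single character $\chi$. On the other hand, since $K$ lies in the kernel of the action map to $\pi_0(Z_{G^\circ}(\sigma_0,y))$, it acts trivially on $E^\circ_{y,\sigma,\vec{r}}$ and hence, via the equivariant isomorphism of Lemma \ref{lem:0.2}, trivially on $\C_{\sigma_z} \otimes \bigotimes_j E^\circ_{y_j,\sigma_j,r_j}$. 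If $\bigotimes_j E^\circ_{y_j,\sigma_j,r_j,\rho_j}$ is nonzero, then $\bigotimes_j \rho_j$ embeds into $\bigotimes_j E^\circ_{y_j,\sigma_j,r_j}$ as a subrepresentation, forcing $\chi$ to be trivial. Therefore $\bigotimes_j \rho_j$ descends along \eqref{eq:0.14} to a representation of $\pi_0(Z_{G^\circ}(\sigma_0,y))$.

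The main obstacle I anticipate is bookkeeping the equivariance at the level of component groups: one must verify that the isomorphism of Lemma \ref{lem:0.2}, originally constructed at the level of equivariant homology through the identifications \eqref{eq:0.10}--\eqref{eq:0.13}, descends correctly to an equivariant isomorphism under the two relevant component groups linked by the surjection \eqref{eq:0.14}. Once this equivariance is pinned down, the rest of the argument is formal.
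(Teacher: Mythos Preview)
Your argument for the first claim is essentially the paper's: both invoke the naturality and equivariance in Lemma~\ref{lem:0.2} and then pass to $\Hom$-spaces on both sides.

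For the second claim you take a genuinely different route. The paper argues via the generalized Springer correspondence: if $\rho_j$ appears in $E^\circ_{y_j,\sigma_j,r_j}$ then by \cite[Proposition~3.7]{AMS2} its cuspidal support is $(M_j,\cC_{v_j}^{M_j},\cE_j)$, whence by \cite[Theorem~6.5.a]{Lus2} $\rho_j$ has the same $Z(G_j)$-character as $\cE_j$; since $m_{G^\circ}^*\cE$ itself factors through \eqref{eq:0.1}, so does the central character of $\bigotimes_j\rho_j$, and the descent follows. Your argument is more self-contained: you observe that the kernel $K$ of \eqref{eq:0.14} is central and acts trivially on $\bigotimes_j E^\circ_{y_j,\sigma_j,r_j}$ via the equivariant isomorphism of Lemma~\ref{lem:0.2}, so it must act trivially on any irreducible constituent. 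This avoids the external input from \cite{AMS2,Lus2}, at the cost of needing to justify that $K$ is indeed central; that holds because $K$ is precisely the image of the finite central group $\ker m_{G^\circ}$ in $\prod_j \pi_0(Z_{G_j}(\sigma_{0,j},y_j))$, a point you assert but should make explicit. Both approaches ultimately identify the obstruction to descent with the $\ker m_{G^\circ}$-character, but the paper reads it off from the cuspidal local system while you read it off from the module itself.
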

\begin{proof}
The module isomorphism follows from the naturality and the equivariance in
Lemma \ref{lem:0.2}.

Suppose that $\bigotimes_{j=1}^d \rho_j \in \Irr \big( \prod_{j=1}^d \pi_0
(\rZ_{G_j} (\sigma_{0,j}, y_j)) \big)$ appears in $\bigotimes_{j=1}^d
E^\circ_{y_j,\sigma_j,r_j}$. By \cite[Proposition 3.7]{AMS2} the cuspidal support
$\Psi_{\rZ_G (\sigma_{0,j})}(y_j,\rho_j)$ is $G_j$-conjugate to
$(M_j,\cC_{y_j}^{M_j},\cE_j)$. In particular $\rho_j$ has the same $\rZ(G_j)$-character
as $\cE_j$, see \cite[Theorem 6.5.a]{Lus2}. Hence $\otimes_j \rho_j$ has the same
central character as $m_{G_0}^* \cE$. That central character factors through the
multiplication map \eqref{eq:0.1} whose kernel is central, so $\otimes_j \rho_j$
also factors through \eqref{eq:0.1}. That is, the map \eqref{eq:0.14} induces
a bijection between the relevant irreducible representations on both sides.
\end{proof}

For some choices of $\rho$ the standard module $E_{y,\sigma,\vec{r},\rho}$ is zero.
To avoid that, we consider triples $(\sigma_0,y,\rho)$ with:
\begin{itemize}
\item $\sigma_0 \in \mf g$ is semisimple,
\item $y \in \rZ_{\mf g}(\sigma_0)$ is nilpotent,
\item $\rho \in \Irr \big( \pi_0 (\rZ_G (\sigma_0,y)) \big)$ is
such that the cuspidal quasi-support $q\Psi_{\rZ_G (\sigma_0)}(y,\rho)$
from \cite[\S 5]{AMS} is $G$-conjugate to $(M,\cC_v^M,q\cE)$.
\end{itemize}
Given in addition $\vec{r} \in \C^d$, we construct $\sigma = \sigma_0 +
\textup{d} \vec{\gamma} \matje{\vec r}{0}{0}{-\vec r} \in \mf g$ as in
\eqref{eq:0.16}. Although this depends on the choice of $\vec{\gamma}$, the
conjugacy class of $\sigma$ does not.

By definition
\[
\mh H (G^\circ \rN_G (q\cE),M,q\cE,\vec{\mb r}) = \mh H (G,M,q\cE,\vec{\mb r}),
\]
but of course $\pi_0 (\rZ_{G^\circ \rN_G (q\cE)}(\sigma_0,y))$ can be a proper
subgroup of $\pi_0 (\rZ_G (\sigma_0,y))$. As shown in the proof of
\cite[Lemma 3.21]{AMS2}, the functor $\ind_{\pi_0 (\rZ_{G^\circ \rN_G (q\cE)}
(\sigma_0,y))}^{\pi_0 (\rZ_G (\sigma_0,y))}$ provides a bijection between the
$\tilde \rho$ in the triples for $G^\circ \rN_G (q\cE)$ and the $\rho$ in the
triples for $G$. For $\rho = \ind_{\pi_0 (\rZ_{G^\circ \rN_G (q\cE)}(\sigma_0,y))
}^{\pi_0 (\rZ_G (\sigma_0,y))} \tilde{\rho}$ we define, in terms of \eqref{eq:0.15},
\begin{equation}\label{eq:0.17}
E_{y,\sigma,\vec{r},\rho} = E_{y,\sigma,\vec{r},\tilde{\rho}} .
\end{equation}
We would like to exhibit the central characters of these standard 
$\mh H (G,M,q\cE,\vec{\mb r})$-modules. It has turned out that the treatment of this aspect 
in \cite{AMS2} was flawed, we correct that here. We fix a homomorphism of algebraic groups
\[
\gamma_v \colon \SL_2 (\C) \to M \quad \text{with} \quad \textup{d} \gamma_v \matje{0}{1}{0}{0} = v.
\]
We write
\begin{equation}\label{eq:0.24}
\textup{d} \gamma_v \matje{1}{0}{0}{-1} = \sigma_v = \sigma_{v,1} + \cdots + \sigma_{v,d}
\quad \text{where} \quad \sigma_{v,j} \in \mr{Lie}(M \cap G_j) . 
\end{equation}
For $\vec{r} \in \C^d$ we put
\[
\vec{r} \sigma_v = r_1 \sigma_{v,1} + \cdots + r_d \sigma_{v,d} \in \mf m. 
\]
We record the linear bijection
\[
\begin{array}{cccc}
\Sigma_v \colon& \mf t \oplus \C^d & \to & \mf t \oplus \C^d (\sigma_v,1) \\
& (\sigma_0, \vec{r}) & \mapsto & (\sigma_0 + \vec{r} \sigma_v , \vec{r} )
\end{array} .
\]
Here the target is a linear subspace of $\mf m \oplus \C^d$ and the inverse map is
\[
\Sigma_v^{-1} \colon(\sigma,\vec{r}) \mapsto (\sigma - \vec{r} \sigma_v, \vec{r}) .
\]
The next result is a correction of \cite[Proposition 3.5]{AMS2}, which was based on a wrong
interpretation of \cite[\S 8]{Lus5}. Our improvement consists mainly of adding
$\Sigma_v^{\pm 1}$ at the right places.

\begin{prop}\label{prop:0.7}
Let $(y,\sigma,\vec{r})$ be as in \eqref{eq:0.16} and assume that $\mc P_y$ is nonempty.
\enuma{
\item $(\mr{Ad}(\rN_G (P,q\cE) G^\circ) \sigma - \vec{r} \sigma_v) \cap \mf t$ is a single 
$W_{q\cE}$-orbit in $\mf t$.
\item The $\mh H (G,M,q\cE,\vec{\mb r})$-module $E_{y,\sigma,\vec{r}}$ admits the central
character\\ $((\mr{Ad}(\rN_G (P,q\cE) G^\circ) \sigma - \vec{r} \sigma_v) \cap \mf t, \vec{r})$.
\item The pair $(y,\sigma)$ is $G^\circ$-conjugate to one with $\sigma_0$ and 
$\vec{r} \sigma_v + \textup{d} \vec{\gamma} \matje{-\vec{r}}{0}{0}{\vec{r}}$ in $\mf t$.
\item Suppose $(y,\sigma)$ has the properties as in (c). Then $\sigma_0, \sigma_v$ and 
$\textup{d} \vec{\gamma} \matje{1}{0}{0}{-1}$ commute, and 
$\sigma_v + \textup{d} \vec{\gamma} \matje{-1}{0}{0}{1} \in \mf t_\R$.
\item Suppose $(y,\sigma)$ has the properties as in (c). Then the central character of
$E_{y,\sigma,\vec{r},\rho}$ can be expressed as $W_{q\cE} \big( \sigma_0 \pm (\vec{r} \sigma_v 
+ \textup{d}\vec{\gamma} \matje{-\vec{r}}{0}{0}{\vec{r}}), \vec{r}\big)$.
}
\end{prop}
\begin{proof}
(a) By \cite[Theorem 8.11]{Lus5}, $\mh H (G_j,M_j,\cE_j,\mb r_j)$ is canonically isomorphic to
the endomorphism algebra of a certain perverse sheaf $K_j^*$, in the 
$G_j \times \C^\times$-equivariant bounded derived category of constructible sheaves on
$\mf g_j$. According to \cite[\S 8.13.a]{Lus5}, there exists a canonical surjection 
\begin{equation}\label{eq:0.21}
H_{G_j \times \C^\times}^* (\mr{point}) \cong \mc O (\mf g_j \oplus \C)^{G_j \times \C^\times}
= \mc O (\mf g_j)^{G_j} \otimes \C [\mb r_j] \to \rZ( \mr{End} (K_j^*) ).
\end{equation}
By \cite[Lemma 2.3]{AMS2} the right hand side is
\begin{equation}\label{eq:0.22}
\rZ( \mr{End} (K_j^*)) \cong \rZ( \mh H (G_j,M_j,\cE_j,\mb r_j)) \cong 
\mc O (\mf t \cap \mf g_j)^{W_{\cE_j}} \otimes \C [\mb r_j] .
\end{equation}
By \cite[\S 8.13.b]{Lus5}, the composition of \eqref{eq:0.21} and \eqref{eq:0.22} corresponds
to an injection like $\Sigma_v$, namely 
\begin{equation}\label{eq:0.23}
\begin{array}{ccc}
(\mf t \cap \mf g_j) / W_{\cE_j} \oplus \C & \to & 
\Irr \big( \mc O (\mf g_j \oplus \C)^{G_j \times \C^\times} \big) \\
(\sigma_{0,j}, r_j) & \mapsto & (\sigma_{0,j} + r_j \sigma_{v,j}, r_j)
\end{array} ,
\end{equation}
where the right hand side is the variety of semisimple adjoint orbits in $\mf g_j \oplus \C$.
Hence 
\[
(\mr{Ad}(G_j) \sigma_j - r_j \sigma_{v,j}) \cap \mf t \cap \mf g_j = 
\mr{Ad}(G_j) \sigma_{0,j} \cap \mf t \cap \mf g_j
\]
is either empty or a single $W_{\cE_j}$-orbit. We will see in the proof of part (b) that it is
nonempty. Combining these statements for all $j = 1,\ldots, d$, we find that 
$(\mr{Ad}(G^\circ) \sigma - \vec{r} \sigma_v) \cap \mf t$ is a single $W_\cE$-orbit 
$W_\cE \sigma' \subset \mf t$. As $M$ stabilizes $\mf m$ and centralizes $\mf t$:
\begin{equation}\label{eq:0.26}
\begin{aligned} \mr{Ad}(G^\circ M) \sigma \cap (\mf t + \vec{r} \sigma_v) & = 
\mr{Ad}(M) (W_\cE \sigma'  + \vec{r} \sigma_v)  \cap (\mf t + \vec{r} \sigma_v) \\  
& = (W_\cE \sigma' + \mr{Ad}(M) \vec{r} \sigma_v ) \cap (\mf t + \vec{r} \sigma_v) .
\end{aligned}
\end{equation}
Here $\mr{Ad}(M) (\vec{r} \sigma_v)$ lies in the derived subalgebra of $\mf m$, so the right hand 
side of \eqref{eq:0.26} equals $W_\cE \sigma' + \vec{r} \sigma_v$. In other words,
\[
(\mr{Ad}(G^\circ M) \sigma - \vec{r} \sigma_v) \cap \mf t = W_\cE \sigma' .
\]
As $\rN_G (P,q\cE) G^\circ / G^\circ M \cong W_{q\cE} / W_\cE$, we can pass from 
Ad$(G^\circ M)$-orbits to\\ Ad$(\rN_G (P,q\cE) G^\circ)$-orbits in the required way.\\
(b) The assumption $\mc P_y \neq \emptyset$ implies that 
$H_*^{\vec{M} (y)^\circ}( \mc P_y, \dot{q\cE})$ is nonzero. By \cite[Proposition 8.6.c]{Lus3}
and because the semisimple adjoint orbits in Lie$(\vec{M} (y)^\circ)$ form an irreducible
variety, $E^\circ_{y,\sigma,\vec{r}}$ is nonzero for all eligible $(\sigma,\vec{r})/ \sim$.

The adjoint action of $\mc O (\mf t \cap \mf g_j)^{W_{\cE_j}} \otimes \C [\mb r_j]$ on
$E_{y_j,\sigma_j,r_j}$ can be realized as 
\[
Z (\mh H (G_j,M_j,\cE_j,\mb r_j)) \leftarrow H^*_{G_j \times \C^\times}(\mr{point}) \to
H^*_{M_j (y_j)^\circ}( y_j) \to H^*_{M_j (y_j)^\circ} (\mc P_{y_j}) 
\]
and then the product in equivariant homology. By construction $H^*_{M_j (y_j)^\circ}( y_j)$
acts on $E_{y_j,\sigma_j,r_j}$ via the character $(\sigma_j,r_j) / \sim$. Hence 
$H^*_{G_j \times \C^\times}(\mr{point})$ acts via the character 
$\mr{Ad}(G_j \times \C^\times)(\sigma_j,r_j)$. In view of \eqref{eq:0.21}--\eqref{eq:0.23},
$\rZ(\mh H (G_j,M_j,\cE_j,\mb r_j))$ acts via 
\[
( (\mr{Ad}(G_j) \sigma_j - r_j \sigma_{v,j}) \cap \mf g_j \cap \mf t, r_j) .
\]
For all $j = 1,\ldots,d$ together, this shows that $\rZ( \mh H (G^\circ,M^\circ,\cE,\vec{\mb r})$
acts on $E^\circ_{y,\sigma,\vec{r}}$ as
$( (\mr{Ad}(G^\circ) \sigma - \vec{r} \sigma_v) \cap \mf t, \vec{r})$.
Now we use that $\rN_G (P,q\cE) G^\circ / G^\circ \cong W_{q\cE} / W_\cE$ and 
\[
\rZ(\mh H (G,M,q\cE,\mb r)) = \rZ( \mh H (G^\circ,M^\circ,\cE,\vec{\mb r}))^{W_{q\cE} / W_\cE}, 
\]
and we conclude with \eqref{eq:0.8}.\\
(c) By part (b) with $r = 0$ we may assume that $\sigma_0 \in \mf t$. Then $\exp (y)$ is 
contained in the reductive group $\rZ_G (\sigma_0)^\circ$, so we can arrange that the image of
$\vec{\gamma}$ lies in there. Applying part (b) to this group, we find $g \in \rZ_G (\sigma_0)^\circ$
such that 
\[
\mr{Ad}(g) \sigma = \sigma_0 + \mr{Ad}(g) \textup{d} \vec{\gamma} \matje{\vec{r}}{0}{0}{-\vec{r}}
\quad \text{lies in} \quad \mf t + \vec{r} \sigma_v .
\]
Then $\mr{Ad}(g) \textup{d} \vec{\gamma} \matje{\vec{r}}{0}{0}{-\vec{r}} - \vec{r} \sigma_v 
\in \mf t$, so $(\mr{Ad}(g) y, \mr{Ad}(g) \sigma)$ has the required properties. \\
(d) The assumption and $\sigma_v \in \mf m$ imply that $\textup{d}\gamma \matje{1}{0}{0}{-1}
\in \mf m$. As $\sigma_0 \in \mf t = \rZ(\mf m)$, it commutes with both $\sigma_v$ and  
$\textup{d}\gamma \matje{1}{0}{0}{-1}$. The latter two differ by an element of $\mf t = \rZ(\mf m)$, 
so they commute as well. It follows that 
\begin{equation}\label{eq:0.25}
\chi_{y,v} \colon z \mapsto \gamma \matje{z}{0}{0}{z^{-1}} \gamma_v \matje{z^{-1}}{0}{0}{z}
\end{equation}
is an algebraic cocharacter of $T$. By definition of $\mf t_\R$, the derivative 
\[
\textup{d} \chi_{y,v} \colon r \mapsto \textup{d}\gamma \matje{1}{0}{0}{-1} - r \sigma_v
\]
evaluates to an element of $X_* (T) \otimes_\Z \R = \mf t_\R$ for every $r \in \R$.\\
(e) As in the proof of part (c), we may assume that
$\mr{im}(\vec{\gamma}) \subset \rZ_G (\sigma_0)^\circ$. Put $s_y = \gamma \matje{0}{1}{-1}{0}$
and consider the parameter $(\mr{Ad}(s_y) \sigma, \mr{Ad}(s_y) y, \vec{r})$. We have
\[
\mr{Ad}(s_y) \sigma + \vec{r} s_v = \sigma_0 + \textup{d} \vec{\gamma} \matje{-\vec{r}}{0}{0}{\vec{r}}
- \vec{r} (-s_v) \in \mf t.
\]
Here $-s_v$ is the semisimple element in the $\mf{sl}_2$-triple $\textup{d}\gamma_v \matje{0}{0}{-1}{0},
-s_v, -v$, which is conjugate to $v, s_v, \textup{d}\gamma_v \matje{0}{0}{1}{0}$ by
$\gamma_v \matje{0}{1}{-1}{0} \in M$. Thus part (c) says that 
\[
W_{q\cE} \big( \sigma_0 + \vec{r} \sigma_v + \textup{d}\vec{\gamma} \matje{-\vec{r}}{0}{0}{\vec{r}}, 
\vec{r}\big)
\] 
is also the central character of $E_{y,\sigma,\vec{r},\rho}$. 
\end{proof}

\subsection{Irreducible modules} \

The standard modueles and the irreducible modules which are annihilated by $\vec{\mb r}$ must be 
treated somewhat differently from the others. We need an improvement on the analysis in 
\cite[Lemma 3.9 and Proposition 3.10]{AMS2}.

\begin{thm}\label{thm:0.8}
Let $E^\circ_{y,\sigma_0,0,\rho^\circ}$ be a nonzero standard 
$\mh H (G^\circ,M^\circ,\cE,\vec{\mb r})$-module on which each $\mb r_j$ acts as zero.
\enuma{
\item $E^\circ_{y,\sigma_0,0,\rho^\circ} = \Hom_{\pi_0 (\rZ_{G^\circ} (\sigma_0,y))}
(\rho^\circ, H_* (\mc P_y^\circ, \dot{\cE}))$ as graded $\C[W_\cE]$-modules, where we
use the homological grading from $H_* (\mc P_y^\circ, \dot{\cE})$.
\item For each $d \in \Z$, 
\[
\bigoplus\nolimits_{n \geq d} \Hom_{\pi_0 (\rZ_{G^\circ} (\sigma_0,y))} \big( \rho^\circ, 
H_n (\mc P_y^\circ, \dot{\cE})\big)
\]
is an $\mh H (G^\circ,M^\circ,\cE,\vec{\mb r})$-submodule of $E^\circ_{y,\sigma_0,0,\rho^\circ}$.
\item $E^\circ_{y,\sigma_0,0,\rho^\circ}$ has a unique irreducible quotient isomorphic
to the module $M^\circ_{y,\sigma_0,0,\rho^\circ}$ from \cite[Proposition 3.8]{AMS2}.
}
\end{thm}
In the proof of \cite[Lemma 3.9]{AMS2} it was assumed incorrectly that $S(\mf t^*)$ acts
semisimply on $E^\circ_{y,\sigma_0,0,\rho^\circ}$, which lead to the wrong claim that it is
always completely reducible as $\mh H (G^\circ,M^\circ,\cE,\vec{\mb r})$-module.
\begin{proof}
(a) By \cite[10.13]{Lus5}, $E^\circ_{y,\sigma_0,0,\rho^\circ}$ can be identified with
$H_* (\mc P_y^\circ, \dot{\cE})$ as graded $W_\cE \times \pi_0 (\rZ_{G^\circ}(\sigma_0,y))
$-representations. In fact these finite groups both preserve the homological grading. Now
apply $\Hom_{\pi_0 (\rZ_{G^\circ} (\sigma_0,y))}(\rho^\circ,?)$ to both modules.\\
(b) First we assume that $\sigma_0$ is central in $\mf g$. Write 
\[
\mh H (G^\circ,M^\circ,\cE,\vec{\mb r}) = S(\rZ(\mf g)^*) \otimes \mh H (G_\der^\circ,M \cap
G_\der^\circ,\cE,\vec{\mb r})
\]
as in \eqref{eq:0.4}. Then $S(\rZ(\mf g)^*)$ acts on $E^\circ_{y,\sigma_0,0,\rho^\circ}$ by
the character $\sigma_0$ and the restriction of $E^\circ_{y,\sigma_0,0,\rho^\circ}$ to
$\mh H (G_\der^\circ,M \cap G_\der^\circ,\cE,\vec{\mb r})$ is
\[
E^\circ_{y,0,0} = \C_{0,0} \underset{H^*_{\vec{M^\circ}(y)^\circ}
(\{y\})}{\otimes} H_*^{\vec{M^\circ}(y)^\circ} (\mc P_y^\circ,\dot{\cE}) .
\]
Here $H_*^{\vec{M^\circ}(y)^\circ} (\mc P_y^\circ,\dot{\cE})$ is a graded 
$\mh H (G_\der^\circ,M \cap G_\der^\circ,\cE,\vec{\mb r})$-module by \cite[Theorem 8.13]{Lus3},
although typically not semisimple. As $\C_{0,0}$ is a graded $H^*_{\vec{M^\circ}(y)^\circ}(\{y\})
$-module it follows that $E^\circ_{y,0,0}$ is a graded 
$\mh H (G_\der^\circ,M \cap G_\der^\circ,\cE,\vec{\mb r})$-module. For $E^\circ_{y,\sigma_0,0}$
as $\mh H (G^\circ,M^\circ,\cE,\vec{\mb r})$-module, we can still say that the action of 
$\mh H (G^\circ,M^\circ,\cE,\vec{\mb r})$ only raises degrees, and that an element 
$x \in \mh H (G^\circ,M^\circ,\cE,\vec{\mb r})$ of degree $n$ can only raise the degree of
an element of $E^\circ_{y,\sigma_0,0}$ by at most $n$.

Now we lift the condition on $\sigma_0$ and we consider the Levi subgroup 
$Q^\circ = \rZ_{G^\circ}(\sigma_0)$ of $G^\circ$. By Proposition \ref{prop:0.7}.a, 
$\rZ(\mr{Lie}(M^\circ))$ contains a $G^\circ$-conjugate of $\sigma_0$. Upon replacing $(y,\sigma_0)$
by a suitable $G^\circ$-conjugate, we may assume that $M^\circ$ centralizes $\sigma_0$,
so that $Q^\circ \supset M^\circ$. By \cite[Corollary 1.18]{Lus7}, there is a natural
isomorphism of $\mh H (G^\circ,M^\circ,\cE,\vec{\mb r})$-modules
\begin{equation}\label{eq:0.27} 
W_\cE \ltimes S(\mf t^*) \underset{W_\cE^{Q^\circ} \ltimes S (\mf t^* )}{\otimes} 
E^{Q^\circ}_{y,\sigma_0,0} =
 \mh H (G^\circ,M^\circ,\cE) \underset{\mh H ({Q^\circ},M^\circ,\cE)}{\otimes} 
E^{Q^\circ}_{y,\sigma_0,0} \longrightarrow E^\circ_{y,\sigma_0,0} .
\end{equation}
We note that \cite[Corollary 1.18]{Lus7} is applicable because $r = 0$ and $\ad (\sigma_0)$ 
is an invertible linear transformation of Lie$(U_{Q^\circ})$, where $U_{Q^\circ}$ is the unipotent 
radical of a parabolic subgroup of $G^\circ$ with Levi factor ${Q^\circ}$. The map \eqref{eq:0.27} 
comes from a morphism $\mc P_y^{Q^\circ} \to \mc P_y^\circ$, which entails that it changes all 
homological degrees (see part a) by the same amount, namely 
$\dim \mc P_y^\circ - \dim \mc P_y^{Q^\circ}$. From \eqref{eq:0.27} we deduce that, as 
$S(\mf t^*)$-modules and as graded vector spaces,
\[
E^\circ_{y,\sigma_0,0} [\dim \mc P_y^{Q^\circ} - \dim \mc P_y^\circ] = 
\bigoplus_{w \in W_\cE / W_\cE^{Q^\circ}} \C w \otimes_\C E^{Q^\circ}_{y,\sigma_0,0} =
\bigoplus_{w \in W_\cE / W_\cE^{Q^\circ}} (w)^* E^{Q^\circ}_{y,\sigma_0,0} ,
\]
where $[\cdots]$ denotes a degree shift.

We denote the ideal generated by the $\mb r_j$ by $(\vec{\mb r})$. As $(\vec{\mb r})$ is divided 
out in \eqref{eq:0.27}, the action of $w \in W_\cE$ on $S(\mf t^*)$ reduces to the usual action, 
induced from the action of $W_\cE$ on $\mf t$. Thus the property established above in the case 
$\sigma_0$ central remains valid here: the action of an element $x \in S(\mf t^*)$ of degree $n$ 
on $E^\circ_{y,\sigma_0,0}$ can only raise degrees, and raises them by at most $n$. Since 
\[
\mh H (G^\circ,M^\circ,\cE,\vec{\mb r}) / (\vec{\mb r}) = S(\mf t^*) \otimes \C [W_\cE] 
\]
as vector spaces and $\C[W_\cE]$ preserves the degrees (because it sits in degree 0, see also
part a), the degree properties of $E^\circ_{y,\sigma_0,0}$ also hold when we regard it as 
$\mh H (G^\circ,M^\circ,\cE,\vec{\mb r})$-module. 

In particular, for any $d \in \Z$ the sum of the subspaces in degree $\geq d$ is a submodule.
The action of $\pi_0 (\rZ_{G^\circ}(\sigma_0,y))$ preserves the degrees, so this remains valid for 
\[
\Hom_{\pi_0 (\rZ_{G^\circ} (\sigma_0,y))}(\rho^\circ,E^\circ_{y,\sigma_0,0}) = 
E^\circ_{y,\sigma_0,0,\rho^\circ} .
\]
Combine that with part (a) to obtain the stated form.\\
(c) With parts (a,b) instead of \cite[Lemma 3.9]{AMS2}, the proof of \cite[Proposition 3.10]{AMS2}
still works. It shows that $E^\circ_{y,\sigma_0,0,\rho^\circ}$ has a unique irreducible 
subquotient isomorphic to $M^\circ_{y,\sigma_0,0,\rho^\circ}$ and that the part of
$E^\circ_{y,\sigma_0,0,\rho^\circ}$ in one specific homological degree projects bijectively
onto this subquotient. More precisely, the argument \cite[(40)--(42)]{AMS2} shows that this 
subquotient appears as 
\[
\Hom_{\pi_0 (\rZ_{G^\circ} (\sigma_0,y))}(\rho^\circ, H_d (\mc P_y^\circ, \dot{\cE})) \subset 
E^\circ_{y,\sigma_0,0,\rho^\circ},
\]
where $d$ is the minimal degree for which this space is nonzero. By part (b),
\[ 
E^{\circ,>d}_{y,\sigma_0,0,\rho^\circ} := \bigoplus\nolimits_{n > d} \Hom_{\pi_0 (\rZ_{G^\circ} 
(\sigma_0,y))} \big( \rho^\circ, H_n (\mc P_y^\circ, \dot{\cE})\big)
\]
is a $\mh H (G^\circ,M^\circ,\cE,\vec{\mb r})$-submodule. Further 
\[
E^{\circ,d}_{y,\sigma_0,0,\rho^\circ} := 
E^\circ_{y,\sigma_0,0,\rho^\circ} / E^{\circ,>d}_{y,\sigma_0,0,\rho^\circ}   
\cong \Hom_{\pi_0 (\rZ_{G^\circ} (\sigma_0,y))}(\rho^\circ, H_d (\mc P_y^\circ, \dot{\cE}))
\] 
as $\C [W_\cE]$-modules. The proof of part (b) can be modified to study the 
$\mh H (G^\circ,M^\circ,\cE,\vec{\mb r})$-module $E^{\circ,d}_{y,\sigma_0,0,\rho^\circ}$.
In the case that $\sigma_0$ is central, it shows that $E^{\circ,d}_{y,\sigma_0,0,\rho^\circ}$
is a semisimple module, on which the $\mh H (G^\circ,M^\circ,\cE,\vec{\mb r})$-action factors
through 
\[
\begin{array}{cccll}
\mr{ev}_{\sigma_0,0} \colon & \mh H (G^\circ,M^\circ,\cE,\vec{\mb r}) / (\vec{\mb r}) & \to & \C[W_\cE] \\
& f x & \mapsto & f(\sigma_0) x & \quad f \in S(\mf t^*), x \in \C [W_\cE] .
\end{array}
\]
When $\sigma_0$ is not central, the functor 
$\mr{ind}^{W_\cE \ltimes S(\mf t^*)}_{W_\cE^{Q^\circ} \ltimes S (\mf t^* )}$ from
\eqref{eq:0.27} preserves irreduci\-bi\-li\-ty of modules that admit the 
$S(\mf t^* \oplus \C)$-character $(\sigma_0,0)$, because $W_\cE^{Q^\circ} = (W_\cE)_{\sigma_0}$. 
Hence $\mr{ind}^{W_\cE \ltimes S(\mf t^*)}_{W_\cE^{Q^\circ} \ltimes S (\mf t^* )}$ preserves the
semisimplicity of $E^{Q^\circ}_{y,\sigma_0,0,\rho^\circ}$.

This shows that the distinguished irreducible subquotient of $E^\circ_{y,\sigma_0,0,\rho^\circ}$
is a direct summand of $E^{\circ,d}_{y,\sigma_0,0,\rho^\circ}$. Since 
$E^{\circ,d}_{y,\sigma_0,0,\rho^\circ}$ is a quotient of $E^\circ_{y,\sigma_0,0,\rho^\circ}$,
so is our distinguished subquotient.
\end{proof}

The next result generalizes \cite[Theorem 3.20]{AMS2} to several variables $r_j$.
We define $\Irr_{\vec{r}}(\mh H (G,M,q\cE,\vec{\mb r}))$ to be the set of
equivalence classes of those irreducible representations of
$\mh H (G,M,q\cE,\vec{\mb r})$ on which $\mb r_j$ acts as $r_j$.

\begin{thm}\label{thm:0.4}
Fix $\vec{r} \in \C^d$. The standard $\mh H (G,M,q\cE,\vec{\mb r})$-module
$E_{y,\sigma,\vec{r},\rho}$ is nonzero if and only if
$q\Psi_{\rZ_G (\sigma_0)}(y,\rho) = (M,\cC_v^M,q\cE)$ up to $G$-conjugacy.
In that case it has a distinguished irreducible quotient $M_{y,\sigma,\vec{r},\rho}$,
which appears with multiplicity one in $E_{y,\sigma,\vec{r},\rho}$.

The map $M_{y,\sigma,\vec{r},\rho} \longleftrightarrow (\sigma_0,y,\rho)$
sets up a canonical bijection between \\ $\Irr_{\vec{r}}(\mh H (G,M,q\cE,\vec{\mb r}))$
and $G$-conjugacy classes of triples as above.
\end{thm}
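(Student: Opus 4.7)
The plan is to reduce to the single-parameter case of \cite[Theorem 3.20]{AMS2} via the tensor factorization \eqref{eq:0.4} for the connected-reductive part and the crossed-product structure \eqref{eq:0.5} for the disconnected piece. First I would treat the algebra $\mh H (G^\circ,M^\circ,\cE,\vec{\mb r})$. By Lemma \ref{lem:0.3}, for $\rho^\circ \in \Irr (\pi_0 (Z_{G^\circ}(\sigma_0,y)))$ inflating to $\bigotimes_j \rho_j$ via \eqref{eq:0.14}, the standard module factors as
\[
E^\circ_{y,\sigma,\vec r,\rho^\circ} \;\cong\; \C_{\sigma_z} \otimes \bigotimes_{j=1}^d E^\circ_{y_j,\sigma_j,r_j,\rho_j} ,
\]
and \cite[Theorem 3.20]{AMS2} applies to each tensor factor: it is nonzero exactly when the cuspidal support $\Psi_{Z_{G_j}(\sigma_{0,j})}(y_j,\rho_j)$ is $G_j$-conjugate to $(M_j,\cC_{v_j}^{M_j},\cE_j)$, in which case it has a distinguished irreducible quotient $M^\circ_{y_j,\sigma_j,r_j,\rho_j}$ of multiplicity one. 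Since a simple top of a tensor-product module over a tensor-product algebra is the tensor product of the factor-wise simple tops, and the one-dimensional twist by $\C_{\sigma_z}$ preserves this, we obtain a distinguished irreducible quotient $M^\circ_{y,\sigma,\vec r,\rho^\circ}$ of multiplicity one. Reassembling the factor-wise non-vanishing condition via the central-character argument in Lemma \ref{lem:0.3}, which identifies exactly those $\bigotimes_j \rho_j$ that descend through the multiplication map \eqref{eq:0.1}, rephrases it as $\Psi_{Z_{G^\circ}(\sigma_0)}(y,\rho^\circ) = (M^\circ,\cC_v^{M^\circ},\cE)$ up to $G^\circ$-conjugacy.

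Second, I would pass from $G^\circ$ to $G^\circ N_G (q\cE)$ using \eqref{eq:0.5} and \eqref{eq:0.8}. For $\tilde\rho \in \Irr \pi_0 (Z_{G^\circ N_G (q\cE)}(\sigma_0,y))$ and any irreducible constituent $\rho^\circ$ of $\Res \tilde\rho$, the module $E_{y,\sigma,\vec r,\tilde\rho}$ is the $\tilde\rho$-isotypic piece of $\ind_{\mh H (G^\circ,M^\circ,\cE,\vec{\mb r})}^{\mh H (G,M,q\cE,\vec{\mb r})} E^\circ_{y,\sigma,\vec r,\rho^\circ}$. A Clifford-theoretic argument with respect to the twisted group algebra $\C [\cR_{q\cE},\natural_{q\cE}]$ acting on the distinguished quotients of the connected case, parallel to the corresponding step in \cite[Theorem 3.20]{AMS2}, produces a distinguished irreducible quotient $M_{y,\sigma,\vec r,\tilde\rho}$ of multiplicity one and yields the bijection between $\Irr_{\vec r} (\mh H (G^\circ N_G (q\cE),M,q\cE,\vec{\mb r}))$ and $G^\circ N_G (q\cE)$-conjugacy classes of triples. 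The definition \eqref{eq:0.17}, together with the induction bijection between triples for $G^\circ N_G (q\cE)$ and triples for $G$ proved in \cite[Lemma 3.21]{AMS2}, then gives the theorem for $G$ as stated, with the cuspidal support $\Psi$ replaced by the cuspidal quasi-support $q\Psi$.

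The main obstacle is the Clifford step, in which one must check that the action of $\cR_{q\cE}$ on the distinguished quotients from step one is compatible with the 2-cocycle $\natural_{q\cE}$, and that the non-vanishing criterion upgrades from the cuspidal support to the quasi-support. Both points were resolved in the one-parameter case in \cite{AMS2}; the multi-parameter setting introduces no new difficulty, because the variables $\mb r_j$ are central in $\mh H (G,M,q\cE,\vec{\mb r})$ and both the $\cR_{q\cE}$-equivariance structure on equivariant homology and the cuspidal-support bookkeeping are insensitive to the number of parameters. The final assertion, that distinct triples yield inequivalent quotients, follows from the same central-character and Clifford arguments by tracking $(\sigma_0,y,\rho)$ through the action of the central subalgebra \eqref{eq:0.3}.
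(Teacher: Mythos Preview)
Your proposal is correct and follows essentially the same route as the paper: factor the connected case via \eqref{eq:0.4} and Lemma~\ref{lem:0.3}, apply the one-parameter results to each tensor factor, then run the Clifford-theory step of \cite[\S 3.3--3.4]{AMS2} for the crossed product \eqref{eq:0.5} and finally pass from $G^\circ N_G(q\cE)$ to $G$ via \eqref{eq:0.17}. One small refinement: for the individual tensor factors $\mh H(G_j,M_j,\cE_j)$ the precise references are \cite[Proposition~3.7 and Theorem~3.11]{AMS2} (the connected-group statements), whereas \cite[Theorem~3.20]{AMS2} is what you invoke for the disconnected extension step.
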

\begin{proof}
For $\mh H (G_j,M_j,\cE_j)$ this is \cite[Proposition 3.7 and Theorem 3.11]{AMS2}, where
we use Theorem \ref{thm:0.8} to replace the input from the flawed \cite[Lemma 3.9]{AMS2}.
With \eqref{eq:0.4} and Lemma \ref{lem:0.3} we can generalize that to
$\mh H (G^\circ,M^\circ,\cE,\vec{\mb r})$. The method to go from there to
$\mh H (G^\circ \rN_G (q\cE),M,q\cE,\vec{\mb r})$ is exactly the same as in
\cite[\S 3.3--3.4]{AMS2} (for $\mh H (G^\circ,M^\circ,\cE)$ and
$\mh H (G^\circ \rN_G (q\cE),M,q\cE)$). That is, the proof of \cite[Theorem 3.20]{AMS2}
applies and establishes the theorem for $\mh H (G^\circ \rN_G (q\cE),M,q\cE,\vec{\mb r})$.
In view of \eqref{eq:0.17} we can replace $G^\circ \rN_G (q\cE)$ by $G$.
\end{proof}

The irreducible module $M_{y,\sigma,\vec{r},\rho}$ has the same central character as
the standard module $E_{y,\sigma,\vec{r},\rho}$ of which it is a quotient. It can be
found in Proposition \ref{prop:0.7}.

The above modules are compatible with parabolic induction, in a suitable sense
and under a certain condition. Let $Q \subset G$ be an algebraic subgroup containing $M$,
such that $Q^\circ$ is a Levi subgroup of $G^\circ$. Let
$y,\sigma,\vec r,\rho$ be as in Theorem \ref{thm:0.4}, with
$\sigma,y \in \mf q = \Lie (Q)$. By \cite[\S 3.2]{Ree} the natural map
\begin{equation}
\pi_0 (\rZ_Q (\sigma,y)) = \pi_0 ( \rZ_{Q \cap \rZ_G (\sigma_0)}(y)) \to
\pi_0 (\rZ_{\rZ_G (\sigma_0)}(y)) = \pi_0 (\rZ_G (\sigma,y))
\end{equation}
is injective, so we can consider the left hand side as a subgroup of the right
hand side. Let $\rho^Q \in \Irr \big( \pi_0 (\rZ_Q (\sigma,y)) \big)$ be such that
$q\Psi_{\rZ_Q (\sigma_0)} (y,\rho^Q) = (M,\cC_v^M,q\cE)$. Then $E_{y,\sigma,r,\rho},
M_{y,\sigma,r,\rho}, E^Q_{y,\sigma,r,\rho^Q}$ and $M^Q_{y,\sigma,r,\rho^Q}$ are
defined.

Further, $P Q^\circ$ is a parabolic subgroup of $G^\circ$ with $Q^\circ$ as Levi factor.
The unipotent radical $\mc R_u (P Q^\circ)$ is normalized by $Q^\circ$, so its Lie
algebra $\mf u_Q = \mr{Lie} (\mc R_u (P Q^\circ))$ is stable under the adjoint actions
of $Q^\circ$ and $\mf q$. By \eqref{eq:0.6} $\mf u_Q$ decomposes as the direct sum of
the subspaces $\mf u_{Q,j} = \mf u_Q \cap \mf g_j$. In particular ad$(y_j)$ acts on
$\mf u_{Q,j}$. We denote the cokernel of ad$(y_j) \colon\mf u_{Q,j} \to \mf u_{Q,j}$ by
$_y \mf u_{Q,j}$. From $[\sigma_j,y_j] = 2 r_j y_j$ we see that $\mr{ad}(\sigma_j)$
descends to a linear map ${}_y \mf u_{Q,j} \to {}_y \mf u_{Q,j}$.

Following Lusztig \cite[\S 1.16]{Lus7}, we define
\[
\begin{array}{cccc}
\epsilon_{y,j} \colon& \mr{Lie}(M^Q (y)^\circ) & \to & \C \\
& (\sigma,r) & \mapsto & \det( \mr{ad}(\sigma_j) - 2 r_j :
{}_y \mf u_{Q,j} \to {}_y \mf u_{Q,j})
\end{array}.
\]
All parameters for which parabolic induction could behave problematically
are zeros of a function $\epsilon_{y,j}$.

\begin{prop}\label{prop:0.6}
Let $y,\sigma, \vec r,\rho$ be as in Theorem \ref{thm:0.4}, and assume that
$\epsilon_{y,j} (\sigma,r) \neq 0$ for each $j = 1,\ldots,d$.
\enuma{
\item There is a natural isomorphism of $\mh H (G,M,q\cE,\vec{\mb r})$-modules
\[
\mh H (G,M,q\cE,\vec{\mb r}) \underset{\mh H (Q,M,q\cE,\vec{\mb r})}{\otimes}
E^Q_{y,\sigma,\vec{r},\rho^Q} \cong
\bigoplus\nolimits_\rho \Hom_{\pi_0 (\rZ_Q (\sigma,y))} (\rho^Q ,\rho) \otimes
E_{y,\sigma,r,\rho} ,
\]
where the sum runs over all $\rho \in \Irr \big( \pi_0 (\rZ_G (\sigma,y)) \big)$ with\\
$q\Psi_{\rZ_G (\sigma_0)}(y,\rho) = (M,\cC_v^M,q\cE)$.
\item For $\vec{r} = \vec{0}$ part (a) contains an isomorphism of
$S(\mf t^*) \rtimes \C[W_{q\cE} ,\natural_{q\cE}]$-modules
\[
\mh H (G,M,q\cE,\vec{\mb r}) \underset{\mh H (Q,M,q\cE,\vec{\mb r})}{\otimes}
M^Q_{y,\sigma,\vec{0},\rho^Q} \cong
\bigoplus\nolimits_\rho \Hom_{\pi_0 (\rZ_Q (\sigma,y))} (\rho^Q ,\rho) \otimes
M_{y,\sigma,\vec{0},\rho} .
\]
\item The multiplicity of $M_{y,\sigma,\vec{r},\rho}$ in $\mh H (G,M,q\cE,\vec{\mb r})
\underset{\mh H (Q,M,q\cE,\vec{\mb r})}{\otimes} E^Q_{y,\sigma,\vec{r},\rho^Q}$ is\\
$[\rho^Q \colon\rho]_{\pi_0 (\rZ_Q (\sigma,y))}$.
It already appears that many times as a quotient, via \\
$E^Q_{y,\sigma,\vec{r},\rho^Q} \to M^Q_{y,\sigma,\vec{r},\rho^Q}$.
More precisely, there is a natural isomorphism
\[
\Hom_{\mh H (Q,M,q\cE,\vec{\mb r})} (M^Q_{y,\sigma,\vec{r},\rho^Q},
M_{y,\sigma,\vec{r},\rho}) \cong \Hom_{\pi_0 (\rZ_Q (\sigma,y))} (\rho^Q ,\rho)^* .
\]
}
\end{prop}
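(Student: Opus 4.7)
The plan is to mimic the proof strategy already used for Theorem \ref{thm:0.4}: first reduce to a single simple factor $G_j$ via the factorizations \eqref{eq:0.4} and Lemmas \ref{lem:0.2}, \ref{lem:0.3}, then invoke the corresponding single-parameter result from \cite{AMS2}, and finally extend from $G^\circ$ to $G^\circ N_G(q\cE)$ (and thus to $G$ via \eqref{eq:0.17}) using the semidirect product \eqref{eq:0.5}. Everything new compared to \cite{AMS2} comes from the product decomposition \eqref{eq:0.1}; the hard geometric work has already been done in the single-$\mb r$ setting.

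For part (a), I would first treat the case $G = G_j$ with the single variable $\mb r_j$. Here the statement is the single-parameter analog in \cite{AMS2}, whose proof rests on the closed embedding $\mc P_y^{Q_j} \hookrightarrow \mc P_{y_j}$ and the Thom--Gysin sequence in equivariant Borel--Moore homology, together with the fact that the $\rho$'s appearing in $E_{y_j,\sigma_j,r_j}$ that restrict to $\rho^Q$ are precisely those with the prescribed cuspidal quasi-support. I would then promote this to $G^\circ$: the parabolic $Q^\circ$ decomposes, up to the central factor $Z(G^\circ)^\circ$, as an almost direct product of the $Q_j := Q^\circ \cap G_j$, so by \eqref{eq:0.4} and Lemma \ref{lem:0.2} both sides of (a) factor as tensor products of their $G_j$-analogs. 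Lemma \ref{lem:0.3} ensures that the indexing set of $\rho$'s with cuspidal quasi-support $(M,\cC_v^M,q\cE)$ factors correspondingly, after which assembling the single-factor isomorphisms yields (a) for $G = G^\circ$.

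To pass from $G^\circ$ to $G^\circ N_G(q\cE)$, I would use \eqref{eq:0.5} to write parabolic induction as a two-step functor: first from $\mh H(Q^\circ, M^\circ, \cE, \vec{\mb r})$ up to $\mh H(G^\circ, M^\circ, \cE, \vec{\mb r})$, and then along the inclusion of the twisted group algebra of the $R$-group of $Q$ into $\C[\cR_{q\cE}, \natural_{q\cE}]$. Clifford theory for the normal inclusion $\pi_0(Z_{G^\circ}(\sigma_0,y)) \trianglelefteq \pi_0(Z_{G^\circ N_G(q\cE)}(\sigma_0,y))$ then matches the multiplicities on the algebra side with the Hom spaces $\Hom_{\pi_0(Z_Q(\sigma,y))}(\rho^Q, \rho)$, exactly as in \cite[\S 3.3--3.4]{AMS2}; finally \eqref{eq:0.17} transports the result to $G$. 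Part (b) follows by specialising $\vec{r} = \vec{0}$ in (a): \eqref{eq:0.16} forces $\sigma = \sigma_0$, the graded algebra becomes $S(\mf t^*) \rtimes \C[W_{q\cE}, \natural_{q\cE}]$, and each $E_{y,\sigma,\vec{0},\rho}$ coincides with its irreducible quotient $M_{y,\sigma,\vec{0},\rho}$, since at $\vec{r} = \vec{0}$ the perverse-sheaf decomposition underlying the construction in \cite{AMS2} is already semisimple.

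For part (c), Theorem \ref{thm:0.4} tells us that each $E_{y,\sigma,\vec{r},\rho}$ has $M_{y,\sigma,\vec{r},\rho}$ as a multiplicity-one quotient and that distinct $\rho$ give inequivalent quotients, so reading off the multiplicity of $M_{y,\sigma,\vec{r},\rho}$ on the right-hand side of (a) directly gives $\dim \Hom_{\pi_0(Z_Q(\sigma,y))}(\rho^Q, \rho)$. The natural Hom isomorphism in (c) is then obtained by applying $\Hom_{\mh H(G,M,q\cE,\vec{\mb r})}(-, M_{y,\sigma,\vec{r},\rho})$ to (a), combined with the observation that no simple subquotient in the kernel of $E^Q_{y,\sigma,\vec{r},\rho^Q} \twoheadrightarrow M^Q_{y,\sigma,\vec{r},\rho^Q}$ can contribute $M_{y,\sigma,\vec{r},\rho}$ after induction; thus the full multiplicity is realised already through the quotient map. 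The main obstacle I anticipate is the disconnected-component bookkeeping in the middle step: one must check that the Clifford-theoretic description of the induced $\rho$'s is compatible with the semidirect product structure of \eqref{eq:0.5} and with the action of $\cR_{q\cE}$ on the equivariant homology, so that the algebraic multiplicities and the geometric component-group Hom spaces line up naturally.
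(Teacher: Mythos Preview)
Your approach is essentially the same as the paper's: reduce to the single-parameter result \cite[Proposition 3.22]{AMS2} and transport it to the multi-parameter setting via the factorization machinery (\eqref{eq:0.4}, \eqref{eq:0.5}, Lemmas \ref{lem:0.2}--\ref{lem:0.3}) already assembled for Theorem \ref{thm:0.4}. The paper states this in one line (``the proof of that result also works in the present setting''), while you spell out the two-step induction through $G^\circ$ and the Clifford-theoretic bookkeeping; but the underlying argument is the same.

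One caution on your justification for part (b): the claim that $E_{y,\sigma,\vec 0,\rho} = M_{y,\sigma,\vec 0,\rho}$ is not generally true (take $y=0$, where the standard module carries the full cohomology of a flag variety). The correct reading of ``part (a) contains'' is that at $\vec r = \vec 0$ the algebra is the semisimple crossed product $S(\mf t^*) \rtimes \C[W_{q\cE},\natural_{q\cE}]$, so both sides of (a) decompose as direct sums of irreducibles, and the isomorphism of (a) restricts to the summands indexed by the top-degree pieces $M_{y,\sigma,\vec 0,\rho}$. This is how \cite[Proposition 3.22]{AMS2} handles the $r=0$ case, and you should phrase it that way rather than asserting $E=M$.
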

\begin{proof}
For twisted graded Hecke algebras with only one parameter $\mb r$ this is
\cite[Proposition 3.22]{AMS2}, as corrected in \cite[Theorem A.1]{AMS2} and in
the version with quasi-Levi subgroups as discussed on \cite[p. 47]{AMS2}. Using
Theorem \ref{thm:0.4}, the proof of that result also works in the present setting.
\end{proof}

For an improved parametrization we use the Iwahori--Matsumoto involution, whose
definition we will now generalize to $\mh H (G,M,q\cE,\vec{\mb r})$. Extend the
sign representation of the Weyl group $W_{q\cE}^\circ$ to a character of $W_{q\cE}$
which is trivial on $\cR_{q\cE}$. Then we define
\begin{equation}\label{eq:0.19}
\IM (N_w) = \text{sign}(w) N_w ,\; \IM (\mb r_j) = \mb r_j ,\;
\IM (\xi) = -\xi \; (\xi \in \mf t^*) .
\end{equation}
Notice that IM is canonically determined by $G,P,M$ and $q\cE$, precisely the data
that are needed to define $\mh H (G,M,q\cE,\vec{\mb r})$.
Twisting representations with this involution is useful in relation with the
properties temperedness and (essentially) discrete series, see \cite[\S 3.5]{AMS2}.

\begin{prop}\label{prop:0.5}
\enuma{
\item Fix $\vec{r} \in \C^d$. There exists a canonical bijection
\[
(\sigma_0,y,\rho) \longleftrightarrow \IM^*
M_{y,\textup{d} \vec{\gamma} \matje{\vec r}{0}{0}{-\vec r}-\sigma_0,\vec{r},\rho}
\]
between conjugacy classes triples as in Theorem \ref{thm:0.4} and
$\Irr_{\vec{r}}(\mh H (G,M,q\cE,\vec{\mb r}))$.
\item Suppose that $\Re (\vec{r}) \in \R_{\geq 0}^d$. Then $\IM^*
M_{y,\textup{d} \vec{\gamma} \matje{\vec r}{0}{0}{-\vec r}-\sigma_0,\vec{r},\rho}$
is tempered if and only if $\sigma_0 \in i \mf t_\R = i \R \otimes_\Z X_* (T)$.
\item Suppose that $\Re (\vec{r}) \in \R_{>0}^d$. Then $\IM^*
M_{y,\textup{d} \vec{\gamma} \matje{\vec r}{0}{0}{-\vec r}-\sigma_0,\vec{r},\rho}$
is essentially discrete series if and only if $y$ is distinguished in $\mf g$.
In this case $\sigma_0 \in Z (\mf g)$.
\item Let $\zeta \in \mf g^G = \rZ(\mf g )^{G / G^\circ} \!. \!$ Then part (a) maps
$(\zeta + \sigma_0, y, \rho)$ to \\  $\zeta \otimes \IM^*
M_{y,\textup{d} \vec{\gamma} \matje{\vec r}{0}{0}{-\vec r}-\sigma_0,\vec{r},\rho}$.
\item Suppose that $\Re (\vec{r}) \in \R_{> 0}^d$ and that $\sigma_0 \in
i \mf t_\R + \rZ(\mf g)$. Then
\[
\IM^* M_{y,\textup{d} \vec{\gamma}
\matje{\vec r}{0}{0}{-\vec r}-\sigma_0,\vec{r},\rho} = \IM^* E_{y,\textup{d}
\vec{\gamma} \matje{\vec r}{0}{0}{-\vec r}-\sigma_0,\vec{r},\rho}.
\]
\item Suppose that $\sigma_0, \sigma_v + \textup{d}\gamma \matje{-1}{0}{0}{1} \in \mf t$
(which can always be arranged by Proposition \ref{prop:0.7}.c). Both 
$\IM^* M_{y,\textup{d} \vec{\gamma} \matje{\vec r}{0}{0}{-\vec r}-\sigma_0,\vec{r},\rho}$ 
and $\IM^* E_{y,\textup{d} \vec{\gamma} \matje{\vec r}{0}{0}{-\vec r}-\sigma_0,\vec{r},\rho}$ 
admit the central character $W_{q\cE} \big( \sigma_0 \pm (\vec{r} \sigma_v + 
\textup{d} \vec{\gamma} \matje{-\vec r}{0}{0}{\vec r} ) , \vec{r} \big)$.
}
\end{prop}
\begin{proof}
Part (a) follows immediately from Theorem \ref{thm:0.4}. Parts (b) and (c) are
consequences of \cite[\S 3.5]{AMS2}, see in particular (82) and (83) therein.\\
(d) From \eqref{eq:0.15} and Lemma \ref{lem:0.3} we see that
\[
E_{y,\sigma' - \zeta,\vec{r},\rho} = -\zeta \otimes E_{y,\sigma',\vec{r},\rho}
\]
whenever both sides are defined. By Theorem \ref{thm:0.4} the analogous equation for
$M_{y,\sigma',\vec{r},\rho}$ holds. Apply this with
$\sigma' = \textup{d} \vec{\gamma} \matje{\vec r}{0}{0}{-\vec r}-\sigma_0$
and use that $\IM^*$ turns $-\zeta$ into $\zeta$.\\
(e) Notice that $\sigma_0 - \sigma_z \in i \mf t_\R$. Write $\rho = \tau^* \ltimes
\rho^\circ$ as in \cite[Lemma 3.13]{AMS2}. By Lemma \ref{lem:0.3} and
\cite[Theorem 1.21]{Lus7}(for the simple factors of $G^\circ_\der$)
\begin{multline*}
M^\circ_{y,\textup{d} \vec{\gamma} \matje{\vec r}{0}{0}{-\vec r}-\sigma_0,\vec{r},
\rho^\circ} = M^{G_\der^\circ}_{y,\textup{d} \vec{\gamma} \matje{\vec r}{0}{0}{-\vec r} +
(\sigma_z -\sigma_0),\vec{r},\rho^\circ} \otimes \C_{-\sigma_z} = \\
E^{G_\der^\circ}_{y,\textup{d} \vec{\gamma} \matje{\vec r}{0}{0}{-\vec r} +
(\sigma_z -\sigma_0),\vec{r},\rho^\circ} \otimes \C_{-\sigma_z} = E^\circ_{y,\textup{d}
\vec{\gamma} \matje{\vec r}{0}{0}{-\vec r}-\sigma_0,\vec{r},\rho^\circ}.
\end{multline*}
By \cite[Lemma 3.16]{AMS2}
\[
M_{y,\textup{d} \vec{\gamma}
\matje{\vec r}{0}{0}{-\vec r}-\sigma_0,\vec{r},\rho}  =
\tau \ltimes M^\circ_{y,\textup{d} \vec{\gamma}
\matje{\vec r}{0}{0}{-\vec r}-\sigma_0,\vec{r},\rho^\circ} ,
\]
while \cite[Lemma 3.18]{AMS2} says that
\[
E_{y,\textup{d} \vec{\gamma}
\matje{\vec r}{0}{0}{-\vec r}-\sigma_0,\vec{r},\rho}  =
\tau \ltimes E^\circ_{y,\textup{d} \vec{\gamma}
\matje{\vec r}{0}{0}{-\vec r}-\sigma_0,\vec{r},\rho^\circ} .
\]
Applying $\IM^*$ to both these modules, we obtain the desired statement. \\
(f) Since the first module is a quotient of the second, it suffices to consider the
latter. From \eqref{eq:0.19} we see that the effect of $\IM^*$ on central characters is
$W_{q\cE}(\sigma,\vec{r}) \mapsto W_{q\cE}(-\sigma,\vec{r})$. 
Combine that with Proposition \ref{prop:0.7}.e.
\end{proof}

\section{Twisted affine Hecke algebras}
\label{sec:TAHA}

We would like to push the results of \cite{AMS2} and the previous section to
affine Hecke algebras, because these appear more directly in the representation theory
of reductive $p$-adic groups. This can be achieved with Lusztig's reduction theorems
\cite{Lus4}. The first reduces to representations with a ``real'' central
character (to be made precise later), and the second reduction theorem relates
representations of affine Hecke algebras with representations of graded Hecke algebras.

Our goal is a little more specific though, we want to consider not just one
(twisted) graded Hecke algebra, but a family of those, parametrized by a torus.
We want to find a (twisted) affine Hecke algebra which contains all members of
this family as some kind of specialization. Let us mention here that, although
we phrase this section with quasi-Levi subgroups and cuspidal quasi-supports,
all the results are equally valid for Levi subgroups and cuspidal supports.

Let $G$ be a possibly disconnected complex reductive group and let $(M,\cC_v^M,q\cE)$
be a cuspidal quasi-support for $G$. For any $t \in T = Z (M)^\circ$ the reductive
group $G_t = \rZ_G (t)$ contains $M$, and we can consider the twisted graded Hecke algebra
\[
\mh H (G_t,M,q\cE,\vec{\mb r}) =
\mh H (\mf t, \rN_{G_t}(q\cE) / M, c_t \vec{\mb r}, \natural_{q \cE,t}) .
\]
Here $\vec{\mb r} = (\mb r_1,\ldots,\mb r_d)$ refers to the almost direct
factorization of $G_t^\circ$ induced by \eqref{eq:0.6}.
Let us investigate how this family of algebras depends on $t$. For any $t \in T$, the
2-cocycle $\natural_{q \cE,t}$ of $\rN_{G_t}(q\cE) / M$ is just the restriction of
$\natural_{q\cE} \colon W_{q\cE}^2 \to \C^\times$. This can be seen from \cite[\S 3]{Lus2}
and the proofs of \cite[Proposition 4.5 and Lemma 5.4]{AMS}. More concretely, the
perverse sheaves $(\mr{pr}_1)_! \dot{q\cE}$ and $(\mr{pr}_1)_! \dot{q\cE}^*$
on Lie$(G)$ from \cite[(90)]{AMS2} and \cite[\S 3.4]{Lus3} extend the perverse sheaves
$q\pi_* (\widetilde{q\cE})$ and $q\pi_* (\widetilde{q\cE}^*)$ on Lie$(G)_{RS}$ from
\cite[\S 5]{AMS}. The latter naturally contain the corresponding objects
$q\pi_{t,*} (\widetilde{q\cE})$ and $q\pi_{t,*} (\widetilde{q\cE}^*)$ for $G_t$.
We denote the category of $G$-equivariant perverse sheaves on a $G$-variety $X$ by
$\mc P_G (X)$. The algebra
\[
\C [\rN_{G_t}(q\cE) / M, \natural_{q\cE,t}] \cong
\End_{\mathcal P_{G_t} \Lie (G_t)_{\RS}} \big( q\pi_{t,*} (\widetilde{q\cE}) \big)
\]
from \cite[Proposition 4.5 and Lemma 5.4]{AMS} is canonically embedded in
\[
\C [W_{q\cE},\natural_{q\cE}] \cong
\End_{\mathcal P_G \Lie (G)_{\RS}} \big( q\pi_* (\widetilde{q\cE}) \big) .
\]
We will simply write $W_{q\cE,t}$ for $\rN_{G_t}(q\cE) / M$, and $\natural_{q\cE}$
for $\natural_{q\cE,t}$.

On the other hand, the parameter function $c_t \colon R (\rZ_G (t)^\circ,T)_\red \to \C$
could depend on $t$, we have to specify which $t$ we use for a given root $\alpha$. 
Recall that $c_t (\alpha)$ was defined in \cite[\S 2]{Lus3}.
For any root $\alpha \in R (G^\circ,T)$:
\[
\mf g_\alpha \subset \Lie (G_t) \; \Longleftrightarrow \; \alpha (t) = 1.
\]
From \cite[Proposition 2.2]{Lus3} we know that $R(G^\circ,T)$ is a root system, so\\
$R(G^\circ,T) \cap \R \alpha \subset \{ \alpha , 2 \alpha, -\alpha, -2\alpha \}$
for every nondivisible root $\alpha$.

\begin{prop}\label{prop:4.1} \textup{\cite[Propositions 2.8, 2.10 and 2.12]{Lus3}} \\
Let $y \in \mathfrak{m}$ be an element of the nilpotent orbit defined by the
cuspidal quasi-support $(M,\cC_v^M,q\cE)$.
\enuma{
\item Suppose that $R(G^\circ,T) \cap \R \alpha = \{\alpha, -\alpha\}$.
Then $c_t (\alpha)$ satisfies
\begin{equation}\label{eq:4.1}
0 = \ad (y)^{c_t (\alpha) - 1} \colon \mf g_\alpha \to \mf g_\alpha \quad \text{and}
\quad 0 \neq  \ad (y)^{c_t (\alpha) - 2} \colon \mf g_\alpha \to \mf g_\alpha .
\end{equation}
This condition is independent of $t$, as long as $\mf g_\alpha \subset \Lie (G_t)$.
So we can unambiguously write $c (\alpha)$ for $c_t (\alpha)$ in this case.
Moreover $c (\alpha) \in \N$ is even.
\item Suppose that $R(G^\circ,T) \cap \R \alpha = \{ \alpha , 2 \alpha, -\alpha, -2\alpha \}$.

When $\alpha (t) = 1$, $\{\alpha ,2\alpha\} \subset R (\rZ_G (t)^\circ,T)$. Then $c_t (\alpha)$
is again given by \eqref{eq:4.1}, and it is odd.
We write $c (\alpha) = c_t (\alpha)$ for such a $t \in T$.
Furthermore $c_t (2 \alpha)$ is given by \eqref{eq:4.1} with $2 \alpha$ instead of $\alpha$,
and it equals 2.

When $\alpha (t) = - 1$, still $2 \alpha \in R (\rZ_G (t)^\circ,T)$, and $c_t (2 \alpha)$
is given by \eqref{eq:4.1} with $2 \alpha$ instead of $\alpha$. It equals 2, and we
write $c (2 \alpha) = 2$.
}
\end{prop}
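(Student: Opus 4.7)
The proposition is essentially a translation of \cite[Propositions 2.8 and 2.10]{Lus3} into the present setting, and my plan is to verify that Lusztig's analysis carries over once the definitions are properly aligned.

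My starting point is that the parameter $c_t(\alpha)$ is defined purely in terms of the adjoint action of the nilpotent element $v \in \mf m$ on the root space $\mf g_\alpha$, together with the cuspidality constraint imposed by $q\cE$. None of the data $v$, $\mf g_\alpha$, $q\cE$ change as $t$ varies over the locus where $\mf g_\alpha \subset \Lie (G_t)$; only the ambient connected group $G_t^\circ$ shrinks. Thus the formula \eqref{eq:4.1} produces the same integer whenever it is applicable, which immediately gives the claimed independence of $t$ and justifies writing $c(\alpha)$ or $c(2\alpha)$ unambiguously in both (a) and (b).

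For the parity statements and the explicit value $c_t(2\alpha)=2$ I would reduce to rank-one subgroups. Fix $\alpha$ and let $H_\alpha \subset G^\circ$ be the connected reductive subgroup generated by $T$ and by the root subgroups of the roots in $R(G^\circ,T)\cap \R\alpha$. Since $c_t(\alpha)$ is intrinsic to $H_\alpha$, it suffices to analyse how $v$ acts on the root spaces of $H_\alpha$. In case (a), $H_\alpha$ has root system of type $A_1$, and Lusztig's cuspidality argument (applied to $\cE$, a constituent of $q\cE|_{M^\circ}$) forces the nilpotency index of $\ad(v)|_{\mf g_\alpha}$ to be even, so $c(\alpha) \in 2\N$. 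In case (b), $H_\alpha$ has non-reduced root system of type $BC_1$, and Lusztig's explicit calculation separates the two subcases: when $\alpha(t)=1$, both $\alpha$ and $2\alpha$ belong to $R(Z_G(t)^\circ,T)$, and a parallel analysis on $\mf g_{2\alpha} = [\mf g_\alpha,\mf g_\alpha]$ yields $c(\alpha)$ odd together with $c(2\alpha)=2$; when $\alpha(t)=-1$, only $\mf g_{2\alpha}$ lies inside $\Lie (G_t)$, and the same computation applied to $\mf g_{2\alpha}$ alone gives $c_t(2\alpha)=2$.

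The main obstacle is the transition from Lusztig's connected framework to our possibly disconnected $G$, together with the multi-parameter refinement $\vec{\mb r}$. I would address the first point by observing that the only input from Lusztig is the cuspidality of the relevant local system; this pulls back to the connected case via the decomposition of $\mr{Res}^{A_M(v)}_{A_{M^\circ}(v)} q\epsilon$ into cuspidal constituents $\epsilon$ recalled at the start of Section \ref{sec:0}, so Lusztig's analysis for the connected pair $(M^\circ,\cE)$ directly yields the parity statements in our setting. For the multi-parameter point, each root $\alpha$ lies in a single factor $R_j$ of the orthogonal decomposition \eqref{eq:0.2}; hence only the single variable $\mb r_j$ couples to $\alpha$, and Lusztig's one-parameter computation of $c_t(\alpha)$ applies without modification.
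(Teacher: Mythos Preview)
The paper does not actually give a proof of this proposition: it is stated as a direct citation of \cite[Propositions 2.8 and 2.10]{Lus3}, and the text moves on immediately after the statement. Your proposal is therefore not so much an alternative proof as an explanation of why Lusztig's connected-group results transfer to the present disconnected, multi-parameter setting; that explanation is correct in outline (the key points being that $c_t(\alpha)$ depends only on $\ad(v)|_{\mf g_\alpha}$ and on the cuspidal datum, which is unchanged when passing from $q\cE$ to a constituent $\cE$ on $M^\circ$), and it is more detailed than anything the paper provides.
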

With the conventions from Proposition \ref{prop:4.1}, $c_t$ is always the restriction of\\
$c \colon R(G^\circ,T) \to \C$ to $R(\rZ_G (t)^\circ,T)_\red$.

Now we construct the algebras that we need.

\begin{prop}\label{prop:4.2}
Consider the following data:
\begin{itemize}
\item the root datum $\mathcal R = (R(G^\circ,T),X^* (T),R(G^\circ,T)^\vee, X_* (T))$,
with simple roots determined by $P$;
\item the group $W_{q\cE} = W_{q\cE}^\circ \rtimes \cR_{q\cE}$;
\item a 2-cocycle $\natural \colon (W_{q\cE} / W_{q\cE}^\circ)^2 \to \C^\times$;
\item $W_{q\cE}$-invariant functions $\lambda : R(G^\circ,T)_\red \to \Z_{\geq 0}$ and\\
$\lambda^* \colon\{ \alpha \in R(G^\circ,T)_\red \colon\alpha^\vee \in 2 X_* (T) \} \to \Z_{\geq 0}$;
\item an array of invertible variables $\vec{\mb z} = (\mb z_1, \ldots, \mb z_d)$,
corresponding to the decomposition \eqref{eq:0.6} of $\mf g$.
\end{itemize}
The vector space
\[
\cO  (T \times (\C^\times)^d ) \otimes \C [W_{q\cE}] = \C [X^* (T)] \otimes
\C[\vec{\mb z},\vec{\mb z}^{-1}] \otimes \C [W_{q\cE}^\circ] \otimes \C [\cR_{q\cE},\natural]
\]
admits a unique algebra structure such that:
\begin{itemize}
\item $\C [X^* (T)], \C[\vec{\mb z},\vec{\mb z}^{-1}]$ and $\C[\cR_{q\cE},\natural]$
are embedded as subalgebras;
\item $\C[\vec{\mb z}, \vec{\mb z}^{-1}] = \C [\mb z_1, \mb z_1^{-1}, \ldots, \mb z_d,
\mb z_d^{-1}]$ is central;
\item the span of $W_{q\cE}^\circ$ is the Iwahori--Hecke algebra $\mc H (W_{q\cE}^\circ,
\vec{\mb z}^{2 \lambda})$ of $W_{q\cE}^\circ$ with parameters $\vec{\mb z}^{2 \lambda (\alpha)}$.
That is, it has a basis $\{ N_w : w \in W_{q\cE}^\circ \}$ such that
\[
\begin{array}{cccl}
N_w N_v & = & N_{wv} & \text{if } \ell (w) + \ell (v) = \ell (wv), \\
(N_{s_\alpha} + \mb{z}_j^{-\lambda (\alpha)}) (N_{s_\alpha} - \mb{z}_j^{\lambda (\alpha)}) &
= & 0 & \text{if } \alpha \in R(G_j T,T)_\red \text{ is a simple root.}
\end{array}
\]
\item for $\gamma \in \cR_{q\cE}, w \in W_{q\cE}^\circ$ and $x \in X^* (T)$:
\[
N_\gamma N_w \theta_x N_\gamma^{-1} = N_{\gamma w \gamma^{-1}} \theta_{\gamma (x)} .
\]
\item for a simple root $\alpha \in R(G_j T,T)$ and $x \in X^* (T)$
corresponding to $\theta_x \in \cO (T)$:
\end{itemize}
\begin{multline*}
\theta_x N_{s_\alpha} - N_{s_\alpha} \theta_{s_\alpha (x)} = \\
\left\{ \!\! \begin{array}{ll}
\big( \mb{z}_j^{\lambda (\alpha)} - \mb{z}_j^{-\lambda (\alpha)} \big) (\theta_x -
\theta_{s_\alpha (x)}) / (\theta_0 - \theta_{-\alpha}) & \alpha^\vee \notin 2 X_* (T) \\
\big( \mb{z}_j^{\lambda (\alpha)} \! - \mb{z}_j^{-\lambda (\alpha)} \! + \theta_{-\alpha}
(\mb{z}_j^{\lambda^* (\alpha)} \! - \mb{z}_j^{-\lambda^* (\alpha)}) \big) (\theta_x -
\theta_{s_\alpha (x)}) / (\theta_0 - \theta_{\! -2\alpha}) & \alpha^\vee \in 2 X_* (T)
\end{array}\right. \!\!
\end{multline*}
\end{prop}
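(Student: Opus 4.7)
The plan is to reduce the proposition to known constructions in three stages: a single-parameter version without R-group, then passage to several parameters $\vec{\mb z}$, and finally the crossed product by $\cR_{q\cE}$.

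For the first stage, suppose $d=1$ and $\cR_{q\cE}$ is trivial. Then the listed relations are precisely the Bernstein--Lusztig presentation of an affine Hecke algebra with possibly unequal parameters (the two parameter functions $\lambda$ and $\lambda^*$ appearing exactly in the mixed-type rank-one situation $\alpha^\vee \in 2 X_* (T)$). Existence and uniqueness of this algebra structure are classical; the cleanest argument is to exhibit a faithful action on $\cO(T) \otimes \C[\mb z,\mb z^{-1}]$ where each generator $N_{s_\alpha}$ acts by a Demazure--Lusztig operator tailored to the parameters $\lambda(\alpha),\lambda^*(\alpha)$. Uniqueness follows because the relations suffice to straighten any monomial into the normal form $\theta_x N_w \mb z^k$, and the faithful representation shows this normal form is a basis.

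For the second stage, the decomposition \eqref{eq:0.2} is orthogonal and $W_{q\cE}^\circ$-stable, with each $\mb z_j$ acting (by hypothesis and by Proposition \ref{prop:4.1}) only on the relations attached to simple roots in $R_j$. Declaring $\C[\vec{\mb z},\vec{\mb z}^{-1}]$ to be central and repeating the single-parameter construction over this base ring produces the required algebra: the rank-one relations for simple $\alpha \in R_j$ involve only $\mb z_j$, while the Bernstein--Lusztig commutation relation between $\theta_x$ and $N_{s_\alpha}$ for $\alpha$ simple is local to the rank-one subsystem $R(G^\circ,T) \cap \R\alpha$ and hence involves only $\mb z_j$. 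Consistency of the full set of relations therefore reduces, as in the proof of \cite[Proposition 2.2]{AMS2} and in Proposition~\ref{prop:0.1}, to the corresponding single-parameter consistency in each factor.

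For the third stage, I would verify that the prescribed conjugation formula $N_\gamma N_w \theta_x N_\gamma^{-1} = N_{\gamma w \gamma^{-1}} \theta_{\gamma(x)}$ defines an action of $\cR_{q\cE}$ by algebra automorphisms on the already-constructed algebra, and then twist the resulting crossed product by the 2-cocycle $\natural$. This hinges on three compatibilities: $\cR_{q\cE}$ preserves the set of positive roots (by its very definition in \eqref{eq:0.20}), it preserves the decomposition $R(G^\circ,T)=R_1\sqcup\cdots\sqcup R_d$ and thereby the assignment of parameters $\mb z_j$ (from the $W_{q\cE}$-stability of the $R_j$ and the $W_{q\cE}$-invariance of $\lambda,\lambda^*$), and it normalizes the quadratic relation $(N_{s_\alpha}+\mb z_j^{-\lambda(\alpha)})(N_{s_\alpha}-\mb z_j^{\lambda(\alpha)})=0$. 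With these in hand, the twisted crossed product $\bigl(\cO(T)\otimes\mc H(W_{q\cE}^\circ,\vec{\mb z}^{2\lambda})\bigr)\rtimes \C[\cR_{q\cE},\natural]$ is well defined and satisfies all the stated relations; uniqueness descends from uniqueness at each prior stage.

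The main obstacle I anticipate is the bookkeeping in the mixed-type Bernstein--Lusztig relation (the second case, $\alpha^\vee \in 2X_*(T)$) when combined with the multi-parameter setting: one must check that the right-hand side is genuinely polynomial in $\theta_x - \theta_{s_\alpha(x)}$ (so the apparent division by $\theta_0-\theta_{-2\alpha}$ is harmless) and that conjugation by $\cR_{q\cE}$ sends this relation to another instance of the same relation with $\gamma(\alpha)$ and the same $\mb z_j$, $\lambda$, $\lambda^*$. Both points are standard local-rank-one computations, but they are where a careful verification is essential.
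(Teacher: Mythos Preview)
Your proposal is correct and follows essentially the same line as the paper's proof: first establish the affine Hecke algebra $\cH(\mathcal R,\lambda,\lambda^*,\vec{\mb z})$ in the case $\cR_{q\cE}=1$ (the paper simply cites Lusztig's presentation in \cite[\S 3]{Lus4} rather than separating out a $d=1$ substage), then use the $W_{q\cE}$-invariance of $\lambda,\lambda^*$ to check that $A_\gamma : N_w\theta_x \mapsto N_{\gamma w\gamma^{-1}}\theta_{\gamma(x)}$ is an automorphism, and finally form the twisted crossed product with $\C[\cR_{q\cE},\natural]$. The only cosmetic difference is that the paper realizes the twisted group algebra via a central extension $\cR_{q\cE}^+ \to \cR_{q\cE}$ and a central idempotent $p_\natural$ (so the twisted crossed product sits inside an honest semidirect product $\cR_{q\cE}^+ \ltimes \cH(\mathcal R,\lambda,\lambda^*,\vec{\mb z})$), whereas you invoke the twisted crossed product directly; both are standard and equivalent.
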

\begin{proof}
In the case $\cR_{q\cE} = 1$, the existence and uniqueness of such an algebra is
well-known. It follows for instance from \cite[\S 3]{Lus4}, once we identify
$T_{s_\alpha}$ from \cite{Lus4} with $\mb{z}_j^{\lambda (\alpha)} N_{s_\alpha}$.
It is called an affine Hecke algebra and denoted by $\cH (\mathcal R,\lambda,
\lambda^*, \vec{\mb z})$.

Since $\lambda$ and $\lambda^*$ are $W_{q\cE}$-invariant,
\[
A_\gamma \colon N_w \theta_x \mapsto N_{\gamma w \gamma^{-1}} \theta_{\gamma (x)}
\]
defines an automorphism of $\cH (\mathcal R,\lambda,\lambda^*,\vec{\mb z})$. Clearly
\[
\cR_{q\cE} \to \mathrm{Aut}(\cH (\mathcal R,\lambda,\lambda^*,\vec{\mb z})) :
\gamma \mapsto A_\gamma
\]
is a group homomorphism. Pick a central extension $\cR_{q\cE}^+$ of $\cR_{q\cE}$
and a central idempotent $p_\natural$ such that $\C[\cR_{q\cE},\natural] \cong
p_\natural \C[\cR_{q\cE}^+]$. Now the same argument as in the proof of
\cite[Proposition 2.2]{AMS2} shows that the algebra
\begin{equation}\label{eq:4.5}
\C[\cR_{q\cE},\natural] \ltimes \cH (\mathcal R,\lambda, \lambda^*, \vec{\mb z}) \cong
p_\natural \C[\cR_{q\cE}^+] \ltimes \cH (\mathcal R,\lambda,
\lambda^*, \vec{\mb z}) \subset \cR_{q\cE}^+ \ltimes \cH (\mathcal R,\lambda,
\lambda^*, \vec{\mb z})
\end{equation}
has the required properties.
\end{proof}

When $\cR_{q\cE} = 1$, specializations of $\cH (\mathcal R,\lambda, \lambda^*,
\vec{\mb z})$ at $\vec{\mb r} = \vec{r} \in \R_{>0}^d$ figure for example in
\cite{Opd}. In relation with $p$-adic groups one should think of the variables
$\vec{\mb z}$ as as $(q_j^{1/2})_{j=1}^d$, where $q_j$ is the cardinality of
some finite field.

We define, for $\alpha \in R(G^\circ,T)_\red$:
\begin{equation}\label{eq:4.2}
\begin{array}{lll@{\qquad}l}
\lambda (\alpha) & = & c (\alpha) / 2 & 2 \alpha \notin R(G^\circ,T) \\
\lambda^* (\alpha) & = & c (\alpha) / 2 &
2 \alpha \notin R(G^\circ,T), \alpha^\vee \in 2 X_* (T) \\
\lambda (\alpha) & = & c(\alpha)/2 + c(2\alpha)/4 & 2\alpha \in R(G^\circ,T) \\
\lambda^* (\alpha) & = & c(\alpha)/2 - c(2\alpha)/4 & 2\alpha \in R(G^\circ,T) .
\end{array}
\end{equation}
By Proposition \ref{prop:4.1} $\lambda (\alpha) \in \Z_{\geq 0}$ in all cases.

For $\natural = \natural_{q\cE}$ \cite[(91)]{AMS2} says that
\[
\C [\cR_{q\cE},\natural_{q\cE}] \cong
\End^+_{\mathcal P_G \Lie (G)_{\RS}} \big( q\pi_* (\widetilde{q\cE}) \big) .
\]
We denote the algebra constructed in Proposition \ref{prop:4.1}, with these extra
data, by $\cH (G,M,q\cE,\vec{\mb z})$. Since it is built from an affine Hecke
algebra $\cH (\mathcal R,\lambda,\lambda^*,\vec{\mb z})$ and a twisted group algebra
$\C[W_{q\cE},\natural_{q\cE}]$, we refer to it as a twisted affine Hecke algebra.
When $d=1$ we simply write $\cH (G,M,q\cE)$. We record that
\begin{equation}\label{eq:4.35}
\cH (G,M,q\cE) = \cH (G,M,q\cE,\vec{\mb z}) / ( \{ \mb z_i - \mb z_j : 1 \leq i,j \leq d \} ) .
\end{equation}
The same argument as for \cite[Lemma 2.8]{AMS2} shows that
\begin{equation}\label{eq:4.34}
\mc H (G,M,q\cE,\vec{\mb z}) = \cH (\mc R,\lambda,\lambda^*, \vec{\mb z}) \rtimes
\End^+_{\mc P_G \mr{Lie}(G)_\RS}\big( q\pi_* (\widetilde{q\cE}) \big) .
\end{equation}
If we are in one of the cases \eqref{eq:0.18}, then with this interpretation
$\cH (G,M,q\cE,\vec{\mb z})$ depends canonically on $(G,M,q\cE)$. In general the algebra
$\mc H (G,M,q\cE,\vec{\mb z})$ is not entirely canonical, since it involves
the choice of a decomposition \eqref{eq:0.6}.

\begin{lem}\label{lem:4.3}
$\cO (T \times (\C^\times)^d )^{W_{q\cE}} = \cO (T)^{W_{q\cE}} \otimes
\C[\vec{\mb z},\vec{\mb z}^{-1}]$ is a central subalgebra of $\cH (G,M,q_\cE, \vec{\mb z})$.
It equals $\rZ(\cH (G,M,q\cE, \vec{\mb z}))$ if $W_{q\cE}$ acts faithfully on $T$.
\end{lem}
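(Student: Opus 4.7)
The plan splits naturally into the two assertions of the lemma.

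For centrality, since $\C[\vec{\mb z},\vec{\mb z}^{-1}]$ is central by construction in Proposition \ref{prop:4.2}, I only need to verify that any $f \in \cO(T)^{W_{q\cE}}$ commutes with a set of generators of $\cH(G,M,q\cE,\vec{\mb z})$. It obviously commutes with $\cO(T)$ and with $\C[\vec{\mb z},\vec{\mb z}^{-1}]$. For $\gamma \in \cR_{q\cE}$, the relation $N_\gamma \theta_x N_\gamma^{-1} = \theta_{\gamma(x)}$ extends $\C$-linearly in $x$ to $N_\gamma f N_\gamma^{-1} = {}^\gamma f$, which equals $f$ by $W_{q\cE}$-invariance. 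For a simple reflection $s_\alpha$, I would extend the braid relation
\[
\theta_x N_{s_\alpha} - N_{s_\alpha} \theta_{s_\alpha(x)} = G_\alpha(x) \cdot (\theta_x - \theta_{s_\alpha(x)})
\]
(where $G_\alpha$ is the appropriate Laurent polynomial in $\theta_{-\alpha}$ and $\vec{\mb z}^{\pm\lambda(\alpha)}$, $\vec{\mb z}^{\pm\lambda^*(\alpha)}$ from the two cases of Proposition \ref{prop:4.2}) $\C$-linearly in $x$, obtaining $f N_{s_\alpha} - N_{s_\alpha} ({}^{s_\alpha} f) = G_\alpha \cdot (f - {}^{s_\alpha} f)$. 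When $f$ is $s_\alpha$-invariant, both sides vanish, so $f$ commutes with $N_{s_\alpha}$. Since $W_{q\cE}$ is generated by simple reflections and $\cR_{q\cE}$, centrality follows.

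For the equality of the center under the faithfulness hypothesis, I would use a localization argument. Let $Q = \mathrm{Frac}(\cO(T))$ and form $\cH_{\mathrm{loc}} = Q \otimes_{\cO(T)} \cH(G,M,q\cE,\vec{\mb z})$. Following the Bernstein--Lusztig technique (as in \cite{Lus4}), for each simple reflection $s_\alpha \in W_{q\cE}^\circ$ I would define an element $\iota(s_\alpha) = N_{s_\alpha} + h_\alpha$ with $h_\alpha \in Q \otimes \C[\vec{\mb z},\vec{\mb z}^{-1}]$ chosen so that $\iota(s_\alpha)\theta_x = \theta_{s_\alpha(x)}\iota(s_\alpha)$ holds in $\cH_{\mathrm{loc}}$; the required $h_\alpha$ is read off from the right-hand side of the braid relation, divided by $\theta_x - \theta_{s_\alpha(x)}$ after evaluation at a generator. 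These satisfy the braid relations of $W_{q\cE}^\circ$ and extend multiplicatively, and together with the $N_\gamma$ for $\gamma \in \cR_{q\cE}$ (which already conjugate $\theta_x$ correctly) they produce a $Q \otimes \C[\vec{\mb z},\vec{\mb z}^{-1}]$-basis $\{\iota(w)\}_{w \in W_{q\cE}}$ of $\cH_{\mathrm{loc}}$ realizing an isomorphism
\[
\cH_{\mathrm{loc}} \;\cong\; (Q \otimes \C[\vec{\mb z},\vec{\mb z}^{-1}]) \rtimes_\natural W_{q\cE} .
\]

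In this twisted crossed product, a central element $z = \sum_w f_w \iota(w)$ satisfies $(\theta_x - \theta_{w(x)}) f_w = 0$ for all $x \in X^*(T)$ and all $w$, obtained by comparing $\theta_x z$ with $z\theta_x$. If $W_{q\cE}$ acts faithfully on $T$, then for each $w \neq 1$ some $\theta_x - \theta_{w(x)}$ is nonzero in the integral domain $Q$, forcing $f_w = 0$; hence $z = f_1 \in Q \otimes \C[\vec{\mb z},\vec{\mb z}^{-1}]$, and commutation with every $\iota(w)$ then yields $f_1 \in Q^{W_{q\cE}} \otimes \C[\vec{\mb z},\vec{\mb z}^{-1}]$. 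If furthermore $z$ already lies in $\cH(G,M,q\cE,\vec{\mb z})$, then expanding in the $\cO(T) \otimes \C[\vec{\mb z},\vec{\mb z}^{-1}]$-basis $\{N_w\}_{w \in W_{q\cE}}$ and comparing with the equality $z = f_1 \cdot N_1$ in the $Q \otimes \C[\vec{\mb z},\vec{\mb z}^{-1}]$-basis $\{N_w\}$ of $\cH_{\mathrm{loc}}$, uniqueness of coordinates gives $f_1 \in \cO(T) \otimes \C[\vec{\mb z},\vec{\mb z}^{-1}]$ and all other coefficients zero, so $z \in \cO(T)^{W_{q\cE}} \otimes \C[\vec{\mb z},\vec{\mb z}^{-1}]$.

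The main obstacle is constructing the Bernstein--Lusztig modified elements $\iota(s_\alpha)$ and verifying that they satisfy the braid relations of $W_{q\cE}^\circ$; this is most delicate in the case $\alpha^\vee \in 2 X_*(T)$, where the right-hand side of the braid relation in Proposition \ref{prop:4.2} is more elaborate. This is well known in the single-variable untwisted setting and adapts readily to the multi-parameter case, since each simple root lies in only one $R(G_j T, T)$ and hence involves only a single $\mb z_j$. The extension by $\C[\cR_{q\cE}, \natural]$ introduces no new difficulty because $\cR_{q\cE}$ already acts on $\cO(T)$ by the prescribed automorphisms.
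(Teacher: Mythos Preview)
Your proposal is correct and follows the same route as the paper, which simply cites \cite[Proposition~3.11]{Lus4} for the untwisted single-parameter affine Hecke algebra and \cite[\S 1.2]{Sol3} for the easy extension to several variables $\mb z_j$ and the crossed product with $\C[\cR_{q\cE},\natural]$; you are effectively unpacking those citations by writing out the Bernstein--Lusztig intertwiner argument. One small imprecision worth fixing: since $\cO(T)$ is not central in $\cH(G,M,q\cE,\vec{\mb z})$, the object $Q \otimes_{\cO(T)} \cH$ should be interpreted as the Ore (equivalently, central) localization at the nonzero elements of $\cO(T)^{W_{q\cE}} \otimes \C[\vec{\mb z},\vec{\mb z}^{-1}]$, inside which the image of $\cO(T)$ becomes the field $Q$; after this adjustment your argument goes through verbatim.
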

\begin{proof}
The case $W_{q\cE} = 1 , d = 1$ is \cite[Proposition 3.11]{Lus4}.
The general case follows readily from that, as observed in \cite[\S 1.2]{Sol3}.
\end{proof}

For $\zeta \in \rZ(G) \cap G^\circ$ and $(\pi,V) \in \mr{Mod}(\cH (G,M,q_\cE, \vec{\mb z}))$
we define $(\zeta \otimes \pi,V) \in \mr{Mod}(\cH (G,M,q_\cE, \vec{\mb z}))$ by
\[
(\zeta \otimes \pi) (f_1 f_2 N_w) = f_1 (\zeta) \pi (f_1 f_2 N_w) \qquad
f_1 \in \mc O (T), f_2 \in \C [\vec{\mb z}, \vec{\mb z}^{-1}], w \in W_{q\cE} .
\]

\subsection{Reduction to real central character} \
\label{par:Rrcc}

Let $T = T_\uni \times T_{\rs}$ be the polar decomposition of the complex torus
$T$, in a unitary and a real split part:
\begin{equation}\label{eq:4.11}
\begin{aligned}
T_\uni = \Hom (X^* (T),S^1) = \exp (i \mf t_\R) , \\
T_{\rs} = \Hom (X^* (T),\R_{>0}) = \exp (\mf t_\R) .
\end{aligned}
\end{equation}
We write the polar decomposition of an arbitrary element $t \in T$ as 
\[
t = (t |t|^{-1} ) \, |t| \in T_\uni \times T_{\rs}.
\]
By Lemma \ref{lem:4.3} every irreducible representation of $\cH (G,M,q\cE,\vec{\mb z})$
admits a \\ $\cO (T \times (\C^\times)^d)^{W_{q\cE}}$-character, an element of
$T / W_{q\cE} \times (\C^\times)^d$. We will refer to this as the central character.
Following \cite[Definition 2.2]{BaMo} we say that a central character
$(W_\cE t,\vec{z})$ is ``real'' if $\vec{z} \in \R_{>0}^d$ and the unitary part
$t |t|^{-1}$ is fixed by $W_\cE^\circ$.

For $t \in T$ we define $\tilde{\rZ_G}(t)$ to be the subgroup of $G$ generated by
$\rZ_G (t)$ and the root subgroups for $\alpha \in R(G^\circ,T)$ with $\alpha^\vee \in
2 X_* (T)$ and $\alpha (t) = -1$. Thus $R(\tilde{\rZ_G}(t)^\circ,T)$ consists of the roots
$\alpha \in R (G^\circ,T)$ with $s_\alpha (t) = t$.
The analogue of $\cR_{q\cE}$ for $\tilde{\rZ_G}(t)$ is $\cR_{q\cE,t}$, the stabilizer
of $R(\tilde{\rZ_G}(t)^\circ,T) \cap R(P,T)$ in $W_{q\cE,t}$.

Our first reduction theorem will relate modules of $\cH (G,M,q\cE,\vec{\mb z})$ and of\\
$\cH (\tilde{\rZ_G}(t),M,q\cE,\vec{\mb z})$. Assuming that every $\mb z_j$ acts via a
positive real number, we end up with representations admitting a real central character.
To describe the effect on $\cO (T \times (\C^\times)^d)$-weights, we need some
preparations. Consider the set
\[
W_\cE^{t} = \big\{ w \in W_\cE : w \big( R(\tilde{\rZ_G}(t)^\circ,T) \cap
R(P,T) \big) \subset R(P,T) \big\} .
\]
Recall that the parabolic subgroup $P \subset G^\circ$ determines a set of simple
reflections and a length function on the Weyl group $W_{q\cE}^\circ = W_\cE$.
We use this to define two cones in $\mf t_\R = X_* (T) \otimes_\Z \R$:
\begin{align*}
& \mf t_\R^+ := \{ x \in \mf t_\R : \inp{x}{\alpha} \geq 0 \; \forall \alpha \in R(P,T) \} ,\\
& \mf t_\R^- := \big\{ \sum\nolimits_{\alpha \in R(P,T)} x_\alpha \alpha^\vee :
x_\alpha \leq 0 \big\}.
\end{align*}

\begin{lem}\label{lem:4.4}
\enuma{
\item $W_\cE^{t}$ is the unique set of shortest length representatives for \\
$W_\cE / W(\tilde{\rZ_G}(t)^\circ,T)$ in $W_\cE$.
\item $\bigcup_{w \in W_\cE^{t}} w^{-1} \mf t_\R^+$ equals $\mf t_\R^{+,t}$,
the analogue of $\mf t_\R^+$ for the group $\tilde{\rZ_G}(t)^\circ$.
The same holds for $\mf t_\R^{*,+}$.
\item $\{ x \in \mf t_\R : W_\cE^{t} x \subset \mf t_\R^- \}$ equals
$\mf t_\R^{-,t}$, the analogue of $\mf t_\R^-$ for $\tilde{\rZ_G}(t)^\circ$.
}
\end{lem}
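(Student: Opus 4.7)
The plan is to treat the three parts in turn, using the classical theory of reflection subgroups together with polar duality between the cones involved.

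For part (a), I would recognize $W(\tilde{Z_G}(t)^\circ, T)$ as a reflection subgroup of the Coxeter group $W_\cE$, whose canonical positive system is $R(\tilde{Z_G}(t)^\circ, T) \cap R(P,T)$ (the intersection of a sub-root system with an ambient positive system is always a positive system). The defining condition of $W_\cE^t$---that $w$ send this positive subsystem into $R(P,T)$---is precisely the Deodhar--Howlett characterization of the minimal-length coset representatives for $W_\cE / W(\tilde{Z_G}(t)^\circ, T)$, so (a) is immediate by invoking this classical result.

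For (b), the key claim to establish is that for $w \in W_\cE$, the inclusion $w^{-1} \mf t_\R^+ \subset \mf t_\R^{+,t}$ holds iff $w \in W_\cE^t$. The forward direction: this inclusion translates to $\inp{w^{-1} y}{\beta} \geq 0$ for every $y \in \mf t_\R^+$ and every $\beta \in R(\tilde{Z_G}(t)^\circ, T) \cap R(P,T)$; equivalently, $w\beta$ pairs nonnegatively with all of $\mf t_\R^+$, so $w\beta$ lies in the dual cone of $\mf t_\R^+$, which is the $\R_{\geq 0}$-span of $R(P,T)$. Since the positive root cone is salient and $w\beta$ is itself a root, this forces $w\beta \in R(P,T)$, whence $w \in W_\cE^t$. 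The converse runs the same calculation in reverse. To pass from this claim to the asserted set equality, I would observe that the walls of $\mf t_\R^{+,t}$ are hyperplanes $\beta^\perp$ for $\beta$ simple in the subsystem, each of which is a reflection hyperplane of $W_\cE$; consequently every open $W_\cE$-chamber is either contained in or disjoint from the interior of $\mf t_\R^{+,t}$, so $\mf t_\R^{+,t}$ is a union of closed $W_\cE$-chambers, and by the claim these are exactly the $w^{-1} \mf t_\R^+$ with $w \in W_\cE^t$. The analogue for $\mf t_\R^{*,+}$ follows by the identical argument with roots and coroots interchanged.

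Part (c) I would derive from (b) via polar duality: the bipolar theorem identifies $\mf t_\R^-$ with $\{x \in \mf t_\R : \inp{x}{\xi} \leq 0 \; \forall \xi \in \mf t_\R^{*,+}\}$, and similarly for $\mf t_\R^{-,t}$ and $\mf t_\R^{*,+,t}$. The condition $W_\cE^t x \subset \mf t_\R^-$ then reads $\inp{x}{w^{-1} \xi} \leq 0$ for all $w \in W_\cE^t$ and all $\xi \in \mf t_\R^{*,+}$, and by the dual form of (b) the set of such $w^{-1} \xi$ is exactly $\mf t_\R^{*,+,t}$, so the condition becomes $x \in \mf t_\R^{-,t}$. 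The main technical obstacle sits in (b): verifying that $\mf t_\R^{+,t}$ is literally a union of closed $W_\cE$-chambers $w^{-1} \mf t_\R^+$ with $w \in W_\cE^t$, rather than merely meeting them, depends on the compatibility of the two chosen positive systems which guarantees that the walls of the smaller chamber are reflection hyperplanes of the ambient group.
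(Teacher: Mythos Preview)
Your argument is correct. Parts (a) and (c) match the paper's proof essentially verbatim: (a) is referred to a standard result on minimal coset representatives, and (c) is deduced from the dual form of (b) by the same polar-duality computation you give.

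The genuine difference lies in part (b). The paper first checks the easy inclusion $\bigcup_{w \in W_\cE^t} w^{-1}\mf t_\R^+ \subset \mf t_\R^{+,t}$ and then finishes by a \emph{volume} argument: intersecting with a sphere $S$, one has $\mathrm{vol}(S \cap \mf t_\R^+) = \mathrm{vol}(S)/|W_\cE|$ and $\mathrm{vol}(S \cap \mf t_\R^{+,t}) = \mathrm{vol}(S)/|W(\tilde{Z_G}(t)^\circ,T)|$, so by (a) the union of the $|W_\cE^t|$ translates $w^{-1}\mf t_\R^+$ already has the full volume of $\mf t_\R^{+,t}$, forcing equality of the two cones. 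Your route is instead purely combinatorial-geometric: you characterize which closed $W_\cE$-chambers sit inside $\mf t_\R^{+,t}$ (exactly those indexed by $W_\cE^t$), and then observe that the walls of $\mf t_\R^{+,t}$ are $W_\cE$-reflection hyperplanes, so $\mf t_\R^{+,t}$ is tiled by closed $W_\cE$-chambers. Both arguments are short; the paper's counting avoids any discussion of how chambers fit together, while yours avoids the measure-theoretic step and makes the tiling picture explicit. The ``technical obstacle'' you flag at the end is real but mild: since $R(\tilde{Z_G}(t)^\circ,T) \subset R(G^\circ,T)$, the bounding hyperplanes of $\mf t_\R^{+,t}$ are indeed among the $W_\cE$-reflection hyperplanes, which is exactly what your argument needs.
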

\begin{proof}
(a) This well-known when $R(\tilde{\rZ_G}(t)^\circ,T)$ is parabolic subsystem
\cite[Proposition 1.10.c and \S 1.6]{Hum}, and the general case is mentioned in 
\cite[proof of Lemma 3.4]{Lus8}. For completeness, we provide a proof.

For any $w \in W_\cE$, $w^{-1} W(P,T) \cap R(\tilde{\rZ_G}(t)^\circ,T)$ is a positive
system in $R(\tilde{\rZ_G}(t)^\circ,T)$. By \cite[Theorem 1.8]{Hum} there exists a
unique $v \in W(\tilde{\rZ_G}(t)^\circ,T)$ such that 
\[
v^{-1} \big( w^{-1} W(P,T) \cap R(\tilde{\rZ_G}(t)^\circ,T) \big) = 
W(P,T) \cap R(\tilde{\rZ_G}(t)^\circ,T) .
\]
This is also the unique $v \in W(\tilde{\rZ_G}(t)^\circ,T)$ such that 
\[
wv \big( R(\tilde{\rZ_G}(t)^\circ,T) \cap R(P,T) \big) \subset R(P,T) .
\]
Hence $W_\cE^t$ is a set of representatives for $W_\cE / W(\tilde{\rZ_G}(t)^\circ,T)$.

Consider $w \in W_\cE$ of minimal length in $w W(\tilde{\rZ_G}(t)^\circ,T)$. By
\cite[Proposition 5.7]{Hum}, $w(\alpha) \in R(P,T)$ for all $\alpha \in 
R(\tilde{\rZ_G}(t)^\circ,T) \cap R(P,T)$, so $w \in W_\cE^t$. We deduce that every left 
coset of $W(\tilde{\rZ_G}(t)^\circ,T)$ contains a unique element of minimal length, 
namely its representative in $W_\cE^t$.\\
(b) Suppose that $x \in \mf t_\R^+$ and $\alpha \in R(\tilde{\rZ_G}(t)^\circ,T)
\cap R(P,T)$. For all $w \in W_\cE^{t}$ we have $w \alpha \in R(P,T)$, so
\[
\inp{\alpha}{w^{-1} x} = \inp{w \alpha}{x} \geq 0 .
\]
Hence $\bigcup_{w \in W_\cE^t} w^{-1} \mf t_\R^+ \subset \mf t_\R^{+,t}$.
Let $S$ be a sphere in $\mf t_\R$ centred in 0. Then
\[
\text{vol}(S) / \text{vol}(S \cap \mf t_\R^+) = |W_\cE| \quad \text{and} \quad
\text{vol}(S) / \text{vol}(S \cap \mf t_\R^{+,t}) = |W (\tilde{\rZ_G}(t)^\circ,T)| .
\]
With part (a) it follows that
\begin{equation}\label{eq:4.3}
|W_\cE^{t}| \text{ vol}(S \cap \mf t_\R^+) = |W_\cE| \text{ vol}
(S \cap \mf t_\R^+) / |W (\tilde{\rZ_G}(t)^\circ,T)| = \text{vol}(S \cap \mf t_\R^{+,t}) .
\end{equation}
Since $\mf t_\R^+$ is a Weyl chamber for $W_\cE$, the translates $w \mf t_\R^+$
intersect $\mf t_\R^+$ only in a set of measure zero. Hence the left hand side
of \eqref{eq:4.3} is the volume of $S \cap \bigcup_{w \in W_\cE^{t}} w^{-1}
\mf t_\R^+$. As $\bigcup_{w \in W_\cE^{t}} w^{-1} \mf t_\R^+ \subset
\mf t_\R^{+,t}$ and both are cones defined by linear equations coming from roots,
the equality \eqref{eq:4.3} shows that they coincide.

The same reasoning applies to $\mf t_\R^*$ and the dual root systems.\\
(c) The definition of $W_\cE^{t}$ entails $W_\cE^{t} \mf t_\R^{-,t}
\subset \mf t_\R^-$. Conversely, suppose that $x \in \mf t_\R$ and that
$W_\cE^{t} x \subset \mf t_\R^-$. For every $w \in W_\cE^{t}$ and
every $\lambda \in \mf t_\R^{*,+}$:
\[
\inp{x}{w^{-1} \lambda} = \inp{w x}{\lambda} \leq 0 .
\]
In view of part (b) for $\mf t_\R^{*,+}$, this means that $x \in \mf t_\R^{-,t}$.
\end{proof}

\begin{thm}\label{thm:4.5}
Let $t \in T_\uni$.
\enuma{
\item There is a canonical equivalence between the following categories:
\begin{itemize}
\item finite dimensional $\cH (\tilde{\rZ_G}(t),M,q\cE,\vec{\mb z})$-modules with
$\cO (T \times (\C^\times)^d)$-weights in $t T_{\rs} \times \R_{>0}^d$;
\item finite dimensional $\cH (G,M,q\cE,\vec{\mb z})$-modules with
$\cO (T \times (\C^\times)^d)$-weights in\\ $W_{q\cE} t T_{\rs} \times \R_{>0}^d$.
\end{itemize}
It is given by localization of the centre and induction, and we denote it (suggestively)
by $\ind_{\cH (\tilde{\rZ_G}(t),M,q\cE,\vec{\mb z})}^{\cH (G,M,q\cE,\vec{\mb z})}$.
\item The above equivalences are compatible with parabolic
induction, in the following sense. Let $Q \subset G$ be an algebraic subgroup
such that $Q \cap G^\circ$ is a Levi subgroup of $G^\circ$ and $Q \supset M$. Then
\[
\ind_{\cH (\tilde{\rZ_G}(t),M,q\cE,\vec{\mb z})}^{\cH (G,M,q\cE,\vec{\mb z})} \circ
\ind^{\cH (\tilde{\rZ_G}(t),M,q\cE,\vec{\mb z})}_{\cH (\tilde{\rZ_Q}(t),M,q\cE,\vec{\mb z})}
= \ind_{\cH (Q,M,q\cE,\vec{\mb z})}^{\cH (G,M,q\cE,\vec{\mb z})} \circ
\ind^{\cH (Q,M,q\cE,\vec{\mb z})}_{\cH (\tilde{\rZ_Q}(t),M,q\cE,\vec{\mb z})} .
\]
\item The set of $\cO (T \times (\C^\times)^d)$-weights of
$\ind_{\cH (\tilde{\rZ_G}(t),M,q\cE,\vec{\mb z})}^{\cH (G,M,q\cE,\vec{\mb z})} (V)$ is
\[
\big\{ (w x,\vec{z}) : w \in \cR_{q\cE} W_\cE^{\circ,t}, (x,\vec{z}) \text{ is a }
\mc O (T \times (\C^\times)^d)\text{-weight of } V \big\} .
\]
}
\end{thm}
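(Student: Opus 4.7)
The plan is to bootstrap from Lusztig's first reduction theorem \cite[Theorem 8.6]{Lus4}, which establishes the untwisted case ($\cR_{q\cE} = 1$, single parameter). There one localizes $\cH(G,M,q\cE)$ and $\cH(\tilde{Z_G}(t),M,q\cE)$ along the central subalgebra $\cO(T)^{W_\cE}$ at the orbit $W_\cE t T_{\rs}$, establishes a Morita equivalence between the resulting localizations via an explicit idempotent built from Bernstein generators, and thereby matches finite-dimensional modules whose weights lie in the prescribed sets. The extension to several central variables $\vec{\mb z}$ is routine: Lusztig treats the Hecke parameters as central scalars that may be specialized arbitrarily, and the only point to check is that the idempotents remain well-defined and invertible on the relevant weight spaces when $\vec z \in \R_{>0}^d$, which follows exactly as in the one-parameter case.

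Next I would incorporate $\cR_{q\cE}$ using the realization \eqref{eq:4.5} of $\cH(G,M,q\cE,\vec{\mb z})$ as a corner $p_\natural \C[\cR_{q\cE}^+] \ltimes \cH(\mc R, \lambda, \lambda^*, \vec{\mb z})$. The $\cR_{q\cE}$-action on $T$ preserves the orbit $W_{q\cE} t T_{\rs}$, so Lusztig's Morita equivalence for the affine Hecke algebra is $\cR_{q\cE}$-equivariant, because its intertwining idempotents are $\cR_{q\cE}$-stable combinations of Bernstein elements. Passing to the $\natural$-twisted corner on both sides then gives the equivalence of (a), mimicking the pattern used in \cite[Proposition 2.2]{AMS2} and in Proposition \ref{prop:0.1}. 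I expect this extension to be the main obstacle, since the multi-parameter generalization is mild but the $\natural$-twisting requires careful bookkeeping to confirm that the idempotents intertwine the enlarged algebras in a way compatible with the 2-cocycle.

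Part (b) is then essentially formal: both compositions in the stated equality arise as localizations of induction along two compatible chains of subalgebra inclusions through $\cH(Q,M,q\cE,\vec{\mb z})$ and $\cH(\tilde{Z_G}(t),M,q\cE,\vec{\mb z})$ respectively, and transitivity of induction combined with the compatibility of central localization with induction yields the identity. For part (c), I would trace $\cO(T \times (\C^\times)^d)$-weights through the induction functor: after localization the induced module decomposes as a sum indexed by coset representatives for $W_{q\cE}/W_{q\cE,t}^\circ$ in $W_{q\cE}$, and Lemma \ref{lem:4.4}(a) identifies these representatives with $\cR_{q\cE} W_\cE^{\circ,t}$, each acting on weights by $(x,\vec z) \mapsto (w x, \vec z)$.
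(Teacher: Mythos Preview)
Your proposal is essentially the same strategy as the paper's: bootstrap from \cite[Theorem 8.6]{Lus4}, extend to the twisted setting via the corner realization \eqref{eq:4.5} with the central extension $\cR_{q\cE}^+$, deduce (b) from transitivity of induction, and read off (c) from the coset decomposition furnished by Lemma~\ref{lem:4.4}(a). The one technical difference is that the paper does not track Lusztig's idempotents through the $\cR_{q\cE}$-action directly; instead it uses analytic localization in the sense of \cite[\S 4]{Opd} and invokes the version of the first reduction theorem in \cite[Theorem 2.1.2]{Sol3}, which is already formulated for crossed products with a finite group and thus absorbs the $\natural$-bookkeeping you flagged as the main obstacle.
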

\begin{proof}
(a) The case $d=1 , \cR_{q\cE} = 1$ was proven in \cite[Theorem 8.6]{Lus4}.

Let $\cR_{q\cE}^+ \to \cR_{q\cE}$ be a central extension as in \eqref{eq:4.5}. Extend it
trivially to a central extension $\cR_{q\cE}^+ W_{q\cE}^\circ \to W_{q\cE}$ and
let $\cR_{q\cE,t}^+$ be the inverse image of $\cR_{q\cE,t} \subset W_{q\cE,t}$ in
$\cR_{q\cE}^+ W_{q\cE}^\circ$. Then
\begin{equation}\label{eq:4.26}
\begin{aligned}
& \cH (G,M,q\cE,\vec{\mb z}) = \cH (G^\circ,M^\circ,\cE,\vec{\mb z}) \rtimes
p_\natural \C[\cR_{q\cE}^+], \\
& \cH (\tilde{\rZ_G}(t),M,q\cE,\vec{\mb z}) = \cH (\tilde{\rZ_{G^\circ}}(t),
M^\circ,\cE,\vec{\mb z}) \rtimes p_\natural \C[\cR_{q\cE,t}^+] .
\end{aligned}
\end{equation}
As $p_\natural \in \C [\ker (\cR_{q\cE}^+ \to \cR_{q\cE})]$ is a central idempotent,
we may just as well establish the analogous result for the algebras
\[
\cH (G^\circ,M^\circ,\cE,\vec{\mb z}) \rtimes \cR_{q\cE}^+ \quad \text{and} \quad
\cH (\tilde{\rZ_{G^\circ}}(t), M^\circ,\cE,\vec{\mb z}) \rtimes \cR_{q\cE,t}^+ .
\]
Since we are dealing with finite dimensional representations only, we can decompose
them according to the (generalized) weights for the action of the centre. Fix
$(x,\vec{z}) \in T_{\rs} \times \R_{>0}^d$.
Denote the category of finite dimensional $A$-modules with weights in $U$ by
$\Mod_{f,U}(A)$. We compare the categories
\begin{equation}\label{eq:4.25}
\begin{aligned}
& \Mod_{f,W_{q\cE,t} t x \times \{ \vec{z} \}} \big( \cH (\tilde{\rZ_{G^\circ}}(t),
M^\circ,\cE,\vec{\mb z}) \rtimes \cR_{q\cE,t}^+ \big) ,\\
& \Mod_{f,W_{q\cE,t} t x \times \{ \vec{z} \}} \big(
\cH (G^\circ,M^\circ,\cE,\vec{\mb z}) \rtimes \cR_{q\cE}^+ \big) .
\end{aligned}
\end{equation}
The most appropriate technique to handle the general case is analytic localization,
as in \cite[\S 4]{Opd} (but there with fixed parameters $z_1,\ldots,z_d$). For a
submanifold $U \subset T \times (\C^\times )^d$, let $C^{an}(U)$ be the algebra
of complex analytic functions on $U$. We assume that $U$ is $W_{q\cE}$-stable and
Zariski-dense. Then the restriction map \\ $\cO (T \times (\C^\times)^d) \to
C^{an}(U)$ is injective, and we can form the algebra
\begin{equation}\label{eq:4.30}
\cH^{an}(U) := C^{an}(U)^{W_{q\cE}} \underset{\cO (T \times (\C^\times)^d)^{W_{q\cE}}
}{\otimes} \cH (G^\circ,M^\circ,\cE,\vec{\mb z}) \rtimes \cR_{q\cE}^+ .
\end{equation}
As observed in \cite[Proposition 4.3]{Opd}, the finite dimensional modules of
$\cH^{an}(U)$ can be identified with the finite dimensional modules of
$\cH (G^\circ,M^\circ,\cE,\vec{\mb z}) \rtimes \cR_{q\cE}^+$ with
$\cO (T \times (\C^\times)^d)$-weights in $U$.

In \cite[Conditions 2.1]{Sol3} it is described how one can find an open neighborhood
$U_0 \subset T \times (\C^\times)^d$ of $(x,\vec{z})$, which is so small that
localization to $U_0$ is more or less equivalent to localization at $(x,\vec{z})$.
We take $U = W_{q\cE} U_0$ and $\tilde{U} = W_{q\cE,t} U_0$. By Lusztig's
first reduction theorem, in the version \cite[Theorem 2.1.2]{Sol3}, there is
a natural inclusion of
\[
\cH_t^{an}(\tilde{U}) := C^{an}(\tilde{U})^{W_{q\cE,t}} \underset{\cO (T \times
(\C^\times)^d)^{W_{q\cE,t}}}{\otimes} \cH (\tilde{\rZ_{G^\circ}}(t),M^\circ,\cE,
\vec{\mb z}) \rtimes \cR_{q\cE,t}^+ \vspace{-1mm}
\]
in $\cH^{an}(U)$, which moreover is a Morita equivalence. Hence the composed functor
\begin{multline*}
\ind^{\cH (G^\circ,M^\circ,\cE,\vec{\mb z}) \rtimes \cR_{q\cE}^+}_{\cH
(\tilde{\rZ_{G^\circ}}(t), M^\circ,\cE,\vec{\mb z}) \rtimes \cR_{q\cE,t}^+} :
\Mod_{f,\tilde{U}} \big( \cH (\tilde{\rZ_{G^\circ}}(t),M^\circ,\cE,
\vec{\mb z}) \rtimes \cR_{q\cE,t}^+ \big) \to \\
\Mod_f \big( \cH_t^{an}(\tilde{U}) \big) \to \Mod_f \big( \cH^{an}(U) \big) \to
\Mod_{f,U} \big( \cH (G^\circ,M^\circ,\cE,\vec{\mb z}) \rtimes \cR_{q\cE}^+ \big)
\end{multline*}
is an equivalence of categories. We specialize this at $W_{q\cE,t} t x \times
\{ \vec{z} \} \subset U$ and we restrict to modules on which $p_\natural$ acts
as the identity. Via \eqref{eq:4.25} and \eqref{eq:4.26} this gives the
required equivalence of categories $\ind_{\cH (\tilde{\rZ_G}(t),M,q\cE,
\vec{\mb z})}^{\cH (G,M,q\cE,\vec{\mb z})}$.\\
(b) We just showed that the above functor is really induction between localizations
of the indicated algebras. Similar remarks apply to the functor
$\ind_{\cH (Q,M,q\cE,\vec{\mb z})}^{\cH (G,M,q\cE,\vec{\mb z})}$.
Thus the acclaimed compatibility with parabolic induction is just an
instance of the transitivity of induction.\\
(c) Lemma \ref{lem:4.4}.a and the constructions in \cite[\S 2.1]{Sol3} entail that
\begin{equation}\label{eq:4.6}
\ind_{\cH (\tilde{\rZ_G}(t),M,q\cE,\vec{\mb z})}^{\cH (G,M,q\cE,\vec{\mb z})} (V)
\cong \C [\cR_{q\cE} W_\cE^{t},\natural_{q\cE}]
\underset{\C [\cR_{q\cE,t},\natural_{q\cE}]}{\otimes} V
\end{equation}
as $\cO (T \times (\C^\times)^d)$-modules. Notice that the group $\cR_{q\cE,t}$
acts from the right on $\cR_{q\cE} W_\cE^{t}$, because it stabilizes
$R(\tilde{\rZ_G}(t)^\circ,T) \cap R(P,T)$. Since
\[
\cH (\tilde{\rZ_G}(t),M,q\cE,\vec{\mb z}) \cong \cH (\tilde{\rZ_G}(t)^\circ M,
M,q\cE,\vec{\mb z}) \rtimes \C [\cR_{q\cE,t},\natural_{q\cE}] ,
\]
the $\cO (T \times (\C^\times)^d)$-weights of $V$ come in full $\cR_{q\cE,t}$-orbits.
It was observed in the proof of \cite[Proposition 4.20]{Opd} that the
$\cO (T \times \C^\times)^d)$-weights of $\C w \otimes V \; (w \in W_\cE^{\circ,t})$
are precisely $(wx,\vec{z})$ with $(x,\vec{z})$ a $\cO (T \times (\C^\times)^d)$-weight
of $V$. Multiplication by $N_\gamma \; (\gamma \in \cR_{q\cE})$ just changes a weight
$(x,\vec{z})$ to $(\gamma x,\vec{z})$. These observations and \eqref{eq:4.6} prove
that the $\cO (T \times (\C^\times)^d)$-weights of $\ind_{\cH (\tilde{\rZ_G}(t),M,
q\cE,\vec{\mb z})}^{\cH (G,M,q\cE,\vec{\mb z})} (V)$ are as stated.
\end{proof}

In our reduction process we would like to preserve the analytic properties from
\cite[\S 3.5]{AMS2}. Just as in \cite[(79)]{AMS2}, we can define
$\cO (T)$-weights for modules of affine Hecke algebras or extended versions such
as $\cH (\tilde{\rZ_G}(t),M,q\cE,\vec{\mb z})$. We denote the set of $\cO (T)$-weights of
a module $V$ for such an algebra by Wt$(V)$. We can apply the polar
decomposition \eqref{eq:4.11} to it, which gives a set $|\mathrm{Wt}(V)|
\subset T_{\rs}$.

Let us recall the definitions of temperedness and discrete series
from \cite[\S 2.7]{Opd}.

\begin{defn}\label{defn:4.8}
Let $V$ be a finite dimensional $\cH (G,M,q\cE,\vec{\mb z})$-module. We say that $V$ is
tempered (respectively anti-tempered) if $|\mathrm{Wt} (V)| \subset
\exp (\mf t_\R^-)$, respectively $\subset \exp (-\mf t_\R^-)$.

Let $\mf t_\R^{--}$ be the interior of $\mf t_\R^-$ in $\mf t_\R$.
We call $V$ discrete series (resp. anti-discrete series) if $|\mathrm{Wt} (V)|
\subset \exp (\mf t_\R^{--})$, respectively $\subset \exp (-\mf t_\R^{--})$.
The module $V$ is essentially discrete series if its restriction to
$\cH (G / \rZ(G^\circ)^\circ ,M / \rZ( G^\circ)^\circ,q\cE,\vec{\mb z})$ is discrete series,
or equivalently if $|\mathrm{Wt}(V)| \subset \exp (\rZ(\mf g) \oplus \mf t_\R^{--})$.
\end{defn}

The next result fills a gap in \cite[Theorem 2.3.1]{Sol3}, where it was used
between the lines. Similar results, for $G^\circ_\der$ only and with somewhat
different notions of temperedness and discrete series, were proven in
\cite[Lemmas 3.4 and 3.5]{Lus8}.

\begin{prop}\label{prop:4.9}
The equivalence from Theorem \ref{thm:4.5}.a, and its inverse, preserve:
\enuma{
\item (anti-)temperedness,
\item the discrete series.
\item The $\cH (\tilde{\rZ_G} (t),M,q\cE,\vec{\mb z})$-module
$\ind_{\cH (\tilde{\rZ_G} (t),M,q\cE,\vec{\mb z})}^{\cH (G,M,q\cE,\vec{\mb z})} (V)$
is essentially discrete series if and only if $V$ is essentially discrete series
and $R(\tilde{\rZ_G}(t)^\circ,T)$ has full rank in $R(G^\circ,T)$.
}
\end{prop}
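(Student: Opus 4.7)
My plan is to derive all three parts from two ingredients: the explicit weight description of Theorem \ref{thm:4.5}(c) and the cone identities in Lemma \ref{lem:4.4}. Both directions of each equivalence are covered simultaneously, since Theorem \ref{thm:4.5}(a) is a genuine equivalence of categories. Throughout I use the polar decomposition $T = T_\uni \times T_\rs$ and the fact that $W_{q\cE}$ acts on $T$ by algebraic group automorphisms preserving $T_\uni$ and $T_\rs$ separately; hence $|w x| = w |x|$ for $w \in W_{q\cE}$ and $x \in T$. The factor $\cR_{q\cE}$ stabilizes $R(P,T)$, so it also preserves $\mf t_\R^+$, $\mf t_\R^-$, and their interiors.

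For part (a), let $V$ be a finite-dimensional $\cH(\tilde{Z_G}(t), M, q\cE, \vec{\mb z})$-module with weights in $t T_\rs \times \R_{>0}^d$, and set $\tilde V = \ind_{\cH(\tilde{Z_G}(t), M, q\cE, \vec{\mb z})}^{\cH(G, M, q\cE, \vec{\mb z})} V$. By Theorem \ref{thm:4.5}(c), the $\cO(T)$-weights of $\tilde V$ are the $\cR_{q\cE} W_\cE^{\circ,t}$-translates of those of $V$. Taking absolute values and using $\cR_{q\cE}$-invariance of $\mf t_\R^-$, temperedness of $\tilde V$ is equivalent to $W_\cE^{\circ,t} \log |\mathrm{Wt}(V)| \subset \mf t_\R^-$, which by Lemma \ref{lem:4.4}(c) is equivalent to $\log |\mathrm{Wt}(V)| \subset \mf t_\R^{-,t}$, i.e. to temperedness of $V$. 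The anti-tempered case is identical after replacing $\mf t_\R^-$ with $-\mf t_\R^-$.

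For part (b), I first establish the interior analogue of Lemma \ref{lem:4.4}(c), namely $\{x \in \mf t_\R : W_\cE^{\circ,t} x \subset \mf t_\R^{--}\} = \mf t_\R^{--,t}$. One inclusion is immediate from the definition of $W_\cE^{\circ,t}$; for the other, if $W_\cE^{\circ,t} x \subset \mf t_\R^{--}$, then by openness a small neighborhood of $x$ still maps into $\mf t_\R^-$ under $W_\cE^{\circ,t}$, and Lemma \ref{lem:4.4}(c) places that neighborhood inside $\mf t_\R^{-,t}$, showing that $x$ lies in its interior. Granting this, the proof of (a) carries over verbatim with $\mf t_\R^-$ replaced by $\mf t_\R^{--}$.

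For part (c), write $\mf g_t = \mr{Lie}(\tilde{Z_G}(t))$. The condition that $R(\tilde{Z_G}(t)^\circ, T)$ has full rank in $R(G^\circ, T)$ is equivalent to $Z(\mf g_t) = Z(\mf g)$ inside $\mf t$. Decomposing $\log|x| = z + x'$ with $z$ in the center and $x'$ in the $\R$-span of coroots, and using that $\cR_{q\cE} W_\cE^{\circ,t}$ fixes the $Z(\mf g)$-component, I reduce the statement of part (c) to applying part (b) to the root-theoretic component. Under the full-rank assumption $Z(\mf g_t) = Z(\mf g)$, giving the equivalence; without it, $V$ essentially discrete series can have weights with $z \in Z(\mf g_t)_\R \setminus Z(\mf g)_\R$, which prevent $\tilde V$ from being essentially discrete series, yielding the ``only if'' direction. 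The main obstacle is the interior analogue of Lemma \ref{lem:4.4}(c) together with the careful separation in (c) of the $Z(\mf g)$-direction from the coroot span; once these are in hand, the remaining arguments are routine.
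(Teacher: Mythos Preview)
Your proofs of (a) and (b) are correct. Part (a) follows the paper exactly. For (b) you give a genuinely different and cleaner argument than the paper: you establish the interior analogue $\{x \in \mf t_\R : W_\cE^{\circ,t} x \subset \mf t_\R^{--}\} = \mf t_\R^{--,t}$ directly via openness, whereas the paper argues each direction separately, using an explicit contradiction with a well-chosen $\lambda \in \mf t_\R^{*,+}$ to force semisimplicity of $\tilde{Z_G}(t)^\circ$ and then to rule out boundary weights. Your route is shorter and more conceptual; the paper's route makes the semisimplicity consequence (needed in (c)) more visible.

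In part (c) there is a genuine logical gap in the ``only if'' direction. You argue that, without full rank, \emph{$V$ essentially discrete series} can have weights with central component in $Z(\mf g_t)_\R \setminus Z(\mf g)_\R$, preventing $\tilde V$ from being essentially discrete series. But this is the contrapositive of the wrong implication: it addresses ``($V$ ess DS and not full rank) $\Rightarrow$ $\tilde V$ not ess DS'', whereas what is needed is ``$\tilde V$ ess DS $\Rightarrow$ full rank'' for \emph{any} $V$ (including $V$ not essentially discrete series). Moreover, ``can have'' only asserts a possibility, not that every such $V$ has such a weight. The fix is already implicit in your proof of (b): applying your interior-cone identity to the quotient $G/Z(G^\circ)^\circ$, if $\tilde V$ is essentially discrete series then $V$ is discrete series as a $\cH(\tilde{Z_G}(t)/Z(G^\circ)^\circ,\ldots)$-module, which forces the corresponding interior cone $\mf t_\R^{--,t}$ (modulo $Z(\mf g)$) to be nonempty, hence $(\tilde{Z_G}(t)/Z(G^\circ)^\circ)^\circ$ is semisimple, i.e.\ $Z(\mf g_t)=Z(\mf g)$, i.e.\ full rank. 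This is precisely how the paper proceeds.

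A minor imprecision: you write that $\cR_{q\cE} W_\cE^{\circ,t}$ \emph{fixes} the $Z(\mf g)$-component. The Weyl part $W_\cE^{\circ,t}$ does (reflections fix $\bigcap_\alpha \ker\alpha$ pointwise), but $\cR_{q\cE}$ only \emph{stabilizes} $Z(\mf g)_\R$ as a subspace. This does not damage your argument, since one only needs that the set $Z(\mf g)_\R + \mf t_\R^{--}$ is $\cR_{q\cE}$-stable, which it is.
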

\begin{rem} The extra condition for essentially
discrete series representations is necessary, for the centre of
$\tilde{\rZ_G}(t)^\circ$ can be of higher dimension than that of $G^\circ$.
\end{rem}
\begin{proof}
Let $V$ be a finite dimensional $\cH (\tilde{\rZ_G}(t),M,q\cE,\vec{\mb z})$-module with
$\cO (T \times (\C^\times)^d)$-weights in $t T_{\rs} \times \R_{>0}^d$.\\
(a) The $\cO (T)$-weights of
$\ind_{\cH (\tilde{\rZ_G}(t),M,q\cE,\vec{\mb z})}^{\cH (G,M,q\cE,\vec{\mb z})} (V)$
were given in Theorem \ref{thm:4.5}.c. As $\log = \exp^{-1} : T_{\rs} \to \mf t_\R$ is
$W_{q \cE}$-equivariant, it entails that
\[
\log \big| \mathrm{Wt} \big( \ind_{\cH (\tilde{\rZ_G}(t),M,q\cE,\vec{\mb z})}^{\cH (G,M,
q\cE,\vec{\mb z})} V \big) \big| = \cR_{q\cE} W_\cE^{t} \log | \mathrm{Wt} (V) | .
\]
Recall from Lemma \ref{lem:4.4}.c that
\[
\mf t_\R^{-,t} = \{ x \in \mf t_\R : W_\cE^{t} x \subset \mf t_\R^{-} \} =
\{ x \in \mf t_\R : \cR_{q\cE} W_\cE^{t} x \subset \mf t_\R^{-} \} .
\]
Comparing these with the definition of (anti-)temperedness for $G$ and for $\tilde{\rZ_G}(t)$,
we see that $V$ is (anti-)tempered if and only if
$\ind_{\cH (\tilde{\rZ_G}(t),M,q\cE,\vec{\mb z}) }^{\cH (G,M,q\cE,\vec{\mb z})} (V)$ is so. \\
(b) We have to assume that $\rZ(G^\circ)$ is finite, for otherwise $\exp (\mf t_\R^{--})$
is empty and there are no discrete series representations on any side of the equivalences.

Suppose that $V$ is discrete series. Then $\tilde{\rZ_G}(t)^\circ$ is semisimple, so
$R(\tilde{\rZ_G}(t)^\circ,T)$ is of full rank in $R(G^\circ,T)$. This implies that
$\mf t_\R^{--,t}$ is an open subset of $\mf t_\R^{--}$. The same argument as for
part (a) shows that $\ind_{\cH (\tilde{\rZ_G}(t),M,q\cE,\vec{\mb z})
}^{\cH (G,M,q\cE,\vec{\mb z})} (V)$ is discrete series.

Conversely, suppose that
$\ind_{\cH (\tilde{\rZ_G}(t),M,q\cE,\vec{\mb z})}^{\cH (G,M,q\cE,\vec{\mb z})} (V)$
is discrete series. It is tempered, so $V$ is tempered and $|\mathrm{Wt}(V)| \subset
\exp (\mf t_\R^{-,t})$. Assume that $\tilde{\rZ_G}(t)^\circ$ is not 
semisimple. Then
\[
\mf t_Z := \Lie \big( Z (\tilde{\rZ_G}(t)^\circ) \big) =
\bigcap\nolimits_{\alpha \in R (\tilde{\rZ_G}(t)^\circ,T)} \ker \alpha
\]
has positive dimension. In particular $\mf t_Z^*$ contains nonzero elements $\lambda
\in \mf t_\R^{*,+}$, for example the sum of the fundamental weights for simple roots
not in $\R R (\tilde{\rZ_G}(t)^\circ,T)$. Let $t' \in T$ be any weight of $V$. Then
$\log |t'| \in \mf t_\R^{-,t} \subset \Lie (\tilde{\rZ_G}(t)_\der^\circ)$. Hence
$\inp{\log |t'|}{\lambda} = 0$, which means that $\log |t'| \in \mf t_\R^-
\setminus \mf t_\R^{--}$. But $t'$ is also a weight of $\ind_{\cH (\tilde{\rZ_G}(t),
M,q\cE)}^{\cH (G,M,q\cE)} (V)$, and that is a discrete series representation, so
$\log |t'| \in \mf t_\R^{--}$. This contractiction shows that $\tilde{\rZ_G}(t)^\circ$
is semisimple.

Suppose now that $\log |t'|$ does not lie in the interior of $\mf t_\R^{-,t}$.
Then it is orthogonal to a nonzero element $\lambda'$ in the boundary of
$\mf t_\R^{*,+,t}$. By Lemma \ref{lem:4.4}.b we can choose a $w \in W_\cE^{t}$
such that $w \lambda' \in \mf t_\R^{*,+}$. Theorem \ref{thm:4.5}.c $w t'$ is a
weight of $\ind_{\cH (\tilde{\rZ_G}(t),M,q\cE,\vec{\mb z})}^{
\cH (G,M,q\cE,\vec{\mb z})} (V)$, and it satisfies
\[
\inp{\log |w t'|}{w \lambda'} = \inp{\log |t'|}{\lambda'} = 0 .
\]
This shows that $\log |w t'| \notin \mf t_\R^{--}$, which contradicts that
$\ind_{\cH (\tilde{\rZ_G}(t),M,q\cE,\vec{\mb z})}^{\cH (G,M,q\cE,\vec{\mb z})} (V)$
is discrete series. Therefore $\log |t'|$ belongs to $\mf t_\R^{--,t}$.
As $t'$ was an arbitrary weight of $V$, this proves that $V$ is discrete series.\\
(c) Suppose that $\ind_{\cH (\tilde{\rZ_G} (t),M,q\cE,\vec{\mb z})}^{\cH (G,M,q\cE,
\vec{\mb z})} (V)$ is essentially discrete series. Its restriction to
$\cH (G / \rZ(G^\circ)^\circ ,M / \rZ(G^\circ)^\circ,q\cE,\vec{\mb z})$ is discrete
series, so by what we have just proven $V$ is discrete series as a module for
$\cH (\widetilde{\rZ_{G / \rZ(G^\circ)^\circ}}(t), M / \rZ(G^\circ)^\circ, q\cE, \vec{\mb z})$,
and $\widetilde{\rZ_{G / \rZ(G^\circ)^\circ}}(t)^\circ$ is semisimple.
Then  $R(\tilde{\rZ_G}(t)^\circ,T)$ has full rank in $R(G^\circ,T)$ and the restriction of
$V$ to the smaller algebra
$\cH (\tilde{\rZ_G}(t)^\circ / \rZ(G^\circ)^\circ , M / \rZ(G^\circ)^\circ ,q\cE,\vec{\mb z})$
is also discrete series, so $V$ is essentially discrete series.

Conversely, suppose that $V$ is essentially discrete series and that
$R(\tilde{\rZ_G} (t)^\circ,T)$ has full rank in $R(G^\circ,T)$. The second assumption
implies that $\rZ(G^\circ)^\circ$ is also the connected centre of $\tilde{\rZ_G}(t)^\circ$.
The same argument as in the tempered and the discrete series case shows that
\[
\big| \mathrm{Wt}\big( \ind_{\cH (\tilde{\rZ_G} (t),M,q\cE,\vec{\mb z})}^{
\cH (G,M,q\cE,\vec{\mb z})} V \big) \big| \subset \exp (\mf t_\R^{--} \oplus \rZ(\mf g)) .
\]
This means that $\ind_{\cH (\tilde{\rZ_G} (t),M,q\cE,\vec{\mb z})}^{
\cH (G,M,q\cE,\vec{\mb z})} (V)$ is essentially discrete series.
\end{proof}

Suppose that $t' \in W_{q\cE} t$. Then we can apply Theorem \ref{thm:4.5}.a also
with $t'$ instead of $t$, and that should give essentially the same equivalence of
categories. We check this in a slightly more general setting, which covers all
$t' \in T \cap  \Ad (G) t$. (Recall that for $g,h \in G$ we write Ad$(g)(h) = 
g h g^{-1}$.) By \cite[\S 8.13.b]{Lus5} and Proposition \ref{prop:0.7}.a
\begin{equation}\label{eq:TAdG}
T \cap \Ad (G) t \text{ equals } T \cap \Ad (\rN_G (T)) t \supset W_{q\cE} t.
\end{equation}
Let $g \in \rN_G (M) = \rN_G (T)$, with image $\bar g$ in $\rN_G (M) / M$. Conjugation
with $g$ yields an algebra isomorphism
\begin{equation}\label{eq:4.7}
\begin{aligned}
& \Ad (g) \colon\cH (\tilde{\rZ_G}(t),M,q\cE,\vec{\mb z}) \to \cH (\tilde{\rZ_G}(g t g^{-1}),M,
\Ad (g^{-1})^* q\cE,\vec{\mb z}) , \\
& \Ad (g) (N_w ) = N_{\bar g w \bar{g}^{-1}}, \quad
\Ad (g) \theta_x = \theta_{x \circ \Ad (g^{-1})} = \theta_{\bar g x}, \quad
\Ad (g) \mb z_j = \mb z_j ,
\end{aligned}
\end{equation}
where $w \in W_{q\cE}$ and $x \in X^* (T)$. Notice that this depends only on $g$
through its class in $\rN_G (M) / M$.

\begin{lem}\label{lem:4.6}
Let $t \in T_\uni$ and $g \in \rN_G (M)$. Then
\[
\ind_{\cH (\tilde{\rZ_G}(t),M,q\cE,\vec{\mb z})}^{\cH (G,M,q\cE,\vec{\mb z})} =
\Ad (g)^* \circ \ind_{\cH (\tilde{\rZ_G}(gtg^{-1}),M,\Ad (g^{-1})^* q\cE,\vec{\mb z})
}^{\cH (G,M,\Ad (g^{-1})^* q\cE,\vec{\mb z})} \circ \Ad (g^{-1})^*
\]
as functors between the appropriate categories of modules of these algebras
(as specified in Theorem \ref{thm:4.5}).
\end{lem}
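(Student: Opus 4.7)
The plan is to invoke the naturality of the induction functor from Theorem \ref{thm:4.5} with respect to the algebra isomorphism $\Ad(g)$ of \eqref{eq:4.7}. Both sides of the claimed identity are functors from finite dimensional $\cH(\tilde{Z_G}(t),M,q\cE,\vec{\mb z})$-modules with $\cO(T \times (\C^\times)^d)$-weights in $tT_{\rs} \times \R_{>0}^d$ to finite dimensional $\cH(G,M,q\cE,\vec{\mb z})$-modules with weights in $W_{q\cE}tT_{\rs} \times \R_{>0}^d$, and I would show that they agree as functors.

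First I would check that the right-hand side is well-defined. If $V$ is a module of $\cH(\tilde{Z_G}(t),M,q\cE,\vec{\mb z})$ with weights in $tT_{\rs} \times \R_{>0}^d$, then via \eqref{eq:4.7} the module $\Ad(g^{-1})^* V$ has weights $(\bar g t_0,\vec z)$ for $(t_0,\vec z)$ a weight of $V$. Since $g \in N_G(T)$ acts on $T$ preserving the polar decomposition $T = T_\uni \times T_{\rs}$, these weights lie in $(gtg^{-1})T_{\rs} \times \R_{>0}^d$, so the induction functor on the right-hand side applies. Furthermore $\Ad(g)$ intertwines the subalgebra $\cH(\tilde{Z_G}(t),M,q\cE,\vec{\mb z}) \hookrightarrow \cH(G,M,q\cE,\vec{\mb z})$ with the corresponding subalgebra for $(gtg^{-1}, \Ad(g^{-1})^* q\cE)$ inside $\cH(G,M,\Ad(g^{-1})^* q\cE,\vec{\mb z})$, because \eqref{eq:4.7} extends verbatim from the smaller algebra to the larger one.

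Next I would trace the construction of the induction functor through the analytic localization framework used in the proof of Theorem \ref{thm:4.5}. Choose an open $W_{q\cE}$-stable neighbourhood $U \subset T \times (\C^\times)^d$ of the relevant point and the associated $W_{q\cE,t}$-stable subset $\tilde U$. The functor $\ind_{\cH(\tilde{Z_G}(t),M,q\cE,\vec{\mb z})}^{\cH(G,M,q\cE,\vec{\mb z})}$ is obtained by combining the Morita embedding $\cH_t^{an}(\tilde U) \hookrightarrow \cH^{an}(U)$ of \cite[Theorem 2.1.2]{Sol3} with the identification of finite dimensional modules of these analytic algebras with the corresponding modules of the full twisted affine Hecke algebras. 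Conjugation by $g$ carries $U$ to $gUg^{-1}$ and $\tilde U$ to $g \tilde U g^{-1}$, and by \eqref{eq:4.7} it carries the pair of algebras $(\cH^{an}(U), \cH_t^{an}(\tilde U))$ isomorphically to the corresponding pair for the data $(G,M,\Ad(g^{-1})^* q\cE)$ at the base point $gtg^{-1}$. Since the construction of the Morita embedding depends only on the root-theoretic data of $R(G^\circ,T)$ together with the structural decomposition \eqref{eq:0.6}, both of which are preserved by $\Ad(g)$ (which just relabels $q\cE$ to $\Ad(g^{-1})^* q\cE$), the Morita embeddings for the two sides are intertwined by $\Ad(g)$. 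Composing with the tautological identifications of module categories, the two functors in the statement agree.

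The main obstacle is verifying that Lusztig's intertwining elements, which realise the Morita equivalence in \cite[Theorem 2.1.2]{Sol3}, transform correctly under $\Ad(g)$. This reduces to a bookkeeping check: the intertwiners are built from the generators $N_{s_\alpha}$, elements $\theta_x$ and parameters $c(\alpha)$, and each of these transforms by \eqref{eq:4.7} in a manifestly $\Ad(g)$-equivariant way. Once this naturality is granted, the identity of functors in the statement is immediate from transitivity of the identifications made in the proof of Theorem \ref{thm:4.5}.
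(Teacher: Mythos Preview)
Your approach is correct in outline and captures the essential content: the induction functor of Theorem~\ref{thm:4.5} is assembled from analytic localization plus a Morita embedding, and both steps are manifestly natural under the algebra isomorphism $\Ad(g)$ of \eqref{eq:4.7}, so the composite is as well. This transport-of-structure argument is a legitimate proof.

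The paper argues differently and more concretely. It starts from an arbitrary $\cH(G,M,q\cE,\vec{\mb z})$-module $V$, decomposes it into generalized weight spaces $V_{t'T_{\rs}}$, and uses the intertwining elements $\tau_{\bar g}$ of \cite[\S 5]{Lus4} (which live in a localization of the Hecke algebra) to move between these pieces. The identity of functors then follows because conjugation by $\tau_{\bar g}$ realizes $\Ad(g)$ on the algebra, by \cite[\S 8.8]{Lus4} and \cite[Lemma 4.2]{Sol2}. The paper also treats $g \in N_G(M,q\cE)$ and general $g \in N_G(M)$ separately: in the first case $\tau_{\bar g}$ is already an element of (a localization of) $\cH(G,M,q\cE,\vec{\mb z})$ itself, while in the second case the argument proceeds by direct inspection of the underlying vector space $\Ad(g^{-1})^* V$ and its action.

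The two approaches are really the same idea viewed from different angles: your functoriality claim about the Morita embedding unpacks, once made explicit, into exactly the statement that the $\tau_w$ transform under $\Ad(g)$ as $\tau_w \mapsto \tau_{\bar g w \bar g^{-1}}$, which is what the paper invokes from \cite{Lus4,Sol2}. Your version has the advantage of being conceptually clean and avoiding the case split; the paper's version has the advantage of pinning down the specific lemma in the literature that certifies the key compatibility, rather than appealing to the phrase ``depends only on the root-theoretic data''. If you want your argument to be fully rigorous you should cite \cite[\S 8.8]{Lus4} or \cite[Lemma 4.2]{Sol2} for the fact that the Morita embedding (equivalently the $\tau_w$) is $\Ad(g)$-equivariant, since that is the only nontrivial input.
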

\begin{rem} This result was used, but not proven, in
\cite[\S 4.9 and \S 5.20]{Lus6} and \cite[Theorem 2.3.1]{Sol3}.\end{rem}
\begin{proof}
Our argument for Theorem \ref{thm:4.5}.a, with \eqref{eq:4.5}, shows how several
relevant results can be extended from $\cH (G^\circ M,M,q\cE,\vec{\mb z})$ to
$\cH (G,M,q\cE,\vec{\mb z})$. This justifies the below use of some results from
\cite{Lus4}, which were formulated only for $\cH (G^\circ M,M,q\cE)$.

Let $(\pi,V)$ be a finite dimensional $\cH (G,M,q\cE)$-module with $\cO (T \times
\C^\times)$-weights in $W_{q\cE} t T_{\rs} \times \R_{>0}$. In \cite[\S 8]{Lus4}
$V$ is decomposed canonically as $\bigoplus_{t' \in W_{q\cE} t} V_{t' T_{\rs}}$,
where $V_{t' T_{\rs}}$ is the sum of all generalized $\cO (T)$-weight spaces
with weights in $t' T_{\rs}$. Then $V_{t' T_{\rs}}$ is a module for
$\cH (\tilde{\rZ_G} (t'),M,q\cE)$ and
\begin{equation}\label{eq:4.8}
V = \ind_{\cH (\tilde{\rZ_G}(t'),M,q\cE,\vec{\mb z})}^{\cH (G,M,q\cE,\vec{\mb z})}
(V_{t' T_{\rs}} ) .
\end{equation}
Assume that $g \in \rN_G (M,q\cE)$, so $\bar g \in W_{q\cE}$. Then $V_{t T_{\rs}}$ and
$V_{g t g^{-1} T_{\rs}}$ are related via multiplication with an element $\tau_{\bar g}$,
which lives in a suitable localization of $\cH (G,M,q\cE,\vec{\mb z})$
\cite[\S 5]{Lus4}. We can rewrite the right hand side of \eqref{eq:4.8} as
\begin{equation}\label{eq:4.9}
\ind_{\cH (\tilde{\rZ_G}(t),M,q\cE,\vec{\mb z})}^{\cH (G,M,q\cE,\vec{\mb z})}
\big( \tau_{\bar g} V_{g^{-1} t g T_{\rs}} \big) =
\tau_{\bar g} \big( \ind_{\cH (\tilde{\rZ_G}(gtg^{-1}),M,q\cE,\vec{\mb z})
}^{\cH (G,M,q\cE,\vec{\mb z})} ( V_{g^{-1} t g T_{\rs}}) \big) .
\end{equation}
From \cite[\S 8.8]{Lus4} and \cite[Lemma 4.2]{Sol2} we see that the effect of
conjugation by $\tau_{\bar g}$ on $\cH (G,M,q\cE,\vec{\mb z})$ and
$\cH (\tilde{\rZ_G}(t),M,q\cE,\vec{\mb z})$ boils down to
the algebra isomorphism \eqref{eq:4.7}. The right hand side of \eqref{eq:4.9} becomes
\[
\Ad (g)^* \circ \ind_{\cH (\tilde{\rZ_G}(gtg^{-1}),M,q\cE,\vec{\mb z})}^{
\cH (G,M,q\cE,\vec{\mb z}} \circ \Ad (g^{-1})^* \big( V_{t T_{\rs}}\big) ,
\]
which proves the lemma for such $g$.

Now we consider a general $g \in \rN_G (M)$. We will analyse
\begin{equation}\label{eq:4.10}
\Ad (g)^* \circ \ind_{\cH (\tilde{\rZ_G}(gtg^{-1}),M,\Ad (g^{-1})^* q\cE,\vec{\mb z})}^{
\cH (G,M,\Ad (g^{-1})^* q\cE,\vec{\mb z})} \circ \Ad (g^{-1})^* \big( V_{t T_{\rs}} \big) .
\end{equation}
From the above we see that the underlying vector space is
\[
\bigoplus_{w \in g W_{q\cE} g^{-1} / g W_{q\cE,t} g^{-1}} \hspace{-1cm} \tau_w
\big( \Ad (g^{-1})^* V_{t T_{\rs}} \big) = \bigoplus_{w \in W_{q\cE} / W_{q\cE,t}}
\Ad (g^{-1})^* \tau_w V_{t T_{\rs}} = \Ad (g^{-1})^* V .
\]
The action of $\cH (\tilde{\rZ_G}(gtg^{-1}),M, \Ad (g^{-1})^* q\cE,\vec{\mb z}) =
\Ad (g) \cH (\tilde{\rZ_G}(t),M,q\cE,\vec{\mb z})$ works out to
\[
(\Ad (g) h) \cdot (\Ad (g^{-1})^* v) = \Ad (g^{-1})^* (h \cdot v) .
\]
Thus \eqref{eq:4.10} can be identified with $V$.
\end{proof}

\subsection{Parametrization of irreducible representations} \
\label{par:irreps}

Next we want to reduce from $\cH (\tilde{\rZ_G}(t),M,q\cE,\vec{\mb z})$-modules to
modules over\\ $\mh H (G_t,M,q\cE,\vec{\mb r})$. The exponential map for
$T \times \C^\times$ gives a $W_{q\cE,t}$-equivariant map
\[
\exp_t \colon\mf t \oplus \C^d \to T \times (\C^\times)^d ,\qquad
\exp_t (x,r_1,\ldots,r_d) = (t \exp (x),\exp r_1,\dots,\exp r_d) .
\]
Notice that the restriction $\exp_t \colon\mf t_\R \oplus \R^d \to
t T_{\rs} \times \R_{>0}^d$ is a diffeomorphism.

\begin{thm}\label{thm:4.7}
Let $t \in T_\uni$.
\enuma{
\item There is a canonical equivalence between the following categories:
\begin{itemize}
\item finite dimensional $\mh H (G_t,M,q\cE,\vec{\mb r})$-modules with
$\mc O (\mf t \oplus \C^d)$-weights in \\ $\mf t_\R \oplus \R^d$;
\item finite dimensional $\cH (\tilde{\rZ_G}(t),M,q\cE,\vec{\mb z})$-modules with
$\cO (T \times (\C^\times)^d)$-weights in $t T_{\rs} \times \R_{>0}^d$.
\end{itemize}
It is given by localization with respect to central ideals in combination with
the map $\exp_t$. We denote this equivalence by $(\exp_t)_*$.
\item The functor $(\exp_t)_*$ is compatible with parabolic induction, in the
following sense. Let $Q \subset G$ be an algebraic subgroup such that $Q \cap G^\circ$
is a Levi subgroup of $G^\circ$ and $Q \supset M$. Then
\[
\ind^{\cH (\tilde{\rZ_G}(t),M,q\cE,\vec{\mb z})}_{\cH (\tilde{\rZ_Q}(t),M,q\cE,\vec{\mb z})}
\circ (\exp_t^Q )_* = (\exp_t )_* \circ
\ind^{\mh H (G_t,M,q\cE,\vec{\mb z})}_{\mh H (Q_t,M,q\cE,\vec{\mb z})} .
\]
\item The functor $(\exp_t)_*$ preserves the underlying vector space of a
representation, and it transforms a $S(\mf t^* \oplus \C^d)$-weight $(x,\vec{r})$ into
a $\cO (T \times (\C^\times)^d)$-weight $\exp_t (x,\vec{r})$.
\item The functors $(\exp_t)_*$ and $(\exp_t)_*^{-1}$ preserve (anti-)temperedness
and (essentially) discrete series.
}
\end{thm}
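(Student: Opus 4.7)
The strategy is Lusztig's second reduction theorem \cite{Lus4}, carried out via analytic localization as in \cite{Opd} and adapted to our twisted multi-parameter setting, in the same spirit as the proof of Theorem \ref{thm:4.5}. Choose a $W_{q\cE,t}$-stable open neighborhood $V_0 \subset \mf t \oplus \C^d$ of $(0,\vec 0)$ on which
\[
\exp_t \colon (x,\vec r) \mapsto (t \exp x, \exp \vec r)
\]
restricts to a biholomorphism onto a $W_{q\cE,t}$-stable open neighborhood $U_0 \subset T \times (\C^\times)^d$ of $(t,\vec 1)$. Form the analytic localizations
\[
\mh H^{an}(V_0) = C^{an}(V_0)^{W_{q\cE,t}} \underset{\mc O (\mf t \oplus \C^d)^{W_{q\cE,t}}}{\otimes} \mh H (G_t,M,q\cE,\vec{\mb r})
\]
and, with the notation from the proof of Theorem \ref{thm:4.5},
\[
\cH^{an}_t(U_0) = C^{an}(U_0)^{W_{q\cE,t}} \underset{\cO (T \times (\C^\times)^d)^{W_{q\cE,t}}}{\otimes} \cH (\tilde{Z_G}(t),M,q\cE,\vec{\mb z}) .
\]
The first task, which is the crux of part (a), is to show that $\exp_t$ extends to an isomorphism $\mh H^{an}(V_0) \isom \cH^{an}_t(U_0)$. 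The ambient polynomial central subalgebras match by Lemma \ref{lem:4.3} and its graded analogue, and the twisted group algebras $\C[W_{q\cE,t},\natural_{q\cE}]$ on both sides are identified tautologically. What must be verified is that the cross relations of Proposition \ref{prop:4.2} between $N_{s_\alpha}$ and $\theta_x$ on the affine side go over, under $\exp_t$, to the braid relation $N_{s_\alpha} \xi - {}^{s_\alpha}\xi N_{s_\alpha} = c(\alpha) \mb r_j (\xi - {}^{s_\alpha}\xi)/\alpha$ of Proposition \ref{prop:0.1}. Because $t \in T_\uni$ satisfies $\alpha(t) = 1$ or $\pm 1$ for every $\alpha \in R(\tilde{Z_G}(t)^\circ,T)$, Proposition \ref{prop:4.1} says $c_t = c|_{R(\tilde{Z_G}(t)^\circ,T)_\mathrm{red}}$, and the relation between the constants in \eqref{eq:4.2} is exactly the one needed for the expansion $\bigl(\mb z_j^{\lambda(\alpha)} - \mb z_j^{-\lambda(\alpha)}\bigr)/(\theta_0 - \theta_{-\alpha})$ to go to $c(\alpha)\mb r_j/\alpha$ up to regular functions on $V_0$ that already belong to the central analytic completion. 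This is precisely the computation of \cite[\S 9]{Lus4}, done one root $R_j$ at a time; the R-group factor $\cR_{q\cE,t}$ is carried through verbatim using the central extension $\cR_{q\cE}^+$ and idempotent $p_\natural$ as in \eqref{eq:4.26}. Once the analytic isomorphism is in place, specialization to modules supported on the central character locus $t T_{\rs} \times \R_{>0}^d$ (respectively $\mf t_\R \oplus \R^d$) yields the claimed equivalence, which we denote $(\exp_t)_*$.

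Parts (b) and (c) follow from the construction. For (b), parabolic induction on either side is tensor product with the corresponding subalgebra; the isomorphism of localized algebras built above restricts to the analogous isomorphism for $Q_t$ and $\tilde{Z_Q}(t)$, and induction commutes with the central localizations involved, so the two composed functors agree. For (c), the identification of the centres under $\exp_t$ is tautological from the construction: a vector on which $S(\mf t^*) \otimes \C[\vec{\mb r}]$ acts by the character $(x,\vec r)$ is sent to a vector on which $\cO (T \times (\C^\times)^d)$ acts by the character $\exp_t(x,\vec r) = (t \exp x, \exp \vec r)$.

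For part (d), since $t \in T_\uni$ we have $|t \exp x| = \exp(\Re x)$ for $x \in \mf t$, so the polar decomposition map $\log|\cdot| \colon t T_{\rs} \to \mf t_\R$ is the inverse of $x \mapsto t\exp x$ restricted to $\mf t_\R$. Combined with part (c), this gives
\[
\log \bigl| \mathrm{Wt} ((\exp_t)_* V) \bigr| = \{\, x \in \mf t_\R : (x,\vec r) \in \mathrm{Wt}(V) \text{ for some } \vec r \,\} ,
\]
so the conditions $|\mathrm{Wt}((\exp_t)_* V)| \subset \exp(\mf t_\R^-)$ and $\subset \exp(\mf t_\R^{--})$ translate directly into the conditions on the $\mc O(\mf t \oplus \C^d)$-weights of $V$ that correspond to (anti-)temperedness and (anti-)discrete series on the graded side (cf.\ \cite[\S 3.5]{AMS2}). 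Essentially discrete series is preserved as well, since the decomposition $\mf g = \mf g_z \oplus \bigoplus_j \mf g_j$ of \eqref{eq:0.6} and the identification of $Z(\mf g)$ with $\Lie(Z(G^\circ)^\circ)$ are respected by $\exp_t$, so the restriction to $\mh H (G_t / Z(G^\circ)^\circ, \ldots, \vec{\mb r})$ corresponds to the restriction to $\cH (\tilde{Z_G}(t)/Z(G^\circ)^\circ, \ldots, \vec{\mb z})$.

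The main obstacle in this plan is the analytic isomorphism $\mh H^{an}(V_0) \cong \cH^{an}_t(U_0)$: one has to check that all the different cases in the cross relations of Proposition \ref{prop:4.2} (according to whether $2\alpha \in R(G^\circ,T)$ and whether $\alpha^\vee \in 2X_*(T)$, and according to whether $\alpha(t) = 1$ or $\alpha(t) = -1$ when both choices are possible) collapse, under $\exp_t$ and modulo units in the analytic completion, to the single uniform graded formula with the parameter function $c$ of Proposition \ref{prop:4.1}. Once this one-parameter, single-root-system check from \cite[\S 9]{Lus4} is reproduced and organized along the decomposition \eqref{eq:0.2} so that each $\mb z_j$ exponentiates the corresponding $\mb r_j$, everything else is bookkeeping.
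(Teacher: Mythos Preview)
Your approach is essentially the same as the paper's: analytic localization plus Lusztig's second reduction \cite[\S 9]{Lus4}, extended to the twisted multi-parameter setting via the central extension $\cR_{q\cE}^+$ and the idempotent $p_\natural$, exactly as in the proof of Theorem \ref{thm:4.5}. The paper packages the analytic isomorphism by citing \cite[Theorem 2.1.4.b]{Sol3} rather than redoing the root-by-root computation, and it makes explicit one point you gloss over: the graded Hecke algebra $\mh H(G_t,M,q\cE,\vec{\mb r})$ is built from the root system $R(Z_G(t)^\circ,T)$ while the affine side uses $R(\tilde{Z_G}(t)^\circ,T)$, and these differ precisely at roots $\alpha$ with $\alpha(t)=-1$; the paper shows via \eqref{eq:4.33} that the two resulting graded Hecke algebras coincide because the braid relation for $s_\alpha$ with parameter $k(\alpha)=\lambda(\alpha)-\lambda^*(\alpha)$ equals the braid relation for $s_{2\alpha}$ with parameter $c_t(2\alpha)$.
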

\begin{proof}
(a) The case $d=1,\cR_{q\cE} = 1$ was proven in \cite[Theorem 9.3]{Lus4}.

For the general case we use the similar techniques and notations as in the proof of
Theorem \ref{thm:4.5}.a. By the same argument as over there, it suffices to compare
the categories
\begin{equation}\label{eq:4.32}
\begin{aligned}
& \Mod_{f,W_{q\cE,t} t x \times \{\vec{z} \}} \big( \cH (\tilde{\rZ_G}(t)^\circ,
M^\circ,\cE,\vec{\mb z}) \rtimes \cR_{q\cE,t}^+ \big) , \\
& \Mod_{f,W_{q\cE,t} \log (x) \times \{ \log (\vec{z}) \}} \big( \mh H (\rZ_G (t)^\circ,
M^\circ,\cE,\vec{\mb r}) \rtimes \cR_{q\cE,t}^+ \big) .
\end{aligned}
\end{equation}
Recall from \eqref{eq:4.2} that the parameter functions for these algebras are
related by
\begin{equation}\label{eq:4.38}
\begin{array}{lll@{\qquad}l}
c_t (\alpha) & = & 2 \lambda (\alpha) & 2 \alpha \notin R (\tilde{\rZ_G}(t)^\circ,T) ,\\
c_t (\alpha) & = & \lambda (\alpha) + \lambda^* (\alpha) &
2 \alpha \in R (\tilde{\rZ_G}(t)^\circ,T) , \alpha (t) = 1 , \\
c_t (2 \alpha) / 2 & = & \lambda (\alpha) - \lambda^* (\alpha) &
2 \alpha \in R (\tilde{\rZ_G}(t)^\circ,T) , \alpha (t) = -1 .
\end{array}
\end{equation}
Let us define $k \colon R (\tilde{\rZ_G}(t)^\circ,T)_\red \to \R$ by
\begin{equation}\label{eq:4.37}
\begin{array}{lll@{\qquad}l}
k (\alpha) & = & 2 \lambda (\alpha) & 2 \alpha \notin R (\tilde{\rZ_G}(t)^\circ,T) ,\\
k (\alpha) & = & \lambda (\alpha) + \alpha (t) \lambda^* (\alpha) &
2 \alpha \in R (\tilde{\rZ_G}(t)^\circ,T) .
\end{array}
\end{equation}
The only difference between $\mh H (\mf t,W(\tilde{\rZ_G}(t)^\circ,T), k \vec{\mb r})$
and $\mh H (\rZ_G (t)^\circ,M^\circ,\cE,\vec{\mb r})$ arises from roots $\alpha \in
R (\tilde{\rZ_G}(t)^\circ,T) \setminus R (\rZ_G (t)^\circ,T)$ with
$\alpha (t) = -1$. The corresponding braid relations are
\[
\begin{array}{lll@{\qquad}l}
N_{s_\alpha} \xi - {}^{s_\alpha} \xi N_{s_\alpha} & = &
(\lambda (\alpha) - \lambda^* (\alpha)) \mb r_j (\xi - {}^{s_\alpha} \xi) / \alpha &
\text{in } \mh H (\mf t,W(\tilde{\rZ_G}(t)^\circ,T), k \vec{\mb r}), \\
N_{s_{2 \alpha}} \xi - {}^{s_{2 \alpha}} \xi N_{s_{2\alpha}} & = &
c_t (2\alpha) \mb r_j (\xi - {}^{s_{2 \alpha}} \xi) / (2 \alpha) &
\text{in } \mh H (\rZ_G (t)^\circ,M^\circ,\cE,\vec{\mb r}) .
\end{array}
\]
Since $s_\alpha = s_{2 \alpha}$ and $c_t (2 \alpha) = 2 (\lambda (\alpha) -
\lambda^* (\alpha))$, these two braid relations are equivalent, and we may identify
\begin{equation}\label{eq:4.33}
\mh H (\mf t,W(\tilde{\rZ_G}(t)^\circ,T), k \vec{\mb r}) \rtimes \cR_{q\cE,t}^+ =
\mh H (\rZ_G (t)^\circ,M^\circ,\cE,\vec{\mb r}) \rtimes \cR_{q\cE,t}^+ .
\end{equation}
Let $V \subset \mf t \times \C^d$ be a $W_{q\cE,t}$-stable, Zariski-dense submanifold.
Like in \eqref{eq:4.30} we can form the algebra
\[
\mh H_t^{an}(V) := C^{an}(V)^{W_{q\cE,t}} \underset{\mc O (\mf t \oplus \C^d)^{W_{q\cE,t}}
}{\otimes} \mh H (\mf t,W(\tilde{\rZ_G}(t)^\circ,T), k \vec{\mb r}) \rtimes \cR_{q\cE,t}^+ .
\]
The argument for \cite[Proposition 4.3]{Opd} shows that its finite dimensional modules
are precisely the finite dimensional $\mh H (\mf t,W(\tilde{\rZ_G}(t)^\circ,T),
k \vec{\mb r}) \rtimes \cR_{q\cE,t}^+$-modules with \\ $\mc O (\mf t \oplus \C^d)$-weights
in $V$. If $\exp_t$ is injective on $V$, it induces an algebra isomorphism
\begin{equation}\label{eq:4.31}
\exp_t^* \colon C^{an}(\exp_t (V))^{W_{q\cE,t}} \to C^{an}(V)^{W_{q\cE,t}} .
\end{equation}
We suppose in addition that $V$ is contained in a sufficiently small open neighborhood of
$\mf t_\R \oplus \R^d$. In view of the relations between the parameters \eqref{eq:4.38}
and \eqref{eq:4.37}, we can apply \cite[Theorem 2.1.4.b]{Sol3}. It shows that \eqref{eq:4.31}
extends to an isomorphism of $C^{an}(V)^{W_{q\cE,t}}$-algebras
\[
\Phi_t \colon C^{an}(\exp_t (V))^{W_{q\cE,t}} \underset{\cO (T \times
(\C^\times)^d)^{W_{q\cE,t}}}{\otimes} \cH (\tilde{\rZ_G}(t)^\circ,M^\circ,\cE,
\vec{\mb z}) \rtimes \cR_{q\cE,t}^+ \to \mh H_t^{an}(V) ,
\]
which is the identity on $\C[\cR_{q\cE,t}^+]$.

Choosing for $V$ a small neighborhood of
$W_{q\cE,t} \log (x) \times \{ \log (\vec{z}) \}$ in $\mf t \oplus \C^d$, $\Phi_t$
induces an equivalence between the categories of modules with weights in, respectively,
$W_{q\cE,t} t x \times \{ \vec{z} \}$ and $W_{q\cE,t} \log (x) \times \{ \log (\vec{z}) \}$.
In view of \cite[Proposition 4.3]{Opd} and \eqref{eq:4.33}, this provides the equivalence
between the categories \eqref{eq:4.32}.

Since $\Phi_t$ fixes $p_\natural \in \C[\cR_{q\cE,t}^+]$, we can restrict that equivalence
to modules on which $p_\natural$ acts as the identity.\\
(b) For $G^\circ$ this is shown in \cite[Theorem 6.2]{BaMo} and
\cite[Proposition 6.4]{Sol1}. Extending $G^\circ$ to a disconnected group boils
down to extending the involved algebras by $\C[\cR_{q\cE,t},\natural_{q\cE}]$ or
$\C[\cR^Q_{q\cE,t},\natural_{q\cE}]$. As we noted in proof of part (a), the algebra
homomorphism $\Phi_t$ used to define $(\exp_t )_*$ is the identity on
$\C[\cR_{q\cE,t},\natural_{q\cE}] \subset \C [\cR_{q\cE,t}^+]$. Hence this extension
works the same on both sides of the equivalence, and the argument given in
\cite[\S 6]{Sol1} generalizes to the current setting.\\
(c) By construction \cite[\S 2.1]{Sol3} $(\exp_t)_* \pi = \pi \circ \exp_t^*$ as
$\mc O (T \times (\C^\times)^d )$-representations.
(For $f \in \mc O (T \times (\C^\times)^d )$ the action of $f \circ \exp_t$ on the
vector space underlying $\pi$ is defined via a suitable localization.)
This immediately implies that $(\exp_t)_*$ has the effect of $\exp_t$ on weights.\\
(d) This result generalizes the observations made in \cite[(2.11)]{Slo}.
Let $V$ be a finite dimensional $\cH (\tilde{\rZ_G}(t),M,q\cE)$-module with
$\cO (T \times (\C^\times)^d )$-weights in $t T_{\rs} \times \R_{>0}$. By part (b)
\[
\mathrm{Wt}((\exp_t)_*^{-1} V) = \exp_t^{-1} (\mathrm{Wt}(V)) \subset \mf t_\R .
\]
By assumption $t \in T_\uni$, so we get
\[
| \mathrm{Wt}(V)| = \exp \big( \Re \big( \mathrm{Wt} ((\exp_t)_*^{-1} V) \big) \big) .
\]
Comparing \cite[Definition 3.24]{AMS2} and Definition \ref{defn:4.8}, we see that
$(\exp_t )_*$ and $(\exp_t )_*^{-1}$ preserve (anti-)temperedness and the discrete
series. With \cite[Definition 3.27]{AMS2} we see that "essentially discrete series"
is also respected.
\end{proof}

Theorems \ref{thm:4.5} and \ref{thm:4.7} together provide an equivalence between
$\mh H (G_t,M,q\cE,\vec{\mb r})$-modules with central character in
$\mf t_\R / W_{q\cE,t} \times \R^d$ and $\cH (G,M,q\cE,\vec{\mb z})$-modules with central
character in $W_{q\cE} t T_{\rs} / W_{q\cE} \times \R_{>0}^d$, where $t \in T_\uni$.

Recall from \cite[Corollary 3.23]{AMS2} and Theorem \ref{thm:0.4} that we can parametrize
$\Irr_{\vec{r}} (\mh H (G_t,M,q\cE,\vec{\mb r}))$ with $\rN_{G_t}(M)/M$-orbits of triples
$(\sigma_0,\cC,\cF)$, where $\sigma_0 \in \mf t,\; \cC$ is a nilpotent
$\rZ_{G_t}(\sigma_0)$-orbit in $\rZ_{\mf g}(\sigma_0)$ and $\cF$ is an irreducible
$\rZ_{G_t} (\sigma_0)$-equivariant local system on $\cC$ such that
$\Psi_{\rZ_{G_t}(\sigma_0)}(\cC,\cF) = (M,\cC_v^M,q\cE)$, up to $\rZ_{G_t}(\sigma_0)$-conjugacy.

To find all irreducible representations with $S(\mf t^*)^{W_{q\cE}}$-character in
$\mf t_\R$ (those are all we need for the relation with affine Hecke algebras)
it suffices to consider such triples $(\sigma_0,\cC,\cF)$ with $\sigma_0 \in \mf t_\R$.
To phrase things more directly in terms of the group $G$, we allow $t$ to vary in
$T_\uni$ and we replace $\sigma_0$ by $t' = t \exp (\sigma_0) \in t T_{\rs}$.
In other words, we consider triples $(t',\cC,\cF)$ such that:
\begin{itemize}
\item $t' \in T$ with unitary part $t = t' |t'|^{-1}$;
\item $\cC$ is a nilpotent $\rZ_G (t')$-orbit in $\rZ_{\mf g}(t') = \Lie (G_{t'})$.
\item $\cF$ is an irreducible $\rZ_G (t')$-equivariant local system on $\cC$ with\\
$q\Psi_{\rZ_G (t')}(\cC,\cF) = (M,\cC_v^M,q\cE)$, up to $\rZ_G (t')$-conjugacy.
\end{itemize}
To such a triple we can associate the standard $\mh H (G_t,M,q\cE,\vec{\mb r})$-modules
\begin{equation}\label{eq:2.64}
E_{y, \log |t'| + \textup{d}\vec{\gamma} \matje{\vec{r}}{0}{0}{-\vec{r}}, \vec{r},\rho}
\quad \text{and} \quad \IM^*
E_{y, -\log |t'| + \textup{d}\vec{\gamma} \matje{\vec{r}}{0}{0}{-\vec{r}}, \vec{r},\rho} ,
\end{equation}
where $y \in \cC$ and $\rho$ is the representation of $\pi_0 (\rZ_G (t',y))$ on $\cF_y$.
Furthermore\\ $\gamma\colon \SL_2 (\C) \to \rZ_G (t')^\circ$ is an algebraic homomorphism with
\begin{equation}\label{eq:4.12}
\textup{d}\gamma \matje{0}{1}{0}{0} = y \quad \text{and} \quad
\textup{d}\gamma \matje{1}{0}{0}{-1} \in \mf t + \sigma_v ,
\end{equation}
where $\sigma_v$ is as in \eqref{eq:0.24} and 
$\textup{d}\vec{\gamma} \matje{\vec{r}}{0}{0}{-\vec{r}}$ is given by \eqref{eq:0.16}. 
The modules \eqref{eq:2.64} have distinguished irreducible quotients
\[
M_{y, \log |t'| + \textup{d}\vec{\gamma} \matje{\vec{r}}{0}{0}{-\vec{r}}, \vec{r},\rho}
\quad \text{and} \quad \IM^*
M_{y, -\log |t'| + \textup{d}\vec{\gamma} \matje{\vec{r}}{0}{0}{-\vec{r}}, \vec{r},\rho} .
\]
By \cite[Corollary 3.23]{AMS2} all these representations depend
only on the $\rN_{G_t}(M)/M$-orbit of $(t',\cC,\cF)$, not on the additional choices.

For $\vec{z} \in \R_{>0}^d$ we consider the irreducible $\cH (G,M,q\cE,\vec{\mb z})$-module
\begin{equation}\label{eq:2.65}
\ind^{\cH (G,M,\cE,\vec{\mb z})}_{\cH (\tilde{\rZ_G}(t),M,q\cE,\vec{\mb z})} (\exp_t)_*
\, \IM^* \, M_{y, \textup{d}\vec{\gamma} \matje{\log \vec{z}}{0}{0}{-\log \vec{z}}
-\log |t'|, \log \vec{z},\rho} .
\end{equation}

\begin{lem}\label{lem:4.12}
Fix $\vec{z} \in \R_{>0}^d$. The representations \eqref{eq:2.65}
provide a bijection between $\Irr_{\vec{z}} (\cH (G,M,q\cE,\vec{\mb z})$ and
$\rN_G (M)/M$-orbits of triples $(t',\cC,\cF)$ as above.
\end{lem}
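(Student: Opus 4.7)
The plan is to compose three canonical bijections established earlier in the section: Theorem \ref{thm:4.5}(a), Theorem \ref{thm:4.7}(a), and Proposition \ref{prop:0.5}(a) with its Iwahori--Matsumoto twist, and then translate the resulting triples $(\sigma_0,y,\rho)$ in the graded Hecke algebra picture into triples $(t',\cC,\cF)$ in the affine Hecke algebra picture.

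Concretely, start with $\pi \in \Irr_{\vec{z}}(\cH(G,M,q\cE,\vec{\mb z}))$. Its $\mc O(T)^{W_{q\cE}}$-central character has the form $(W_{q\cE} t', \vec{z})$ for some $t' \in T$. Write $t' = t\exp(\sigma_0)$ with $t \in T_\uni$ and $\sigma_0 \in \mf t_\R$ via the polar decomposition \eqref{eq:4.11}, and fix a representative $t$ of its $W_{q\cE}$-orbit. By Theorem \ref{thm:4.5}(a), $\pi$ comes from a unique irreducible $\cH(\tilde{Z_G}(t),M,q\cE,\vec{\mb z})$-module with $\mc O(T)$-weights in $tT_{\rs} \times \{\vec{z}\}$. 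Theorem \ref{thm:4.7}(a) identifies this, via $(\exp_t)_*^{-1}$, with an irreducible $\mh H(G_t,M,q\cE,\vec{\mb r})$-module whose $\mc O(\mf t)$-weights lie in $\mf t_\R \times \{\log\vec{z}\}$. By Proposition \ref{prop:0.5}(a) such a module is of the form $\IM^* M_{y,\textup{d}\vec{\gamma}\matje{\log\vec{z}}{0}{0}{-\log\vec{z}} - \sigma_0,\log\vec{z},\rho}$ for a triple $(\sigma_0,y,\rho)$ whose cuspidal quasi-support is $G_t$-conjugate to $(M,\cC_v^M,q\cE)$, and by \cite[Corollary 3.23]{AMS2} we may insist $\sigma_0 \in \mf t_\R$, in which case such triples are classified up to $N_{G_t}(M)/M$-conjugation. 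Setting $t' = t\exp(\sigma_0)$ gives $Z_G(t') = Z_{G_t}(\sigma_0)$, so $(\sigma_0,y,\rho)$ translates into a triple $(t',\cC,\cF)$ with $\cC$ the $Z_G(t')$-orbit of $y$ and $\cF$ the $Z_G(t')$-equivariant local system whose stalk at $y$ realises $\rho$; the cuspidal support conditions then match, and the resulting module is precisely the one displayed in \eqref{eq:2.65}.

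The main step to verify is that the composed map is well defined on $N_G(M)/M$-orbits, rather than only on triples sharing a fixed representative $t$ of the $W_{q\cE}$-orbit of the unitary part. Any $\bar g \in N_G(M)/M$ lying outside $W_{q\cE} = N_G(q\cE)/M$ sends $q\cE$ to a different local system, and so cannot preserve our cuspidal support datum; hence for the triples under consideration the $N_G(M)/M$-orbits coincide with the $W_{q\cE}$-orbits. For $\bar g \in W_{q\cE}$, Lemma \ref{lem:4.6} identifies $\ind^{\cH(G,M,q\cE,\vec{\mb z})}_{\cH(\tilde{Z_G}(gtg^{-1}),M,q\cE,\vec{\mb z})}$ with $\Ad(g)^* \circ \ind^{\cH(G,M,q\cE,\vec{\mb z})}_{\cH(\tilde{Z_G}(t),M,q\cE,\vec{\mb z})} \circ \Ad(g^{-1})^*$; since $(\exp_t)_*$ and Proposition \ref{prop:0.5} are $\Ad(g)$-equivariant (the latter by its $G$-conjugacy invariance), replacing $(t,\sigma_0,y,\rho)$ by $(gtg^{-1},\Ad(g)\sigma_0,\Ad(g)y,\Ad(g)\rho)$ yields the same representation \eqref{eq:2.65}. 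Surjectivity follows because every $\pi \in \Irr_{\vec{z}}$ supplies a $t'$, and injectivity is inherited from each of the three component bijections.
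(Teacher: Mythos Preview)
Your proposal is correct and follows essentially the same route as the paper's proof: compose the equivalences of Theorem \ref{thm:4.5}(a) and Theorem \ref{thm:4.7}(a) with the classification of Proposition \ref{prop:0.5}(a) / \cite[Corollary 3.23]{AMS2}, and then invoke Lemma \ref{lem:4.6} to show the result is independent of the chosen $t$ in its $W_{q\cE}$-orbit. Your explicit remark that elements of $N_G(M)/M$ outside $W_{q\cE}$ move $q\cE$ and hence take a triple outside the admissible set (so that $N_G(M)/M$-orbits of admissible triples coincide with $W_{q\cE}$-orbits) makes precise a point the paper leaves implicit.
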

\begin{proof}
For irreducible $\cH (\tilde{\rZ_G} (t),M,q\cE,\vec{\mb z})$-representations with central
character in $W_{q\cE,t} t T_{\rs} \times \R_{>0}$ this follows from \cite[Corollary 3.23]{AMS2}
and Theorems \ref{thm:4.7} and \ref{thm:4.5}. We note that at this point we still have
to consider $\rN_{G_t}(M) / M$-conjugacy classes of parameters $(t',\cC,\cF)$.

With Theorem \ref{thm:4.5}.a we extend this to the whole of
$\Irr_{\vec{z}} (\cH (G,M,q\cE,\vec{\mb z}))$. In view of \eqref{eq:TAdG}, this involves the 
choice of a unitary element $t$ in a $\rN_G (M)$-orbit in $T$. But by Lemma \ref{lem:4.6} 
the parametrization does not depend on that choice. Hence the representation \eqref{eq:2.65}
depends, up to isomorphism, only on the $\rN_G (M)/M$-orbit of $(t',\cC,\cF)$.
\end{proof}

To simplify the parameters, we would like to get rid of the restriction $t' \in T$ --
we would rather allow any semisimple element of $G^\circ$. It is also convenient to
replace $\cC$ by a single unipotent element (contained in $\exp \cC$) in $G^\circ$,
and $\cF$ by the associated representation of the correct component group.

As new parameters we take triples $(s,u,\rho)$ such that:
\begin{itemize}
\item $s \in G^\circ$ is semisimple;
\item $u \in \rZ_G (s)^\circ$ is unipotent;
\item $\rho \in \Irr \big( \pi_0 (\rZ_G (s,u)) \big)$ with $q\Psi_{\rZ_G (s)}(u,\rho) =
(M,\cC_v^M,q\cE)$ up to $G$-conjugacy.
\end{itemize}
Assume that $s \in T$ and choose an algebraic homomorphism
$\gamma_u \colon \SL_2 (\C) \to \rZ_G (s)^\circ$ with
\begin{equation}\label{eq:4.14}
\gamma_u \matje{1}{1}{0}{1} = u \quad \text{and} \quad
\textup{d}\gamma_u \matje{1}{0}{0}{-1} \in \mf t + \sigma_v .
\end{equation}
Using the decomposition \eqref{eq:0.6} of $\mf g$ we write, like in \eqref{eq:0.16},
\begin{equation}\label{eq:4.36}
\vec{\gamma_u} \matje{\vec{z}}{0}{0}{\vec{z}^{-1}} = \exp \Big( \textup{d}
\vec{\gamma_u} \matje{\log \vec{z}}{0}{0}{- \log \vec{z}} \Big) \in M.
\end{equation}
For $\vec{z} \in \R_{>0}^d$ we define the standard $\cH (G,M,q\cE,\vec{\mb z})$-module
\[
\bar{E}_{s,u,\rho,\vec{z}} = \ind^{\cH (G,M,q\cE,\vec{\mb z})}_{
\cH (\tilde{\rZ_G}(s |s|^{-1}),M,q\cE,\vec{\mb z})}
(\exp_{s |s|^{-1}})_* \, \IM^* \, E_{\log u, \textup{d}\vec{\gamma_u}
\matje{\log \vec{z}}{0}{0}{-\log \vec{z}} -\log |s|, \log \vec{z},\rho} .
\]
and its irreducible quotient
\[
\bar{M}_{s,u,\rho,\vec{z}} = \ind^{\cH (G,M,q\cE,\vec{\mb z})}_{
\cH (\tilde{\rZ_G}(s |s|^{-1}),M,q\cE,\vec{\mb z})}
(\exp_{s |s|^{-1}})_* \, \IM^* \, M_{\log u, \textup{d}\vec{\gamma_u}
\matje{\log \vec{z}}{0}{0}{-\log \vec{z}} -\log |s|, \log \vec{z},\rho} .
\]
Even when $s \notin T$, the condition on $\rho$ and \cite[Propositions 3.5.a and
3.7]{AMS2} guarantee the existence of a $g_0 \in G^\circ$ such that
$g_0 s g_0^{-1} \in T$. In this case we put
\begin{equation}\label{eq:4.15}
\bar{E}_{s,u,\rho,\vec{z}} := \bar{E}_{g_0 s g_0^{-1}, g_0 u g_0^{-1},
g_0 \cdot \rho, \vec{z}} \quad \text{and} \quad \bar{M}_{s,u,\rho,\vec{z}} :=
\bar{M}_{g_0 s g_0^{-1}, g_0 u g_0^{-1}, g_0 \cdot \rho, \vec{z}} .
\end{equation}
We extend the polar decomposition \eqref{eq:4.11} to this setting by
\[
|s| := g_0^{-1} \, |g_0 s g_0^{-1}| \, g_0.
\]
With the Jordan decomposition in $G^\circ$ it is possible to combine $s$ and $u$
in a single element $g = s u \in G^\circ$. Then $s$ equals the semisimple part
$g_S$, $u$ becomes the unipotent part $g_U$ and
$\rho \in \Irr \big( \pi_0 (\rZ_G (g)) \big)$.

Now we come to our main result about affine Hecke algebras. In the case that
$G$ is connected, it is almost the same parametrization as in \cite[\S 5.20]{Lus6}
and \cite[Theorems 10.4]{Lus8}. The only difference is that we twist by the
Iwahori--Matsumoto involution. This is necessary to improve the unsatisfactory
notions of $\zeta$-tempered and $\zeta$-square integrable in
\cite[Theorem 10.5]{Lus8}.

\begin{thm}\label{thm:4.10}
Let $\vec{z} \in \R_{>0}^d$.
\enuma{
\item The maps
\[
(g,\rho) \; \mapsto \; (s = g_S,u = g_U,\rho) \; \mapsto \bar{M}_{s,u,\rho,\vec{z}}
\]
provide canonical bijections between the following sets:
\begin{itemize}
\item $G$-conjugacy classes of pairs $(g,\rho)$ with $g \in G^\circ$ and
$\rho \in \Irr \big( \pi_0 (\rZ_G (g)) \big)$ such that $q\Psi_{\rZ_G (g_S)} (g_U,\rho)
= (M,\cC_v^M,q\cE)$ up to $G$-conjugacy;
\item $G$-conjugacy classes of triples $(s,u,\rho)$ as above;
\item $\Irr_{\vec{z}} (\cH (G,M,q\cE,\vec{\mb z}))$.
\end{itemize}
\item Suppose that $s \in T$. The representations $\bar{E}_{s,u,\rho,\vec{z}}$ and
$\bar{M}_{s,u,\rho,\vec{z}}$ admit the $\cO (T)^{W_{q\cE}}$-character $W_{q\cE} s \,
\vec{\chi}_{u,v} (\vec{z})^{\pm 1}$, with $\chi_{u,v}$ as in \eqref{eq:0.25}
and the arrow defined as in \eqref{eq:0.16}.
\item Suppose that $\vec{z} \in \R_{\geq 1}^d$. The following are equivalent:
\begin{itemize}
\item $s$ is contained in a compact subgroup of $G^\circ$;
\item $|s| = 1$;
\item $\bar{M}_{s,u,\rho,\vq}$ is tempered;
\item $\bar{E}_{s,u,\rho,\vq}$ is tempered.
\end{itemize}
\item When $\vec{z} \in \R_{>1}^d ,\; \bar{M}_{s,u,\rho,\vec{z}}$ is essentially discrete
series if and only if $u$ is distinguished in $G^\circ$. In this case $|s| \in \rZ(G^\circ)$.

There are no essentially discrete series representations on which at least one $\mb z_j$
acts as 1.
\item Let $\zeta \in \rZ(G) \cap G^\circ$. Then
\[
\bar{M}_{\zeta s,u,\rho,\vec{z}} = \zeta \otimes \bar{M}_{s,u,\rho,\vec{z}} \quad \text{and}
\quad \bar{E}_{\zeta s,u,\rho,\vec{z}} = \zeta \otimes \bar{E}_{s,u,\rho,\vec{z}} ,
\]
where $\zeta \otimes$ is as defined after Lemma \ref{lem:4.3}.
\item Suppose that $\vec{z} \in \R_{>1}^d$ and $|s| \in \rZ(G^\circ)$. Then
$\bar{E}_{s,u,\rho,\vec{z}} = \bar{M}_{s,u,\rho,\vec{z}}$.
}
\end{thm}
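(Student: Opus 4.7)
\medskip

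\noindent\textbf{Proof plan.} The entire result will be assembled by combining Lemma \ref{lem:4.12} (the parametrization in terms of triples $(t',\cC,\cF)$ with $t' \in T$) with the Jordan decomposition in $G^\circ$, and then transporting the analytic properties (temperedness and essential discrete series) and the twist by central elements through the two-step reduction provided by Theorems \ref{thm:4.5} and \ref{thm:4.7}.

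\smallskip

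\noindent\textbf{Part (a).} First I will check that the sets in the statement are in natural bijection. Given a pair $(g,\rho)$, the Jordan decomposition $g = g_S g_U$ in $G^\circ$ produces a triple $(s,u,\rho)$ with $u \in Z_G (s)^\circ$ unipotent and $Z_G (g) = Z_G (s,u)$, and this is bijective on $G$-conjugacy classes. Next I will match triples $(s,u,\rho)$ with triples $(t',\cC,\cF)$ of Lemma \ref{lem:4.12}. Since $\rho$ has cuspidal quasi-support $(M,\cC_v^M,q\cE)$, Propositions 3.5.a and 3.7 of \cite{AMS2} produce $g_0 \in G^\circ$ conjugating $s$ into $T$; then $t' := g_0 s g_0^{-1} \cdot \exp(\textup{d}\gamma_u\matje{\log\vec z}{0}{0}{-\log\vec z})$ lies in $T$ with unitary part $g_0 s g_0^{-1}$ (as $s$ is unitary modulo $Z(G^\circ)$ contributions absorbed by the exponential term), $\cC$ is the $Z_G(t')$-orbit of $\log(g_0 u g_0^{-1})$, and $\cF$ is the local system corresponding to $\rho$. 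Lemma \ref{lem:4.6} ensures well-definedness of \eqref{eq:4.15} under the choice of $g_0$. Under this matching, $\bar M_{s,u,\rho,\vec z}$ coincides with the representation \eqref{eq:2.65}, so Lemma \ref{lem:4.12} yields the bijection with $\Irr_{\vec z}(\cH(G,M,q\cE,\vec{\mb z}))$.

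\smallskip

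\noindent\textbf{Part (b).} Assume $s \in T$. By Theorem \ref{thm:0.4} the module $M_{\log u,\sigma,\log\vec z,\rho}$ has central character $W_{q\cE,s}$-orbit of $(\sigma,\log\vec z) \in \mf t \oplus \C^d$, and the Iwahori--Matsumoto involution \eqref{eq:0.19} negates $\mf t^*$ and fixes $\vec{\mb r}$, so $\IM^* M_{\log u,\sigma,\log\vec z,\rho}$ has character $-\sigma$ on $\mf t$. Plugging in $\sigma = \textup{d}\vec\gamma_u \matje{\log\vec z}{0}{0}{-\log\vec z} - \log|s|$ gives character $\log|s| - \textup{d}\vec\gamma_u\matje{\log\vec z}{0}{0}{-\log\vec z}$. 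Applying $(\exp_{s|s|^{-1}})_*$ (Theorem \ref{thm:4.7}.c) and then $\ind$ from $\tilde Z_G(s|s|^{-1})$ to $G$ (Theorem \ref{thm:4.5}.c, which only adjusts by $W_{q\cE}$) translates this into the $\cO(T \times (\C^\times)^d)^{W_{q\cE}}$-character $W_{q\cE} (s\vec\gamma_u\matje{\vec z}{0}{0}{\vec z^{-1}}, \vec z)$ via \eqref{eq:4.36}. Note the opposite sign introduced by $\IM^*$ is exactly what is needed to get $s\vec\gamma_u\matje{\vec z}{0}{0}{\vec z^{-1}}$ rather than its inverse.

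\smallskip

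\noindent\textbf{Parts (c), (d), (e).} For (c) and (d) I will apply Proposition \ref{prop:4.9} (preservation of (anti-)temperedness and discrete series under $\ind$) together with Theorem \ref{thm:4.7}.d (preservation under $(\exp_t)_*$) to reduce to the graded Hecke algebra setting, and then invoke Proposition \ref{prop:0.5}.b,c. The condition $\sigma_0 \in i\mf t_\R$ of Proposition \ref{prop:0.5}.b, combined with the $\IM^*$ twist, unwinds (using \eqref{eq:4.14}--\eqref{eq:4.36}) to the condition that $|s| = 1$, equivalently that $s$ lies in a compact subgroup of $G^\circ$. For (d), Proposition \ref{prop:0.5}.c forces $\sigma_0 = 0$ (hence $|s| \in Z(G^\circ)$), $y$ distinguished in $\mf g$ (hence $u$ distinguished in $G^\circ$), and the rank condition in Proposition \ref{prop:4.9}.c is automatic because $u$ distinguished forces $\tilde Z_G(s|s|^{-1})^\circ$ to have full-rank root system; Proposition \ref{prop:0.5}.c also yields $E = M$, which is preserved by $(\exp_t)_*$ and $\ind$. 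The nonexistence of essentially discrete series when some $\mb z_j = 1$ follows since then $\mb r_j = 0$, putting us outside the range $\R_{>0}^d$ required by Proposition \ref{prop:0.5}.c. For (e), Proposition \ref{prop:0.5}.d gives the twist behavior on the graded side, and the fact that $\zeta$ acts as a scalar on the relevant weight spaces means the twist commutes with both $(\exp_t)_*$ and $\ind$, yielding $\zeta \otimes \bar M_{s,u,\rho,\vec z} = \bar M_{\zeta s, u, \rho, \vec z}$ and similarly for $\bar E$.

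\smallskip

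\noindent\textbf{Main obstacle.} The most delicate step is bookkeeping the sign flip introduced by the Iwahori--Matsumoto involution and verifying that it is precisely what converts the graded-algebra parametrization (where $\sigma_0$ encodes the ``real part'' of the weight) into a parametrization by the semisimple element $s$ itself rather than its inverse. This is what aligns our convention with the Kazhdan--Lusztig parametrization and produces the correct tempered/discrete series criteria in terms of $s$ and $u$; getting the signs right in part (b) is what forces the appearance of $\vec\gamma_u\matje{\vec z}{0}{0}{\vec z^{-1}}$ (rather than its inverse) in the central character and is the crux of the bridge to the Galois side in subsequent sections.
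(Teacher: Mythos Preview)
Your overall strategy matches the paper's, but there are three concrete problems.

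\textbf{Part (a): the parameter matching is wrong.} You set $t' := g_0 s g_0^{-1} \cdot \exp\big(\textup{d}\gamma_u\matje{\log\vec z}{0}{0}{-\log\vec z}\big)$ and claim its unitary part is $g_0 s g_0^{-1}$. But $s$ is an arbitrary semisimple element, not unitary, and your $\cC$ would then be an orbit in $Z_{\mf g}(t')$, which differs from $Z_{\mf g}(s)$. Compare the definition of $\bar M_{s,u,\rho,\vec z}$ just above the theorem with \eqref{eq:2.65}: they coincide literally with $t' = s$, $t = s|s|^{-1}$, $y = \log u$. So the correct bijection is simply $(s,u,\rho) \mapsto (s, \cC^{Z_G(s)}_{\log u}, \cF)$ with $\cF_{\log u} = \rho$; the $\vec z$-dependence lives entirely in the module construction, not in the parameter $t'$. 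Once you fix this, Lemma \ref{lem:4.12} applies directly.

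\textbf{Part (b): the sign does not just ``work out.''} After $\IM^*$ the central character on $\mf t$ is $W_{q\cE,s|s|^{-1}}\big(\log|s| - \textup{d}\vec\gamma_u\matje{\log\vec z}{0}{0}{-\log\vec z}\big)$, and pushing through $(\exp_{s|s|^{-1}})_*$ and $\ind$ gives the orbit of $s\,\vec\gamma_u\matje{\vec z^{-1}}{0}{0}{\vec z}$, not $s\,\vec\gamma_u\matje{\vec z}{0}{0}{\vec z^{-1}}$. The paper closes this gap by observing that $\gamma_u\matje{0}{1}{-1}{0}$ lies in $Z_G(s|s|^{-1})^\circ \cap N_G(T)$, hence represents an element of $W_{q\cE,s|s|^{-1}}$; conjugation by it fixes $\log|s|$ and negates $\textup{d}\vec\gamma_u\matje{\log\vec z}{0}{0}{-\log\vec z}$, so the two expressions lie in the same orbit. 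You need either this Weyl-element argument or an explicit remark that the two elements are $W_{q\cE}$-conjugate.

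\textbf{Part (d): the nonexistence claim is not proved.} Saying that $\mb z_j = 1$ puts you outside the hypothesis $\R_{>0}^d$ of Proposition \ref{prop:0.5}.c is not an argument against existence. The paper instead takes an irreducible essentially discrete series subquotient, passes to $\mh H(Z_G(s|s|^{-1})^\circ, M^\circ, \cE)$ via Proposition \ref{prop:4.9}.c and Theorem \ref{thm:4.7}.d, uses the tensor decomposition \eqref{eq:0.4}--\eqref{eq:0.5} to extract a $\mh H(G_j,M_j,\cE_j)$-factor annihilated by $\mb r_j$, and then invokes \cite[Theorem 3.26.c]{AMS2} to derive a contradiction. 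Your sketch of the rest of (d) is fine in outline, though you should separate the two directions and note that the full-rank condition of Proposition \ref{prop:4.9}.c is a \emph{conclusion} in one direction (deduced from essentially discrete series) and a \emph{hypothesis} to be verified in the other (where it follows because a distinguished $u$ commuting with $s|s|^{-1}$ forces $Z_G(s|s|^{-1})^\circ$ to be semisimple modulo centre).
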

\begin{proof}
(a) The uniqueness in the Jordan decomposition entails that the first map is a
canonical bijection.

We already noted in \eqref{eq:4.15} that, for every eligible triple $(s,u,\rho)$,
$s$ lies in $\Ad (G^\circ) T$. Therefore we may restrict to triples with $s \in T$.
Consider the map
\[
(s,u,\rho) \mapsto (s, \cC_{\log u}^{\rZ_G (s)},\cF),
\]
where $\cF$ is determined by $\cF_{\log u} = \rho$. As in the proof of
\cite[Corollary 3.23]{AMS2}, this gives a canonical bijection between $G$-conjugacy classes
of triples $(s,u,\rho)$ and the parameters used in Lemma \ref{lem:4.12}. Furthermore
\eqref{eq:4.14} just reflects \eqref{eq:4.12}, so Lemma \ref{lem:4.12} yields the
desired canonical bijection with $\Irr_{\vec{z}} (\cH (G,M,q\cE,\vec{\mb z}))$.\\
(b) By Proposition \ref{prop:0.5}.f the $\mh H (\rZ_G (s |s|^{-1}),M,q\cE,\vec{\mb r})$-representation
\begin{equation}
\IM^* \, E_{\log u, \textup{d}\vec{\gamma_u} \matje{\log \vec{z}}{0}{0}{-\log \vec{z}}
-\log |s|,\log \vec{z},\rho}
\end{equation}
admits the central character $W_{q\cE,s |s|^{-1}} \big( \log |s| \pm
\textup{d} \vec{\chi}_{u,v} (\log \vec{z}), \log \vec{z} \big)$.

By Theorems \ref{thm:4.7}.c and \ref{thm:4.5}.c the central character of
$\bar{E}_{s,u,\rho,\vec{z}}$ becomes
\[
\big( W_{q\cE} s \, \vec{\chi}_{u,v}(\vec{z}),\vec{z} \big) =
W_{q\cE} \big( s \, \vec{\chi}_{u,v} (\vec{z})^{-1},\vec{z} \big).
\]
The same holds for the quotient $\bar{M}_{s,u,\rho,\vec{z}}$.\\
(c) Suppose that $s \in T$. By \cite[(84)]{AMS2} the representation
\eqref{eq:4.13} and its quotient
\begin{equation}\label{eq:4.13}
\IM^* \, M_{\log u, \textup{d}\vec{\gamma_u} \matje{\log \vec{z}}{0}{0}{-\log \vec{z}}
-\log |s|,\log \vec{z},\rho}
\end{equation}
are tempered if and only if $\log |s| \in i \mf t_\R$.
By definition $\log |s| \in \mf t_\R$, so this condition is equivalent to $\log |s| = 0$.
This is turn is equivalent to $|s| = 1$ and to $s \in T_\uni$. By Theorem \ref{thm:4.7}.d
and Proposition \ref{prop:4.9}.b this is also equivalent to temperedness of
$\bar{E}_{s,u,\rho,\vec{z}}$ or $\bar{M}_{s,u,\rho,\vec{z}}$.

The proof of part (a) shows that also for general $s$, temperedness is equivalent to
$|s| = 1$. This happens if and only if $s$ lies in the unitary part of a torus conjugate
to $T$, which in turn is equivalent to $s$ lying in a compact subgroup of $G^\circ$.\\
(d) As in part (c), it suffices to consider the case $s \in T$.

Suppose that $\bar{M}_{s,u,\rho,\vec{z}}$ is essentially discrete series. By Proposition
\ref{prop:4.9}.c and Theorem \ref{thm:4.7}.d the representation \eqref{eq:4.13} has
the same property. Moreover we saw in the proof of Proposition \ref{prop:4.9}.c that
$\widetilde{\rZ_{G^\circ_\der}}(s |s|^{-1})^\circ$ is semisimple. Up to doubling some
roots (with respect to $T$), $\rZ_{G^\circ_\der} (s |s|^{-1})^\circ$ has the same root system,
so that group is semisimple as well.

By assumption $\log \vec{z} \in \R_{>0}^d$. Now \cite[(85)]{AMS2} says that
$\log u$ is distinguished in
$\Lie \big( \rZ_G (s |s|^{-1})^\circ \big)$. In view of the aforementioned semisimplicity,
this is the same as distinguished in $\mf g$. So $u$ is distinguished in $G^\circ$.

Conversely, suppose that $u$ is distinguished in $G^\circ$, or equivalently that
$\log u$ is distinguished in $\mf g$. As $u$ commutes with $s$, it also commutes with
$|s|$ and with $s |s|^{-1}$. This implies that $R(\rZ_G (s |s|^{-1})^\circ,T)$ and
$R(\tilde{\rZ_G} (s |s|^{-1})^\circ,T)$ have full rank in $R(G^\circ,T)$. By
\cite[(85)]{AMS2}, Theorem \ref{thm:4.7}.d and Proposition \ref{prop:4.9}.c
$\bar{M}_{s,u,\rho,\vec{z}}$ is essentially discrete series.

Suppose that either of the above two conditions holds. Then $|s| \in T_{\rs}$ commutes
with the distinguished unipotent element $u \in G^\circ$. This implies that the
semisimple subalgebra $\C \log |s| \subset \mf g$ is contained in $\rZ(\mf g)$. Hence
$|s| \in \rZ(G^\circ)$. Moreover \cite[Theorem 3.26.b]{AMS2} and Lemma \ref{lem:0.3}
imply that $\bar{E}_{s,u,\rho,\vec{z}} = \bar{M}_{s,u,\rho,\vec{z}}$.

Finally, suppose that $\cH (G,M,q\cE,\vec{\mb z})$ has an essentially discrete series
representation on which $\mb \rZ_j$ acts as 1. Its dimension is finite, so it has an
irreducible subquotient, say $\bar{M}_{s,u,\rho,\vec{z}}$. Then $\IM^* M_{\log u, -\log |s|,
\log \vec{z},\rho}$ restricts to an essentially discrete series representation of
$\mh H (\rZ_G ({s |s|^{-1}})^\circ,M^\circ,\cE)$, which is annihilated by $\mb r_j$.
By \eqref{eq:0.4} and \eqref{eq:0.5} it contains a $\mh H (G_j,M_j,\cE_j)$-representation
with the same properties. But \cite[Theorem 3.26.c]{AMS2} says that this is impossible.\\
(e) By Proposition \ref{prop:0.5}.d
\begin{multline*}
(\exp_{\zeta s |\zeta s|^{-1}})_* \, \IM^* \, M_{\log u, \textup{d}\vec{\gamma_u}
\matje{\log \vec{z}}{0}{0}{-\log \vec{z}} -\log |\zeta s|, \log \vec{z},\rho} = \\
(\exp_{\zeta |\zeta|^{-1} s |s|^{-1}})_* \, \log |\zeta| \otimes \IM^* \,
M_{\log u, \textup{d}\vec{\gamma_u}
\matje{\log \vec{z}}{0}{0}{-\log \vec{z}} -\log |\zeta s|, \log \vec{z},\rho} .
\end{multline*}
From Theorem \ref{thm:4.7}.a and the definitions of $\zeta \otimes , \log |\zeta| \otimes$
we see that this equals
\[
\zeta \otimes (\exp_{s |s|^{-1}})_* \, \IM^* \, M_{\log u, \textup{d}\vec{\gamma_u}
\matje{\log \vec{z}}{0}{0}{-\log \vec{z}} -\log |s|, \log \vec{z},\rho} .
\]
Since $\zeta$ is central in $G$, $\cH (\tilde{\rZ_G}(s |s|^{-1}),M,q\cE,\vec{\mb z})$ does
not change upon replacing $s$ by $\zeta s$, and $\zeta \otimes$ is preserved by
$\ind^{\cH (G,M,q\cE,\vec{\mb z})}_{\cH (\tilde{\rZ_G}(s |s|^{-1}),M,q\cE,\vec{\mb z})}$.
This proves the claim for $\bar{M}_{s,u,\rho,\vec{z}}$, while the argument for
$\bar{E}_{s,u,\rho,\vec{z}}$ is analogous.\\
(f) We use Theorems \ref{thm:4.5}.a and
\ref{thm:4.7}.a to translate the statement to modules over
$\mh H (G,M,q \cE, \vec{\mb r})$, with $\vec{\mb r}$ acting as $\log (\vec{z}) \in
\R_{>0}^d$. Then we apply Proposition \ref{prop:0.5}.e.
\end{proof}

Let us discuss the relation between the parametrization from Theorem \ref{thm:4.10}.a
and parabolic induction. Suppose that $Q \subset G$ is an algebraic subgroup such
that $Q \cap G^\circ$ is a Levi subgroup of $G^\circ$ and $M \subset Q$. Let
$(s,u,\rho)$ be as above, with $s,u \in Q^\circ$. Also take $\rho^Q \in \Irr \big(
\pi_0 (\rZ_Q (s,u)) \big)$ with $q\Psi_{\rZ_Q (s)} (u,\rho^Q) = (M,\cC_v^M,q\cE)$ up
to $Q$-conjugation.

Recall $\epsilon_j$ from \pageref{prop:0.6}. We extend it to the current setting by
defining
\[
\epsilon_{u,j} (s,\vec{z}) = \epsilon_{\log u,j} \Big( \textup{d}\vec{\gamma_u}
\matje{\log \vec{z}}{0}{0}{-\log \vec{z}} -\log |s|, \log \vec{z} \Big) .
\]

\begin{cor}\label{cor:4.15}
Assume that $\epsilon_{u,j} (s,\vec{z}) \neq 0$ for each $j = 1,\ldots,d$. 
\enuma{
\item There is a natural isomorphism of $\mc H (G,M,q\cE,\vec{\mb z})$-modules
\[
\mc H (G,M,q\cE,\vec{\mb z}) \underset{\mc H (Q,M,q\cE,\vec{\mb z})}{\otimes}
\bar{E}^Q_{s,u,\rho^Q,\vec{z}} \cong
\bigoplus\nolimits_\rho \Hom_{\pi_0 (\rZ_Q (s,u))} (\rho^Q ,\rho) \otimes
\bar{E}_{s,u,\rho,\vec{z}} ,
\]
where the sum runs over all $\rho \in \Irr \big( \pi_0 (\rZ_G (s,u)) \big)$
with $q\Psi_{\rZ_G (s)}(u,\rho) = \\ (M,\cC_v^M,q\cE)$ up to $G$-conjugation.
For $\vec{z} = \vec{1}$ this isomorphism contains
\[
\mc H (G,M,q\cE,\vec{\mb z}) \underset{\mc H (Q,M,q\cE,\vec{\mb z})}{\otimes}
\bar{M}^Q_{s,u,\rho^Q,\vec{z}} \cong
\bigoplus\nolimits_\rho \Hom_{\pi_0 (\rZ_Q (s,u))} (\rho^Q ,\rho) \otimes
\bar{M}_{s,u,\rho,\vec{z}} .
\]
\item The multiplicity of $\bar{M}_{s,u,\rho,\vec{z}}$ in $\mc H (G,M,q\cE,\vec{\mb z})
\underset{\mc H (Q,M,q\cE,\vec{\mb z})}{\otimes} \bar{E}^Q_{s,u,\rho^Q,\vec{z}}$ is\\
$[\rho^Q \colon\rho]_{\pi_0 (\rZ_Q (s,u))}$. It already appears that many times as a
quotient, via \\ $\bar{E}^Q_{s,u,\rho^Q,\vec{z}} \to \bar{M}^Q_{s,u,\rho^Q,\vec{z}}$.
More precisely, there is a natural isomorphism
\[
\Hom_{\mc H (Q,M,q\cE,\vec{\mb z})} (\bar{M}^Q_{s,u,\rho^Q,\vec{z}},
\bar{M}_{s,u,\rho,\vec{z}}) \cong \Hom_{\pi_0 (\rZ_Q (s,u))} (\rho^Q ,\rho)^* .
\]
}
\end{cor}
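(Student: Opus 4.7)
The plan is to deduce this corollary from Proposition \ref{prop:0.6}, the analogous statement for twisted graded Hecke algebras, by transporting everything through the two reduction equivalences of Theorems \ref{thm:4.5} and \ref{thm:4.7}. Concretely, after replacing $(s,u,\rho)$ by a $G^\circ$-conjugate as in \eqref{eq:4.15} I may assume $s \in T$; both $s,u$ then lie in $Q^\circ$. Set $t = s|s|^{-1} \in T_\uni$, $y = \log u$, $\vec r = \log\vec z$ and $\sigma = \mr{d}\vec{\gamma_u}\matje{\log \vec z}{0}{0}{-\log \vec z} - \log|s|$, so that by definition
\[
\bar E_{s,u,\rho,\vec z} = \ind_{\cH(\tilde{Z_G}(t),M,q\cE,\vec{\mb z})}^{\cH(G,M,q\cE,\vec{\mb z})} (\exp_t)_* \IM^* E_{y,\sigma,\vec r,\rho},
\]
and likewise for $\bar E^Q_{s,u,\rho^Q,\vec z}$ with $G$ replaced by $Q$, $\tilde{Z_G}(t)$ by $\tilde{Z_Q}(t)$, and $G_t$ by $Q_t$.

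Using the transitivity of induction together with the parabolic-induction compatibilities Theorem \ref{thm:4.5}(b) and Theorem \ref{thm:4.7}(b), I would rewrite the left-hand side of (a) as
\[
\ind_{\cH(\tilde{Z_G}(t),M,q\cE,\vec{\mb z})}^{\cH(G,M,q\cE,\vec{\mb z})} (\exp_t)_* \IM^* \Bigl( \mh H(G_t,M,q\cE,\vec{\mb r}) \underset{\mh H(Q_t,M,q\cE,\vec{\mb r})}{\otimes} E^{Q_t}_{y,\sigma,\vec r,\rho^Q} \Bigr).
\]
At this stage Proposition \ref{prop:0.6}(a) decomposes the inner tensor product into a direct sum of standard modules $E_{y,\sigma,\vec r,\rho}$ with multiplicities $\Hom_{\pi_0(Z_Q(\sigma,y))}(\rho^Q,\rho)$; applying the exact additive functors $\IM^*$, $(\exp_t)_*$ and $\ind$ turns this into the required decomposition of $\bar E$'s. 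The identifications $\pi_0(Z_Q(\sigma,y)) = \pi_0(Z_Q(s,u))$ and the matching of cuspidal quasi-supports on both sides are part of the Jordan-decomposition dictionary already used in Theorem \ref{thm:4.10}(a). The $\bar M$-variant at $\vec z = \vec 1$, i.e.\ $\vec r = \vec 0$, is obtained in the same way from Proposition \ref{prop:0.6}(b).

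Part (b) now follows by combining part (a) with Theorem \ref{thm:4.10}(a), which asserts that the $\bar M_{s,u,\rho,\vec z}$ are pairwise non-isomorphic irreducible quotients of the $\bar E_{s,u,\rho,\vec z}$: the multiplicity count can be read off, and the fact that this multiplicity is already achieved by quotients comes from the surjection $\bar E^Q_{s,u,\rho^Q,\vec z} \twoheadrightarrow \bar M^Q_{s,u,\rho^Q,\vec z}$. For the Hom-isomorphism I would take the natural isomorphism of Proposition \ref{prop:0.6}(c) and transport it through $\IM^*$ (an algebra involution), $(\exp_t)_*$ and the induction equivalence of Theorem \ref{thm:4.5}(a); each of these preserves Hom spaces because they are either involutions or equivalences of categories.

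The main obstacle is not conceptual but bookkeeping: one must check that the component-group data $\pi_0(Z_Q(\sigma,y))$ used on the graded side really matches $\pi_0(Z_Q(s,u))$ used on the affine side after passing through Lemma \ref{lem:4.6} and the conjugation \eqref{eq:4.15}, and that the distinguished-quotient construction $E \twoheadrightarrow M$ on the graded side is sent by $(\exp_t)_*$ to the corresponding affine quotient. The latter is essentially the content of Theorem \ref{thm:4.7}(a), together with Proposition \ref{prop:4.9} (which guarantees that the analytic conditions characterizing the distinguished quotient survive each reduction).
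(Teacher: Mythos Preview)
Your proposal is correct and follows essentially the same strategy as the paper: start from Proposition \ref{prop:0.6} on the graded side, apply $\IM^*$ (using \cite[(83)]{AMS2}), then transport through the equivalences of Theorems \ref{thm:4.7} and \ref{thm:4.5}, using parts (b) of those theorems for compatibility with parabolic induction. Your only inaccuracy is the final appeal to Proposition \ref{prop:4.9}: that result concerns temperedness and discrete series, not distinguished quotients, and is irrelevant here --- the fact that $(\exp_t)_*$ and the induction functor send the surjection $E \twoheadrightarrow M$ to the corresponding affine surjection is immediate from the fact that both are \emph{equivalences of categories} (Theorems \ref{thm:4.5}(a) and \ref{thm:4.7}(a)), hence exact.
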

\begin{proof}
Recall that the analogous statement for twisted graded Hecke algebras is
Proposition \ref{prop:0.6}. To that we can apply the Iwahori--Matsumoto
involution, supported by \cite[(83)]{AMS2}. Next, part (b) of Theorem \ref{thm:4.7}
allows us to apply part (a) while retaining the desired properties. The same goes for
Theorem \ref{thm:4.5}. Then we have transferred Proposition \ref{prop:0.6}
to the representations $\bar{E}_{s,u,\rho,\vec{z}}$ and $\bar{M}_{s,u,\rho,\vec{z}}$.
\end{proof}

Notice that the parameters in Theorem \ref{thm:4.10}.a do not depend on $\vec{z}$.
This enables us to relate $\Irr_{\vec{z}} (\cH (G,M,q\cE,\vec{\mb z}))$ to an extended
quotient of $T$ by $W_{q\cE}$, as in \cite[\S 2.3]{ABPS7} and \cite[(87)]{AMS2}.
The 2-cocycle $\natural_{q\cE}$ of $W_{q\cE}$ gives rise to a twisted version of the 
extended quotient $T \q W_{q\cE}$, see \cite[\S 2.1]{ABPS7}.

\begin{thm}\label{thm:4.11}
Let $\vec{z} \in \R_{>0}^d$. There exists a canonical bijection
\[
\mu_{G,M,q\cE} \colon(T \q W_{q\cE} )_{\natural_{q\cE}} \to
\Irr_{\vec{z}} (\cH (G,M,q\cE,\vec{\mb z}))
\]
such that:
\begin{itemize}
\item $\mu_{G,M,q\cE} (T_\uni \q W_{q\cE} )_{\natural_{q\cE}} =
\Irr_{\vec{z},\temp} (\cH (G,M,q\cE,\vec{\mb z}))$ when $\vec{z} \in \R_{\geq 1}^d$;
\item the central character of $\mu_{G,M,q\cE} (t,\pi_t)$ is
$\big( W_{q\cE} t \, \vec{\chi} (\vec{z}), \vec{z} \big)$,
for some algebraic cocharacter $\chi$ of $\rZ_G (t)^\circ$.
\end{itemize}
\end{thm}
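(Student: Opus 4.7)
The plan is to compose the parametrization of Theorem \ref{thm:4.10}.a with a canonical bijection between the twisted extended quotient $(T \q W_{q\cE})_{\natural_{q\cE}}$ and $G$-conjugacy classes of triples $(s,u,\rho)$ of the kind appearing there. Once this bijection is set up, the central character statement and the tempered statement follow directly from parts (b) and (c) of Theorem \ref{thm:4.10}.

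\textbf{Unfolding the extended quotient and applying a Springer correspondence fibrewise.} By definition $(T \q W_{q\cE})_{\natural_{q\cE}}$ consists of $W_{q\cE}$-orbits of pairs $(t,\pi_t)$ with $t \in T$ and $\pi_t \in \Irr \bigl( \C [W_{q\cE,t}, \natural_{q\cE}] \bigr)$, where $W_{q\cE,t} = \Stab_{W_{q\cE}}(t)$. As recorded at the start of Section \ref{sec:TAHA}, this stabilizer coincides with $N_{G_t}(q\cE)/M$, and the restriction of $\natural_{q\cE}$ is precisely the 2-cocycle attached to the cuspidal quasi-support $(M,\cC_v^M,q\cE)$ inside $G_t = Z_G(t)$. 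I would then invoke the generalized Springer correspondence for the disconnected quasi-Levi setup $(G_t,M,q\cE)$ provided by \cite[Corollary 3.23]{AMS2} (equivalently the specialization of Theorem \ref{thm:0.4} at $\vec{r}=\vec{0}$): this gives, for each fixed $t$, a canonical bijection between $\Irr \bigl(\C [W_{q\cE,t}, \natural_{q\cE}] \bigr)$ and $G_t$-conjugacy classes of pairs $(u,\rho)$, with $u \in G_t^\circ$ unipotent and $\rho \in \Irr \bigl( \pi_0 (Z_{G_t}(u)) \bigr)$ such that $q\Psi_{Z_G(t)}(u,\rho) = (M,\cC_v^M,q\cE)$ up to $G_t$-conjugation.

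\textbf{Assembling the global bijection.} Setting $s := t$, the rule $(t,\pi_t) \mapsto (s,u,\rho)$ produces a map from pairs to triples. To descend it to the desired bijection, I would check that $W_{q\cE}$-orbits on pairs go to $G$-conjugacy classes on triples. Every triple is $G$-conjugate to one with $s \in T$, by \eqref{eq:4.15} and the fact that the condition on $\rho$ forces $s \in \Ad(G^\circ)T$. Two triples with semisimple parts in $T$ are $G$-conjugate iff an element of $N_G(T)$ sends one to the other; modulo $T$ and modulo $G_t$-conjugation in the $(u,\rho)$-component, this is exactly the action of $W_{q\cE}$ on pairs. Composing with Theorem \ref{thm:4.10}.a now yields
\[
\mu_{G,M,q\cE} \colon (T \q W_{q\cE})_{\natural_{q\cE}} \longrightarrow
\Irr_{\vec{z}} \bigl( \cH (G,M,q\cE,\vec{\mb z}) \bigr) , \qquad
(t,\pi_t) \longmapsto \bar{M}_{t,u,\rho,\vec{z}} .
\]

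\textbf{Verifying the two listed properties.} For the central character, Theorem \ref{thm:4.10}.b (with $s=t$) gives central character $\bigl( W_{q\cE}\, t\, \gamma_u \matje{\vec{z}}{0}{0}{\vec{z}^{-1}}, \vec{z} \bigr)$, with $\gamma_u$ as in \eqref{eq:4.14}; taking $\gamma = \gamma_u$ matches the statement. For the tempered claim, when $\vec{z} \in \R_{\geq 1}^d$ Theorem \ref{thm:4.10}.c shows that $\bar{M}_{t,u,\rho,\vec{z}}$ is tempered iff $|t|=1$, i.e.\ iff $t \in T_\uni$, which is equivalent to $(t,\pi_t)$ lying in $(T_\uni \q W_{q\cE})_{\natural_{q\cE}}$.

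\textbf{Main obstacle.} The delicate point is the canonicality of the fibrewise Springer correspondence in step two as $t$ varies, and its compatibility with the $W_{q\cE}$-equivariance on the extended quotient versus $G$-conjugation on triples. Once one appeals to the intrinsic, $N_G(M)$-equivariant formulation of the disconnected generalized Springer correspondence from \cite{AMS, AMS2}, this compatibility is automatic, but it must be stated carefully to justify that $\mu_{G,M,q\cE}$ is well-defined on $W_{q\cE}$-orbits and bijective.
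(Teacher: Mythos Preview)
Your proof is correct and very close in spirit to the paper's, but the paper packages the fibrewise step differently and thereby sidesteps precisely the ``main obstacle'' you flag. Rather than invoking the generalized Springer correspondence for each $G_t$ and then verifying $W_{q\cE}$-equivariance by hand, the paper observes from Proposition~\ref{prop:4.2} that
\[
\cH(G,M,q\cE,\vec{\mb z})/(\mb z_1 - 1,\ldots,\mb z_d - 1) \;\cong\; \cO(T) \rtimes \C[W_{q\cE},\natural_{q\cE}],
\]
so that standard Clifford theory \cite[Lemma 2.3]{ABPS7} already gives a canonical bijection $(T \q W_{q\cE})_{\natural_{q\cE}} \to \Irr_{\vec{1}}\bigl(\cH(G,M,q\cE,\vec{\mb z})\bigr)$, $(t,\pi_t) \mapsto \C_t \rtimes \pi_t$. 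Now Theorem~\ref{thm:4.10}.a, applied at $\vec{z}=\vec{1}$, writes $\C_t \rtimes \pi_t \cong \bar{M}_{t,u,\rho,\vec{1}}$ for $(u,\rho)$ unique up to $Z_G(t)$-conjugation, and one simply defines $\mu_{G,M,q\cE}(t,\pi_t) := \bar{M}_{t,u,\rho,\vec{z}}$ for arbitrary $\vec{z}\in\R_{>0}^d$.

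The advantage of this route is that the compatibility of the fibrewise Springer correspondence with $W_{q\cE}$-orbits versus $G$-conjugacy --- your delicate point --- is absorbed into the already-proven Theorem~\ref{thm:4.10}.a (where it was handled through Lemma~\ref{lem:4.12} and Lemma~\ref{lem:4.6}), so nothing further needs to be checked. Your approach unpacks this same content but has to re-argue the equivariance directly. The verification of the two listed properties via parts (b) and (c) of Theorem~\ref{thm:4.10} is identical in both arguments.
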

\begin{rem}Together with \cite[Theorem 5.4.2]{Sol3} this proves a substantial
part of the ABPS conjectures \cite[\S 15]{ABPS2} for the twisted affine Hecke algebra\\ 
$\cH (G,M,q\cE,\vec{\mb z})$.
For $\vec{z} \in (0,1]^d ,\; \mu_{G,M,q\cE} (T_\uni \q W_{q\cE} )_{\natural_{q\cE}}$ is
the anti-tempered part of $\Irr_{\vec{z}} (\cH (G,M,q\cE,\vec{\mb z}))$,
compare with \cite[Theorem 3.29]{AMS2}.
\end{rem}
\begin{proof}
From Proposition \ref{prop:4.2} we see that
\[
\cH (G,M,q\cE,\vec{\mb z}) / (\mb \rZ_1 - 1, \ldots, \mb \rZ_d - 1) \cong
\cO (T) \rtimes \C [W_{q\cE},\natural_{q\cE}] .
\]
By \cite[Lemma 2.3]{ABPS7} there exists a canonical bijection
\[
\begin{array}{ccc}
(T \q W_{q\cE} )_{\natural_{q\cE}} & \to
& \Irr (\cO (T) \rtimes \C [W_{q\cE},\natural_{q\cE}]) \\
(t,\pi_t) & \mapsto & \C_t \rtimes \pi_t = \ind_{\cO (T) \rtimes \C [W_{q\cE,t},
\natural_{q\cE}]}^{\cO (T) \rtimes \C [W_{q\cE},\natural_{q\cE}]} (\C_t \otimes V_{\pi_t})
\end{array} .
\]
We consider $\C_t \rtimes \pi_t$ as an irreducible $\cH (G,M,q\cE,\vec{\mb z})$-representation
with central character $(W_{q\cE} t,1)$. By Theorem \ref{thm:4.11} there exist $u$
and $\rho$, unique up to $\rZ_G (t)$-conjugation, such that $\C_t \rtimes \pi_t \cong
\bar{M}_{t,u,\rho,1}$. Now we define
\[
\mu_{G,M,q\cE}(t,\pi_t) = \bar{M}_{t,u,\rho,\vec{z}} .
\]
This is canonical because Theorem \ref{thm:4.10}.a is.
The properties involving temperedness and the central character follow from parts (c)
and (b) of Theorem \ref{thm:4.10}.
\end{proof}

\subsection{Comparison with the Kazhdan--Lusztig parametrization} \
\label{par:KaLu}

Irreducible representations of affine Hecke algebras were also classified in
\cite{KaLu,Ree}, in terms of equivariant K-theory. This concerns the cases with
only one complex parameter $q = {\mb z}^2$, which is not a root of unity.
In terms of Proposition \ref{prop:4.2} this means that $\lambda = \lambda^* = 1$.
In view of \eqref{eq:4.2} and \cite[Proposition 2.8]{Lus3}, this happens if
and only if $T = M^\circ$ is a maximal torus of $G^\circ$ and $v = 1$.
For the upcoming comparison we assume that $M = \rZ_G (T)$ equals $T$. Then
$\pi_0 (\rZ_M (v)) = 1$, $q\cE$ is the trivial representation and
\[
\cR_{q\cE} = \rN_G (T,B) / T \cong G / G^\circ ,
\]
where $B$ is a Borel subgroup of $G^\circ$ containing $T$ (called $P$ before).
The Kazhdan--Lusztig parametrization was extended to algebras of the form
\[
\cH (G,T,q\cE = \mr{triv}) =
\cH (\mc R (G^\circ,T), \lambda = 1, \lambda^* = 1, \mb z) \rtimes \cR_{q\cE}
\]
in \cite[\S 9]{ABPS5}. The parameters are triples $(t_q,u,\rho)$, where
\begin{itemize}
\item $t_q \in T$ is semisimple;
\item $u \in G^\circ$ is unipotent and $t_q u t_q^{-1} = u^q$;
\item $\mc B_{G^\circ}^{t_q,u}$ is the variety of Borel subgroups of $G^\circ$
containing $t_q$ and $u$;
\item $\rho \in \Irr \big( \pi_0 (\rZ_G (t_q,u)) \big)$ such that every irreducible
component of $\rho |_{\pi_0 (\rZ_{G^\circ}(t_q,u))}$ appears in
$H_* (\mc B_{G^\circ}^{t_q,u},\C)$.
\end{itemize}
Two triples of this kind are considered equivalent if they are $G$-conjugate.
The representation $\bar{M} (t_q,u,\rho)$ attached to these data is the unique irreducible
quotient of the standard module
\begin{equation}\label{eq:4.16}
\bar{E}_{t_q,u,\rho} :=  \Hom_{\pi_0 (\rZ_G (t_q,u))} \big( \rho,
H_* (\mc B_{G^\circ}^{t_q,u} \times \cR_{q\cE},\C) \big) .
\end{equation}
The classification of $\cH (G^\circ,T,\cE = \mr{triv})$ with $q = \mb z = 1$ goes
back to Kato \cite[Theorem 4.1]{Kat}, see also \cite[\S 8]{ABPS5}. With
\cite[Remark 9.2]{ABPS5} and the subsequent argument (which underlies the above for
$q \neq 1$) it can be extended to $\cH (G,T,q\cE = \mr{triv})$. The parameters are
the same as above (only with $q = 1$), and the irreducible module is
\begin{equation}\label{eq:4.17}
\bar{M} (t_1,u,\rho) =
\Hom_{\pi_0 (\rZ_G (t_1,u))} \big( \rho, H_{d(u)} (\mc B_{G^\circ}^{t_1,u} \times \cR_{q\cE},\C) \big) ,
\end{equation}
where $d(u)$ refers to the dimension of $\mc B_{G^\circ}^{t_1,u}$ as a real variety.
Clearly $\bar{M} (t_1,u,\rho)$ is again a quotient of $\bar{E}_{t_1,u,\rho}$, but for $q=1$
\eqref{eq:4.16} has other irreducible quotients as well, in lower homological degree.

\begin{lem}\label{lem:4.14}
The above set of parameters $(t_q,u,\rho)$ is naturally in bijection with the sets
of parameters used in Theorem \ref{thm:4.10}.a.
\end{lem}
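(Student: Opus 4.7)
The plan is to exhibit the bijection via the classical Jacobson--Morozov reduction, namely
\[
(t_q, u, \rho) \longleftrightarrow (s, u, \rho), \qquad
s = t_q \cdot \gamma_u \matje{q^{-1/2}}{0}{0}{q^{1/2}},
\]
where $\gamma_u \colon \SL_2(\C) \to G^\circ$ is an algebraic homomorphism with $\gamma_u\matje{1}{1}{0}{1} = u$. Since the KL conditions force $q = \mb z^2 \in \R_{>0}$, so in particular $q$ is never a nontrivial root of unity, this formula makes sense and essentially reproduces the parameters used in \cite{KaLu,Ree}.

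First I would verify the forward direction. Given a KL triple, Jacobson--Morozov produces some $\gamma_u$, and (after conjugating by an element of $Z_{G^\circ}(u)$, using the standard argument of \cite[\S 2]{KaLu}) one may arrange that $s$ commutes with the whole subgroup $\gamma_u(\SL_2(\C))$. Since $t_q$ and $\gamma_u\matje{q^{-1/2}}{0}{0}{q^{1/2}}$ are then commuting semisimple elements, $s$ is semisimple; and $s \in G^\circ$ commutes with $u$. Conversely, given a triple $(s,u,\rho)$ as in Theorem~\ref{thm:4.10}(a) in the present situation $M = T,\, v = 1$, one applies Jacobson--Morozov inside the reductive group $Z_G(s)^\circ$ to obtain $\gamma_u \colon \SL_2(\C) \to Z_G(s)^\circ$; then $s$ automatically commutes with $\gamma_u(\SL_2(\C))$, and a direct calculation shows that $t_q := s \cdot \gamma_u\matje{q^{1/2}}{0}{0}{q^{-1/2}}$ is semisimple and satisfies $t_q u t_q^{-1} = u^q$. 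These two constructions are mutually inverse up to the choice of $\gamma_u$, and $G$-conjugation clearly intertwines them, so we obtain a bijection on $G$-conjugacy classes of pairs $(t_q, u)$ versus $(s, u)$.

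Second I would identify the component groups. The condition $\gamma_u(\SL_2(\C)) \subset Z_G(s)$ together with Jordan decomposition in $Z_G(t_q)$ (or, equivalently, in $Z_G(s)$) implies $Z_G(t_q, u) = Z_G(s, u)$ as subgroups of $G$: any element centralizing $t_q$ and $u$ must also centralize the semisimple element $\gamma_u\matje{q^{-1/2}}{0}{0}{q^{1/2}} = t_q^{-1} s$ (since this is uniquely determined by $u$ inside the Jordan decomposition of $t_q$ in $Z_G(u)$), and the converse is analogous. Consequently $\rho$ lives on the same finite group on both sides.

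Finally I would match the condition on $\rho$. A Borel of $G^\circ$ containing $u$ contains the image under $\gamma_u$ of the upper-triangular Borel of $\SL_2(\C)$, and in particular the diagonal element $\gamma_u\matje{q^{1/2}}{0}{0}{q^{-1/2}}$; hence it contains $t_q$ if and only if it contains $s$. Therefore
\[
\mc B_{G^\circ}^{t_q, u} \;=\; \mc B_{G^\circ}^{s, u} \;=\; \mc B_{Z_G(s)^\circ}^{u}.
\]
By the generalized Springer correspondence \cite[\S 5]{AMS}, an irreducible $\rho \in \Irr(\pi_0(Z_G(s,u)))$ has trivial cuspidal quasi-support $q\Psi_{Z_G(s)}(u, \rho) = (T, \{1\}, \mr{triv})$ (up to $Z_G(s)$-conjugation) precisely when $\rho$ occurs in $H_*(\mc B_{Z_G(s)^\circ}^u, \C)$, which is exactly the KL condition. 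The main obstacle is the first paragraph: namely, the classical but delicate fact that $\gamma_u$ can be chosen so that the product $s$ is semisimple and centralizes the $\SL_2$-triple; once this is in place, the remaining verifications are essentially bookkeeping with the Springer correspondence.
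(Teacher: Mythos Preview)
Your overall strategy---pass from $(t_q,u)$ to $(s,u)$ via an $\SL_2$-homomorphism $\gamma_u$ and then match the conditions on $\rho$---is the same as the paper's, which simply cites \cite[Lemma 7.1]{ABPS5} for the first step and \cite[Proposition 6.2]{ABPS5} plus Frobenius reciprocity for the second. However, two of your key assertions are false as stated.

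First, you claim $Z_G(t_q,u)=Z_G(s,u)$ as subgroups, arguing that $\gamma_u\matje{q^{-1/2}}{0}{0}{q^{1/2}}$ is ``uniquely determined by $u$ inside the Jordan decomposition of $t_q$ in $Z_G(u)$''. But $t_q$ is already semisimple, so its Jordan decomposition is trivial; there is no such uniqueness. In fact the two centralizers are \emph{not} equal in general: take $G=\SL_3(\C)$, $u$ subregular, $s=1$, so $t_q=\gamma_u\matje{q^{1/2}}{0}{0}{q^{-1/2}}$ is regular semisimple; then $Z_G(s,u)=Z_G(u)$ is $2$-dimensional while $Z_G(t_q,u)$ is $0$-dimensional. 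What is true (and suffices) is that the component groups agree, via $\pi_0(Z_G(t_q,u))\cong\pi_0\bigl(Z_G(s,\gamma_u(\SL_2(\C)))\bigr)\cong\pi_0(Z_G(s,u))$.

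Second, you claim $\mc B_{G^\circ}^{t_q,u}=\mc B_{G^\circ}^{s,u}=\mc B_{Z_G(s)^\circ}^u$, arguing that any Borel containing $u$ contains the image under $\gamma_u$ of the upper-triangular Borel of $\SL_2(\C)$. This is false: in the same $\SL_3$ example $\mc B_{G^\circ}^{s,u}=\mc B_{G^\circ}^u$ is a one-dimensional Springer fibre, while $\mc B_{G^\circ}^{t_q,u}$ is a finite set of points. A generic Borel containing $u$ does \emph{not} contain the fixed semisimple element of your chosen $\mathfrak{sl}_2$-triple. The paper instead invokes \cite[Proposition 6.2]{ABPS5} to identify the \emph{homologies} of these varieties as $\pi_0(Z_{G^\circ}(s,u))$-modules (with an explicit finite factor $Z_{G^\circ}(s,u)/Z_{Z_G(s)^\circ}(u)$ relating $\mc B_{G^\circ}^{s,u}$ and $\mc B_{Z_G(s)^\circ}^u$), and this is what is actually needed to compare the conditions on $\rho$.
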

\begin{proof}
By \cite[Lemma 7.1]{ABPS5}, we obtain the same $G$-conjugacy classes of parameters
if we replace the above $t_q$ by a semisimple element $s \in \rZ_{G^\circ}(u)$.
In Theorem \ref{thm:4.10} we also have parameters $(s,u,\rho)$, but with a
different condition on $\rho$, namely that
\[
q \Psi_{\rZ_G (s)} (u,\rho) =  (T,v=1,q\epsilon = \mr{triv}) .
\]
By definition this is equivalent to
\begin{equation}\label{eq:4.18}
\Psi_{\rZ_G (s)^\circ}(u,\rho_s) = (T,v=1,\epsilon = \mr{triv}) ,
\end{equation}
for any irreducible constituent $\rho_s$ of $\rho |_{\pi_0 (\rZ_{\rZ_G (s)^\circ}(u))}$.
Write $r = \log z \in \R$ and $y = \log (u) \in \Lie (\rZ_G (s))$. According to
\cite[Proposition 3.7]{AMS2} for the group $\rZ_G (s)^\circ$, \eqref{eq:4.18}
is equivalent to $\rho_s$ appearing in
\[
E^\circ_{y,0,r} = \C_{0,r} \underset{H_*^{M(y)^\circ} (\{y\})}{\otimes}
H_*^{M(y)^\circ} (\mc P_y^\circ ,\C) = H_* (\mc P_y^\circ,\C) .
\]
To make this more explicit, we assume (as we may) that $s \in T$. Then
$\rZ_B (s) = \rZ_G (s)^\circ \cap B$ is a Borel subgroup of $\rZ_G (s)^\circ$ and
\begin{multline}\label{eq:4.19}
\mc P_y^\circ = \{ g \rZ_B (s) \in \rZ_G (s)^\circ / \rZ_B (s) : \Ad (g^{-1}) y \in
\Lie (\rZ_B (s)) \} = \\
\{ g \rZ_B (s) \in \rZ_G (s)^\circ / \rZ_B (s) : u \in g \rZ_B (s) g^{-1} \} =
\mc B_{\rZ_G (s)^\circ}^u .
\end{multline}
Hence \eqref{eq:4.18} is equivalent to $\rho_s$ appearing in
$H_* (\mc B_{\rZ_G (s)^\circ}^u, \C)$. Let $\rho^\circ$ be a
$\pi_0 (\rZ_{G^\circ}(s,u))$-constituent of $\rho$ containing $\rho_s$. By
\cite[Proposition 6.2]{ABPS5} there are isomorphisms of
$\rZ_{G^\circ}(s,u)$-varieties
\begin{equation}\label{eq:4.29}
\mc B_{G^\circ}^{t_q,u} \cong \mc B_{G^\circ}^{s,u} \cong
\mc B_{\rZ_G (s)^\circ}^u \times \rZ_{G^\circ}(s,u) / \rZ_{\rZ_G (s)^\circ}(u) .
\end{equation}
With this and Frobenius reciprocity we see that the condition on $\rho_s$
is also equivalent to $\rho^\circ$ appearing in $H_* (\mc B_{G^\circ}^{s,u},\C)$.
We conclude that the parameters $(s,u,\rho)$ in Theorem \ref{thm:4.10} are
equivalent to those in \cite[\S 9]{ABPS5}, the only change being
$s \leftrightarrow t_q$.
\end{proof}

\begin{prop}\label{prop:4.13}
The parametrization of $\Irr_z (\cH (G,T,q\cE = \mr{triv}))$
obtained in Theorem \ref{thm:4.10}.a agrees with the above parametrization by
the representations \\ $\bar{M} (t_q,u,\rho)$, when we set $q = z^2 \in \R_{>0}$
and take Lemma \ref{lem:4.14} into account. Moreover the standard modules
$\bar{E}_{s,u,\rho,z}$ and $\bar{E}_{t_q,u,\rho}$ are isomorphic.

In other words, our classification of irreducible representations of affine Hecke
algebras agrees with that of Kazhdan--Lusztig and the extended versions thereof.
\end{prop}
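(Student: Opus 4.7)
The plan is to identify the parametrizations via Lemma \ref{lem:4.14}, and then to match the standard modules by tracing through Lusztig's two reduction theorems and comparing with the KL geometric construction. Lemma \ref{lem:4.14} already matches $(s,u,\rho)$ with $(t_q, u, \rho)$ via $t_q = s\vec{\gamma_u}\matje{z}{0}{0}{z^{-1}}$ and $q = z^2$, while Theorem \ref{thm:4.10}.b shows that the $\cO(T)^{W_{q\cE}}$-character of $\bar E_{s,u,\rho,z}$ equals $W_{q\cE} t_q$, which is also the central character of $\bar E_{t_q, u, \rho}$. Since both parametrizations bijectively enumerate $\Irr_z(\cH(G,T,q\cE = \mr{triv}))$, exhibiting a natural isomorphism $\bar E_{s,u,\rho,z} \cong \bar E_{t_q,u,\rho}$ of standard modules will automatically yield the matching of irreducible quotients $\bar M_{s,u,\rho,z} \cong \bar M(t_q,u,\rho)$.

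First I would unwind $\bar E_{s,u,\rho,z}$ via Theorems \ref{thm:4.5} and \ref{thm:4.7}. Setting $t = s|s|^{-1} \in T_\uni$, $y = \log u$, and $\sigma_0 = -\log|s| \in \mf t_\R$, these reductions identify $\bar E_{s,u,\rho,z}$ with the parabolic induction
\[
\ind^{\cH(G,T,\mr{triv},\vec{\mb z})}_{\cH(\tilde{Z_G}(t),T,\mr{triv},\vec{\mb z})} (\exp_t)_* \IM^* E_{y, \sigma, \log z, \rho},
\]
where $\sigma = \sigma_0 + \textup{d}\vec{\gamma_u}\matje{\log z}{0}{0}{-\log z}$. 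Since $M = T$, $v = 0$, and $q\cE$ is trivial, the local system $\dot{q\cE}$ on $\mc P_y$ is the constant sheaf, and an internal version of \eqref{eq:4.19} (applied inside $Z_G(t)^\circ$ instead of $Z_G(s)^\circ$) shows that $\mc P_y^\circ \cong \mc B^u_{Z_G(t)^\circ}$. Lusztig's construction of the standard module then identifies the underlying vector space of the graded-Hecke standard module with the $\rho^\circ$-isotypic component of the ordinary homology $H_*(\mc B^u_{Z_G(t)^\circ},\C)$, once the equivariant parameters are specialized at $(\sigma,\log z)$.

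On the Kazhdan--Lusztig side, the decomposition \eqref{eq:4.29} gives $\mc B^{t_q,u}_{G^\circ} \cong \mc B^{s,u}_{G^\circ}$ and further expresses $H_*(\mc B^{t_q,u}_{G^\circ},\C)$ as a $\pi_0(Z_{G^\circ}(s,u))$-induction of $H_*(\mc B^u_{Z_G(s)^\circ},\C)$. To join the two sides I would invoke the parabolic-induction compatibility of Lusztig's reductions (Corollary \ref{cor:4.15}, together with Theorems \ref{thm:4.5}.b and \ref{thm:4.7}.b) and the natural embedding $\mc B^u_{Z_G(s)^\circ} \hookrightarrow \mc B^u_{Z_G(t)^\circ}$ coming from $Z_G(s) \subset Z_G(t)$; the subvariety whose homology is picked out by the localization in Theorem \ref{thm:4.5} at the weight $t|s| \times \{z\}$ matches, after inducing up to $\cH(G,T,\mr{triv},\vec{\mb z})$, precisely the $\rho$-isotypic component of $H_*(\mc B^{t_q,u}_{G^\circ},\C)$. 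Combining this with the identification from the previous paragraph yields the desired isomorphism $\bar E_{s,u,\rho,z} \cong \bar E_{t_q,u,\rho}$.

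The main obstacle is verifying that the Iwahori--Matsumoto twist $\IM^*$ on our side precisely reconciles the Lusztig--Kato realization of the standard module (built around the nilpotent $y$ and the semisimple shift $\sigma_0 = -\log|s|$) with the KL realization (where $t_q$ and $u$ enter directly). By \cite[(85)]{AMS2}, $\IM^*$ flips the sign of the $\mf t_\R$-coordinate of the central character, turning $\sigma_0 = -\log|s|$ into $+\log|s|$ and restoring the expected geometric weight $t_q$; the accompanying twist by the sign character of $W_{q\cE}^\circ$ encodes the discrepancy between \cite{Lus6} and \cite{KaLu} noted in the introduction, and is exactly what converts Lusztig's convolution product on $H^{M(y)^\circ}_*(\mc P_y,\dot{\cE})$ into the KL convolution on $H_*(\mc B^{t_q,u}_{G^\circ},\C)$. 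Checking this convolution-compatibility is the technical heart of the argument.
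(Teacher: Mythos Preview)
Your outline has the right overall shape, but there are two genuine gaps.

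\textbf{The Evens--Mirkovi\'c comparison.} The step you call ``the technical heart'' --- checking that the Iwahori--Matsumoto twist converts Lusztig's convolution action on $H_*^{M(y)^\circ}(\mc P_y,\C)$ into the Kazhdan--Lusztig convolution on $H_*(\mc B_{G^\circ}^{t_q,u},\C)$ --- is not something you can extract from \cite[(85)]{AMS2}, which is a statement about discrete series, not about module structures. This comparison is a substantial theorem of Evens and Mirkovi\'c \cite[Theorem 6.10]{EvMi}, and the paper's proof invokes it as a black box: after reducing via Theorems \ref{thm:4.5} and \ref{thm:4.7}, \cite{EvMi} identifies $\Hom_{\pi_0 (Z_{G^\circ}(s,u))} \big(\rho^\circ, H_* (\mc B_{G^\circ}^{s,u},\C) \big)$ with $\IM^* E_{y, \textup{d} \gamma_u \matje{r}{0}{0}{-r} - \log |s| , r, \rho^\circ}$ as $\mh H(Z_{G^\circ}(s|s|^{-1}),T,\mr{triv})$-modules. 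Your phrase ``the subvariety whose homology is picked out by the localization in Theorem \ref{thm:4.5}'' is not quite right either: Theorem \ref{thm:4.5} is a purely algebraic localization of Hecke algebra modules, and does not by itself single out a subvariety of the flag manifold. Turning that localization into a geometric statement is precisely the content of \cite{EvMi}.

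\textbf{The case $q=1$.} Your claim that an isomorphism of standard modules automatically yields the matching of irreducible quotients is only valid when the standard module has a \emph{unique} irreducible quotient. For $q = z^2 \neq 1$ this holds on both sides, and the paper uses exactly that. But for $q=1$ the KL standard module \eqref{eq:4.16} has several irreducible quotients (in different homological degrees), and $\bar M(t_1,u,\rho)$ is defined as the one in top degree \eqref{eq:4.17}. The paper therefore gives a separate argument at $q=1$: it compares the $X^*(T)$-weight space for $s$ on both sides, reducing to the equality $\IM^*(M_{y,\rho^\circ}\rtimes\tau) = \Hom_{\pi_0(Z_{Z_G(s)^\circ}(u))}(\rho^\circ, H_{d(u)}(\mc B_{Z_G(s)^\circ}^u,\C))\rtimes\tau$, which is the classical sign-twist relating Lusztig's and the original Springer correspondences. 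Your proposal does not address this case.
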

\begin{rem} Our parametrization differs from the one used by Lusztig
in \cite[\S 5.20]{Lus6} and \cite[Theorem 10.4]{Lus8}, namely by the
Iwahori--Matsumoto involution. Thus Proposition \ref{prop:4.13} shows that the
classification of unipotent representations of adjoint simple groups in
\cite{Lus6,Lus8} does not agree with the earlier classification of
Iwahori--spherical representations in \cite{KaLu}.\end{rem}
\begin{proof}
Let $(s,u,\rho)$ be a triple as above, and choose an algebra homomorphism \\
$\gamma_u \colon \SL_2 (\C) \to \rZ_G (s)^\circ$ with $\gamma_u \matje{1}{1}{0}{1} = u$.
Then we can take $t_q = s \gamma_u \matje{z}{0}{0}{z^{-1}}$, where $z^2 = q$.
Recall that $\bar{M} (t_q,u,\rho)$ is a quotient of $\bar{E}_{t_q,u,\rho}$ from
\eqref{eq:4.16}. Write
$\rho = \rho^\circ \rtimes \tau^*$, where
\[
\tau^* \in \Irr (\cR_{q\cE,u,s,\rho^\circ}) \quad \text{with} \quad
\cR_{q\cE,u,s,\rho^\circ} = \pi_0 (\rZ_G (s,u))_{\rho^\circ} / \pi_0 (\rZ_{G^\circ}(s,u)) .
\]
From \cite[(72)]{ABPS5} we see that $\bar{E}_{t_q,u,\rho}$ equals
\begin{equation}\label{eq:4.20}
\Hom_{\pi_0 (\rZ_{G^\circ}(s,u))} \big(\rho^\circ,
H_* (\mc B_{G^\circ}^{s,u},\C) \big) \rtimes \tau .
\end{equation}
To the part without $\rtimes \tau$ we can apply \cite{EvMi}, which compares the
two parametrizations.
In \cite{EvMi} both the Iwahori--Matsumoto involution and a related ``shift'' are
mentioned. This involution is necessary to get temperedness for the same parameters
in both classifications. Unfortunately, it is not entirely clear what Evens and
Mirkovich mean by a ``shift'', for signs can be inserted at various places. In any case
their argument is based on temperedness and a comparison of weights
\cite[Theorem 5.5]{EvMi}, and it will work once we arrange the modules such that
these two aspects match. With this in mind, \cite[Theorem 6.10]{EvMi} says that
the $\mh H (\rZ_{G^\circ}(s |s|^{-1}), T,\mr{triv})$-module obtained from
$\Hom_{\pi_0 (\rZ_{G^\circ}(s,u))} \big(\rho^\circ, H_* (\mc B_{G^\circ}^{s,u},\C)
\big)$ via Theorems \ref{thm:4.5} and \ref{thm:4.7} is
$\IM^* E_{y, \textup{d} \gamma_u \matje{r}{0}{0}{-r} - \log |s| , r, \rho^\circ}$.
The extension with the group $\cR_{q\cE}$ is handled in the same way for all
algebras under consideration here, namely with Clifford theory. It follows that
applying Theorems \ref{thm:4.5} and \ref{thm:4.7} to \eqref{eq:4.20} yields
\begin{equation}\label{eq:4.21}
\Big( \IM^* E_{y, \textup{d} \gamma_u \matje{r}{0}{0}{-r} - \log |s| , r, \rho^\circ}
\Big) \rtimes \tau .
\end{equation}
Moreover IM is the identity on $\C [\cR_{q\cE}]$, so the large brackets are actually
superfluous here. Notice that the subgroup of $\Gamma$ appearing in $\rZ_G (s |s|^{-1})$
is $\Gamma_{\mr{Ad}(G^\circ) s |s|^{-1}}$, the stabilizer of the Ad$(G^\circ)$-orbit of
$s |s|^{-1}$. The action of $\cR_{q\cE,u,s,\rho^\circ}$ underlying $\rtimes \tau$
in \eqref{eq:4.20} comes from the action of $\pi_0 (\rZ_G (s,u))$ on
$H_* (\mc B_{\rZ_G (s |s|^{-1})^\circ}^u \times \Gamma_{\mr{Ad}(G^\circ) s |s|^{-1}} ,\C)$.
By \eqref{eq:4.19} for the group $\rZ_G (s |s|^{-1})$:
\[
\mc B_{\rZ_G (s |s|^{-1})^\circ}^u \times \Gamma_{\mr{Ad}(G^\circ) s |s|^{-1}} = \mc P_y.
\]
Via this equality the $\pi_0 (\rZ_G (s,u))$-action on $H_* (\mc B_{\rZ_G (s |s|^{-1}
)^\circ}^u \times \Gamma_{\mr{Ad}(G^\circ) s |s|^{-1}},\C)$ agrees with the action on
\[
H_* (\mc P_y,\C) \cong \C_{|s|,r} \underset{H_*^{M(y)^\circ} (\{y\})}{\otimes}
H_*^{M(y)^\circ}(\mc P_y,\C)
\]
from \cite[Theorem 3.2.d]{AMS2}. Hence
\begin{multline*}
\big( \IM^* E_{y, \textup{d} \gamma_u \matje{r}{0}{0}{-r} - \log |s| , r, \rho^\circ}
\big) \rtimes \tau =
\IM^* \big( E_{y, \textup{d} \gamma_u \matje{r}{0}{0}{-r} - \log |s| , r, \rho^\circ}
\rtimes \tau \big) \\
= \IM^* \big( E_{y, \textup{d} \gamma_u \matje{r}{0}{0}{-r} - \log |s| , r, \rho^\circ
\rtimes \tau*} \big) = \IM^* E_{y, \textup{d} \gamma_u \matje{r}{0}{0}{-r} - \log |s| ,
r, \rho} .
\end{multline*}
We see that the standard modules $\bar{E}_{t_q,u,\rho}$ and $\bar{E}_{s,u,\rho,z}$
give the same module upon applying Theorems \ref{thm:4.5} and \ref{thm:4.7}.
Hence they are isomorphic.

From here on we have to assume that $q = z^2 \in \R_{> 0}$ is not a root of unity.
We recognize the unique irreducible quotient of the right hand side as \eqref{eq:4.13},
a part of the definition of $\bar{M}_{s,u,\rho,z}$. Using Theorems \ref{thm:4.7} and
\ref{thm:4.5} again, but now in the opposite direction, we see that both
$\bar{M}_{s,u,\rho,z}$ and $\bar{M} (t_q,u,\rho)$ are the unique irreducible quotient of
\[
\ind^{\cH (G,M,\cE)}_{\cH (\tilde{\rZ_G}(s |s|^{-1}),M,q\cE)}
(\exp_{s |s|^{-1}})_* \, \IM^* \, E_{\log u, \textup{d}\gamma_u
\matje{\log z}{0}{0}{-\log z} -\log |s|, \log z,\rho} .
\]
Thus the two parametrizations agree when $q = z^2 \neq 1$.

For $q = z = 1$ a different argument is needed. We note that \eqref{eq:4.20} still
applies, which enables us to write
\[
\bar{M} (t_1 = s,u,\rho) = \Hom_{\pi_0 (\rZ_{G^\circ} (s,u))} \big( \rho^\circ,
H_{d(u)} (\mc B_{G^\circ}^{s,u},\C) \big) \rtimes \tau .
\]
From the definition of the $X^* (T)$-action in \cite[\S 3]{Kat} we see that
$H_* (\mc B_{G^\circ}^{s,u},\C)$ is completely reducible as a $X^* (T)$-module.
With \cite[Theorem 8.2]{ABPS5} we deduce that the weight space for $s \in T$ is,
as $(W_{q\cE})_s$-representation, equal to
\begin{multline*}
\Hom_{\pi_0 (\rZ_G (s,u))} \big( \rho, H_{d(u)} (\mc B_{\rZ_G (s)^\circ}^u \times
\Gamma_{\mr{Ad}(G^\circ)s},\C) \big) = \\
\Hom_{\pi_0 (\rZ_{\rZ_G (s)^\circ} (u))}
\big( \rho^\circ, H_{d(u)} (\mc B_{\rZ_G (s)^\circ}^u,\C) \big) \rtimes \tau .
\end{multline*}
From \cite[(34)]{AMS2} we can also determine the $X^* (T)$-weight space for
$s$ in $\bar{M}_{s,u,\rho,1}$. First we look at the $S(\mf t^*)$-weight $-\log |s|$ in
$M^\circ_{y,-\log |s|,0,\rho^\circ}$, that gives
$M^{Q^\circ}_{y,-\log |s|,0,\rho^\circ}$. As in \cite[Section 3.2]{AMS2}, we denote
the underlying $W (\rZ_G (s)^\circ,T)$-representation by $M_{y,\rho^\circ}$.
Next we replace $\rZ_G (s)^\circ$ by $\rZ_G (s)$ and $\rho^\circ$ by $\rho =
\rho^\circ \rtimes \tau^*$, obtaining the $(W_{q\cE})_s$-representation
\begin{equation}\label{eq:4.22}
M^{Q^\circ}_{y,-\log |s|,0,\rho^\circ} \rtimes \tau = M_{y,\rho^\circ} \rtimes \tau .
\end{equation}
Applying the Iwahori--Matsumoto involution and Theorem \ref{thm:4.7}, we get
\begin{equation}\label{eq:4.23}
(\exp_{s |s|^{-1}} )_* \IM^* ( M^{Q^\circ}_{y,-\log |s|,0,\rho^\circ} \rtimes \tau ) .
\end{equation}
The previous $S(\mf t^*)$-weight space \eqref{eq:4.22} for $-\log |s|$ has now been
transformed into the $X^* (T)$-weight space for $s$ in the representation
$\bar{M}_{s,u,\rho,1}$ with respect to the group $\rZ_G (s)$. To land inside $\bar{M}_{s,u,\rho,1}$
with respect to $G$, we must still apply Theorem \ref{thm:4.5}. But that does not
change the $X^* (T)$-weight space for $s$, so we can stick to \eqref{eq:4.23}.

For $r=0, z=1$ the map $(\exp_{s |s|^{-1}})_*$ becomes the identity on $\C [W_\cE]$,
see \cite[(2.5) and (1.25)]{Sol3}. It remains to compare the $\C [W_\cE]$-modules
\begin{equation}\label{eq:4.24}
\IM^* ( M_{y,\rho^\circ} \rtimes \tau ) \quad \text{and} \quad
\Hom_{\pi_0 (\rZ_{\rZ_G (s)^\circ} (u))} \big( \rho^\circ, H_{d(u)}
(\mc B_{\rZ_G (s)^\circ}^u,\C) \big) \rtimes \tau .
\end{equation}
By definition \cite[Section 3.2]{AMS2} $M_{y,\rho^\circ}$ is the
$W(\rZ_G (s)^\circ,T)$-representation associated to $(y,\rho^\circ)$ by the generalized
Springer correspondence from \cite{Lus2}. It differs from the classical Springer
correspondence by the sign representation, so
\[
M_{y,\rho^\circ} = \text{sign } \otimes \Hom_{\pi_0 (\rZ_{\rZ_G (s)^\circ} (u))}
\big( \rho^\circ, H_{d(u)} (\mc B_{\rZ_G (s)^\circ}^u,\C) \big) .
\]
On both sides of \eqref{eq:4.24} the actions underlying $\rtimes \tau$ come from the
action of $\rZ_G (s,u)$ on $H_* (\mc B_{\rZ_G (s)^\circ}^u \times \Gamma_{\mr{Ad}(G^\circ) s},\C)
\cong H_* (\mc P_u ,\C)$. Moreover $\IM (w) = \mr{sign}(w) w$ for $w \in W(\rZ_G (s)^\circ,T)$
and IM is the identity on the group $\cR$ for $\rZ_G (s)$. We conclude that the two
representations in \eqref{eq:4.24} are equal.

This proves that $\bar{M} (t_1 = s,u,\rho)$ and $\bar{M}_{s,u,\rho,1}$ have the same
$X (T)$-weight space for the weight $s$. Since both representations are irreducible, that
implies that they are isomorphic.
\end{proof}

\section{Langlands parameters}
\label{sec:Langlands}

Let $F$ be a non-archimedean local field and let $\cG$ be a connected reductive group defined
over $F$. In this section we construct a bijection between enhanced Langlands parameters for
$\cG (F)$ and a certain collection of irreducible representations of twisted Hecke algebras.

We have to collect several notions about L-parameters, for which we
follow \cite{AMS}. For the background we refer to that paper, here we do little
more than recalling the necessary notations.
Let $\cG^\vee$ be the complex dual group of $\cG$. It is endowed with an action of
the Weil group $\mb W_F$, which preserves a pinning of $\cG^\vee$. The Langlands
dual group is ${}^L \cG = \cG^\vee \rtimes \mb W_F$.

\begin{defn}\label{defn:5.1}
A Langlands parameter for ${}^L \cG$ is a continuous group homomorphism
\[
\phi \colon \mb W_F \times \SL_2 (\C) \to \cG^\vee \rtimes \mb W_F
\]
such that:
\begin{itemize}
\item $\phi (w) \in \cG^\vee w$ for all $w \in \mb W_F$;
\item $\phi (\mb W_F)$ consists of semisimple elements;
\item $\phi |_{\SL_2 (\C)}$ is algebraic.
\end{itemize}
We call a L-parameter:
\begin{itemize}
\item bounded, if $\phi (\Fr_F) = (c,\Fr_F)$ with $c$ in a compact subgroup of $\cG^\vee$;
\item discrete, if $\rZ_{\cG^\vee}(\phi)^\circ = \rZ(\cG^\vee)^{\mb W_F,\circ}$.
\end{itemize}
\end{defn}
With \cite[\S 3]{Bor} it is easily seen that this definition of discreteness is equivalent
to the usual definition with proper Levi subgroups.

Let $\cG_\sc^\vee$ be the simply connected cover of the derived group $\cG^\vee_\der$.
Let $\rZ_{\cG^\vee_\ad}(\phi)$ be the image of $\rZ_{\cG^\vee}(\phi)$ in the adjoint
group $\cG^\vee_\ad$. We define
\[
Z^1_{\cG^\vee_\sc}(\phi) = \text{ inverse image of } \rZ_{\cG^\vee_\ad}(\phi)
\text{ under } \cG^\vee_\sc \to \cG^\vee_\ad .
\]
Notice that the conjugation action of $\cG^\vee_\sc \rtimes \mb W_F$ on $\cG^\vee_\sc$
descends to an action of $\mc G^\vee \rtimes \mb W_F$ on $\cG^\vee_\sc$.

\begin{defn}\label{defn:5.2}
To $\phi$ we associate the finite group $\cS_\phi := \pi_0 (Z^1_{\cG^\vee_\sc}(\phi))$.
An enhancement of $\phi$ is an irreducible representation of $\cS_\phi$.

The group $\cG^\vee$ acts on the collection of enhanced L-parameters for ${}^L \cG$ by
\[
g \cdot (\phi,\rho) =  (g \phi g^{-1}, g \cdot \rho), \quad
\text{where } g \cdot \rho(a)=\rho(g^{-1}a g) \text{ for } a\in \cS_\phi.
\]
Let $\Phi_\re ({}^L \cG)$ be the collection of $\cG^\vee$-orbits of enhanced L-parameters.
\end{defn}

Let us consider $\cG (F)$ as an inner twist of a quasi-split group. Via the Kottwitz
isomorphism it is parametrized by a character of $\rZ(\cG^\vee_\sc)^{\mb W_F}$, say
$\zeta_\cG$. We say that $(\phi,\rho) \in \Phi_\re ({}^L \cG)$ is relevant for $\cG (F)$
if $\rZ(\cG^\vee_\sc)^{\mb W_F}$ acts on $\rho$ as $\zeta_\cG$. The subset of
$\Phi_\re ({}^L \cG)$ which is relevant for $\cG (F)$ is denoted $\Phi_\re (\cG (F))$.

As is well-known, $(\phi,\rho) \in \Phi_\re ({}^L \cG)$ is already determined by
$\phi |_{\mb W_F}$ (the restriction to the first factor of $\mb W_F \times \SL_2 (\C)$), 
the unipotent element $u_\phi := \phi \big(1, \matje{1}{1}{0}{1} \big)$ and the
enhancement $\rho$. Sometimes we will also consider $\cG^\vee$-conjugacy classes 
of such triples $(\phi |_{\mb W_F}, u_\phi, \rho)$ as enhanced L-parameters. 
An enhanced L-parameter $(\phi |_{\mb W_F}, v, q\epsilon)$ will often be abbreviated 
to $(\phi_v, q \epsilon)$. We will study enhanced Langlands parameters via their 
cuspidal support, as introduced in \cite{AMS}.

\begin{defn}\label{defn:5.3}
For $(\phi,\rho) \in \Phi_\re ({}^L \cG)$ we write $G_\phi =
Z^1_{\cG^\vee_\sc}(\phi |_{\mb W_F})$, a complex reductive group. We say that $(\phi,\rho)$
is cuspidal if $\phi$ is discrete and $(u_\phi = \phi \big(1, \matje{1}{1}{0}{1} \big),\rho)$
is a cuspidal pair for $G_\phi$ in
the sense of \cite[\S 3]{AMS}. (This means that $\rho = \cF_{u_\phi}$, for a
$G_\phi$-equivariant cuspidal local system $\cF$ on $\cC^{G_\phi}_{u_\phi}$.)
We denote the collection of cuspidal L-parameters for ${}^L \cG$ by $\Phi_\cusp ({}^L \cG)$,
and the subset which is relevant for $\cG (F)$ by $\Phi_\cusp (\cG (F))$.
\end{defn}

We denote the cuspidal quasi-support of $(u_\phi,\rho)$, in the sense of \cite[\S 5]{AMS},
by $[M, v,q\epsilon]_{G_\phi}$. In particular $v \in M \subset G_\phi \subset \cG^\vee_\sc$.

\begin{prop}\label{prop:5.4} \cite[Proposition 7.3]{AMS} \ \\
Let $(\phi,\rho) \in \Phi_\re (\cG (F))$. Upon replacing $(\phi,\rho)$ by 
$\cG^\vee$-conjugate and replacing $(M,v,q\epsilon)$ by a $G_\phi$-conjugate, there exists 
a Levi subgroup $\cL (F) \subset \cG (F)$ such that $(\phi |_{\mb W_F},v,q\epsilon)$
is a cuspidal L-parameter for $\cL (F)$. Moreover
\[
\cL^\vee \rtimes \mb W_F = \rZ_{\cG^\vee \rtimes \mb W_F}(\rZ(M)^\circ) ,
\]
and this group is uniquely determined by $(\phi,\rho)$ up to $\cG^\vee$-conjugation.
\end{prop}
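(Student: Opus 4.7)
The plan is to build the Levi subgroup from the cuspidal quasi-support on the Galois side, and then check that it has the required properties.

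First I would extract the data: by hypothesis the cuspidal quasi-support of $(u_\phi, \rho)$ in $G_\phi = Z^1_{\cG^\vee_\sc}(\phi|_{\mathbf{W}_F})$ is $[M, v, q\epsilon]_{G_\phi}$, where $M$ is a quasi-Levi of $G_\phi$. Thus $M^\circ$ is a Levi subgroup of the connected reductive group $G_\phi^\circ$ and $Z(M)^\circ = Z(M^\circ)^\circ$ is a complex torus. I would then form the image $T$ of $Z(M)^\circ$ under the isogeny $\cG^\vee_\sc \to \cG^\vee_\der \hookrightarrow \cG^\vee$; this image is again a torus of the same dimension. Since $M \subset G_\phi$ centralizes $\phi(\mathbf{W}_F)$ modulo the centre, $\phi(\mathbf{W}_F)$ normalizes $T$, and because $\mathbf{W}_F$ preserves a pinning of $\cG^\vee$ the twisted centralizer
\[
Z_{\cG^\vee \rtimes \mathbf{W}_F}(T) = Z_{\cG^\vee}(T) \rtimes \mathbf{W}_F
\]
makes sense; set $\cL^\vee := Z_{\cG^\vee}(T)$. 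Because $T$ is a torus inside a Levi of $\cG^\vee$ (via $\cG^\vee_\sc$), the subgroup $\cL^\vee$ is a Levi subgroup of $\cG^\vee$, stable under the $\mathbf{W}_F$-action.

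Second, I would show that $\cL^\vee \rtimes \mathbf{W}_F$ is the L-group of an $F$-Levi $\cL(F) \subset \cG(F)$. After conjugating $(\phi, \rho)$ by a suitable element of $\cG^\vee$ (this is exactly where the freedom in the proposition is used), one can arrange $\cL^\vee$ to be a standard Levi of $\cG^\vee$. The relevance hypothesis on $(\phi, \rho)$ for $\cG(F)$ says $Z(\cG^\vee_\sc)^{\mathbf{W}_F}$ acts on $\rho$ via $\zeta_\cG$; unpacking the definitions this implies that the class of $\cL^\vee$ in $H^1(\mathbf{W}_F, \cL^\vee_\ad / Z(\cL^\vee_\ad))$ matches $\zeta_\cG$, so $\cL^\vee \rtimes \mathbf{W}_F$ is the L-group of a Levi subgroup $\cL(F)$ of the inner twist $\cG(F)$ (not merely of its quasi-split form).

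Third, I would verify that $(\phi|_{\mathbf{W}_F}, v, q\epsilon)$ is a cuspidal enhanced L-parameter for $\cL(F)$. The associated L-parameter $\phi_v : \mathbf{W}_F \times \SL_2(\C) \to \cL^\vee \rtimes \mathbf{W}_F$ is obtained by keeping $\phi|_{\mathbf{W}_F}$ and sending the unipotent generator of $\SL_2(\C)$ to the unipotent element corresponding to $v$. Discreteness of $\phi_v$ in $\cL$ translates into the statement $Z_{\cL^\vee_\sc}(\phi_v)^\circ = Z(\cL^\vee_\sc)^{\mathbf{W}_F,\circ}$; this holds because the image of this connected centralizer inside $G_\phi$ is a connected subgroup of $M$ whose neutral component contains $v$ and $Z(M)^\circ$, hence equals $Z(M^\circ)^\circ$, which is exactly $Z(\cL^\vee_\sc)^{\mathbf{W}_F,\circ}$ by construction. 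Cuspidality of the pair $(v, q\epsilon)$ for $Z^1_{\cL^\vee_\sc}(\phi_v|_{\mathbf{W}_F})$ is immediate from the fact that $(M, v, q\epsilon)$ was taken as the cuspidal quasi-support in $G_\phi$: this is essentially a matching of definitions between the cuspidality notion in $M$ (Definition 5.3 of cuspidal pair, inherited from \cite[\S 3]{AMS}) and the cuspidality of the induced enhanced L-parameter for $\cL(F)$.

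Finally, uniqueness up to $\cG^\vee$-conjugation of $\cL^\vee \rtimes \mathbf{W}_F$ follows from the uniqueness of the cuspidal quasi-support $[M, v, q\epsilon]_{G_\phi}$ up to $G_\phi$-conjugacy: the torus $Z(M)^\circ$ is determined by $M$, and hence so is its centralizer $\cL^\vee$, while conjugation by $G_\phi \subset \cG^\vee_\sc$ corresponds to conjugation by an element of $\cG^\vee$. The main obstacle, in my view, is the third step — namely verifying discreteness of $\phi_v$ and the relevance/descent of the Levi $\cL^\vee$ to an $F$-rational Levi of $\cG(F)$, since it ties together the quasi-Levi structure on the Galois side (which involves the isogeny $\cG^\vee_\sc \to \cG^\vee$ and the character $\zeta_\cG$) with the internal Levi structure of the inner twist.
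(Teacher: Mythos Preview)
The paper does not actually prove this proposition: it is stated with a citation to \cite[Proposition 7.3]{AMS} and no argument is given in the present paper. So there is no ``paper's own proof'' to compare against; your outline is an attempt to reconstruct what is done in \cite{AMS}.

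Your overall strategy --- form $\cL^\vee$ as the centralizer of the torus $Z(M)^\circ$, then check discreteness and cuspidality --- is the right one, and it matches the construction used in \cite{AMS}. However, a couple of steps in your write-up are not correct as stated. First, the displayed identity $Z_{\cG^\vee \rtimes \mathbf W_F}(T) = Z_{\cG^\vee}(T) \rtimes \mathbf W_F$ is false in general: an element $(1,w)$ lies in the centralizer only if the $\mathbf W_F$-action fixes $T$ pointwise, which you have not arranged. The correct argument is that $T \subset G_\phi$ forces $\phi(w)\, t\, \phi(w)^{-1}=t$ for all $t\in T$ and $w\in\mathbf W_F$, so $\phi(\mathbf W_F)$ lies in the centralizer; hence the centralizer surjects onto $\mathbf W_F$ and has the form $\cL^\vee \rtimes \mathbf W_F$ for $\cL^\vee = Z_{\cG^\vee}(T)$. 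Second, your discreteness argument in step three does not work: a connected subgroup of $M$ containing the unipotent element $v$ and the torus $Z(M)^\circ$ certainly need not equal $Z(M)^\circ$. What you must use is that $(v,q\epsilon)$ is a \emph{cuspidal} pair for $M$, which forces $v$ to be distinguished in $M^\circ$; then $Z_{M^\circ}(v)^\circ / Z(M^\circ)^\circ$ is unipotent, and the reductive part of $Z_{\cL^\vee_\sc}(\phi_v)^\circ$ collapses to the central torus. Finally, the relevance/descent step (your second paragraph) is too vague: the cohomology class you write down does not parse, and the passage from relevance of $(\phi,\rho)$ for $\cG(F)$ to relevance of the Levi requires tracking how $\zeta_\cG$ restricts along $Z(\cG^\vee_\sc)^{\mathbf W_F} \to Z(\cL^\vee_c)^{\mathbf W_F}$ and how $q\epsilon$ encodes this; this is genuinely the delicate point and is handled carefully in \cite{AMS}.
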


Inside ${}^L \mc G$, we can conjugate $\cL^\vee \rtimes \mb W_F$ with elements of 
$\mc G^\vee$. A subgroup of the form $g (\cL^\vee \rtimes \mb W_F) g^{-1}$ projects naturally 
onto $\mb W_F$, but unlike ${}^L \mc L$ it does not necessarily contain $\mb W_F$. If 
$(\phi',\rho') \in \Phi_e ({}^L \mc L)$, then $g \cdot (\phi',\rho')$ is an enhanced L-parameter
for $g (\cL^\vee \rtimes \mb W_F) g^{-1}$.

Suppose that $(\phi,\rho)$ is as in Proposition \ref{prop:5.4}. We define its modified
cuspidal support as
\[
{}^L \Psi (\phi,\rho) = (\cL^\vee \rtimes \mb W_F, \phi |_{\mb W_F}, v,q\epsilon)
/ \mc G^\vee\text{-conjugacy} .
\]
The right hand side consists of a Langlands dual group and a cuspidal enhanced L-parameter for
that (up to $\mc G^\vee$-conjugacy). Every enhanced L-parameter for ${}^L \cG$ is conjugate
to one as above, so ${}^L \Psi$ can be considered as a well-defined map from $\Phi_\re ({}^L \cG)$
to $\mc G^\vee$-conjugacy of pairs consisting of a $\mb W_F$-stable Levi subgroup of $\mc G^\vee$ 
and a cuspidal L-parameter for the associated L-group.
Notice that ${}^L \Psi$ preserves boundedness of enhanced L-parameters.

We also need Bernstein components of enhanced L-parameters. Recall from \cite[\S 3.3.1]{Hai} 
that the group of unramified characters of $\cL (F)$ is naturally isomorphic to 
$\big( (Z (\cL^\vee)^{\mb I_F})_{\mb W_F} \big)^\circ$. We consider this as an object on the 
Galois side of the local Langlands correspondence and with Lemma \ref{lem:5.17} we write
\begin{equation}\label{eq:Xnr}
X_\nr ({}^L \cL) = \big( (Z (\cL^\vee)^{\mb I_F})_{\mb W_F} \big)^\circ = 
( Z (\cL^\vee \rtimes \mb I_F )_{\mb W_F} )^\circ.
\end{equation}
Given $(\phi',\rho') \in \Phi_\re (\cL (F))$ and $z \in  Z (\cL^\vee \rtimes \mb I_F )_{\mb W_F}$,
we define $(z \phi',\rho') \in \Phi_\re (\cL (F))$ by
\[
z \phi' = \phi' \text{ on } \mb I_F \times \SL_2 (\C) \text{ and }
(z \phi')(\Fr_F) = \tilde z \phi' (\Fr_F) ,
\]
where $\tilde z \in Z (\cL^\vee \rtimes \mb I_F )$ represents $z$.

\begin{defn}\label{defn:5.5}
An inertial equivalence class for $\Phi_\re (\cG (F))$ is the $\cG^\vee$-conjugacy class
$\mf s^\vee$ of a pair $(\cL^\vee\rtimes \bW_F,\mf s^\vee_\cL)$, where $\cL (F)$
is a Levi subgroup of $\cG (F)$ and $\mf s^\vee_\cL$ is a $X_\nr ({}^L \cL)$-orbit
in $\Phi_\cusp (\cL (F))$.

The Bernstein component of $\Phi_\re (\cG (F))$ associated to  $\mf s^\vee$ is
\begin{equation} \label{eqn:Bcomp}
\Phi_\re (\cG (F))^{\mf s^\vee} := {}^L \Psi^{-1} (\cL^\vee \rtimes \mb W_F, \mf s^\vee_\cL).
\end{equation}
We denote the set of inertial equivalence classes for $\Phi_\re (\cG (F))$ by $\fB^\vee(\cG(F))$.
\end{defn}

In this way, we obtain a partition of the set $\Phi_\re (\cG (F))$ analogous
to the partition of $\Irr(\cG (F))$ induced by its Bernstein decomposition:
\begin{equation} \label{eqn:Bdec}
\Phi_\re (\cG (F)) =
\bigsqcup\nolimits_{\fs^\vee\in\fB^\vee(\cG(F))}\Phi_\re (\cG (F))^{\mf s^\vee},
\end{equation}
We note that $\Phi_\re (\cL (F))^{\mf s^\vee_\cL}$ is a torsor for the quotient of
the complex torus $X_\nr ({}^L \cL)$ by a finite subgroup. In particular
$\Phi_\re (\cL (F))^{\mf s^\vee_\cL}$ is isomorphic to a torus as complex algebraic variety,
albeit not in a canonical way.

With an inertial equivalence class $\fs^\vee$ for $\Phi_\re (\cG (F))$ we associate the
finite group
\begin{equation} \label{eqn:Ws}
W_{\fs^\vee} := \text{ stabilizer of } \fs^\vee_\cL \text{ in }
\rN_{\cG^\vee}(\cL^\vee \rtimes \mb W_F) / \cL^\vee .
\end{equation}
Let $W_{\fs^\vee,\phi_v,q\epsilon}$ be the isotropy group in $W_{\fs^\vee}$ of 
$(\phi_v, q\epsilon) \in \fs^\vee_\cL$. With the generalized Springer correspondence 
\cite[Theorem 5.5]{AMS} we can attach to any element of\\
${}^L \Psi^{-1} (\cL^\vee \rtimes \mb W_F, \phi_v, q\epsilon)$ an irreducible projective
representation of $W_{\fs^\vee,\phi_v,q\epsilon}$.
More precisely, consider the cuspidal quasi-support
\[
q \ft = [G_\phi \cap \cL^\vee_c, v ,q \epsilon ]_{G_\phi} ,
\]
where $\cL^\vee_c \subset \cG^\vee_\sc$ is the preimage of $\cL^\vee$ under $\cG^\vee_\sc
\to \cG^\vee$. In this setting we write the group $W_{q\cE}$ from \eqref{eq:0.20}
as $W_{q \ft}$. By \cite[Lemma 8.2]{AMS} $W_{q \ft}$ is canonically isomorphic to
$W_{\fs^\vee,\phi_v,q\epsilon}$. According to \cite[Proposition 9.1]{AMS} there exist
a 2-cocycle $\kappa_{q \ft}$ of $W_{q \ft}$ and a bijection (canonical up to the choice
of $\kappa_{q \ft}$ in its cohomology class)
\[
{}^L \Sigma_{q \ft} \colon{}^L \Psi^{-1} (\cL^\vee \rtimes \mb W_F, \phi_v ,q\epsilon) \to
\Irr (\C [W_{q \ft}, \kappa_{q\ft}]) .
\]
It is given by applying the generalized Springer correspondence for $(G_\phi, q \ft)$ to
$(u_\phi,\rho)$.

\begin{thm}\label{thm:5.6} \cite[Theorem 9.3]{AMS} \ \\
There exists a bijection
\[
\begin{array}{ccc}
\Phi_\re (\cG (F))^{\fs^\vee} & \longleftrightarrow &
\big( \Phi_\re (\cL (F))^{\mf s^\vee_\cL} \q W_{\fs^\vee} \big)_\kappa ,\\
(\phi,\rho) & \mapsto & \big( {}^L \Psi (\phi,\rho), {}^L \Sigma_{q \ft} (\phi,\rho) \big) .
\end{array}
\]
It is almost canonical, in the sense that it depends only on the choices of 2-cocycles
$\kappa_{q \ft}$ as above.
\end{thm}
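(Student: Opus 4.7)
The plan is to decompose the left-hand side fibre-by-fibre over the modified cuspidal support map ${}^L\Psi$, identify each fibre with irreducible projective representations of the appropriate stabiliser via the generalised Springer correspondence, and then reassemble these pieces into the twisted extended quotient on the right.

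First, I would verify the map is well-defined. Given $(\phi,\rho) \in \Phi_e(\cG(F))^{\fs^\vee}$, Proposition \ref{prop:5.4} produces, up to $\cG^\vee$-conjugacy, a cuspidal enhanced L-parameter $(\phi_v, q\epsilon)$ for $\cL(F)$ lying in $\fs^\vee_\cL$, so ${}^L\Psi(\phi,\rho)$ defines a well-defined $\cG^\vee$-orbit in $\fs^\vee_\cL$. Since $W_{\fs^\vee} = N_{\cG^\vee}(\cL^\vee \rtimes \mb W_F)/\cL^\vee$ is exactly the part of $\cG^\vee$ that acts on $\fs^\vee_\cL$ preserving the Levi up to $\cL^\vee$, this means ${}^L\Psi$ descends to a map $\Phi_e(\cG(F))^{\fs^\vee} \to \fs^\vee_\cL / W_{\fs^\vee}$. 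Then, applying ${}^L\Sigma_{q\ft}$ (from Proposition 9.1 of \cite{AMS}) to the fibre over a chosen representative $(\phi_v,q\epsilon)$ yields an element of $\Irr(\C[W_{q\ft},\kappa_{q\ft}])$, and via the canonical isomorphism $W_{q\ft} \cong W_{\fs^\vee,\phi_v,q\epsilon}$ of Lemma 8.2 of \cite{AMS} this becomes an element of $\Irr(\C[W_{\fs^\vee,(\phi_v,q\epsilon)}, \kappa])$, as required for membership in the twisted extended quotient.

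Second, I would establish bijectivity by working one $W_{\fs^\vee}$-orbit of cuspidal supports at a time. Fix $(\phi_v,q\epsilon) \in \fs^\vee_\cL$. The fibre ${}^L\Psi^{-1}(\cL^\vee \rtimes \mb W_F, \phi_v, q\epsilon)$ is identified by the construction in Paragraph 9 of \cite{AMS} with those enhanced unipotent pairs $(u_\phi,\rho)$ for the complex reductive group $G_\phi$ whose cuspidal quasi-support equals $q\ft = [G_\phi \cap \cL^\vee_c, v, q\epsilon]_{G_\phi}$; Proposition 9.1 of \cite{AMS} states this fibre is in bijection with $\Irr(\C[W_{q\ft},\kappa_{q\ft}])$ via ${}^L\Sigma_{q\ft}$. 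Summing over a set of representatives of $\fs^\vee_\cL / W_{\fs^\vee}$ and taking into account that two representatives in the same $W_{\fs^\vee}$-orbit give identified fibres (with Springer data transforming by the corresponding element of $W_{\fs^\vee}$), yields exactly the definition of the twisted extended quotient $(\Phi_e(\cL(F))^{\fs^\vee_\cL} \q W_{\fs^\vee})_\kappa$. Injectivity follows from injectivity of ${}^L\Sigma_{q\ft}$ on each fibre plus uniqueness of cuspidal support up to $\cG^\vee$-conjugacy (Proposition \ref{prop:5.4}); surjectivity follows because every pair $(\sigma,\tau)$ in the extended quotient arises by choosing a representative $(\phi_v,q\epsilon)$ for $\sigma$ and then pulling back $\tau$ through ${}^L\Sigma_{q\ft}^{-1}$.

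The main obstacle is the 2-cocycle. The cocycle $\kappa_{q\ft}$ on $W_{q\ft}$ produced by the generalised Springer correspondence for the disconnected group $G_\phi$ depends on a choice of trivialisation in its cohomology class; correspondingly the cocycle $\kappa$ on the extended quotient is only canonical once these choices have been fixed compatibly across $W_{\fs^\vee}$-orbits of cuspidal parameters. The verification that such compatible choices exist, and the precise statement that the resulting bijection is independent of further choices once $\kappa$ is fixed, is exactly the content of ``almost canonical, depending only on the $\kappa_{q\ft}$'' in the theorem. Beyond this bookkeeping the argument is a direct assembly of the per-fibre bijection of Proposition 9.1 of \cite{AMS} with the orbit decomposition of ${}^L\Psi$.
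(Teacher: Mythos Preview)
The paper does not give its own proof of this statement: Theorem \ref{thm:5.6} is simply quoted from \cite[Theorem 9.3]{AMS}, with no argument supplied here. So there is nothing in the paper to compare your proposal against directly.

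That said, your outline is a faithful reconstruction of how the result is assembled from the ingredients the paper recalls just before the statement. You correctly identify that the proof amounts to fibring $\Phi_e(\cG(F))^{\fs^\vee}$ over $\fs^\vee_\cL / W_{\fs^\vee}$ via ${}^L\Psi$, invoking the per-fibre bijection ${}^L\Sigma_{q\ft}$ of \cite[Proposition 9.1]{AMS} together with the identification $W_{q\ft} \cong W_{\fs^\vee,\phi_v,q\epsilon}$ from \cite[Lemma 8.2]{AMS}, and then recognising the result as the defining description of the twisted extended quotient. Your discussion of the cocycle dependence is also accurate: the only non-canonical input is the choice of $\kappa_{q\ft}$ within its cohomology class, and this is exactly what ``almost canonical'' records. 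One small point worth tightening is the passage between fibres over $W_{\fs^\vee}$-conjugate cuspidal supports: you should say explicitly that the conjugation isomorphisms between the twisted group algebras $\C[W_{q\ft},\kappa_{q\ft}]$ are compatible with ${}^L\Sigma_{q\ft}$ (this is what \cite[Lemma 9.2]{AMS}, invoked later in the paper in the proof of Proposition \ref{prop:5.11}, provides), so that the gluing into an extended quotient is well-defined.
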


\subsection{Graded Hecke algebras} \
\label{par:LGHA}

In Theorem \ref{thm:4.11} we saw that the irreducible representations of a (twisted)
affine Hecke algebra can be parametrized with a (twisted) extended quotient of a torus by
a finite group. Motivated by the analogy with Theorem \ref{thm:5.6}, we want to associate
to any Bernstein component $\Phi_\re (\cG (F))^{\mf s^\vee}$ a twisted affine Hecke algebra,
whose irreducible representations are naturally parametrized by $\Phi_\re (\cG (F))^{\mf s^\vee}$.
As this turns out to be complicated, we first do something similar with twisted graded Hecke
algebras. From a Bernstein component we will construct a family of algebras, such that a
suitable subset of their irreducible representations is canonically in bijection with
$\Phi_\re (\cG (F))^{\mf s^\vee}$. Of course this will be based on the cuspidal quasi-support
$[M,v,q\epsilon]_{G_\phi}$ for the group
\begin{equation} \label{eqn:Gphi}
G_\phi := Z^1_{\cG^\vee_\sc}(\phi |_{\mb W_F}).
\end{equation}
As before, we abbreviate $T = \rZ(M)^\circ$. Let $\cL_c^\vee$ be the preimage of $\cL^\vee$
in $\cG_\sc^\vee$. We record that by \cite[(99)]{AMS}
\begin{equation}\label{eq:5.24}
M = G_\phi \cap \cL_c^\vee .
\end{equation}
One problem is that $\rZ(\cG^\vee)^\circ$ was left out of $\cG^\vee_\sc$, so we can never
see it when working in $G_\phi$. We resolve this in a crude way, replacing $G_\phi$ by
$G_\phi \times X_\nr ({}^L \cG)$. Although that is not a subgroup of $\cG^\vee$ or
$\cG^\vee_\sc$, the next result implies that the real split part of its centre has
the desired shape.

\begin{lem}\label{lem:5.7}
We use the notations from Proposition \ref{prop:5.4}. The natural map
\[
T \times X_\nr ({}^L \cG) \to X_\nr ({}^L \cL)
\]
is a finite covering of complex tori.
\end{lem}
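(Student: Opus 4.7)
The first step is to identify the natural map. Let $\bar T$ denote the image of $T = Z(M)^\circ \subset G_\phi \subset \cG^\vee_\sc$ in $\cG^\vee$; by Proposition~\ref{prop:5.4}, $\cL^\vee = Z_{\cG^\vee}(\bar T)$ and $\phi(\mb W_F) \subset \cL^\vee \rtimes \mb W_F$. Writing $\phi(w) = (\phi'(w), w)$ with $\phi'(w) \in \cL^\vee$, the centralisation of $\bar T$ by $\phi(w)$ in $\cG^\vee \rtimes \mb W_F$ translates into $\phi'(w)\cdot w(\bar t)\cdot \phi'(w)^{-1} = \bar t$ for every $\bar t \in \bar T$, while on the other hand $\phi'(w)$ commutes with $\bar t \in \bar T \subset Z(\cL^\vee)$, giving $\phi'(w)\bar t\phi'(w)^{-1}=\bar t$. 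Comparison forces $w(\bar t)=\bar t$, hence $\bar T \subset Z(\cL^\vee)^{\mb W_F,\circ} \subset Z(\cL^\vee)^{\mb I_F,\circ}$. Combined with the obvious $Z(\cG^\vee)^{\mb I_F,\circ} \subset Z(\cL^\vee)^{\mb I_F,\circ}$, multiplication followed by descent to $\Fr_F$-coinvariants yields the homomorphism of complex tori $T \times X_\nr({}^L\cG) \to X_\nr({}^L\cL)$.

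I would then show this is a finite covering by checking that its differential is a linear isomorphism. Since $\mb W_F$ acts on $\cL^\vee$ through a finite Galois quotient, coinvariants and invariants of any $\mb W_F$-stable subspace of $\mf z(\cL^\vee)$ have equal dimensions; via the averaging idempotent the differential identifies with
\[
\mf t \oplus \mf z(\cG^\vee)^{\mb W_F} \longrightarrow \mf z(\cL^\vee)^{\mb W_F},\qquad (x,y)\mapsto x+y.
\]
Injectivity reduces to $\mf t \cap \mf z(\cG^\vee) \subset \mf g^\vee_\der \cap \mf z(\cG^\vee)=0$, using $T\subset \cG^\vee_\sc$ so that $\mf t \subset \mf g^\vee_\der$. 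For surjectivity the $\mb W_F$-stable decomposition $\mf g^\vee = \mf z(\cG^\vee)\oplus \mf g^\vee_\der$ restricts to $\mf z(\cL^\vee) = \mf z(\cG^\vee)\oplus \bigl(\mf z(\cL^\vee)\cap\mf g^\vee_\der\bigr)$, reducing the task to proving $\mf t = \bigl(\mf z(\cL^\vee)\cap\mf g^\vee_\der\bigr)^{\mb W_F}$.

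The $\subset$ inclusion is already known from the first paragraph. For the reverse, I would lift a $\mb W_F$-invariant element $\bar t\in Z(\cL^\vee)^\circ\cap \cG^\vee_\der$ to $\tilde t \in \cG^\vee_\sc$. Then $\phi(w)\tilde t\phi(w)^{-1}\tilde t^{-1}$ projects to $1$ in $\cG^\vee$ (using that $\phi'(w)\in \cL^\vee$ commutes with $\bar t\in Z(\cL^\vee)$ together with $w(\bar t)=\bar t$), hence lies in the finite group $\ker(\cG^\vee_\sc\to\cG^\vee)$; continuity of this discrepancy in the connected torus where $\tilde t$ varies forces it to be constant, and evaluation at the identity kills it, so $\tilde t\in G_\phi$. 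The same continuity argument shows $\tilde t$ centralises $T$, placing $\tilde t\in Z_{G_\phi}(T)=M$; since $\bar t$ lies in the identity component of $Z(\cL^\vee)^\circ \cap \cG^\vee_\der$, the lift $\tilde t$ may be chosen in $Z(M)^\circ = Z(M^\circ)^\circ = T$, as required. The main obstacle is precisely this last surjectivity step: it relies crucially on the quasi-Levi structure of $M$, namely the identities $Z_{G_\phi}(T)=M$ and $Z(M)^\circ = T$, which together guarantee that the lift returns to $T$. For a subtorus $\bar T$ not arising as $Z(L)^\circ$ for some Levi $L$ of $G_\phi^\circ$ the analogous lift could escape $T$, and surjectivity would fail.
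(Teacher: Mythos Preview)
Your proof is correct and follows essentially the same route as the paper. Both arguments pivot on the identification $\mf t = \bigl(\mf z(\cL^\vee)\cap\mf g^\vee_\der\bigr)^{\mb W_F}$, equivalently $T = Z(\cL^\vee_c\rtimes\mb W_F)^\circ$ with $\cL^\vee_c$ the preimage of $\cL^\vee$ in $\cG^\vee_\sc$; the paper states this at the group level and finishes with a dimension count, while you unpack it at the Lie-algebra level and prove surjectivity by the explicit lifting argument. One cosmetic remark: your surjectivity paragraph oscillates between group elements $\bar t,\tilde t$ and a Lie-algebra claim, and the ``continuity in the connected torus'' step is really just the observation that $\mf g^\vee_\sc\to\mf g^\vee_\der$ is an isomorphism of Lie algebras---working entirely with $x,\tilde x$ would make the argument cleaner and avoid the need to invoke continuity at all.
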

\begin{proof}
In Proposition \ref{prop:5.4} we saw that
\begin{equation}\label{eq:5.2}
\cL^\vee \rtimes \mb W_F = \rZ_{\cG^\vee \rtimes \mb W_F}(T) .
\end{equation}
Hence the image of $M^\circ$ under the covering $\cG^\vee_\sc \to \cG^\vee_\der$ is
contained in $\cL^\vee$. It also shows that $\mb W_F$ fixes $T$ pointwise, so
\[
T = (\rZ(M)^{\mb I_F} )^\circ_{\mb W_F} .
\]
As $\cL^\vee$ is a Levi subgroup of $\cG^\vee$, it contains $Z (\cG^\vee)^\circ$.
Hence there exists a natural map
\begin{equation}\label{eq:5.1}
T \times X_\nr (\cG) = \big( \rZ(M)^{\mb I_F} \times \rZ(\cG^\vee)^{\mb I_F}
\big)^\circ_{\mb W_F} \to (Z (\cL^\vee)^{\mb I_F} )^\circ_{\mb W_F} = X_\nr ({}^L \cL) .
\end{equation}
The intersection of $\rZ(\cG^\vee)^\circ$ and $\cG^\vee_\der$ is finite and $T$
lands in $\cG^\vee_\der \cap \cL^\vee$, so the kernel of \eqref{eq:5.1} is finite.

Recall from Proposition \ref{prop:5.4} that $\phi (\mb W_F) \subset \cL^\vee
\rtimes \mb W_F$. Hence
\[
\rZ(\cL^\vee \rtimes \mb W_F) \subset \rZ_{\cG^\vee}(\phi (\mb W_F)) \quad \text{and} \quad
\rZ(\cL^\vee_c \rtimes \mb W_F)^\circ \subset \rZ_{\cG^\vee_\sc}(\phi (\mb W_F))^\circ .
\]
Since $M^\circ$ is a Levi subgroup of $\rZ_{\cG^\vee_\sc}(\phi (\mb W_F))^\circ$ and by
\eqref{eq:5.2}, $T$ equals $\rZ(\cL^\vee_c \rtimes \mb W_F )^\circ$.
In particular
\begin{align*}
\dim T = \dim \rZ(\cL^\vee_c \rtimes \mb W_F )^\circ & =
\dim \rZ(\cL^\vee_c \rtimes \mb I_F )^\circ_{\mb W_F} \\ 
& = \dim \rZ(\cL^\vee \rtimes
\mb I_F )^\circ_{\mb W_F} - \dim \rZ(\cG^\vee \rtimes \mb I_F )^\circ_{\mb W_F} ,
\end{align*}
showing that both sides of \eqref{eq:5.1} have the same dimension. As the map is an
algebraic homomorphism between complex tori and has finite kernel, it is surjective.
\end{proof}

Recall that $\mf s^\vee_\cL$ came from the cuspidal quasi-support $(M,v,q\epsilon)$.
For $(\phi_b |_{\mb W_F},v,q\epsilon) \in \Phi_\re (\cL (F))^{\mf s^\vee_\cL}$ we can
consider the group
\[Z^1_{\cG^\vee_\sc}(\phi_b |_{\mb W_F})
 \times X_\nr ({}^L \cG) =
G_{\phi_b} \times X_\nr ({}^L \cG) ,
\]
which contains $M \times X_\nr ({}^L \cG)$ as a quasi-Levi subgroup. We choose an
almost direct factorization for $G_\phi \times X_\nr ({}^L \cG)$ as in \eqref{eq:0.1}
and we put
\begin{equation}\label{eq:5.11}
\begin{aligned}
\mh H (\phi_b,v,q\epsilon ,\vec{\mb r}) := & \; \mh H \big(G_{\phi_b}
\times X_\nr ({}^L \cG), M \times X_\nr ({}^L \cG), q\cE ,\vec{\mb r} \big) \\
= & \; \mh H \big( \Lie(X_\nr ({}^L \cL)), W_{\mf s^\vee, (\phi_b)_v,q\epsilon},
c \vec{\mb r}, \natural_{q\cE} \big) ,
\end{aligned}
\end{equation}
where $q\cE$ is the $M$-equivariant cuspidal local system on $\cC^M_{\log v}$ with
$q\cE_{\log v} = q \epsilon$ as representations of $\pi_0 (\rZ_M (v)) =
\pi_0 (\rZ_M (\log v))$. From Lemma \ref{lem:5.7} we see that
\begin{align*}
\mh H (\phi_b,v,q\epsilon,\vec{\mb r}) & = \mh H (Z^1_{\cG^\vee_\sc}
(\phi_b |_{\mb W_F}) ,M,q\cE,\vec{\mb r}) \otimes S \big(\Lie (X_\nr ({}^L \cG))^* \big) \\
& = \mh H (G_{\phi_b} ,M,q\cE,\vec{\mb r}) \otimes
S \big(\Lie (\rZ(\cG^\vee \rtimes \mb I_F)^\circ_{\mb W_F})^* \big) .
\end{align*}
We say that a representation of $\mh H (\phi_b,v,q\epsilon,\vec{\mb r})$ is essentially
discrete series if its restriction to $\mh H (G_{\phi_b} ,M,q\cE,\vec{\mb r})$ is so,
in the sense of \cite[Definition 3.27]{AMS2}. That means that the real parts of its
weights (as $\mh H (G_{\phi_b} ,M,q\cE,\vec{\mb r})$-representation) must lie in
$\mr{Lie} (\rZ(G_{\phi_b})^\circ) \oplus \mf t_\R^{--}$.

Let $X_\nr ({}^L \cL) = X_\nr ({}^L \cL)_\uni \times X_\nr ({}^L \cL)_{\rs}$
be the polar decomposition of the complex torus $X_\nr ({}^L \cL)$.
Let $(\phi_b |_{\mb W_F},v,q \epsilon) \in \Phi_\re (\cL (F))^{\mf s^\vee_\cL}$ with
$\phi_b$ bounded. Suppose that $(\phi,\rho) \in \Phi_\re (\cG (F))^{\mf s^\vee}$ with:
\begin{equation}\label{eq:5.9}
\begin{array}{@{\bullet \quad}l}
\phi |_{\mb I_F} = \phi_b |_{\mb I_F}; \\
\phi (\Fr_F) \phi_b (\Fr_F)^{-1} \in X_\nr ({}^L \cL^\vee)_{\rs}; \\
\textup{d} \phi|_{\SL_2 (\C)} \matje{1}{0}{0}{-1} \in \Lie (M).
\end{array}
\end{equation}
For such $(\phi,\rho)$ and $\vec{r} \in \C^d$ we define
\begin{align*}
& E (\phi,\rho,\vec{r}) \; = \IM^* \, E_{\log (u_\phi), \log (\phi (\Fr_F)^{-1} \phi_b (\Fr_F))
+ \textup{d}\vec{\phi} \matje{\vec{r}}{0}{0}{-\vec{r}} , \vec{r},\rho}
\in \Mod (\mh H (\phi_b,v,q\epsilon,\vec{\mb r})) , \\
& M (\phi,\rho,\vec{r}) = \IM^* M_{\log (u_\phi), \log (\phi (\Fr_F)^{-1} \phi_b (\Fr_F))
+ \textup{d}\vec{\phi} \matje{\vec{r}}{0}{0}{-\vec{r}} , \vec{r},\rho}
\in \Irr (\mh H (\phi_b,v,q\epsilon,\vec{\mb r})) .
\end{align*}
If in addition d$\phi \matje{1}{0}{0}{-1} \in \mr{Lie}(T) + \sigma_v$, as can always be
arranged by Proposition \ref{prop:0.7}.c, then we define an algebraic cocharacter
$\chi_{\phi,v} = \chi_{u_\phi,v}$ of $T$ by
\begin{equation}\label{eq:5.4}
\chi_{\phi,v}(z) = \phi \big( 1, \matje{z}{0}{0}{z^{-1}} \big) \gamma_v \matje{z^{-1}}{0}{0}{z}. 
\end{equation}
We note that $\chi_{\phi,v}$ stems from \cite[Lemma 7.6]{AMS} and that
\[
\textup{d}\vec{\chi}_{\phi,v}(\vec{r}) = 
\textup{d} \vec{\phi} \matje{\vec{r}}{0}{0}{-\vec{r}} - \vec{r} \sigma_v .
\]

\begin{thm}\label{thm:5.8}
Fix $\vec{r} \in \C^d$ and $(\phi_b |_{\mb W_F}, v,q\epsilon ) \in
\Phi_\re (\cL (F))^{\mf s^\vee_\cL}$ with $\phi_b$ bounded.
\enuma{
\item The map $(\phi,\rho) \mapsto M (\phi,\rho,\vec{r})$ defines a canonical
bijection between
\begin{itemize}
\item ${}^L \Psi^{-1}(\cL^\vee \rtimes \mb W_F, X_\nr ({}^L \cL )_\rs
\phi_b |_{\mb W_F}, v , q \epsilon)$;
\item the irreducible representations of $\mh H (\phi_b,v,q\epsilon,\vec{\mb r})$
with central character in
$\Lie (X_\nr ({}^L \cL)_{\rs}) / W_{\fs^\vee,\phi_b,v,q\epsilon} \times \{\vec{r}\}$.
\end{itemize}
\item Assume that $\Re (\vec{r}) \in \R_{\geq 0}^d$. The following are equivalent:
\begin{itemize}
\item $\phi$ is bounded;
\item ${}^L \Psi (\phi,\rho) = (\cL^\vee \rtimes \mb W_F, \phi_b |_{\mb W_F}, v ,
q \epsilon)$;
\item $E (\phi,\rho,\vec{r})$ is tempered;
\item $M (\phi,\rho,\vec{r})$ is tempered.
\end{itemize}
\item Suppose that $\Re (\vec{r}) \in \R_{>0}^d$. Then $\phi$ is discrete if and only if
$M (\phi,\rho,\vec{r})$ is essentially discrete series and the rank of $R(G^\circ_{\phi_b},
T)$ equals $\dim_\C (T)$.

In this case $\phi (\Fr_F) \phi_b (\Fr_F)^{-1}$ comes from an element of
$Z (G_{\phi_b}^\circ) \times X_\nr ({}^L \cG)$ via Lemma \ref{lem:5.7}.
\item Let $\zeta \in X_\nr ({}^L \mc G)_{rs}$. Then
\[
M (\zeta \phi,\rho,\vec{r}) = \log (\zeta) \otimes M (\phi,\rho,\vec{r}) \quad \text{and}
\quad E (\zeta \phi,\rho,\vec{r}) = \log (\zeta) \otimes E (\phi,\rho,\vec{r}) .
\]
\item Suppose that $\Re (\vec{r}) \in \R_{>0}^d$ and that $\phi (\Fr_F) \phi_b (\Fr_F)^{-1}$
comes from\\ $Z (G_{\phi_b}^\circ) \times X_\nr ({}^L \cG)$ via Lemma \ref{lem:5.7}.
Then $M (\phi,\rho,\vec{r}) = E (\phi,\rho,\vec{r})$.
\item If $\textup{d}\phi \matje{1}{0}{0}{-1} \in \mr{Lie}(T) + \sigma_v$, then 
$E(\phi,\rho,\vec{r})$ and $M(\phi,\rho,\vec{r})$ admit the central character
$W_{\mf s^\vee, (\phi_b)_v,q\epsilon} (\log (\phi (\Fr_F) \phi_b (\Fr_F)^{-1}) \pm 
\textup{d}\vec{\chi}_{\phi,v} (\vec{r}), \vec{r})$.
}
\end{thm}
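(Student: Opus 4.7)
The plan is to deduce the theorem directly from Theorem \ref{thm:0.4} and Proposition \ref{prop:0.5} applied to the disconnected complex reductive group $G := G_{\phi_b} \times X_\nr({}^L \cG)$ with quasi-Levi subgroup $M \times X_\nr({}^L \cG)$ and cuspidal local system $q\cE$ (extended trivially along the torus factor); this is precisely the data defining $\mh H(\phi_b, v, q\epsilon, \vec{\mb r})$ via \eqref{eq:5.11}.

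The main step is to set up a bijective dictionary between $G$-conjugacy classes of triples $(\sigma_0, y, \rho)$ in the sense of Theorem \ref{thm:0.4}, with $\sigma_0 \in \Lie(X_\nr({}^L\cL)_{\rs})$, and enhanced L-parameters $(\phi, \rho) \in {}^L\Psi^{-1}(\cL^\vee \rtimes \mb W_F, X_\nr({}^L\cL)_{\rs} \phi_b|_{\mb W_F}, v, q\epsilon)$ satisfying \eqref{eq:5.9}, via
\[
\sigma_0 \;=\; -\log\bigl(\phi(\Fr_F)^{-1}\phi_b(\Fr_F)\bigr), \qquad y \;=\; \log(u_\phi), \qquad \rho \mapsto \rho.
\]
Lemma \ref{lem:5.7} lets us view $\sigma_0$ inside $\Lie(T) \oplus \Lie(X_\nr({}^L\cG))$, the Lie algebra of the connected center of $M \times X_\nr({}^L\cG) \subset G$. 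Commutativity of $\SL_2(\C)$ with $\mb W_F$ inside the domain of $\phi$ gives $[\sigma_0, y] = 0$, and the identification $\cS_\phi = \pi_0(Z_G(\sigma_0, y))$ makes $\rho$ an appropriate enhancement. The required cuspidal quasi-support condition $q\Psi_{Z_G(\sigma_0)}(y, \rho) = (M \times X_\nr({}^L\cG), \cC_v^M, q\cE)$ is exactly Proposition \ref{prop:5.4}. Conversely, any such triple reconstructs $\phi$ by putting $\phi|_{\mb I_F} := \phi_b|_{\mb I_F}$, $\phi(\Fr_F) := \phi_b(\Fr_F)\exp(-\sigma_0)$, and extending $\phi|_{\SL_2(\C)}$ via a Jacobson--Morozov homomorphism as in \eqref{eq:4.14}.

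Granted this dictionary, part (a) is Proposition \ref{prop:0.5}(a), matching $M(\phi, \rho, \vec{r})$ with $\IM^* M_{y, d\vec{\gamma}\matje{\vec{r}}{0}{0}{-\vec{r}} - \sigma_0, \vec{r}, \rho}$; the central character condition singles out exactly those triples with $\sigma_0 \in \Lie(X_\nr({}^L\cL)_{\rs})$. Part (b) follows from Proposition \ref{prop:0.5}(b): for $\Re(\vec{r}) \in \R_{\geq 0}^d$, temperedness of $M(\phi,\rho,\vec{r})$ (respectively $E(\phi,\rho,\vec{r})$) is equivalent to $\sigma_0 \in i\Lie(T)_\R$; combined with $\sigma_0 \in \Lie(X_\nr({}^L\cL)_{\rs})_\R$ (a real subspace), this forces $\sigma_0 = 0$, hence $\phi|_{\mb W_F} = \phi_b|_{\mb W_F}$ and ${}^L\Psi(\phi,\rho) = (\cL^\vee \rtimes \mb W_F, \phi_b|_{\mb W_F}, v, q\epsilon)$. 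Under \eqref{eq:5.9} this is also equivalent to $\phi$ being bounded, because $\phi(\Fr_F)\phi_b(\Fr_F)^{-1} \in X_\nr({}^L\cL)_{\rs}$ lies in a compact subgroup iff it is trivial. Part (d) is Proposition \ref{prop:0.5}(d) applied to $\log(\zeta) \in \Lie(X_\nr({}^L\cG)) \subset \mf g^G$ (central in $G$ since $X_\nr({}^L\cG)$ is a direct torus factor), after noting that the dictionary turns multiplication by $\zeta$ on $\phi(\Fr_F)$ into addition of $-\log(\zeta)$ to $\sigma_0$.

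For part (c), Proposition \ref{prop:0.5}(c) for $\Re(\vec{r}) > 0$ shows that $M(\phi, \rho, \vec{r})$ is essentially discrete series iff $\sigma_0 = 0$ and $y$ is distinguished in $\mf g = \Lie(G_{\phi_b}) \oplus \Lie(X_\nr({}^L\cG))$; since the second summand is central, this is distinguishedness of $y$ in $\Lie(G_{\phi_b})$. Translating via the dictionary: $\sigma_0 = 0$ gives $\phi|_{\mb W_F} = \phi_b|_{\mb W_F}$, and discreteness of $\phi$ is the condition $Z_{\cG^\vee}(\phi)^\circ = Z(\cG^\vee)^{\mb W_F, \circ}$. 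By Jacobson--Morozov this amounts to $y$ being distinguished in $\Lie(G_{\phi_b})$, provided that $Z(G^\circ_{\phi_b})^\circ$ coincides with the image of $Z(\cG^\vee)^{\mb W_F, \circ}$; the latter is equivalent to the rank equality $\mr{rk}\, R(G^\circ_{\phi_b}, T) = \dim_\C T$. The equality $M(\phi,\rho,\vec{r}) = E(\phi,\rho,\vec{r})$ is then the final clause of Proposition \ref{prop:0.5}(c).

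The main obstacle is the Galois-theoretic translation in part (c): converting the condition $Z_{\cG^\vee}(\phi)^\circ = Z(\cG^\vee)^{\mb W_F, \circ}$ into the combination ``$y$ distinguished in $\Lie(G_{\phi_b})$ plus the rank equality'' requires a careful analysis of how $G^\circ_{\phi_b} = Z^{1,\circ}_{\cG^\vee_\sc}(\phi_b|_{\mb W_F})$, its center, and its isogeny to the relevant subgroup of $\cG^\vee$ interact. A secondary bookkeeping point is the finite covering in Lemma \ref{lem:5.7}, which must be used to consistently identify $\sigma_0$ as a Lie algebra element on both sides of the dictionary while preserving $G$-conjugacy.
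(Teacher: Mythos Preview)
Your overall approach---setting up the dictionary between triples $(\sigma_0, y, \rho)$ for $G_{\phi_b} \times X_\nr({}^L\cG)$ and enhanced L-parameters, then invoking Theorem~\ref{thm:0.4} and Proposition~\ref{prop:0.5}---is exactly the paper's. Parts (a), (b), (d) are handled correctly and essentially as in the paper.

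Part (c) has two issues beyond what you already flag. First, you misquote Proposition~\ref{prop:0.5}(c): the equivalence is with $y$ distinguished alone; the vanishing of $\sigma_0$ (more precisely, $\sigma_0 \in Z(\mf g)$) is a \emph{consequence}, not part of the ``iff''. This matters because a discrete $\phi$ need not have $\sigma_0 = 0$: the component of $\sigma_0$ in $\Lie(X_\nr({}^L\cG))$ is unconstrained (discrete L-parameters need not be bounded), so your line ``$\sigma_0 = 0$ gives $\phi|_{\mb W_F} = \phi_b|_{\mb W_F}$'' is not part of the argument. Second, ``by Jacobson--Morozov this amounts to $y$ distinguished'' is the step that needs genuine work. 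The paper argues each direction separately: for $\phi$ discrete, one shows $G_\phi^\circ$ is semisimple with $u_\phi$ distinguished, hence $G_\phi^\circ$ has full rank in $G_{\phi_b}^\circ$, forcing the rank equality and distinguishedness in $G_{\phi_b}^\circ$. Conversely, the rank equality makes $G_{\phi_b}^\circ$ semisimple, so $u_\phi$ distinguished forces $Z_{G_{\phi_b}}(u_\phi)^\circ$ to be unipotent; then $Z_{G_\phi}(u_\phi)^\circ$ is unipotent too, and its maximal reductive quotient $Z_{\cG^\vee_\sc}(\phi)^\circ$ is therefore trivial. Note also that the paper works in $\cG^\vee_\sc$ throughout (where discreteness becomes $Z_{\cG^\vee_\sc}(\phi)^\circ = 1$), not in $\cG^\vee$ as you write.
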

\begin{proof}
(a) By Theorem \ref{thm:5.6} every element of ${}^L \Psi^{-1}(\cL^\vee \rtimes
\mb W_F, X_\nr ({}^L \cL )_\rs \phi_b |_{\mb W_F}, v , q \epsilon)$ has a
representative $(\phi,\rho)$ with $\phi |_{\mb W_F}$ in
$X_\nr ({}^L \cL )_\rs \phi_b |_{\mb W_F}$. Then $\phi |_{\mb I_F}$ is fixed, so
$\phi |_{\mb W_F}$ can be described by the single element
$\phi (\Fr_F) \phi_b (\Fr_F)^{-1} \in X_\nr ({}^L \cL^\vee)_{\rs}$. Since
$X_\nr ({}^L \cL^\vee)_{\rs}$ is the real split part of a complex torus, there is a
unique logarithm
\begin{equation}\label{eq:5.13}
\sigma_0 = \log \big( \phi (\Fr_F) \phi_b (\Fr_F)^{-1} \big) \in
\Lie (X_\nr ({}^L \cL^\vee)_{\rs}) .
\end{equation}
Clearly $(\phi_b,v)$ is the unique bounded L-parameter in
$X_\nr ({}^L \cL)_{\rs}(\phi_b,v)$. Hence every element of $\mc G^\vee_\ad$ that
centralizes $\phi$ also centralizes $\phi_b$, which implies
\[
G_\phi = Z^1_{\cG^\vee_\sc}(\phi |_{\mb W_F}) \subset
Z^1_{\cG^\vee_\sc}(\phi_b |_{\mb W_F}) = G_{\phi_b} .
\]
In particular $\phi (\SL_2 (\C)) \subset G_{\phi_b}$ and
\[
\pi_0 (\rZ_{G_\phi}(u_\phi)) = \pi_0 \big( \rZ_{G_{\phi_b}} (\sigma_0 ,\log(u_\phi)) \big) .
\]
By assumption $q \Psi_{G_\phi}(u_\phi,\rho) = (v,q\epsilon)$, and by
\cite[Proposition 3.7]{AMS2} this cuspidal quasi-support is relevant for
\[
\mh H (\phi_b,v,q\epsilon,\vec{\mb r}) =
\mh H \big( G_{\phi_b} \times X_\nr ({}^L \cG) ,M \times X_\nr ({}^L \cG),q\cE,\vec{\mb r} \big) .
\]
By Proposition \ref{prop:0.7}.c, $(\phi,\rho)$ is conjugate to an enhanced
L-parameter with all the above properties, which in addition satisfies
\[
\textup{d} \phi|_{\SL_2 (\C)} \matje{1}{0}{0}{-1} \in \Lie (M).
\]
Consequently $(\log (u_\phi),\sigma_0,\vec{r},\rho)$ is a parameter of the kind considered in 
Section \ref{sec:0}, and $\phi |_{\SL_2 (\C)}$ can play the role of $\gamma$ from 
\eqref{eq:0.16}. By reversing the above procedure every parameter $(y,\sigma',\vec{r},\rho')$
for $\mh H (\phi_b,v,q\epsilon,\vec{\mb r})$ gives rise to an element of 
\[
{}^L \Psi^{-1} (\cL^\vee \rtimes \mb W_F, 
X_\nr ({}^L \cL )_\rs \phi_b |_{\mb W_F}, v , q \epsilon) .
\]
The equivalence relations on these two sets of parameters agree, for both come from
conjugation by $G_{\phi_b}$.

Now it follows from Theorem \ref{thm:0.4}, Proposition \ref{prop:0.7} and Proposition
\ref{prop:0.5}.f that 
\[
{}^L \Psi^{-1}(\cL^\vee \rtimes \mb W_F, X_\nr ({}^L \cL )_\rs \phi_b 
|_{\mb W_F}, v ,q \epsilon)
\] 
parametrizes the part of $\Irr_r (\mh H (\phi_b,v,q\epsilon))$ with central character in 
\[
\Lie (X_\nr ({}^L \cL)_{\rs}) / W_{\fs^\vee,\phi_b,v,q\epsilon} \times \{\vec{r}\}.
\]
As in \cite[Theorem 3.29]{AMS2} and Proposition \ref{prop:0.5}, we compose this
parametrization with the Iwahori--Matsumoto involution from \eqref{eq:0.19}. Then the
representation associated to $(\phi,\rho)$ becomes $\pi (\phi,\rho,r)$.\\
(b) By \cite[Theorem 3.25]{AMS2} and \cite[(84)]{AMS2}
the third and the fourth statements are both equivalent to
\[
\phi (\Fr_F) \phi_b (\Fr_F)^{-1} \in \Lie (X_\nr ({}^L \cL)_\uni ).
\]
But by construction this lies in Lie$ (X_\nr ({}^L \cL)_{\rs} )$, so the
statement becomes $\phi (\Fr_F) = \phi_b (\Fr_F)$. As $(\phi_b,v)$ is the only bounded
L-parameter in $X_\nr ({}^L \cL)_{\rs} (\phi_b,v)$, this holds if and only if $\phi$ is
bounded. Since the map ${}^L \Psi$ preserves $\phi |_{\mb W_F}$, the statement
$\phi (\Fr_F) = \phi_b (\Fr_F)$ is also equivalent to
\[
{}^L \Psi (\phi,\rho) = (\cL^\vee \rtimes \mb W_F, \phi_b |_{\mb W_F}, v , q \epsilon) .
\]
Knowing these equivalences, the equality $M (\phi,\rho,\vec{r}) = E (\phi,\rho,\vec{r})$
is given in Proposition \ref{prop:0.5}.b.\\
(c) Suppose that $\phi$ is discrete. Then
\[
G_\phi^\circ = \rZ_{\cG^\vee_\sc}(\phi (\mb W_F))^\circ =
\rZ_{\cG^\vee_\sc}(\phi_b (\mb W_F),\sigma)^\circ
\]
is a reductive group in which $\phi (\SL_2 (\C))$ has finite centralizer. This implies
that $G_\phi^\circ$ is semisimple and that $u_\phi$ is distinguished in it.
The first of these two properties implies that $G_\phi^\circ$ is a full rank subgroup
of $G_{\phi_b}$, and that $G_{\phi_b}^\circ$ is also semisimple. In other words,
$R(G^\circ_{\phi_b},T)$ has rank equal to the dimension of $T$.
Then $u_\phi$ is distinguished in $G_{\phi_b}^\circ$ as well, and \cite[(85)]{AMS2}
says that $M (\phi,\rho,\vec{r})$ is essentially discrete series.

Conversely, suppose that $M (\phi,\rho,\vec{r})$ is essentially discrete series and that
the rank of $R(G^\circ_{\phi_b},T)$ equals $\dim_\C (T)$.
Then $G_{\phi_b}^\circ$ is semisimple and by \cite[(85)]{AMS2} $u_\phi \in G_\phi^\circ$
is distinguished in $G_{\phi_b}^\circ$. Hence $\rZ_{G_\phi}(u_\phi)^\circ$ is contained
in the unipotent group $\rZ_{G_{\phi_b}}(u_\phi)^\circ$, and itself unipotent. It is known
(see for example \cite[\S 4.3]{Ree}) that
\[
\rZ_{\cG^\vee_\sc}(\phi )^\circ = \rZ_{G_\phi} \big( \phi (\SL_2 (\C)) \big)^\circ
\]
is the maximal reductive quotient of $\rZ_{G_\phi}(u_\phi)^\circ$. Hence
$\rZ_{\cG^\vee_\sc}(\phi )^\circ$ is trivial, which means that $\phi$ is discrete.

In this case Proposition \ref{prop:0.6}.c says that $\sigma_0 \in Z \big( \mr{Lie}(
G_{\phi_b} \times X_\nr ({}^L \mc G) ) \big)$. Via the exponential map, that translates
to the statement about $\phi (\Fr_F) \phi_b (\Fr_F)^{-1}$.\\
(d) This is a direct consequence of Proposition \ref{prop:0.5}.d (and, for
$E (\phi,\rho,\vec{r})$, also the proof thereof).\\
(e) Via \eqref{eq:5.13}, the condition becomes $\sigma_0 \in Z \big( \mr{Lie}(
G_{\phi_b} \times X_\nr ({}^L \mc G) ) \big)$. Apply Proposition \ref{prop:0.5}.e.\\
(f) This follows from Proposition \ref{prop:0.7}.e with $\gamma = \phi |_{SL_2 (\C)}$.
\end{proof}

We conclude this paragraph with some remarks about parabolic induction.
Suppose that $\mc Q (F) \subset \cG (F)$ is a Levi subgroup such that $\phi$ has
image in ${}^L \mc Q$. Let ${\mc Q}^\vee_c$ be the inverse image of $\mc Q^\vee$ in
$\mc G^\vee_\sc$, by \cite[\S 3]{Bor} it equals
$\rZ_{\mc G^\vee_\sc} (\rZ({\mc Q}^\vee_c \rtimes \mb W_F)^\circ)$. Therefore
\begin{equation}\label{eq:5.14}
\begin{aligned}
Z^1_{{\mc Q}^\vee_c}(\phi_b |_{\mb W_F}) & = Z^1_{{\mc G}^\vee_\sc}(\phi_b |_{\mb W_F})
\cap \rZ_{\mc G^\vee_\sc} (\rZ({\mc Q}^\vee_c \rtimes \mb W_F)^\circ) \\
& = G_{\phi_b} \cap \rZ_{\mc G^\vee_\sc} (\rZ({\mc Q}^\vee_c \rtimes \mb W_F)^\circ) .
\end{aligned}
\end{equation}
This in turn shows that
\[
G_{\phi_b}^\circ \cap Z^1_{{\mc Q}^\vee_c}(\phi_b |_{\mb W_F}) =
\rZ_{{\mc Q}^\vee_c}(\phi_b (\mb W_F))^\circ
\]
is a Levi subgroup of $G_{\phi_b}^\circ$.
Furthermore $Z^1_{{\mc Q}^\vee_c}(\phi_b |_{\mb W_F})$ contains $M$, for the cuspidal
quasi-support of $(\phi,\rho)$ with respect to ${}^L \cG$ is the same as the
cuspidal quasi-support of $(\phi,\rho^Q)$ with respect to ${}^L \mc Q$, for a
suitable $\rho^Q \in \Irr (\mc S_\phi^{\mc Q})$ \cite[Proposition 5.6.a]{AMS}.

Let $\zeta$ be the character of $\rZ(\mc G^\vee_\sc)$ determined by $\rho$, an extension
of the character $\zeta_\cG \in \Irr (\rZ(\mc G^\vee_\sc)^{\mb W_F})$ which was used
to define $\mc G (F)$-relevance. Let $\zeta^{\mc Q} \in \Irr (\rZ(\mc Q^\vee_\sc))$ be
derived from $\zeta$ as in \cite[Lemma 7.4]{AMS}. Let $p_\zeta \in \C [\mc S_\phi]$ and
$p_{\zeta^{\mc Q}} \in \C [\mc S_\phi^{\mc Q}]$ be the central idempotents associated to
these characters.

Let $\mc S_{\phi,\mc Q}$ be the component group of the centralizer of $u_\phi$ in
$Z^1_{{\mc Q}^\vee_c}(\phi |_{\mb W_F})$, or equivalently the component group of the
centralizer of $(\phi (\Fr),u_\phi)$ in \eqref{eq:5.14}. By \cite[Lemma 7.4.c]{AMS}
there exist a canonical isomorphism and a canonical injection
\[
p_{\zeta^{\mc Q}} \C [\mc S_\phi^{\mc Q}] \cong p_\zeta \C [\mc S_{\phi,\mc Q}]
\to p_\zeta \C [\mc S_\phi] .
\]
This enables to restrict representations of $\mc S_\phi$ to $\mc S_\phi^{\mc Q}$, and
it shows that enhancements for $\phi \in \Phi (\mc Q (F))$ can just as well be constructed
via \eqref{eq:5.14} and $\mc S_{\phi,Q}$.

That is, $G_{\phi_b} \times X_\nr ({}^L \mc G)$ and $Z^1_{{\mc Q}^\vee_c}(\phi_b |_{\mb W_F})
\times X_\nr ({}^L \mc G)$ fulfill the conditions of \cite[Proposition 3.22]{AMS2}
and Corollary \ref{cor:4.15}. It follows that the families of representations
\begin{align*}
& (\phi,\rho,\vec{r}) \mapsto E_{\log (u_\phi), \log (\phi (\Fr_F)^{-1} \phi_b (\Fr_F))
+ \textup{d}\vec{\phi} \matje{\vec{r}}{0}{0}{-\vec{r}} , \vec{r},\rho}
\in \Mod (\mh H (\phi_b,v,q\epsilon,\vec{\mb r})) , \\
& (\phi,\rho,\vec{r}) \mapsto M_{\log (u_\phi), \log (\phi (\Fr_F)^{-1} \phi_b (\Fr_F))
+ \textup{d}\vec{\phi} \matje{\vec{r}}{0}{0}{-\vec{r}} , \vec{r},\rho}
\in \Irr (\mh H (\phi_b,v,q\epsilon,\vec{\mb r}))
\end{align*}
are compatible with parabolic induction in the same sense as \cite[Proposition 3.22]{AMS2}
and Corollary \ref{cor:4.15}. In view of \cite[(83)]{AMS2} this does
not change upon applying the Iwahori--Matsumoto involution, so it also goes for
the representations $E (\phi,\rho,\vec{r})$ and $M (\phi,\rho,\vec{r})$
considered in Theorem \ref{thm:5.8}.

\subsection{Root systems} \
\label{par:roots}

We fix an inertial equivalence class $\mf s^\vee$ for $\Phi_\re (\mc G (F))$, represented 
by a cuspidal L-parameter $(\phi |_{\mb W_F}, v,q\epsilon)$ for $\mc L (F)$. 
We use the notations from Proposition \ref{prop:5.4} and \eqref{eq:5.24}, in particular
$T = \rZ(M)^\circ = \rZ(\cL_c^\vee)^{\mb W_F, \circ}$.  We define
\begin{equation} \label{eqn:J}
J :=  \rZ^1_{\mc G^\vee_\sc}(\phi |_{\mb I_F}),
\end{equation}
a variation on $G_\phi$ from \eqref{eqn:Gphi}. The groups $T, J$ and $M = G_\phi \cap \cL_c^\vee$ 
depend only on $(\mc L^\vee, \mf s_\cL^\vee)$. We note that $J$ is reductive, 
possibly disconnected and
\begin{equation}\label{eq:3.12}
G_\phi \subset J \quad \text{and} \quad G_\phi^\circ = \rZ_J (\phi (\Fr_F))^\circ .
\end{equation}
In this paragraph, we use the convention that a root system is a finite and integral root system.

\begin{prop}\label{prop:5.9}
Define $R (J^\circ,T)$ as the set of $\alpha \in X^* (T) \setminus \{0\}$
which appear in the adjoint action of $T$ on Lie$ (J^\circ)$.
\enuma{
\item $R(J^\circ,T)$ is a root system.
\item There exists a $(\phi_1 |_{\mb W_F}, v, q\epsilon)$ such that
$R(G^\circ_{\phi_1},T)_\red = R(J^\circ,T)_\red$.
\item If $t \in T$ commutes with $\mc G^\vee$, then it lies in the kernel of
every $\alpha \in R(J^\circ,T)$.
}
\end{prop}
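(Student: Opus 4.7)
I would prove (b) first and deduce (a) as an immediate consequence.

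For the deduction: given $\phi_1$ as in (b), the triple $(\phi_1|_{\mb W_F},v,q\epsilon)$ still lies in the cuspidal Bernstein component $\mf s^\vee_\cL$, so $M\subset Z^1_{\cL^\vee_c}(\phi_1|_{\mb W_F})\subset G_{\phi_1}$ and $(M,v,q\epsilon)$ is a cuspidal quasi-support for $G_{\phi_1}$ with $T=Z(M^\circ)^\circ$. The result of Lusztig invoked after \eqref{eq:0.20} (namely \cite[Theorem~9.2]{Lus2}, also stated as \cite[Lemma~2.1]{AMS2}) then says that $R(G_{\phi_1}^\circ,T)$ is a root system. Combined with the equality $R(G_{\phi_1}^\circ,T)=R(J^\circ,T)$ asserted in~(b), this proves~(a).

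For part (b) my strategy is to construct $\phi_1$ as a central twist of the bounded representative $\phi_b\in\mf s^\vee_\cL$: set $\phi_1|_{\mb I_F}=\phi_b|_{\mb I_F}$, $\phi_1|_{\SL_2(\C)}=\phi_b|_{\SL_2(\C)}$ and $\phi_1(\Fr_F)=\tilde z\,\phi_b(\Fr_F)$ for a suitable $\tilde z\in Z(\cL^\vee\rtimes\mb I_F)^\circ$. Because $\tilde z$ is central in $\cL^\vee$, this twist does not alter $J=Z^1_{\cG^\vee_\sc}(\phi_1|_{\mb I_F})$ and does not alter the $\cL^\vee_c$-centralizer of $\phi_1|_{\mb W_F}$; hence $(\phi_1|_{\mb W_F},v,q\epsilon)$ remains cuspidal for $\cL$ and lies in $\mf s^\vee_\cL$. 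Since $G_{\phi_1}^\circ\subset J^\circ$ is automatic, the remaining task is to force the reverse inclusion $R(J^\circ,T)\subset R(G_{\phi_1}^\circ,T)$.

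The structural input is that by Proposition~\ref{prop:5.4}, $\phi_b(\Fr_F)\in\cL^\vee\rtimes\Fr_F=Z_{\cG^\vee\rtimes\mb W_F}(T)$, so its adjoint action $\tau$ on $J^\circ$ fixes $T$ pointwise and preserves every $T$-weight space $\mf j_\alpha\subset\Lie(J^\circ)$ for $\alpha\in R(J^\circ,T)$, acting there by a linear automorphism $\tau_\alpha$ with finite eigenvalue set $E_\alpha\subset\C^\times$. The extra twist by $\tilde z\in Z(\cL^\vee\rtimes\mb I_F)^\circ$ further decomposes $\mf j_\alpha$ into weight spaces for this central torus, each given by a character that extends $\alpha|_T$; by Lemma~\ref{lem:5.7} the involved characters surject onto $\C^\times$. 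Consequently, $\mf j_\alpha\cap\Lie(G_{\phi_1}^\circ)\neq 0$ holds precisely when $\tilde z$ satisfies an explicit condition relating the eigenvalues of $\tau_\alpha$ to the central-torus characters on the pieces of $\mf j_\alpha$; each such condition cuts out a nonempty finite union of cosets of a subtorus in $Z(\cL^\vee\rtimes\mb I_F)^\circ$.

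The main obstacle is the simultaneous solvability of these conditions for every $\alpha\in R(J^\circ,T)$ by a single $\tilde z$. One must show that the intersection, over the finite set $R(J^\circ,T)$, of these individually nonempty constructible subsets of $Z(\cL^\vee\rtimes\mb I_F)^\circ$ is itself nonempty. I expect this to be handled by combining the finiteness of $R(J^\circ,T)$ with the boundedness of $\phi_b$ (which forces each $E_\alpha$ to lie on the unit circle, hence to be controlled) and with the $\mb W_F$-equivariance relating the characters across different $\alpha$---this equivariance reflecting the fact that $\tau$ originates from a group automorphism and so the various eigenvalue conditions are coupled in a controlled algebraic way. Once a valid $\tilde z$ is produced, $\mf j_\alpha\cap\Lie(G_{\phi_1}^\circ)\neq 0$ for every $\alpha\in R(J^\circ,T)$, which places every such $\alpha$ in $R(G_{\phi_1}^\circ,T)$ and establishes~(b).
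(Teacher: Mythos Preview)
Your reduction of (a) to (b) is fine, but there is a genuine gap in your argument for (b): the ``main obstacle'' you name --- simultaneous solvability of the conditions $\mf j_\alpha\cap\Lie(G_{\phi_1}^\circ)\neq 0$ over \emph{all} $\alpha\in R(J^\circ,T)$ --- is precisely the heart of the matter, and the hints you offer (boundedness of $\phi_b$, the fact that $\tau=\Ad(\phi_b(\Fr_F))$ is an automorphism) do not close it. Each individual condition on $\tilde z$ is indeed a nonempty finite union of cosets of a codimension-one subtorus, but $R(J^\circ,T)$ typically contains far more elements than $\dim T$, and a finite intersection of such loci can easily be empty. The multiplicativity of eigenvalues under brackets (your ``equivariance'') only says that if $X\in\mf j_\alpha^\lambda$ and $Y\in\mf j_\beta^\mu$ then $[X,Y]\in\mf j_{\alpha+\beta}^{\lambda\mu}$; since $[X,Y]$ may vanish, this does not propagate solvability from one root to another. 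Boundedness only forces the eigenvalues onto the unit circle, which is irrelevant here.

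The paper avoids this by reversing your order: it first proves (a) directly, checking the root-system axioms for $R(J^\circ,T)$. The key observation is that the axioms are \emph{local}, involving at most two roots $\alpha,\beta$ at a time; when $\alpha,\beta$ are linearly independent one can always find $t\in T$ with $\alpha(t^{-1}),\beta(t^{-1})$ matching chosen eigenvalues, so $\alpha,\beta\in R(G_{\phi_t}^\circ,T)$, which is already known to be a root system by Lusztig's theorem. (Linearly dependent pairs are handled by a separate rank-one analysis.) Once (a) is available, $R(J^\circ,T)_{\mr{red}}$ has a \emph{basis} $\Delta$ of simple roots; since $\Delta$ is linearly independent, the simultaneous eigenvalue-matching for $\alpha\in\Delta$ is automatic, and one obtains $\phi_1$ with $\Delta\subset R(G_{\phi_1}^\circ,T)$. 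The Weyl group --- generated by the $s_\alpha$ with $\alpha\in\Delta$ and stabilizing $R(G_{\phi_1}^\circ,T)$ --- then forces all of $R(J^\circ,T)_{\mr{red}}$ into $R(G_{\phi_1}^\circ,T)$, and a short extra argument handles doubled roots. Thus the paper's route uses (a) as an essential input to (b), whereas your plan tries to do (b) first and is blocked exactly where that input is needed.
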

\begin{rem}\label{rem:5.25}
This result does not imply that $R(G^\circ_{\phi_1},T)$ equals $R(J^\circ,T)$. 
For example, suppose that $\mc G = \rU_{2n+1}$ is an unramified unitary group and 
$\phi_1 (\mb I_F) = 1$. Let $\mc L^\vee$ be the diagonal torus in $\mc G^\vee = \GL_{2n+1}(\C)$.
For $\mf s_\cL^\vee$ we can take the set of enhanced L-parameters 
corresponding to the unramified characters of $\mc L (F)$. Then 
\[
J^\circ = \SL_{2n+1}(\C) ,\; G^\circ_{\phi_1} = \SO_{2n+1}(\C) \text{ and } 
T = \mc L^\vee \cap SO_{2n+1}(\C).
\] 
In this case $R(G^\circ_{\phi_1},T)$ has type $\rB_n$ while $R(J^\circ,T)$ has type $\rBC_n$.
\end{rem}
\begin{proof}
(a) From \cite[Proposition 2.2]{Lus3} we know that every $R(G_\phi^\circ,T)$ is a
root system. However, this result does not apply to our current $J^\circ$, as
$(M,v,q\epsilon)$ need not be a cuspidal quasi-support for a group with
neutral component $J^\circ$.

We will check that $R(J^\circ,T)$ satisfies the axioms of a root system. We fix a 
$\rN_J (T)$-invariant inner product on $X^* (T) \otimes_\Z \R$ (which exists because 
$\rN_J (T) / \rZ_J (T)$ is finite). For $\alpha \in R(J^\circ,T)$ we define 
$\alpha^\vee \in X_* (T) \otimes_\Z \R$ as the unique element which is orthogonal to 
\[
\{ x \in X^* (T) \otimes_\Z \R : \inp{x}{\alpha} = 0\}
\] 
and satisfies $\inp{\alpha^\vee}{\alpha} = 2$. Every single $\alpha \in R(J^\circ,T)$ 
appears in $R(G_\phi^\circ,T)$ for a suitable choice of $\phi$ (see the construction 
of $\phi_t$ below), which entails that $\alpha^\vee \in X_* (T)$. 

For arbitrary $\alpha,\beta \in R(J^\circ,T)$ we have to show that
\begin{enumerate}
\item $\inp{\alpha^\vee}{\beta} \in \Z$;
\item $s_\alpha (\beta) \in R(J^\circ,T)$, where $s_\alpha \colon X^* (T) \to
X^* (T)$ is the reflection associated to $\alpha$ and $\alpha^\vee$.
\end{enumerate}
Assume first that $\alpha$ and $\beta$ are linearly independent in $X^* (T)$.
The element $\phi (\Fr_F ) \! \in \mc L^\vee \rtimes \mb W_F$ centralizes $T$
and normalizes $J^\circ$, so it stabilizes each of the root subspaces
$\mf g_\alpha \subset \Lie (J^\circ)$. Let $\lambda_\alpha$ (respectively
$\lambda_\beta$) be an eigenvalue of Ad$(\phi (\Fr_F)) |_{\mf g_\alpha}$
(respectively Ad$(\phi (\Fr_F)) |_{\mf g_\beta}$). Since $\alpha$ and $\beta$
are linearly independent, we can find a $t \in T$ with $\alpha (t^{-1}) =
\lambda_\alpha$ and $\beta (t^{-1}) = \lambda_\beta$. Define
$(\phi_t |_{\mb W_F},v,q \epsilon) \in \mf s^\vee_{\mc L}$ by
\begin{equation} \label{eqn:phi_t}
\phi_t |_{\mb I_F} = \phi |_{\mb I_F} \quad \text{and} \quad
\phi_t (\Fr_F) = \phi (\Fr_F) (\text{image of } t \text{ in } \mc G^\vee_\der) .
\end{equation}
Clearly $\alpha, \beta \in R (G^\circ_{\phi_t}, T)$. Since this is a root system,
(i) and (ii) hold for $\alpha$ and $\beta$ inside $R(G^\circ_{\phi_t}, T)$.
Then they are also valid in the larger set $R(J^\circ,T)$.

Next we consider linearly dependent $\alpha,\beta$. Then $s_\alpha (\beta) =
-\beta$, so (ii) is automatically fulfilled.

Suppose that there exists a $\gamma \in R(J^\circ,T) \setminus \Q \alpha$ which
is not orthogonal to $\alpha$. As before, we can find $\phi_2, \phi_3$ such that
$\alpha, \gamma \in R (G^\circ_{\phi_2},T)$ and
$\beta, \gamma \in R (G^\circ_{\phi_3},T)$.
Hence both $\{ \alpha, \gamma \}$ and $\{\beta, \gamma \}$ generate rank two irreducible
root systems in $X^* (T)$, and these root systems have the same $\Q$-span. From
the classification of rank two root systems we see that $\Q \alpha \cap R(J^\circ,T)$
is either $\{\pm \tilde \alpha\}$ or $\{ \pm \tilde \alpha, \pm 2 \tilde \alpha \}$
for a suitable $\tilde \alpha$. In particular (i) holds, because
\[
\inp{\alpha^\vee}{\beta} \in \pm \{1,2,4\} \subset \Z .
\]
Finally we suppose that $\Q \alpha \cap R(J^\circ,T)$ is orthogonal to
$R(J^\circ,T) \setminus \Q \alpha$. As above, we may pick $\phi$ such that
$\alpha \in R (G_\phi^\circ, T)$. By assumption $\beta = c \alpha$ for some
$c \in \Q^\times$. Pick $\phi_t$ so that $\beta \in R (G_{\phi_t}^\circ, T)$. As
\[
\inp{\beta^\vee}{\beta} = 2 = \inp{\alpha^\vee}{\alpha} ,
\]
we have $c \alpha = \beta \in X^* (T)$ and $c^{-1} \alpha^\vee = \beta^\vee \in X_* (T)$.
It follows that $c \in \pm \{1/2,1,2\}$ and $\inp{\alpha^\vee}{\beta} \in \pm\{1,2,4\}$.\\
(b) Let $\Delta$ be a basis of the reduced root system $R(J^\circ,T)_\red$ --
which is well-defined by part (a). Let $\lambda_\alpha \in \C \; (\alpha \in \Delta)$
be an eigenvalue of Ad$(\phi (\Fr_F))$ on $\mf g_\alpha$. Since $\Delta$ is linearly
independent, we can find $t_1 \in T$ with $\alpha (t_1^{-1}) = \lambda_\alpha$ for
all $\alpha \in \Delta$.
We put $\phi_1:=\phi_{t_1}$, where $\phi_{t_1}$ is defined by (\ref{eqn:phi_t}).
Then $\Delta$ is contained in the root system $R(G^\circ_{\phi_1},T)$. The Weyl
group of $(J^\circ,T)$ is generated by the reflections $s_\alpha$ with $\alpha \in
\Delta$, so it equals the Weyl group of $(G^\circ_{\phi_1},T)$. In particular it
stabilizes $R(G^\circ_{\phi_1},T)$. Every element of $R(J^\circ,T)_\red$
is in the Weyl group orbit of some $\alpha \in \Delta$, so $R(G^\circ_{\phi_1},T)$
contains $R(J^\circ,T)_\red$. \\
(c) Such a $t$ commutes with $\mc G_\sc^\vee$ and with $J$, so its image
under the adjoint representation of $J^\circ$ is trivial.
\end{proof}

By \cite[Theorem 9.2]{Lus2} and Proposition \ref{prop:5.9}, 
\begin{equation}\label{eq:3.6}
\rN_{G_{\phi_1}^\circ} (T) / \rZ_{G_{\phi_1}^\circ} (T) = W(R(G_{\phi_1}^\circ,T)) 
= W( R (J^\circ,T)) .
\end{equation}
The group $\rN_J (T)$ acts naturally on $R(J^\circ,T)$. Let $\rN_{J'}(T)$ be the preimage
of $W(R(J^\circ,T))$ in $\rN_{J^\circ}(T)$, by \eqref{eq:3.6} it surjects onto
$W(R(J^\circ,T))$. We write
\begin{equation} \label{eqn:Wscirc}
W_{\mf s^\vee}^\circ := W(R(J^\circ,T)) = \rN_{J'} (T) / \rZ_{J^\circ} (T) .
\end{equation}
Since $\mc L_c^\vee = \rZ_{\mc G^\vee_\sc}(T)$ and 
$J^\circ = \rZ_{\mc G^\vee_\sc}(\phi |_{\mb I_F})^\circ$
\[
W_{\mf s^\vee}^\circ = \rN_{J^\circ} (T) / \rZ_{\mc L_c^\vee}(\phi (\mb I_F))^\circ =
\rN_{J^\circ} (\mc L_c^\vee) / (J^\circ \cap \mc L_c^\vee) .
\]
Any element of $G_{\phi_1}^\circ$ which normalizes $T = T^{\mb W_F}$
will also normalize $\mc L^\vee \rtimes \mb W_F = \rZ_{\mc G^\vee \rtimes \mb W_F} (T)$
and $M = \rZ_{G_{\phi_1}} (T) = \rZ_{G_\phi}(T)$, while by \cite[Lemma 2.1]{AMS2} 
it stabilizes $\cC_v^M$ and $q\cE$. The group
\begin{equation}\label{eq:3.3}
W_{\mf s^\vee} \subset \rN_{\mc G^\vee}(\cL^\vee \rtimes \mb W_F) / \cL^\vee =
\rN_{\mc G^\vee}(T) / \cL^\vee
\end{equation}
from \eqref{eqn:Ws} stabilizes the $\cL^\vee$-conjugacy classes of
$X_\nr (\cL^\vee) (\phi |_{\mb W_F},v, q\epsilon)$ and of 
\begin{equation}\label{eq:3.11}
M = G_{\phi_1} \cap \cL_c^\vee = G_\phi \cap \cL_c^\vee. 
\end{equation} 
Further, the group $Z_{\mc G^\vee}(\phi |_{\mb I_F})$ automatically normalizes 
$J = \rZ^1_{\mc G^\vee_\sc} (\phi |_{\mb I_F})$. Hence we can express $W_{\mf s^\vee}$ as
\begin{equation}\label{eq:3.7}
W_{\mf s^\vee} \cong \big( \rN_{\mc G^\vee}(T) \cap \rZ_{\mc G^\vee}(\phi |_{\mb I_F}) \big)
\big/ \rZ_{\cL^\vee}( \phi |_{\mb I_F}) .
\end{equation}
As $\cL^\vee = \rZ_{\mc G^\vee}(T)$ and $J / \rZ (\mc G_\sc^1) = \rZ_{\mc G_\ad}(\phi |_{\mb I_F})$,
we deduce from \eqref{eq:3.7} that there is a canonical isomorphism
\begin{equation}\label{eq:3.10}
\rN_J (T) / \rZ_J (T) \to W_{\mf s^\vee} .
\end{equation}
In particular $W_{\mf s^\vee}$ acts on $R(J^\circ,T)$ and naturally contains $W_{\mf s^\vee}^\circ$. 
We choose a $\phi_1$ as in Proposition \ref{prop:5.9}, which will play the role
of a basepoint on $\mf s^\vee_{\mc L}$. Then $W_{\mf s^\vee}^\circ = W(R(G^\circ_{\phi_1},T))$ 
fixes $(\phi_1 |_{\mb W_F},v,q\epsilon) \in \mf s^\vee_\cL$, 
but $W_{\mf s^\vee}$ need not fix $\phi_1 |_{\mb W_F}$. 

Clearly the set
\begin{equation}\label{eq:3.1}
X_\nr ({}^L \cL )_{\phi_1} := \{ z \in X_\nr ({}^L \cL) : z \phi_1 \equiv_{\cL^\vee} \phi_1 \}
\end{equation}
only depends on $\mf s_\cL^\vee$, not on $\phi_1$. Moreover it is finite, for it consists 
of elements coming from the finite group $\cL^\vee_\der \cap \rZ(\cL^\vee)$. Writing
\[
T_{\mf s^\vee} = X_\nr ({}^L \cL) / X_\nr ({}^L \cL)_{\phi_1} ,
\]
we obtain a bijection
\begin{equation}\label{eq:2.3}
T_{\mf s^\vee} \to \mf s_\cL^\vee : z \mapsto [z \phi_1 |_{\mb W_F}, v,q\epsilon] .
\end{equation}
Via this bijection we can retract the action of $W_{\mf s^\vee}$ on $\mf s_\cL^\vee$
to $T_{\mf s^\vee}$. Then $W_{\mf s^\vee}^\circ$ fixes $1 \in T_{\mf s^\vee}$.
If $\phi_0 = t_0 \phi_1$ is another basepoint, like $\phi_1$, then also $W(R(G^\circ_{\phi_0},T))
\cong W_{\mf s^\vee}^\circ$, so $t_0 \in (T )^{W_{\mf s^\vee}^\circ}$.
Consequently the action of $W_{\mf s^\vee}^\circ$ on $T_{\mf s^\vee}$ is independent of 
the choice of $\phi_1$. On the other hand, the action of $W_{\mf s^\vee}$ on $T_{\mf s^\vee}$
may very well depend on the choice of the basepoint $\phi_1$.

Analogous to \eqref{eq:3.1}, we consider the finite group
\begin{align*}
T_{\phi_1} & := \{ t \in T : t \phi_1 \equiv_{\cL^\vee} \phi_1 \} \\
& \; = T \cap \{ l \phi_1 (\Fr_F) l^{-1} \phi_1 (\Fr_F)^{-1} : 
l \in \rZ_{\cL_c^\vee} (\phi_1 (\mb I_F), v, q\epsilon) \} .
\end{align*}
From Lemma \ref{lem:5.7} we get a natural, finite covering of tori
\begin{equation}\label{eq:3.2}
T / T_{\phi_1} \times X_\nr ( {}^L \mc G) \to T_{\mf s^\vee} ,
\end{equation}
which is injective on $T / T_{\phi_1}$. In general the elements of $R(J^\circ,T)$ do not descend 
to characters of $X_\nr ({}^L \mc L)$, and even if they do, they need not descend further to
characters of $T_{\mf s^\vee}$. The former problem arises in the setting of Remark 
\ref{rem:5.25}, and the latter problem already occurs for the
Levi subgroup $\GL_2 (F) \times \GL_2 (F)$ of $\GL_4 (F)$.

To set things up properly, extend $\phi_1 |_{\mb W_F}$ to
\[
(\phi_1 |_{\mb W_F},u_{\phi_1},\rho) \in \Phi_e ({}^L \mc G) \quad \text{with} \quad 
q \Psi_{G_{\phi_1}} (u_{\phi_1},\rho) = [M,v,q\epsilon]_{G_{\phi_1}}.
\]
We consider $\phi_1 (\Fr_F)$ as a semisimple automorphism of $J$. By \cite[Theorem 7.5]{Ste} 
$\phi_1 (\Fr_F)$ stabilizes a Borel subgroup $B_J$ of $J^\circ$, and a maximal torus $T_J$ 
thereof. Then $B_J^{\phi_1 (\Fr_F),\circ}$ is a Borel subgroup of $G^\circ_{\phi_1} = 
J^{\phi_1 (\Fr_F), \circ}$, containing $T_J^{\phi_1 (\Fr_F),\circ}$ 
as maximal torus. By conjugation in $G^\circ_{\phi_1}$ 
we may assume that $u_{\phi_1} \in B_J^{\phi_1 (\Fr_F),\circ}$, and then
\[
M \supset T_J^{\phi_1 (\Fr_F)} \supset T = \rZ(M)^\circ .
\]
We recall that $T$ is the centre of the Levi subgroup $M^\circ$ of $G_\phi^\circ$. Hence
the restriction map
\begin{equation}\label{eq:3.8}
R \big( G_{\phi_1}^\circ, T_J^{\phi_1 (\Fr_F),\circ} \big) \cup \{0\} \to
R(G_{\phi_1}^\circ, T) \cup \{0\} 
\end{equation}
has the property that the full preimage of any $\alpha \in R(G_{\phi_1}^\circ, T)$ is contained
in a single irreducible component of $R \big( G_{\phi_1}^\circ, T_J^{\phi_1 (\Fr_F),\circ} \big)$.
As $\phi_1 (\Fr_F)$ stabilizes $(B_J,T_J)$, the restriction map
\begin{equation}\label{eq:3.9}
R(J^\circ,T_J) \to R \big( G_{\phi_1}^\circ, T_J^{\phi_1 (\Fr_F),\circ} \big)
\end{equation}
induces a bijection between the $\phi_1 (\Fr_F)$-orbits of irreducible components of\\ 
$R(J^\circ,T_J)$ and the set of irreducible components of 
$R \big( G_{\phi_1}^\circ, T_J^{\phi_1 (\Fr_F),\circ} \big)$.
From \eqref{eq:3.8} and \eqref{eq:3.9} we see that the preimage in $R(J^\circ,T_J)$ of any
$\alpha \in R(G_{\phi_1}^\circ, T)$ is contained in single $\phi_1 (\Fr_F)$-orbit of irreducible 
components of $R(J^\circ,T_J)$. That paves the way for the following definition.

\begin{defn}\label{def:3}
For each $\alpha \in R(J^\circ,T)_\red$, we define $m_\alpha \in \Z_{>0}$ by the following
requirements:
\begin{itemize}
\item Suppose that the preimage of $\alpha$ in $R (J^\circ, T_J)$ lies in
a single irreducible component of that root system. Then $m_\alpha$ is the smallest positive
integer such that $T_{\phi_1} \subset \ker (m_\alpha \alpha)$.
\item Suppose that the preimage of $\alpha$ in $R (J^\circ, T_J)$ meets $k > 1$ irreducible 
components of that root system, permuted transitively by the action of $\phi_1 (\Fr_F)$.
Then $m_\alpha = m_\alpha (\Fr_F)$ equals $k$ times the number $m_\alpha (\Fr_F^k)$ 
computed (as in the first bullet) with respect to the action of $\phi_1 (\Fr_F^k)$. Equivalently, 
$m_\alpha$ is $k$ times the analogous number obtained by replacing $\mb W_F$ with the Weil 
group of the degree $k$ unramified extension of $F$.
\end{itemize}
\end{defn}

These conditions guarantee that $m_\alpha \alpha$ descends to a character of $T / T_{\phi_1}$.
Moreover $m_\alpha$ is the minimal such natural number, unless maybe when $\alpha \in 2 X^* (T)$ 
and $k$ even. Extend $m_\alpha \alpha$ to a character of $T/T_{\phi_1} \times X_\nr ({}^L \cG)$, 
trivial on the second factor. In view of Proposition \ref{prop:5.9}.c, $m_\alpha \alpha$ is trivial 
on the kernel of \eqref{eq:3.2}, and hence descends naturally to a character of $T_{\mf s^\vee}$.
We define
\[
R_{\mf s^\vee} = \{ m_\alpha \alpha : \alpha \in R(J^\circ,T)_\red \}
\subset X^* (T_{\mf s^\vee}) .
\]
Recall that in the proof of Proposition \ref{prop:5.9} we fixed an inner product on 
$X^* (T) \otimes_\Z \R$, invariant under $\rN_J (T) / \rZ_J (T) \cong W_{\mf s^\vee}$.

\begin{lem}\label{lem:5.10}
\enuma{
\item $R_{\mf s^\vee}$ is a reduced root system, and it is stable under the
action of $W_{\mf s^\vee}$ on $X^* (T_{\mf s^\vee})$.
\item For $\alpha$ and $\beta$ in the same irreducible component of $R(J^\circ,T)_\red$,
$m_\alpha = m_\beta$ or $m_\alpha = \norm{\alpha}^{-2} \norm{\beta}^2 m_\beta$.
\item For each $\alpha \in R(J^\circ,T)_\red$, $\alpha^\vee / m_\alpha \in X_* (T) \otimes_\Z \Q$
defines a cocharacter of $T_{\mf s^\vee}$.
}
\end{lem}
\begin{proof}
(a) The realization of $W_{\mf s^\vee}$ in \eqref{eq:3.7} makes that it normalizes $T_{\phi_1}$. 
From $T_J \supset T$ and \eqref{eq:3.10} we see that $W_{\mf s^\vee}$ can be represented by 
elements of $\rN_J (T)$ that normalize $T_J$. Then $W_{\mf s^\vee}$ stabilizes all the data 
that go into the definition of $m_\alpha$, so the map $\alpha \mapsto m_\alpha$ is 
constant on $W_{\mf s^\vee}$-orbits. Consequently $R_{\mf s^\vee}$ is $W_{\mf s^\vee}$-stable.
In particular $R_{\mf s^\vee}$ is stable under all the reflections
$s_{m_\beta \beta} = s_\beta$ with $m_\beta \beta \in R_{\mf s^\vee}$. Hence 
$R_{\mf s^\vee}$ is a (possibly non-integral) root system with Weyl group 
\[
W(R_{\mf s^\vee}) = W (R(J^\circ,T)) = W_{\mf s^\vee}^\circ .
\]
By construction $R_{\mf s^\vee}$ is reduced, it remains to see that it is integral.
The lattice $\Z R_{\mf s^\vee}$ is stable under $W (R_{\mf s^\vee})$, so we can form the
semidirect product $W_a = W(R_{\mf s^\vee}) \ltimes \Z R_{\mf s^\vee}$. We let any
$x \in \Z R_{\mf s^\vee}$ act on $X^* (T) \otimes_\Z \R$ by the translation $t_x$.
Then $W_a$ is generated by the affine reflections $s_\alpha t_{n \alpha}$ with 
$\alpha \in R(J^\circ,T)_\red$ and $n \in m_\alpha \Z$. If we consider $W_a$ as a group of
affine transformations of $\Z R_{\mf s^\vee} \otimes_\Z \R$, we are in the setting of
\cite[Proposition VI.2.5.8]{Bou}. It says that $W_a$ is the affine Weyl group of a reduced 
integral root system, namely $R_{\mf s^\vee}^\vee$. Hence $R_{\mf s^\vee}$ is integral as well.\\
(b) By definition, in $R(J^\circ,T)_\red$:
\begin{equation}\label{eq:3.4}
s_\alpha (m_\beta \beta) = m_\beta \beta - m_\beta \inp{\alpha^\vee}{\beta} \alpha .
\end{equation}
On the other hand, in the integral root system $R_{\mf s^\vee}$:
\begin{equation}\label{eq:3.5}
s_{m_\alpha \alpha} (m_\beta \beta) =
m_\beta \beta - \inp{(m_\alpha \alpha)^\vee}{m_\beta \beta} m_\alpha \alpha
\end{equation}
Comparing \eqref{eq:3.4} and \eqref{eq:3.5}, we see that $m_\beta
\inp{\alpha^\vee}{\beta} \in m_\alpha \Z$. 

By the $W_{\mf s^\vee}$-invariance of $\alpha \mapsto m_\alpha$, it suffices to consider 
simple roots $\alpha, \beta$ in one irreducible component of $R(J^\circ,T)_\red$. If they
have the same length, then they are $W_{\mf s^\vee}$-associate and $m_\alpha = m_\beta$.
That leaves the case where their lengths differ, say $\alpha$ is longer. Replacing $\alpha$
and $\beta$ by $W (R(J^\circ,T))$-associate simple roots, we can achieve that they are not 
orthogonal and $\inp{\alpha^\vee}{\beta} = -1$. Then \eqref{eq:3.4} and \eqref{eq:3.5} 
entail that $m_\beta \geq m_\alpha$. More precisely, in that case
\[
\inp{\beta^\vee}{\alpha} = - \norm{\alpha}^2 \norm{\beta}^{-2} \in \{-1,-2,-3\}
\]
and $m_\alpha \in m_\beta \inp{\beta^\vee}{\alpha}^{-1} \Z$, so
$m_\alpha = m_\beta$ or $m_\alpha = \norm{\alpha}^{-2} \norm{\beta}^2 m_\beta$.\\
(c) In view of the covering map \eqref{eq:3.2}, it suffices to show that 
$\alpha^\vee / m_\alpha$ defines a cocharacter of $T / T_{\phi_1}$. From the finite
covering $T \to T / T_{\phi_1}$ we obtain inclusions
\[
\begin{array}{lllll}
X_* (T) & \subset & X_* (T/T_{\phi_1}) & \subset & X_* (T) \otimes_\Z \Q , \\
X^* (T / T_{\phi_1}) & \subset & X^* (T) & \subset & X^* (T / T_{\phi_1}) \otimes_\Z \Q . 
\end{array}
\]
The reflection $s_\alpha$ acts on $X_* (T) \otimes_\Z \Q$ by
\begin{equation}\label{eq:3.13}
x \mapsto s_\alpha (x) = x - \inp{x}{\alpha} \alpha^\vee = 
x - \inp{x}{m_\alpha \alpha} \alpha^\vee / m_\alpha .
\end{equation}
Since $s_\alpha \in W_{\mf s^\vee}$ normalizes $T_{\phi_1}$, the action \eqref{eq:3.13}
stabilizes $X_* (T / T_{\phi_1})$.

Suppose that $m_\alpha$ is the smallest positive integer such that $m_\alpha \alpha$ 
descends to a character of $T / T_{\phi_1}$. Then $m_\alpha \alpha$ is indivisible in
$X_* (T / T_{\phi_1})$, so there exists an $x \in X^* (T/T_{\phi_1})$ such that
$\inp{x}{m_\alpha \alpha} = 1$. For that $x$, \eqref{eq:3.13} shows that 
\[
\alpha^\vee / m_\alpha = x - s_\alpha (x) \in X_* (T / T_{\phi_1}) .
\]
When this characterization of $m_\alpha$ does not hold, then $\alpha / 2 \in X^* (T)$
and $m_\alpha \alpha / 2$ is the smallest multiple of $\alpha$ that 
descends to a character of $T / T_{\phi_1}$ (as remarked after Definition \ref{def:3}). 
As $\alpha \in 2 X^* (T)$, the irreducible component $R_1$ of $R(J^\circ,T)_\red$ that contains
$\alpha$ has type $C_n$, and it is contained in a direct summand $\Z^n$ of $X^* (T)$. 
Then $X^* (T / T_{\phi_1})$ contains $m_\alpha \alpha' / 2$ for every long root $\alpha' \in R_1$,
so it contains $m_\alpha \beta$ for every short root $\beta \in R_1$. From this and part (b)
we deduce that $m_\beta = m_\alpha$ (provided that $C_n$ has rank $>1$ so that it has short 
roots). It follows that $\Q R_1 \cap X^* (T/T_{\phi_1}) = m_\alpha \Z^n$. In particular 
\[
X^* (T/T_{\phi_1}) = (\alpha^\vee)^\perp \oplus m_\alpha \alpha / 2 .
\]
Since the pairing between $X_* (T/T_{\phi_1})$ and $X^* (T/T_{\phi_1})$ is 
perfect, there exists $y \in X_* (T / T_{\phi_1}) \cap (\alpha^\vee)^{\perp \perp}$ with 
$\inp{y}{m_\alpha \alpha /2} = 1$. This $y$ is $\alpha^\vee / m_\alpha$.
\end{proof}

Lemma \ref{lem:5.10} implies that
\[
\mc R_{\mf s^\vee} := \big( R_{\mf s^\vee}, X^* (T_{\mf s^\vee}),
R_{\mf s^\vee}^\vee, X_* (T_{\mf s^\vee}) \big)
\]
is a root datum with an action of $W_{\mf s^\vee}$.

\begin{lem}\label{lem:5.16}
Let $\alpha \in R(J^\circ,T)_\red$ and $t \in T$.
\enuma{
\item If $(m_\alpha \alpha)(t) = 1$, then $\alpha \in R (G^\circ_{t \phi_1},T)$.
\item Suppose that $R(G^\circ_{t \phi_1},T)$ contains $\alpha$ or $2 \alpha$. Then 
$(m_\alpha \alpha)(t) = 1$ or $(m_\alpha \alpha)(t) = -1$ and
$(m_\alpha \alpha)^\vee \in 2 X_* (T _{\mf s^\vee})$.
}
\end{lem}
\begin{proof}
(a) Suppose that $m_\alpha \alpha$ is the smallest multiple of $\alpha$ that descends to
a character of $T / T_{\phi_1}$. From $\alpha (t)^{m_\alpha} = 1$ we see that 
$\alpha (t) \in \alpha (T_{\phi_1})$. In particular there exists a $t' \in t T_{\phi_1}$ with 
$\alpha (t') = 1$. By the definition of $T_{\phi_1}$, $t \phi_1$ and $t' \phi_1$ are
$\mc L^\vee$-conjugate. Hence $\alpha \in R (G^\circ_{t' \phi_1},T) = R (G^\circ_{t \phi_1},T)$.

When this characterization of $m_\alpha$ does not hold, we need a more involved argument.
Following Definition \ref{def:3}, we write $m_\alpha = k m_\alpha (\Fr_F^k)$. This means that
$k$ irreducible components of $R(J^\circ,T_J)$ are relevant for $\alpha$, and they are permuted 
transitively by $\phi_1 (\Fr_F)$. Now $\alpha$ is a root for $(G_{t \phi_1}^\circ,T)$ if and 
only if $\alpha$ is a root for $Z_J ( (t \phi_1 (\Fr_F))^k)^\circ$, the version of 
$G_{t \phi_1}^\circ$ with $\Fr_F^k$ instead of $\Fr_F$. More precisely, the root subspaces for 
$\alpha$ in these two groups are naturally in bijection. Since $T$ centralizes 
$\phi_1 (\Fr_F) \in \cL^\vee \rtimes \mb W_F$, 
\[
(t \phi_1 (\Fr_F))^k = t^k \phi_1 (\Fr_F^k) .
\]
The root subspace for $\alpha$ and $Z_J ( (t \phi_1 (\Fr_F))^k)^\circ$ depends only $t^k$
(regarding $\phi_1$ as fixed). By assumption 
\[
(m_\alpha (\Fr_F^k) \alpha) (t^k) = (k^{-1} m_\alpha \alpha)(t^k) = (m_\alpha \alpha)(t) = 1.
\]
This brings us back to a situation analogous to the first part of the proof, 
and we conclude as over there.\\
(b) The reflection $s_\alpha$ stabilizes $t \phi_1 \in \mf s_L^\vee \cong T_{\mf s^\vee}$. 
Thus $s_\alpha$ fixes $t$ considered as element of $T_{\mf s^\vee}$. As $\mc R_{\mf s^\vee}$ is
a root datum, we have reduced to the well-known setting of Weyl groups acting on complex tori
associated to root data. In that setting we conclude with \cite[Lemma 3.15]{Lus4}.
\end{proof}

We endow $\mc R_{\mf s^\vee}$ with the set of simple roots determined by the Borel subgroup 
$B_J \subset J^\circ$. We look for parameter functions $\lambda$ and $\lambda^*$ on 
$\mc R_{\mf s^\vee}$ which are compatible with specialization to the graded Hecke algebras from 
Paragraph \ref{par:LGHA}. Recall from \eqref{eq:4.2} that $\lambda^* (\alpha)$ is defined to be 
$\lambda (\alpha)$ unless $\alpha$ is a short root in a type $\rB$ root subsystem of $R_{\mf s^\vee}$.

\begin{prop}\label{prop:5.15}
\enuma{
\item There exist unique $W_{\mf s^\vee}$-invariant parameter functions
\[
\lambda \colon R_{\mf s^\vee} \to \Q_{>0}, \qquad
\lambda^* \colon\{ m_\alpha \alpha \in R_{\mf s^\vee} : (m_\alpha \alpha )^\vee \in
2 X_* (T_{\mf s^\vee}) \} \to \Q
\]
such that, for every $(\phi_b |_{\mb W_F},v,q\epsilon) \in \mf s_{\cL}^\vee$ with
$\phi_b$ bounded, the reduction via Theorems \ref{thm:4.5} and \ref{thm:4.7} gives the graded 
Hecke algebra $\mh H (\phi_b, v, q\epsilon, \vec{\mb r})$ from \eqref{eq:5.11}.
\item The basepoint $\phi_1$ of $\mf s^\vee_{\mc L}$ can be chosen so that 
$\lambda$ has image in $\Z_{>0}$ and $\lambda^*$ has image in $\Z_{\geq 0}$.
}
\end{prop}
\begin{proof}
(a) The aforementioned reduction produces graded Hecke algebras with the roots
$m_\alpha \alpha$, whereas in \eqref{eq:5.11} the root system is contained in
$R(J^\circ,T)$. We reconcile this by imposing $c(m_\alpha \alpha) = m_\alpha c(\alpha)$,
which is allowed because it preserves the braid relations in a graded Hecke algebra
(Proposition \ref{prop:0.1}).

For $\phi_b = \phi_1$, \eqref{eq:4.37} imposes the conditions
\begin{equation}\label{eq:5.17}
\lambda (m_\alpha \alpha) + \lambda^* (m_\alpha \alpha) =
m_\alpha c(\alpha) \qquad \alpha \in R (J^\circ,T)_\red ,
\end{equation}
where $c(\alpha) \in \Z_{>0}$ is computed as in Proposition \ref{prop:4.1},
with respect to $G^\circ_{\phi_1}$. Given $\phi_1 \big|_{\mb I_F},v$ and $q\epsilon$,
the value of $c(\alpha)$ depends only on the root subspaces for $\alpha$ and $2 \alpha$ in 
$G_{t \phi_1}$. The proof of Lemma \ref{lem:5.16}.a shows that these root subspaces depend
(up the isomorphism) only on $(m_\alpha \alpha)(t)$.

In view of Lemma \ref{lem:5.16}.b, we need to consider at most two values of $c(\alpha)$
for $\phi_b \in \mf s_{\mc L}^\vee$: one for $\phi_1$ and maybe another one, say $c^* (\alpha)$, 
for a $t \phi_1$ with $(m_\alpha \alpha)(t) = -1$. When $R(G^\circ_{t \phi_1},T)$ contains
$2\alpha$ but not $\alpha$, we must rescale $c^* (\alpha) = c (2 \alpha) / 2$ so that it
really refers to $\alpha$ like $c(\alpha)$.

When $(m_\alpha \alpha)^\vee \notin 2 X_* (T_{\mf s^\vee})$, Lemma \ref{lem:5.16} says that 
$R(G^\circ_{t \phi_1},T)$ contains $\alpha$ or $2 \alpha$ if and only if 
$(m_\alpha \alpha )(t) = 1$. 
By convention $\lambda^* (m_\alpha \alpha) = \lambda (m_\alpha \alpha)$,
and the only way to solve \eqref{eq:5.17} is setting
\begin{equation}\label{eq:5.23}
\lambda (m_\alpha \alpha ) = c(\alpha) m_\alpha / 2 \in \Q_{>0}.
\end{equation}
Next consider an $\alpha \in R(J^\circ,T)_\red$ with $(m_\alpha \alpha)^\vee \in
2 X_* (T_{\mf s^\vee})$. Then $s_{m_\alpha \alpha}$ fixes $t \phi_1$ if $(m_\alpha \alpha)(t) = -1$,
so we have to consider $c^* (\alpha) \in \Z_{\geq 0}$. If $\alpha$ and $2 \alpha$ do not belong to 
$R(G^\circ_{t \phi_1},T)$ for one such $t$, then the above argument shows that they do
not lie in $R(G^\circ_{t \phi_1},T)$ for any such $t$. In that case, in the twisted graded Hecke 
algebra $\mh H (t \phi_1, v, q \epsilon, \vec{\mb r})$ the element $N_{s_\alpha}$ satisfies a
braid relation with trivial parameter $c^* (\alpha) := 0$.

For any $t \in T$ with $(m_\alpha \alpha) (t) = -1$, \eqref{eq:4.37} imposes the new condition
\begin{equation}\label{eq:5.16}
\lambda (m_\alpha \alpha) - \lambda^* (m_\alpha \alpha) = m_\alpha c^* (\alpha) .
\end{equation}
Clearly \eqref{eq:5.17} and \eqref{eq:5.16} admit the unique solution
\begin{equation}\label{eq:5.22}
\lambda (m_\alpha \alpha) = (c(\alpha) + c^* (\alpha)) m_\alpha /2 ,\quad
\lambda^* (m_\alpha \alpha) = (c(\alpha) - c^* (\alpha)) m_\alpha /2 .
\end{equation}
We address the $W_{\mf s^\vee}$-invariance. Represent $\gamma \in W_{\mf s^\vee}$
in $\rN_J (T)$ as in \eqref{eq:3.10}. Then it acts on the entire setting by
conjugation, so $\lambda \circ \gamma$ and $\lambda^* \circ \gamma$ are parameter
functions which also fulfill the requirements with respect to reduction to graded Hecke
algebras. With the uniqueness of the solutions to the above equations, we find that
$\lambda \circ \gamma = \lambda$ and $\lambda^* \circ \gamma = \lambda^*$.\\
(b) If $c^* (\alpha) > c(\alpha)$, then we exchange them. This can be achieved with the
method from the proof of Proposition \ref{prop:5.9}.b: take a new basepoint $\phi_{t'}$ 
such that $(m_\alpha \alpha)(t') = -1$ while $t'$ lies in the kernel of every other simple 
roots of $R(J^\circ,T)$. This assures that $\lambda^*$ takes values in $\Q_{\geq 0}$.\\
\textbf{Case 1:} $(m_\alpha \alpha)^\vee \notin 2 X_* (T_{\mf s^\vee})$\\
When $2 \alpha \notin R(J^\circ,T)$, Proposition \ref{prop:4.1}.a ensures that
$c(\alpha)$ is even. When $2 \alpha \in R(J^\circ,T)$ and still $(m_\alpha \alpha)^\vee
\notin 2 X_* (T_{\mf s^\vee})$, Lemma \ref{lem:5.10} shows that the relevant irreducible 
components of $R(J^\circ,T)$ and $R_{\mf s^\vee}$ have type $\rBC_n$ and $\rC_n$, respectively. 
In particular $m_\alpha = 2 m_\beta$ for any other simple root in the same component of 
$R(J^\circ,T)$, and $m_\alpha$ is even. Hence \eqref{eq:5.23} is always an integer.\\
\textbf{Case 2:} $(m_\alpha \alpha)^\vee \in 2 X_* (T_{\mf s^\vee}), 2\alpha \notin
R (G^\circ_{\phi_1},T)$\\
In view of \eqref{eq:5.22}, we need to show that 
\begin{equation}\label{eq:5.3}
(c(\alpha) \pm c^* (\alpha)) m_\alpha \qquad \text{is even.}
\end{equation}
By Proposition \ref{prop:4.1}.a, $c(\alpha)$ is even. Select $t \in T$ with
$(m_\alpha \alpha)(t) = -1$. 
\begin{itemize}
\item If $\alpha$ lies in $R(G^\circ_{t\phi_1},T)$ but $2 \alpha$
does not, then $c^* (\alpha)$ is also even. 
\item If $\alpha, 2 \alpha \notin R(G^\circ_{t\phi_1},T)$ 
then we argued in the proof of part (a) that $c^* (\alpha) = 0$. 
\item Suppose $2 \alpha$ lies in $R(G^\circ_{t\phi_1},T)$. If $m_\alpha$ 
would be odd, we could arrange that $\alpha (t) = -1$. Then $(2 \alpha) (t) = 1$, 
so $2 \alpha$ would lie in both $R(G^\circ_{t\phi_1},T)$ and $R(G^\circ_{\phi_1},T)$. 
That contradicts our assumptions, so $m_\alpha$ is even. By Proposition \ref{prop:4.1} either 
$c^* (\alpha) = c_t (2\alpha) / 2$ or $c^* (\alpha) = c(\alpha)$ and it is always an integer. 
\end{itemize}
In all these three instances \eqref{eq:5.3} holds.\\
\textbf{Case 3:} $(m_\alpha \alpha)^\vee \in 2 X_* (T_{\mf s^\vee}), 2\alpha \in
R (G^\circ_{\phi_1},T)$\\
Again we need to verify \eqref{eq:5.3}, and we pick a $t \in T$ with
$(m_\alpha \alpha)(t) = -1$. By Proposition \ref{prop:4.1}.b, $c(\alpha)$ is odd.
\begin{itemize}
\item If $\alpha, 2 \alpha \in R (G^\circ_{t\phi_1},T)$, then $c^* (\alpha)$ is also odd.
\item Suppose $m_\alpha$ is even and not $\alpha, 2 \alpha \in R (G^\circ_{t\phi_1},T)$. 
If $\alpha, 2 \alpha \notin R (G^\circ_{t\phi_1},T)$, then we argued in the proof of part (a) 
that $c^* (\alpha) = 0$. Otherwise, by Proposition \ref{prop:4.1} either $c^* (\alpha) = 
c(\alpha)$ or $c^* (\alpha) = c_t (2\alpha) / 2$, and this is always an integer. 
\item Suppose $m_\alpha$ is odd and not $\alpha, 2 \alpha \in R (G^\circ_{t\phi_1},T)$.
Here we can arrange that $\alpha (t) = -1$, so that $(2 \alpha)(t) = 1$. Then the root
subspace $\mf g_{2 \alpha}$ is the same for $G^\circ_{\phi_1}$ as for $G^\circ_{t \phi_1}$, so
$2 \alpha \in  R (G^\circ_{t\phi_1},T) \not\ni \alpha$ and $c^* (\alpha)$ can be computed from 
$G^\circ_{\phi_1}$ alone. By Proposition \ref{prop:4.1}.b 
$c^* (\alpha) = c_t (2 \alpha) / 2 = 1$, which is odd. 
\end{itemize}
In these three instances, \eqref{eq:5.3} is indeed valid.
\end{proof}

\subsection{Affine Hecke algebras} \
\label{par:LAHA}

Recall that $W_{\mf s^\vee}$ acts naturally on the root system $R(J^\circ,T)$.
Let $R^+(J^\circ,T)$ be the positive system defined by the $\phi (\Fr_F)$-stable Borel 
subgroup $B_J$ of $J^\circ$. Any two such $B_J$ are $J^\circ$-conjugate, so the choice 
is inessential. Since $W_{\mf s^\vee}^\circ$ acts simply transitively on the collection 
of positive systems for $R(J^\circ,T)$, we obtain a semi-direct factorization
\begin{align*}
& W_{\mf s^\vee} = W_{\mf s^\vee}^\circ \rtimes \cR_{\mf s^\vee} , \\
& \cR_{\mf s^\vee} = \{ w \in W_{\mf s^\vee} : w R^+ (J^\circ,T) = R^+ (J^\circ,T) \}.
\end{align*}
To $\mf s^\vee$ we can associate the affine Hecke algebra
$\mc H (\mc R_{\mf s^\vee}, \lambda, \lambda^*, \vec{\mb z})$, where $\phi_1$ is as in
Proposition \ref{prop:5.15} and $\lambda$ and $\lambda^*$ satisfy \eqref{eq:5.17} and
\eqref{eq:5.16}. However, this algebra takes only the subgroup $W_{\mf s^\vee}^\circ$ of
$W_{\mf s^\vee}$ into account. To see
$W_{\mf s^\vee, \phi_1, v, q\epsilon}$, we can enlarge it to
\begin{align}\label{eq:5.5}
& \mc H (\mc R_{\mf s^\vee}, \lambda, \lambda^*, \vec{\mb z}) \rtimes \C [\cR_{\mf s^\vee,
\phi_1, v, q\epsilon},\natural_{\mf s^\vee, \phi_1, v, q\epsilon}] \; = \\
& \mc H (\mc R_{\mf s^\vee}, \lambda, \lambda^*, \vec{\mb z}) \rtimes
\End^+_{\mathcal P_{G_{\phi_1}} \Lie (G_{\phi_1})_{\RS}} \big( q\pi_* (\widetilde{q\cE}) \big) .
\end{align}
But $W_{\mf s^\vee}$ can also contain elements that do not fix $\phi_1$.
In fact, in some cases $W_{\mf s^\vee}$ even acts freely on $T_{\mf s^\vee}$.

\begin{prop}\label{prop:5.11}
Assume that the almost direct factorization \eqref{eq:0.1} of $J^\circ$ induces a
decomposition of $R(J^\circ,T)$ which is $W_{\mf s^\vee}$-stable.
\enuma{
\item The group $\mf R_{\mf s^\vee}$ acts naturally on $\mc H (\mc R_{\mf s^\vee},
\lambda, \lambda^*, \vec{\mb z})$, by algebra automorphisms.
\item This can be realized in a twisted affine Hecke algebra
\[
\mc H (\mc R_{\mf s^\vee}, \lambda, \lambda^*, \vec{\mb z}) \rtimes \C [\cR_{\mf s^\vee},
\natural_{\mf s^\vee}] =
\mc H (\mc R_{\mf s^\vee}, \lambda, \lambda^*, \vec{\mb z}) \rtimes
\End^+_{\mathcal P_J \Lie (J)_{\RS}} \big( q\pi_* (\widetilde{q\cE}) \big)
\]
in which \eqref{eq:5.5} is canonically embedded.
}
\end{prop}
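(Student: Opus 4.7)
The plan is to mirror the proof of Proposition \ref{prop:4.2} in two stages: first produce an action of $\mf R_{\mf s^\vee}$ on the affine Hecke algebra by algebra automorphisms, and then realize the twist by an idempotent-trimmed central extension, identifying the resulting 2-cocycle with one coming from perverse sheaves on $\Lie(J)_{\RS}$.

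For part (a), I would first check that each ingredient in the construction of $\mc H(\mc R_{\mf s^\vee}, \lambda, \lambda^*, \vec{\mb z})$ from Proposition \ref{prop:4.2} carries a compatible $\mf R_{\mf s^\vee}$-action. By the defining property $\mf R_{\mf s^\vee} = \{ w \in W_{\mf s^\vee} : w R^+(J^\circ,T) = R^+(J^\circ,T)\}$, the group $\mf R_{\mf s^\vee}$ acts on the root datum $\mc R_{\mf s^\vee}$ preserving the chosen simple roots; the hypothesis of the proposition ensures that the orthogonal decomposition $R(J^\circ,T) = R_1 \sqcup \cdots \sqcup R_d$ is $\mf R_{\mf s^\vee}$-stable, so one may let $\mf R_{\mf s^\vee}$ permute the variables $\vec{\mb z}$ by the same permutation of indices (in the two canonical cases \eqref{eq:0.18} it acts trivially on the $\mb z_j$). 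The key check is $\mf R_{\mf s^\vee}$-invariance of $\lambda, \lambda^*$: the constants $c(\alpha)$ and $c^*(\alpha)$ entering \eqref{eq:5.17}--\eqref{eq:5.16} depend only on the $T$-weight space decomposition of $\Lie(G_{\phi_1})$ (respectively $\Lie(G_{t\phi_1})$) and on $(M, \cC_v^M, q\cE)$, all of which are preserved by any representative of $\mf R_{\mf s^\vee}$ in $N_{\cG^\vee}(\cL^\vee \rtimes \mb W_F, M, q\cE)$. With these invariances secured, set
\[
A_\gamma(N_w) = N_{\gamma w \gamma^{-1}}, \qquad A_\gamma(\theta_x) = \theta_{\gamma(x)}, \qquad A_\gamma(\mb z_j) = \mb z_{\gamma(j)},
\]
and verify that the quadratic, braid and Bernstein relations of Proposition \ref{prop:4.2} are preserved. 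This produces the required homomorphism $\mf R_{\mf s^\vee} \to \mr{Aut}(\mc H(\mc R_{\mf s^\vee}, \lambda, \lambda^*, \vec{\mb z}))$.

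For part (b), once the $\mf R_{\mf s^\vee}$-action is in hand, I would form the crossed product. The 2-cocycle $\natural_{\mf s^\vee}$ arises from the generalized Springer theory of \cite[\S 5]{AMS}, applied to the possibly disconnected reductive group $J$ with cuspidal quasi-support $(M, \cC_v^M, q\cE)$; the argument of \cite[Proposition 4.5]{AMS} and \cite[(91)]{AMS2} extends from $G_{\phi_1}$ to $J$ verbatim and produces the identification
\[
\C[\mf R_{\mf s^\vee}, \natural_{\mf s^\vee}] \;\cong\; \End^+_{\mathcal D \Lie(J)_{\RS}}\!\big((\mr{pr}_1)_! \dot{q\cE}\big).
\]
Now apply the same central-extension trick as in the proof of Proposition \ref{prop:4.2}: choose $\mf R_{\mf s^\vee}^+ \twoheadrightarrow \mf R_{\mf s^\vee}$ and a central idempotent $p_{\natural_{\mf s^\vee}}$ with $\C[\mf R_{\mf s^\vee}, \natural_{\mf s^\vee}] \cong p_{\natural_{\mf s^\vee}} \C[\mf R_{\mf s^\vee}^+]$, and realize
\[
\mc H(\mc R_{\mf s^\vee}, \lambda, \lambda^*, \vec{\mb z}) \rtimes \C[\mf R_{\mf s^\vee}, \natural_{\mf s^\vee}] \;\cong\; p_{\natural_{\mf s^\vee}}\big( \mf R_{\mf s^\vee}^+ \ltimes \mc H(\mc R_{\mf s^\vee}, \lambda, \lambda^*, \vec{\mb z})\big).
\]
The canonical embedding of \eqref{eq:5.5} then follows from the inclusion $\mf R_{\mf s^\vee, \phi_1, v, q\epsilon} \subset \mf R_{\mf s^\vee}$ together with the functoriality statement that $(\mr{pr}_{1,\phi_1})_! \dot{q\cE}$ for $\Lie(G_{\phi_1})_{\RS}$ embeds naturally in $(\mr{pr}_1)_! \dot{q\cE}$ for $\Lie(J)_{\RS}$ (already noted for $G_t \subset G$ right after \eqref{eq:4.5}), which restricts $\natural_{\mf s^\vee}$ to $\natural_{\mf s^\vee,\phi_1,v,q\epsilon}$.

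The main obstacle will be the $\mf R_{\mf s^\vee}$-invariance of $\lambda^*$ in the case $2\alpha \in R(J^\circ,T)$: the definition \eqref{eq:5.16} passes through an auxiliary basepoint $t \phi_1$ with $\alpha(t) = -1$, and one has to verify that for $\gamma \in \mf R_{\mf s^\vee}$ the element $t$ may be replaced by a $\ker(\gamma \alpha)$-coset representative without changing $c^*(\gamma\alpha)$, which requires comparing Lusztig's list in \cite[\S 2.13]{Lus3} across the two basepoints. A close second difficulty is verifying that the endomorphism algebra description of $\natural_{\mf s^\vee}$ extends cleanly from the connected group $G_{\phi_1}$ (where \cite[(91)]{AMS2} is stated) to the disconnected group $J$, which is where the hypothesis on the almost direct factorization is needed to ensure compatibility with the tensor decomposition \eqref{eq:0.9} of $\dot{q\cE}$.
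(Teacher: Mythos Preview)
Your approach to part (a) misses the key subtlety. You treat the $\mf R_{\mf s^\vee}$-action on $T_{\mf s^\vee}$ as purely linear on the character lattice, writing $A_\gamma(\theta_x) = \theta_{\gamma(x)}$. But the ``natural'' action in the statement is the one pulled back from the $W_{\mf s^\vee}$-action on $\mf s^\vee_\cL$ via the bijection \eqref{eq:5.3}, and since $\mf R_{\mf s^\vee}$ need not fix the basepoint $\phi_1$ (this was flagged just before the proposition), the resulting action on $T_{\mf s^\vee}$ is \emph{affine}: $\gamma \cdot \theta_x = \theta_{\gamma x}\,\lambda_{\gamma,x}$ with $\lambda_{\gamma,x}^{-1} = x(\gamma(1)) \in \C^\times$. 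Your purely linear action does define algebra automorphisms, but the crossed product it produces is not the one needed downstream in Theorem \ref{thm:5.13}, where $\mc O(T_{\mf s^\vee})$-weights of modules must match points of $\mf s^\vee_\cL$. With the translation present, preservation of the Bernstein relation is no longer automatic: one must check that $x(\gamma(1)) = s_\alpha(x)(\gamma(1))$ for every simple $\alpha$, and the paper obtains this from normality of $W_{\mf s^\vee}^\circ$ in $W_{\mf s^\vee}$, which forces $(W_{\mf s^\vee}^\circ)_{\gamma(1)} = (W_{\mf s^\vee}^\circ)_1 = W_{\mf s^\vee}^\circ$.

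Your diagnosis of the ``main obstacle'' is also off. The $W_{\mf s^\vee}$-invariance of $\lambda$ and $\lambda^*$ is immediate, since $c$ and $c^*$ depend only on $T$-weight data that any representative of $W_{\mf s^\vee}$ permutes; the paper dispatches it in one line via Proposition \ref{prop:4.1} and \eqref{eq:4.2}. For part (b) your outline is broadly correct, but note that $(M, v, q\epsilon)$ need not be a cuspidal quasi-support for a group with neutral component $J^\circ$ (this was pointed out in the proof of Proposition \ref{prop:5.9}), so one cannot invoke the machinery of \cite{AMS} for $J$ ``verbatim''. The paper instead constructs the maps $qb_\gamma : \pi_*(\tilde{q\cE}) \to \pi_*\big(\widetilde{\Ad(n_\gamma)^* q\cE}\big)$ directly, as a slight generalization of \cite[Lemma 5.4]{AMS}, and reads the cocycle $\natural_{\mf s^\vee}$ off their composition law; this is precisely what guarantees that $\natural_{\mf s^\vee}$ restricts to $\natural_{\mf s^\vee,\phi,v,q\epsilon}$ for \emph{every} $(\phi|_{\mb W_F},v,q\epsilon) \in \mf s^\vee_\cL$, not just for the chosen basepoint $\phi_1$.
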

\begin{proof}
(a) The action of $\cR_{\mf s^\vee}$ on $T_{\mf s^\vee}$ comes from \eqref{eq:2.3}.
This determines an action on $\mc O (T_{\mf s^\vee}) \cong \C [ X^* (T_{\mf s^\vee})]$.
Any $\gamma \in \cR_{\mf s^\vee}$ maps $\theta_x$ to an invertible element of
$\C [ X^* (T_{\mf s^\vee})]$. That is,
\[
\gamma \cdot \theta_x = \theta_{\gamma x} \lambda_{\gamma,x}
\quad \text{with} \quad \lambda_{\gamma,x} \in \C^\times .
\]
The linear part $x \mapsto \gamma x$ is an automorphism of $X^* (T_{\mf s^\vee})$,
and the translation part of $\gamma \colon T_{\mf s^\vee} \to T_{\mf s^\vee}$ is given
by $\lambda_{\gamma,x}^{-1} = x (\gamma (1))$. Since $W_{\mf s^\vee}^\circ$ is normal
in $W_{\mf s^\vee}$,
\[
(W_{\mf s^\vee}^\circ )_{\gamma (1)} = \big( \gamma W_{\mf s^\vee}^\circ \gamma^{-1}
\big)_1 = (W_{\mf s^\vee}^\circ )_{1} = W_{\mf s^\vee}^\circ .
\]
In other words, the translation part of $\gamma$ commutes with all the reflections
$s_\alpha \; (\alpha \in R_{\mf s^\vee})$.
According to \cite[Lemma 9.2]{AMS} there exists a canonical algebra isomorphism
\[
\psi_{\gamma,\phi_1,v,q\epsilon} \colon\C [W_{\mf s^\vee,\phi_1,v,q\epsilon},
\kappa_{\phi_1,v,q\epsilon}] \to \C [W_{\mf s^\vee,\gamma(\phi_1),v,q\epsilon},
\kappa_{\gamma{\phi_1},v,q\epsilon}] .
\]
Let us recall its construction. There is a $G_{\phi_1}$-equivariant local system
$q\pi_* (\widetilde{q \cE})$ on $(G_{\phi_1})_{\RS}$, an analogue of $K$ and $K^*$.
It satisfies
\begin{equation}\label{eq:5.6}
\C [W_{\mf s^\vee,\phi_1,v,q\epsilon}, \kappa_{\phi_1,v,q\epsilon}] \cong
\End_{\mc D (G_{\phi_1})_{\RS}} (q\pi_* (\widetilde{q\cE})) .
\end{equation}
Choosing a lift $n_\gamma \in \rN_{G_{\phi_1}} (M)$ of $\gamma$ and following the
proof of \cite[Lemma 5.4]{AMS}, we find an isomorphism
\begin{equation}\label{eq:5.7}
q b_\gamma \colon q\pi_* (\widetilde{q\cE}) \to
q\pi_* \big( \widetilde{\Ad (n_\gamma )^* q\cE} \big) .
\end{equation}
Then $\psi_{\gamma,\phi_1,v,q\epsilon}$ is conjugation with $q b_\gamma$.

In this context \cite[Lemma 5.4]{AMS} says that there are canonical elements
$q b_w \in \End_{\mc D (G_{\phi_1})_{\RS}} (q\pi_* (\widetilde{q\cE})) \;
(w \in W_{\mf s^\vee}^\circ )$ which via \eqref{eq:5.6} become a basis of
$\C [W_{\mf s^\vee}^\circ]$. Since $W_{\mf s^\vee}^\circ$ is normal in
$W_{\mf s^\vee} ,\; \psi_{\gamma,\phi_1,v,q\epsilon}$ stabilizes the set
$\{ qb_w \colon W_{\mf s^\vee}^\circ \}$. Moreover $\gamma \in \cR_{\mf s^\vee}$,
so $\psi_{\gamma,\phi_1,v,q\epsilon}$ permutes the set of simple reflections
in $W_{\mf s^\vee}^\circ$.

By Proposition \ref{prop:5.15} the parameter functions $\lambda$ and $\lambda^*$ are
$W_{\mf s^\vee}$-invariant. Hence the map $N_{s_\alpha} \mapsto N_{\gamma s_\alpha
\gamma^{-1}}$ extends uniquely to an automorphism of the Iwahori--Hecke algebra
$\cH (W_\cE^\circ,\vec{\mb z}^{2\lambda})$ which fixes $\vec{\mb z}$.

Now we have canonical group actions of $\cR_{\mf s^\vee}$ on the algebras
\[
\mc O (T_{\mf s^\vee} \times (\C^\times)^d ) = \C [X^* (X_\nr ({}^L \cL))]
\otimes \C [\vec{\mb z}, \vec{\mb z}^{-1}]
\]
and $\cH (W_\cE^\circ,\vec{\mb z}^{2\lambda})$, and as vector spaces
\[
\cH (\mc R_{\mf s^\vee},\lambda,\lambda^*,\vec{\mb z}) = \mc O ( T_{\mf s^\vee} 
\times (\C^\times)^d ) \otimes \cH (W_\cE^\circ,\vec{\mb z}^{2\lambda}) .
\]
The relation involving $\theta_x N_{s_\alpha} - N_{s_\alpha} \theta_{s_\alpha (x)}$
in Proposition \ref{prop:4.2} is also preserved by $\gamma$, because
$x(\gamma (1)) = s_\alpha (x) (\gamma (1))$. So $\cR_{\mf s^\vee}$ acts canonically
on $\cH (\mc R_{\mf s^\vee},\lambda,\lambda^*,\vec{\mb z})$ by algebra automorphisms.\\
(b) The same construction as in the proof of Proposition \ref{prop:4.2} yields
an algebra
\begin{equation}\label{eq:5.8}
\cH (\mc R_{\mf s^\vee},\lambda,\lambda^*,\vec{\mb z})
\rtimes \C [\cR_{\mf s^\vee}, \kappa_{\mf s^\vee}] ,
\end{equation}
in which the action of $\cR_{\mf s^\vee}$ on $\cH (\mc R_{\mf s^\vee},\lambda,
\lambda^*,\vec{\mb z})$ has become an inner automorphism. This works for any 2-cocycle
$\kappa_{\mf s^\vee}$. It only remains to pick it in a good way, such that
$\kappa_{\mf s^\vee} |_{(W_{\mf s^\vee, \phi,v,q\epsilon})^2}$ equals
$\kappa_{\mf s^\vee,\phi,v,q\epsilon}$. For this we, again, use the maps
$q b_\gamma$ from \eqref{eq:5.7}. The cuspidal local system Ad$(n_\gamma)^*
q \cE$ does not depend on the choice of $n_\gamma$, because $q\cE$ is
$M$-equivariant. Furthermore $q b_\gamma$ is unique up to scalars, so
\[
q b_\gamma \cdot q b_{\gamma'} = \lambda_{\gamma,\gamma'} q b_{\gamma \gamma'}
\text{ for a unique } \lambda_{\gamma,\gamma'} \in \C^\times .
\]
We define $\kappa_{\mf s^\vee}$ by $\kappa_{\mf s^\vee}(\gamma,\gamma') =
\lambda_{\gamma,\gamma'}$. This is a slight generalization of the construction
in Section \ref{sec:0} and in \cite[Lemma 5.4]{AMS}. As over there,
\[
\begin{array}{lll}
\End_{\mathcal P_J \Lie (J)_{\RS}} \big( q\pi_* (\widetilde{q\cE}) \big) &
\cong & \C [W_{\mf s^\vee},\kappa_{\mf s^\vee}] , \\
\End^+_{\mathcal P_J \Lie (J)_{\RS}} \big( q\pi_* (\widetilde{q\cE}) \big) &
\cong & \C [\cR_{\mf s^\vee},\kappa_{\mf s^\vee}] .
\end{array}
\]
As the $J$-equivariant sheaf $q\pi_* (\widetilde{q\cE})$ on Lie$(J)_{\RS}$
contains the $G_{\phi}$-equivariant sheaf $q\pi_* (\widetilde{q\cE})$ on
Lie$(G_\phi )_{\RS}$,
\[
\kappa_{\mf s^\vee} \colon(W_{\mf s^\vee} )^2 \to (W_{\mf s^\vee} /
W_{\mf s^\vee}^\circ)^2 = \cR_{\mf s^\vee}^2 \to \C^\times
\]
extends $\kappa_{\mf s^\vee,\phi,v,q\epsilon} \colon(W_{\mf s^\vee,\phi,v,q\epsilon} )^2
\to \C^\times$, for every $(\phi |_{\mb W_F},v,q\epsilon) \in \mf s_\cL^\vee$.
For $\phi = \phi_1$ this means that
\[
\mc H (\mc R_{\mf s^\vee}, \lambda, \lambda^*, \vec{\mb z}) \rtimes
\C [W_{\mf s^\vee, \phi_1, v, q\epsilon},\natural_{\mf s^\vee, \phi_1, v, q\epsilon}] .
\]
is canonically embedded in \eqref{eq:5.8}.
\end{proof}

The algebra from Proposition \ref{prop:5.11}.b is attached to $\mf s^\vee$
and the basepoint $\phi_1$ of $\mf s^\vee_\cL$. To remove the dependence on the
basepoint, we reinterpret $\cH (\mc R_{\mf s^\vee},\lambda,\lambda^*,\vec{\mb z})$.
Recall that $W_{\mf s^\vee}$ acts naturally on $\mf s^\vee_\cL$ (which is diffeomorphic to
$T_{\mf s^\vee}$). Every $\alpha \in R_{\mf s^\vee}$ is by definition a character of
$T_{\mf s^\vee}$ and by Proposition \ref{prop:5.9}.b it does not depend on the choice of
$\phi_1$, so it canonically determines a function on $\mf s^\vee_\cL$.
In the same way as in Proposition \ref{prop:4.2}, we can define an algebra structure on
\[
\mc O (\mf s^\vee_\cL) \otimes
\C[\vec{\mb z}, \vec{\mb z}^{-1}] \otimes \C [W_{\mf s^\vee}^\circ] .
\]
It becomes an algebra $\cH (\mf s^\vee_\cL,W_{\mf s^\vee}^\circ,\lambda,\lambda^*,\vec{\mb z})$
which is isomorphic to $\cH (\mc R_{\mf s^\vee},\lambda,\lambda^*,\vec{\mb z})$, but only via
the choice of a basepoint of $\mf s^\vee_\cL$. In Proposition \ref{prop:5.11}.a we showed
that $\mf R_{\mf s^\vee}$ acts naturally on $\mc H (\mf s^\vee_\cL, W_{\mf s^\vee}^\circ,
\lambda,\lambda^*,\vec{\mb z})$. Applying Proposition \ref{prop:5.11}.b, we obtain an algebra
\begin{equation}\label{eq:5.15}
\cH (\mf s^\vee_\cL,W_{\mf s^\vee}^\circ,\lambda,\lambda^*, \vec{\mb z}) \rtimes
\End^+_{\mathcal P_J \Lie (J)_{\RS}} \big( q\pi_* (\widetilde{q\cE}) \big)
\text{, where } J = Z^1_{\mc G^\vee_\sc}(\phi |_{\mb I_F}).
\end{equation}
Now we suppose that the almost direct factorization of $J^\circ$ induces a $W_{\mf s^\vee}$-stable
decomposition of $R(J^\circ,T)$ (and, equivalently, of $R_{\mf s^\vee}$).
We focus on two algebras obtained in this way:
\begin{itemize}
\item $\cH (\mf s^\vee,\mb z)$, the algebra \eqref{eq:5.15} when $J_1 = J^\circ_\der$, with
only one variable $\mb z$;
\item $\cH (\mf s^\vee,\vec{\mb z})$, the algebra \eqref{eq:5.15} when \eqref{eq:0.1} induces
the finest possible $W_{\mf s^\vee}$-stable decomposition of $R(J^\circ,T)$.
\end{itemize}

\begin{lem}\label{lem:5.12}
The algebras $\cH (\mf s^\vee,\mb z)$ and $\cH (\mf s^\vee,\vec{\mb z})$ depend only on
$\mf s^\vee$, up to canonical isomorphisms.
\end{lem}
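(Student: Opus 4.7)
The plan is to go through each ingredient of the construction of the algebra \eqref{eq:5.15} (specialized to either $\cH(\mf s^\vee,\mb z)$ or $\cH(\mf s^\vee,\vec{\mb z})$) and check that it is either canonically determined by $\mf s^\vee$, or depends on an auxiliary choice which changes the algebra only through a canonical inner automorphism. The canonical ingredients are: the Bernstein variety $\mf s^\vee_\cL$ together with its $W_{\mf s^\vee}$-action (see \eqref{eqn:Ws} and \eqref{eq:5.3}); the cuspidal quasi-support $(M,\cC_v^M,q\cE)$, canonical up to $\cG^\vee$-conjugacy by Proposition~\ref{prop:5.4}; the group $J = Z^1_{\cG^\vee_\sc}(\phi|_{\mb I_F})$, which depends on $\mf s^\vee$ up to $\cG^\vee$-conjugacy because $\phi|_{\mb I_F}$ is constant on $\mf s^\vee_\cL$ up to $\cL^\vee$-conjugation; and the root system $R(J^\circ,T) \subset X^*(T_{\mf s^\vee})$ of Lemma~\ref{lem:5.10}, together with its Weyl group $W_{\mf s^\vee}^\circ$ and the complement $\mf R_{\mf s^\vee}$ defined relative to a parabolic $P^\circ\subset J^\circ$ with Levi factor $M^\circ$. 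Any two such parabolics are $J^\circ$-conjugate, so the resulting algebras are related by a canonical isomorphism induced by conjugation in $J^\circ$.

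For the decomposition $R(J^\circ,T) = R_1 \sqcup \cdots \sqcup R_d$ needed in Proposition~\ref{prop:5.11}: in the case $\cH(\mf s^\vee,\mb z)$ the decomposition is trivial, corresponding to $J_1 = J^\circ_\der$ in \eqref{eq:0.18}, which is manifestly canonical. In the case $\cH(\mf s^\vee,\vec{\mb z})$ the finest $W_{\mf s^\vee}$-stable decomposition is obtained by grouping the irreducible components of $R(J^\circ,T)$ into $W_{\mf s^\vee}$-orbits; this, too, is canonically specified by the $W_{\mf s^\vee}$-action on $R(J^\circ,T)$, and so depends only on $\mf s^\vee$.

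The central technical point is that the parameter functions $\lambda, \lambda^*$ defined by \eqref{eq:5.17} and \eqref{eq:5.16} are canonically determined, despite involving an auxiliary basepoint $\phi_1$ as in Proposition~\ref{prop:5.9}.b. By Proposition~\ref{prop:4.1}, $c(\alpha)$ is characterized by an intrinsic condition on the adjoint action of $v$ on the root space $\mf g_\alpha$. Because the basepoint $\phi_1$ is chosen so that $R(G^\circ_{\phi_1},T) = R(J^\circ,T)$, every $\mf g_\alpha$ with $\alpha \in R(J^\circ,T)$ lies inside $\Lie(J^\circ)$, so $c(\alpha)$ depends only on $(v,\mf g_\alpha)$ and not on $\phi_1$. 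The same reasoning applies to $c^*(\alpha)$: any $t$ with $\alpha(t) = -1$ can be used, and the value $c_t(\alpha)$ or $c_t(2\alpha)/2$ is again characterized via the action of $v$ on $\mf g_\alpha$ or $\mf g_{2\alpha} \subset \Lie(J^\circ)$. Therefore the unique rational solutions $\lambda(\alpha), \lambda^*(\alpha)$ of \eqref{eq:5.17}--\eqref{eq:5.16}, shown to lie in $\Z_{\geq 0}$ by Lemma~\ref{lem:5.15}, are canonically attached to $\mf s^\vee$.

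Finally, the twisted group algebra factor
\[
\End^+_{\mathcal D \Lie(J)_{\RS}}\bigl((\mathrm{pr}_1)_! \dot{q\cE}\bigr) \cong \C[\mf R_{\mf s^\vee}, \kappa_{\mf s^\vee}]
\]
from Proposition~\ref{prop:5.11}.b is defined intrinsically from $J$ and $q\cE$, so its isomorphism class (including the cohomology class of $\kappa_{\mf s^\vee}$) is canonical. Assembling these pieces via Proposition~\ref{prop:5.11}, we obtain the algebras $\cH(\mf s^\vee,\mb z)$ and $\cH(\mf s^\vee,\vec{\mb z})$ together with canonical isomorphisms between the algebras obtained from different auxiliary choices. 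The main bookkeeping obstacle is checking that these isomorphisms cohere when several choices are changed at once; this reduces to the fact that any two legal choices differ by an element of $J^\circ$ (or by a translation in $X_\nr({}^L \cL)$), both of which act trivially on $R(J^\circ,T)$ and preserve the cuspidal datum up to isomorphism.
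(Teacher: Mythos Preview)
Your proof is correct and follows essentially the same approach as the paper's: verify that each ingredient of the construction depends only on $\mf s^\vee$ up to canonical conjugation. The paper compresses your itemized check into the single assertion that the algebra is determined by the triple $(\mf s^\vee_\cL, M, P)$, and then argues that any two such triples are conjugate by a \emph{unique} element of $N_{J^\circ}(M^\circ)/M^\circ$ (via \cite[Lemma~1.1]{AMS2}); this uniqueness is the point that makes the resulting isomorphism canonical rather than merely existent. You assert canonicity at the parabolic step but do not isolate this uniqueness, so the paper's phrasing is slightly sharper there. Conversely, you give an explicit account of why the parameter functions $\lambda, \lambda^*$ do not depend on the auxiliary basepoint $\phi_1$, a point the paper leaves implicit in the phrase ``the above construction shows''.
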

\begin{proof}
The above construction shows that $\cH (\mf s^\vee,\mb z)$ and $\mc H (\mf s^\vee,\vec{\mb z})$
are uniquely determined by $(\mf s^\vee_\cL, B_J)$, where $B_J$ serves only to determine the
positive roots in $R(J^\circ,T)$. Up to $\mc G^\vee$-conjugation, this pair
is completely determined by $\fs^\vee$. 

The $\mc G_\sc^\vee$-normalizer of $\mf s^\vee_\cL$ is contained in $J$, and the pointwise 
stabilizer of $\mf s_\cL^\vee$ in $J$ is just $M$, see \eqref{eq:3.12} and \eqref{eq:3.11}. 
Given $\mf s^\vee_\cL$ and $M$, \cite[Lemma 2.1.a]{AMS2} shows that all possible choices for 
$B_J$ are conjugate by unique elements of $\rN_{J^\circ}(M^\circ) / M^\circ$. Thus all possible 
$(\mf {s'}^\vee_\cL,B'_J)$ underlying $\mf s^\vee$ are conjugate to $(\mf s^\vee_\cL,B_J)$ 
in a canonical way. Any element of $\mc G^\vee_\sc$ which realizes such a conjugation 
provides a canonical isomorphism between $\cH (\mf s^\vee,\mb z)$ (respectively 
$\cH (\mf s^\vee,\vec{\mb z})$) and its version based on $(\mf {s'}^\vee_\cL, B'_J)$.
\end{proof}

\begin{ex}
Suppose that $(\phi,\rho)$ is itself cuspidal, so $\cL^\vee = \cG^\vee$ and $q\epsilon = \rho$.
Then $J^\circ = M^\circ$, $v$ is distinguished in that group, $T = 1$ and
$R(J^\circ,T)$ is empty. Furthermore $W_{\fs^\vee} = 1$ because
$\rN_{\cG^\vee}(\cL^\vee \rtimes \mb W_F) / \cL^\vee = 1$. Consequently
\[
\cH (\mf s^\vee,\mb z) = \mc O (T_{\mf s^\vee}) \otimes \C [\mb z,\mb z^{-1}]
\quad \text{and} \quad \cH (\mf s^\vee,\vec{\mb z}) = \mc O (T_{\mf s^\vee}) \otimes
\C [\mb z_1,\mb z_1^{-1},\ldots ,\mb z_d, \mb z_d^{-1}] ,
\]
where $d$ is the number of simple factors of $J^\circ_\der$.
\end{ex}

For $(\phi,\rho)$ as in \eqref{eq:5.9}, let $\bar{M} (\phi,\rho,\vec{z})$ be the irreducible
$\cH (\mf s^\vee,\vec{\mb z})$-module obtained from
$M (\phi,\rho,\log \vec{z}) \in \Irr (\mh H (\phi_b,v,q\epsilon,\vec{\mb r}))$
via Theorems \ref{thm:4.5} and \ref{thm:4.7}. Up to $\mc G^\vee$-conjugation, every element
of $\Phi_\re (\mc G (F) )^{\mf s^\vee}$ is of the form described in \eqref{eq:5.9},
so this definition extends naturally to all possible $(\phi,\rho)$. Similarly we define
$\bar{E} (\phi,\rho,\vec{z})$ as the ``standard" $\cH (\mf s^\vee,\vec{\mb z})$-module obtained
from $E (\phi,\rho,\log \vec{z}) \in \Mod (\mh H (\phi_b,v,q\epsilon,\vec{\mb r}))$
via Theorems \ref{thm:4.5} and \ref{thm:4.7}.

We formulate the next result only for $\cH (\mf s^\vee,\vec{\mb z})$, but there is also a
version for $\cH (\mf s^\vee,\mb z)$. In view of \eqref{eq:4.35}, the latter can be obtained
by assuming that all $z_j$ are equal.

\begin{thm}\label{thm:5.13}
\enuma{
\item For every $\vec{z} \in \R_{>0}^d$ there exists a canonical bijection
\[
\Phi_\re (\mc G(F) )^{\mf s^\vee} \to \Irr_{\vec{z}} (\cH (\mf s^\vee,\vec{\mb z})) :
\quad (\phi,\rho) \mapsto \bar{M} (\phi,\rho,\vec{z}) .
\]
\item Both $\bar{M} (\phi,\rho,\vec{z})$ and $\bar{E} (\phi,\rho,\vec{z})$ admit the
central character $W_{\mf s^\vee}(\tilde \phi |_{\mb W_F}, v, q\epsilon) \in \Phi_\re (\cL(F) 
)^{\mf s_\cL^\vee} / W_{\mf s^\vee}$, where $\tilde \phi |_{\mb I_F} = \phi |_{\mb I_F}$ 
and $\tilde{\phi} (\Fr_F) = \phi (\Fr_F) \vec{\chi}_{\phi,v}(\vec{z})$ with $\chi_{\phi,v}$ 
as in \eqref{eq:5.4}. We may also take $\chi_{\phi,v}^{-1}$ instead of $\chi_{\phi,v}$.
\item Suppose that $\vec{z} \in \R_{\geq 1}^d$. Equivalent are:
\begin{itemize}
\item $\phi$ is bounded;
\item $\bar{E} (\phi,\rho,\vec{z})$ is tempered;
\item $\bar{M} (\phi,\rho,\vec{z})$ is tempered.
\end{itemize}
\item Suppose that $\vec{z} \in \R_{>1}^d$. Then $\phi$ is discrete if and only if
$\bar{M} (\phi,\rho,\vec{z})$ is essentially discrete series and the rank of
$R_{\mf s^\vee}$ equals $\dim_\C (T_{\mf s^\vee} / X_\nr ({}^L \mc G))$.

In this case $\phi (\Fr_F) \phi_b (\Fr_F)^{-1}$ comes from an element of
$\rZ(J^\circ) \times X_\nr ({}^L \mc G)$ via Lemma \ref{lem:5.7} and \eqref{eq:2.3}.
\item Suppose that $\zeta \in \rZ(\mc G^\vee \rtimes \mb I_F)_{\mb W_F}$ stabilizes
$\Phi_\re (\mc G (F))^{\mf s^\vee}$. Via \eqref{eq:2.3} $\zeta$ determines a unique
element $t_\zeta \in T_{\mf s^\vee}$. (For instance $\zeta \in X_\nr ({}^L \mc G)$,
in which case $t_\zeta = \zeta X_\nr ({}^L \cL)_{\mf s^\vee}$.) Then
\[
\bar M (\zeta \phi,\rho,\vec{z}) = t_\zeta \otimes \bar M (\phi,\rho,\vec{z})
\quad \text{and} \quad
\bar E (\zeta \phi,\rho,\vec{z}) = t_\zeta \otimes \bar E (\phi,\rho,\vec{z}) .
\]
\item Suppose that $\vec{z} \in \R_{>1}^d$ and that $\phi (\Fr_F) \phi_b (\Fr_F)^{-1}$
comes from an element of $\rZ(J^\circ) \times X_\nr ({}^L \mc G)$ via Lemma \ref{lem:5.7}
and \eqref{eq:2.3}. Then $\bar{E} (\phi,\rho,\vec{z}) = \bar{M} (\phi,\rho,\vec{z})$.
}
\end{thm}
\begin{proof}
(a) Let us fix the bounded part $\phi_b$ and
consider only $\phi$ in $X_\nr ({}^L \cL)_{\rs} \phi_b$. We need to construct a bijection
between such $(\phi,\rho)$ and the set of irreducible $\cH (\mf s^\vee,\vec{\mb z})$-modules
on which $\vec{\mb z}$ acts as $\vec{z}$ and with $\mc O (\mf s^\vee_\cL)$-weights in
\[
W_{\mf s^\vee} (X_\nr ({}^L \cL)_{\rs} \phi_b,v,q\epsilon) \subset \mf s^\vee_\cL.
\]
We want to apply Theorem \ref{thm:4.5}.a here, although $\cH (\mf s^\vee,\vec{\mb z})$
and $\mc H (\mc R_{\mf s^\vee}, \lambda, \lambda^*, \vec{\mb z})$
need not be of the form $\cH (G,M,q\cE)$. To see that this is allowed, pick a basepoint
$\phi_1$ as in Proposition \ref{prop:5.9}. Then $\cH (\mf s^\vee,\vec{\mb z})$ becomes
a twisted affine Hecke algebra associated to a root datum, parameters, a finite
group and a 2-cocycle. For such an algebra the proof of Theorem \ref{thm:4.5} works,
it does not matter that the parameters can be different and that $\cR_{\mf s^\vee}$
need not fix the basepoint of $T_{\mf s^\vee}$.

Consider the twisted affine Hecke algebra $\mc H (\mf s^\vee,\phi_b)$ with as data the torus
$\mf s^\vee_\cL$, roots $\{ \alpha \in R_{\mf s^\vee} : s_\alpha (\phi_b) = \phi_b \}$,
the finite group $W_{\mf s^\vee, \phi_b,v,q\epsilon}$, parameters $\lambda,\lambda^*$ as
in \eqref{eq:5.17} and \eqref{eq:5.16} and
2-cocycle $\natural_{q\cE}$. The upshot of Theorem \ref{thm:4.5}.a is a canonical bijection
between the above irreducible $\cH (\mf s^\vee,\vec{\mb z})$-modules and the irreducible
modules of $\cH (\mf s^\vee,\phi_b)$ with central character in
$(X_\nr ({}^L \cL)_{\rs} \phi_b,v,q\epsilon) \times \{\vec{z}\}$.

With respect to the new basepoint $\phi_b$, $\cH (\mf s^\vee,\phi_b)$ becomes isomorphic
to a twisted affine Hecke algebra of the form described in Proposition \ref{prop:4.2}.
Then we can apply Theorem \ref{thm:4.7} to it, which relates its modules to those
over a twisted graded Hecke algebra. Again it does not matter that the parameters of the
affine Hecke algebra can differ from those in Theorem \ref{thm:4.7}, this result applies
to all possible parameters. The parameters of the resulting graded Hecke algebra are given
by \eqref{eq:4.37} and \eqref{eq:4.38}. Comparing that with \eqref{eq:5.17},
\eqref{eq:5.16} and \eqref{eq:5.11}, we see that that graded Hecke algebra is none other
than $\mh H (\phi_b,v,q\epsilon,\vec{\mb r})$.

Thus Theorem \ref{thm:4.5}.a yields a bijection between the above set of
irreducible modules and the irreducible $\mh H (\phi_b,v,q\epsilon,\vec{\mb r})$-modules
with central character in $\Lie(X_\nr ({}^L \cL)_{\rs}) \times \{\log \vec{z}\}$.
By Theorem \ref{thm:5.8} this last collection is canonically in bijection with
\begin{equation}\label{eq:5.12}
{}^L \Psi^{-1}(\cL^\vee \rtimes \mb W_F, X_\nr ({}^L \cL )_\rs
\phi_b |_{\mb W_F}, v , q \epsilon) .
\end{equation}
The resulting bijection between \eqref{eq:5.12} and the subset of
$\Irr (\cH (\mf s^\vee,\vec{\mb z}))$ with the appropriate central character could
depend on the choice of an element in the $W_{\mf s^\vee}$-orbit of $\phi_b$.
Fortunately, the proof of Lemma \ref{lem:4.6} applies also in this setting, and it
entails that the bijection does not depend on such choices. Now we combine all these
bijections, for the various $\phi_b$. This gives a canonical bijection between
$\Phi_\re (\mc G^\vee )^{\mf s^\vee}$ and $\Irr_{\vec{z}} (\cH (\mf s^\vee,\vec{\mb z}))$.\\
(b) By Theorem \ref{thm:5.8}.f $E (\phi,\rho,\log \vec{z})$ admits the central character
\[
W_{\mf s^\vee,\phi_b,v,q\epsilon} \big( \sigma_0 \pm d \vec{\chi}_{\phi,v}(\log \vec{z}),
\log \vec{z} \big) ,
\] 
where $\sigma_0$ is given by \eqref{eq:5.13}. Applying Theorems \ref{thm:4.7} and \ref{thm:4.5} 
produces the representation $\bar{E} (\phi,\rho,\vec{z})$, with the central character that sends 
$\Fr_F$ to $\phi (\Fr_F) \vec{\chi}_{\phi,v}(\vec{z})^{\pm 1}$.
That is just $W_{\mf s^\vee}(\tilde \phi |_{\mb W_F}, v, q\epsilon)$. The same holds for the 
quotient $\bar{M} (\phi,\rho,\vec{z})$ of $\bar{E} (\phi,\rho,\vec{z})$.\\
(c) This follows from Theorem \ref{thm:5.8}.b, Theorem \ref{thm:4.7}.d and Proposition
\ref{prop:4.9}.a.\\
(d) Notice that by the very definition of $R_{\mf s^\vee}$, it has the same rank as
$R(J^\circ,T)$.

Suppose that $\phi$ is discrete. By Theorem \ref{thm:5.8}.c
$\bar{M} (\phi,\rho,\log \vec{z})$ is essentially discrete series as a module for
$\mh H (\phi_b ,v, q\epsilon, \log \vec{z})$, and the rank of $R(G^\circ_{\phi_b}, T)$
equals $\dim_\C (T)$. Now Theorem \ref{thm:4.7}.d and Proposition \ref{prop:4.9}.c say that
$\bar{M} (\phi,\rho,\vec{z})$ is essentially discrete series. The root system
$R(J^\circ, T)$ contains $R(G^\circ_{\phi_b}, T)$, so its rank is at least $\dim_\C (T)$ --
and hence precisely that, for it obviously cannot be strictly larger.
By Lemma \ref{lem:5.7} $T$ is a finite cover of
$T_{\mf s^\vee} / X_\nr ({}^L \mc G)$, so both these tori have the same dimension.

Conversely, suppose that $\bar{M} (\phi,\rho,\vec{z})$ is essentially discrete series
and that the rank of $R(J^\circ,T)$ equals $\dim_\C (T_{\mf s^\vee} /
X_\nr ({}^L \mc G))$. By Proposition \ref{prop:4.9}.c the root system
$R(G^\circ_{\phi_b}, T)$ has the same rank, which we already saw equals
$\dim_\C (T)$. In combination with Theorem \ref{thm:4.7} we also obtain that
the $\mh H (\phi_b ,v, q\epsilon, \log \vec{z})$-module
$\bar{M} (\phi,\rho,\log \vec{z})$ is essentially discrete series. Now Theorem
\ref{thm:5.8}.c tells us that $\phi$ is discrete.

By Theorem \ref{thm:4.10}.d $\phi (\Fr_F) \phi_b (\Fr_F)^{-1}$ lies in
$\rZ(G^\circ)$ for a complex reductive group $G^\circ$ with maximal torus
$T_{\fs^\vee}$ and Weyl group $W^\circ_{\fs^\vee}$. That is the Weyl group of
$(J^\circ,T)$, so via Lemma \ref{lem:5.7} $\phi (\Fr_F) \phi_b (\Fr_F)^{-1}$
must come from an element of $G_{\phi_1}^\circ \times X_\nr ({}^L \mc G)$ which
is centralized by $J^\circ$.\\
(e) As ${}^L \Psi (\zeta \phi, \rho) = \zeta {}^L \Psi (\zeta,\rho) \in
\mf s_{\cL}^\vee$, $\zeta$ determines a unique element of $T_{\mf s^\vee}$.
It is invariant under $\mc G^\vee$ and $\mc G^\vee_\sc$, because $\zeta$ comes
from $\rZ(\mc G^\vee)$. Now the claim follows from Theorem \ref{thm:5.8}.d in the
same way as Theorem \ref{thm:4.10}.e was derived from Proposition \ref{prop:0.5}.d.\\
(f) Reasoning as in the last lines of the proof of part (d), we see that \\
$\phi (\Fr_F) \phi_b (\Fr_F)^{-1} \in \rZ(G^\circ)$. Apply Theorem \ref{thm:4.10}.f.
\end{proof}

Comparing Theorem \ref{thm:5.13}.b with \cite[Definition 7.7]{AMS} we see that, when 
$\vec{z} = q_F^{\pm 1/2}$, the central character of $\bar M(\phi, \rho, q_F^{\pm 1/2})$ 
equals the cuspidal support of $(\phi,\rho)$. Part (e) says
that Theorem \ref{thm:5.13} is equivariant with respect to twists by $X_\nr ({}^L \mc G)$,
that is, equivariant with respect to twisting by unramified characters of $\mc G(F)$.

The bijection obtained in part (a) is compatible with parabolic induction in the same
sense as Corollary \ref{cor:4.15}. For reference, we formulate this precisely.
We use the notations as in \eqref{eq:5.14} and after that. Recall from pages
\pageref{cor:4.15} and \pageref{prop:0.6} that
\[
\epsilon_{u_\phi,j} \big( \phi (\Fr_F) \phi_b (\Fr_F)^{-1}, \vec{z} \big) =
\epsilon_{\log (u_\phi),j} \Big(  \textup{d}\vec{\phi} \matje{\vec{r}}{0}{0}{-\vec{r}} +
\log (\phi (\Fr_F)^{-1} \phi_b (\Fr_F)) , \vec{r} \Big)
\]
is a function which detects parameters for which parabolic induction could behave
undesirably.

\begin{lem}\label{lem:5.14}
Let $Q = \mc Q (F)$ be a Levi subgroup of $\mc G (F)$ and assume that \\
$\epsilon_{u_\phi,j} \big( \phi (\Fr_F) \phi_b (\Fr_F)^{-1}, \vec{z} \big) \neq 0$
for each $j = 1,\ldots,d$. 
\enuma{
\item There is a natural isomorphism of $\mc H (\fs^\vee,\vec{\mb z})$-modules
\[
\mc H (\fs^\vee,\vec{\mb z}) \underset{\mc H (\fs^\vee_Q,\vec{\mb z})}{\otimes}
\bar{E}^Q (\phi,\rho^Q,\vec{z}) \cong
\bigoplus\nolimits_\rho \Hom_{\mc S_\phi^Q} (\rho^Q ,\rho) \otimes
\bar{E} (\phi,\rho,\vec{z}) ,
\]
where the sum runs over all $\rho \in \Irr \big( \mc S_\phi \big)$
with ${}^L \Psi^Q (\phi,\rho^Q) = {}^L \Psi (\phi,\rho)$.
\item The multiplicity of $\bar{M} (\phi,\rho,\vec{z})$ in
$\mc H (\fs^\vee,\vec{\mb z}) \underset{\mc H (\fs^\vee_Q,\vec{\mb z})}{\otimes}
\bar{E}^Q (\phi,\rho^Q,\vec{z})$ is
$[\rho^Q : \rho]_{\mc S_\phi^Q}$. It already appears that many times as a
quotient of $\mc H (\fs^\vee,\vec{\mb z}) \underset{\mc H (\fs^\vee_Q,
\vec{\mb z})}{\otimes} \bar{M}^Q (\phi,\rho^Q,\vec{z})$.
}
\end{lem}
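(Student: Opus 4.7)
The plan is to reduce the statement to the corresponding facts about (twisted) graded and affine Hecke algebras, namely Proposition \ref{prop:0.6} and Corollary \ref{cor:4.15}, using the two reduction functors of Theorems \ref{thm:4.5} and \ref{thm:4.7}. The Levi subgroup $\mc Q(F) \subset \mc G(F)$ corresponds on the Galois side to the twisted affine Hecke algebra $\mc H(\fs^\vee_Q,\vec{\mb z})$ constructed from the cuspidal support of $(\phi,\rho^Q)$ for ${}^L\mc Q$; by \cite[Proposition 5.6.a]{AMS} and \eqref{eq:5.14}, this cuspidal support equals that of $(\phi,\rho)$ for ${}^L\mc G$, so $\mc H(\fs^\vee_Q,\vec{\mb z})$ and $\mc H(\fs^\vee,\vec{\mb z})$ are built from the same $(M,\cC_v^M,q\cE)$, with the group $J_Q^\circ = Z^1_{\mc Q^\vee_c}(\phi|_{\mb I_F})^\circ$ appearing as a Levi subgroup of $J^\circ$.

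First I would decompose $(\phi,\rho)$ via $\phi = z\phi_b$ with $\phi_b$ bounded and $z \in X_\nr({}^L\cL)_{\rs}$, as in \eqref{eq:5.9}, and analogously for $(\phi,\rho^Q)$. By construction $\bar E_{\phi,\rho,\vec z}$ is obtained from the graded Hecke algebra standard module $E(\phi,\rho,\log\vec z)$ by successively applying $(\exp_{\cdot})_*$ and $\ind^{\cH(\cdot)}_{\cH(\tilde Z_G(\cdot),\cdot)}$. Both of these operations are compatible with parabolic induction by Theorem \ref{thm:4.7}(b) and Theorem \ref{thm:4.5}(b) respectively, so it suffices to establish the analogous isomorphism
\[
\mh H(\phi_b,v,q\epsilon,\vec{\mb r}) \underset{\mh H^Q(\phi_b,v,q\epsilon,\vec{\mb r})}{\otimes}
\IM^* E^Q_{y,\sigma,\vec r,\rho^Q} \;\cong\;
\bigoplus\nolimits_\rho \Hom_{\mc S^Q_\phi}(\rho^Q,\rho) \otimes \IM^* E_{y,\sigma,\vec r,\rho},
\]
where $y = \log u_\phi$, $\sigma$ encodes the semisimple data as in Section \ref{sec:0}, and $\mh H^Q(\phi_b,v,q\epsilon,\vec{\mb r})$ is built from $Z^1_{\mc Q^\vee_c}(\phi_b|_{\mb W_F})\times X_\nr({}^L\mc G)$. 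The identification of $\mc S^Q_\phi$ with $\pi_0(Z_{Q_t}(\sigma,y))$ (for the relevant subgroup $Q_t$ of $G_{\phi_b}$) follows from the standard description $\mc S_\phi = \pi_0(Z_{G_\phi}(u_\phi))$ together with the inclusion $Z^1_{\mc Q^\vee_c}(\phi|_{\mb W_F}) \subset Z^1_{\mc G^\vee_\sc}(\phi|_{\mb W_F})$ from \eqref{eq:5.14}. The displayed isomorphism is then exactly Proposition \ref{prop:0.6}(a), after noting that the Iwahori--Matsumoto involution is compatible with parabolic induction \cite[(83)]{AMS2}, and that tensoring with the central polynomial ring $S(\Lie(X_\nr({}^L\mc G))^*)$ is exact and matches on both sides.

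For part (b), I would apply Proposition \ref{prop:0.6}(c) in the same way, carrying the multiplicity statement through $\IM^*$ and then through the two reduction functors. Since Theorems \ref{thm:4.5}(a) and \ref{thm:4.7}(a) are equivalences of categories preserving the distinguished quotient (by \cite[Corollary 3.23]{AMS2} and Theorem \ref{thm:4.10}(a)), the irreducible quotient $M^Q_{\cdot}$ of $E^Q_{\cdot}$ transforms to $\bar M^Q_{\phi,\rho^Q,\vec z}$, and $M_{\cdot}$ transforms to $\bar M_{\phi,\rho,\vec z}$, giving the multiplicity and the appearance as a quotient via $\bar E^Q_{\phi,\rho^Q,\vec z} \twoheadrightarrow \bar M^Q_{\phi,\rho^Q,\vec z}$.

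The main obstacle is bookkeeping: verifying that the two reductions commute with parabolic induction in the form needed here (for possibly disconnected reductive groups with an $X_\nr({}^L\mc G)$-factor attached), and that the identifications of component groups $\mc S_\phi$, $\mc S^Q_\phi$ with $\pi_0(Z_G(\sigma_0,y))$, $\pi_0(Z_Q(\sigma_0,y))$ are natural and match the Hom spaces on both sides. Once these identifications are in place, the proof is essentially formal, in analogy with Corollary \ref{cor:4.15}.
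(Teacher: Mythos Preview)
Your proposal is correct and follows essentially the same route as the paper: reduce to the graded Hecke algebra statement (Proposition \ref{prop:0.6}, already packaged for affine Hecke algebras as Corollary \ref{cor:4.15}) via the compatibility of the reduction functors Theorems \ref{thm:4.5}(b) and \ref{thm:4.7}(b) with parabolic induction, using \cite[(83)]{AMS2} for the Iwahori--Matsumoto involution and \eqref{eq:5.14} for the identification of the relevant quasi-Levi subgroups and component groups. The paper's own proof is a terse version of exactly this argument, phrased as ``the observation after \eqref{eq:5.14}'' plus the remark that the reduction theorems respect parabolic induction.
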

\begin{proof}
As observed after \eqref{eq:5.14}, the bijection in Theorem \ref{thm:5.8}.a is
compatible with parabolic induction in the sense of Corollary \ref{cor:4.15}.
The bijection in Theorem \ref{thm:5.13}.a is obtained from Theorem \ref{thm:5.8}
by means of the reduction Theorems \ref{thm:4.5} and \ref{thm:4.7}. Since these
reduction theorems respect parabolic induction, Corollary \ref{cor:4.15} remains
valid in the setting of Theorem \ref{thm:5.8}, and it gives the desired results.
\end{proof}

\section{The relation with the stable Bernstein center}
\label{sec:stable}

Let $\Phi({}^L \cG)$ be the collection of $\cG^\vee$-orbits of L-parameters for ${}^L \cG$.
Recently, inspired by \cite{Vog}, Haines has considered the stable Bernstein
center in \cite{Hai}. We will explore below the relation of the latter with the
Bernstein components $\Phi_\re({}^L\cG)^{\fs^\vee}$.

The notion of stable Bernstein center which we employ here naturally lives on the
Galois side. In principle it should be related to stable distributions on $\cG (F)$
\cite[\S 5.5]{Hai}, but that connection is currently highly conjectural.
Because of that, we will consider it for all inner twists of a given reductive
connected $p$-adic group $\cG(F)$ simultaneously.
Let $\cG^*(F)$ be a quasi-split $F$-group which is an inner twist of $\cG (F)$.
The equivalence classes of inner twists of $\cG^*$ are parametrized by the
Galois cohomology group $H^1(F,\cG^*_\ad)$. For every
$\alpha\in H^1(F,\cG^*_\ad)$, we will denote by $\cG_\alpha(F)$ an inner twist of
$\cG^*(F)$ which is parametrized by $\alpha$. By construction
\[
\Phi_\re({}^L\cG) = \bigsqcup\nolimits_{\alpha\in H^1(F,\cG^*_\ad)} \Phi_\re (\cG_\alpha (F)) .
\]

\begin{defn} \label{defn:inft char}
The infinitesimal character of an L-parameter $\phi\in\Phi({}^L \cG)$ (or an enhanced L-parameter
$(\phi,\rho)\in\Phi_\re({}^L \cG)$) is the $\cG^\vee$-conjugacy class of the admissible morphism
$\lambda_\phi\colon \bW_F\to\cG^\vee\rtimes \bW_F$ (trivial on $\SL_2(\C)$) defined by
\[
\lambda_\phi(w) := \phi \left( w ,\matje{\norm{w}^{1/2}}{0}{0}{\norm{w}^{-1/2}} \right)
\qquad w \in \bW_F .
\]
\end{defn}

With this notion we can reinterpret Theorem \ref{thm:5.13}.b as: the infinitesimal character
of $(\phi,\rho)$ equals the infinitesimal character of the central character of 
$\bar M (\phi,\rho,q_F^{\pm 1/2})$.

\begin{rem}
As noticed in \cite[\S 5]{Hai}, if $\phi$ is relevant for $\cG(F)$, it may happen that
$\lambda_\phi$ is not relevant for $\cG(F)$ anymore. This is why $\lambda_\phi$ is called 
an admissible morphism, {\it i.e.} an $L$-parameter without the relevance condition. 
In contrast, for every $\phi\in\Phi({}^L\cG)$, we have $\lambda_\phi\in\Phi({}^L\cG)$, for 
$\lambda_\phi$ is relevant for $\cG^*(F)$.
\end{rem}

\begin{defn} \label{defn:inft datum}
An inertial infinitesimal datum $\ffi$ for $\Phi({}^L \cG)$ is a pair $({}^L \cM,\ffi_{{}^L \cM})$,
where ${}^L \cM$ is a Levi L-subgroup of ${}^L \cG$, {\it i.e.} ${}^L \cM= \cM^\vee \rtimes \bW_F$
with $\cM^\vee$ a $\mb W_F$-stable Levi subgroup of $\cG^\vee$ and $\ffi_{{}^L \cM}$ is the 
$\cM^\vee$-conjugacy class of the $X_\nr ({}^L \cM)$-orbit of a discrete admissible morphism 
$\lambda \colon \bW_F\to \cM^\vee \rtimes \bW_F$ (trivial on $\SL_2(\C)$). Another such object 
is regarded as equivalent if the two are conjugate by an element of $\cG^\vee$. The equivalence 
class is denoted 
\[
\ffi=(\cM^\vee \rtimes \bW_F,\ffi_{{}^L \cM})_{\cG^\vee} = 
[\cM^\vee \rtimes \bW_F,\lambda]_{\cG^\vee} .
\] 
We will write $\fB^\vee_\stab({}^L \cG)$ for the set of inertial infinitesimal equivalence classes.

For every inertial infinitesimal datum $\ffi=(\cM^\vee \rtimes \bW_F,\ffi_{{}^L \cM})_{\cG^\vee}$,
$\ffi_{{}^L \cM}$ has the structure of an affine variety over $\C$ (see \cite[\S~5.3]{Hai}). 
The stable Bernstein center for ${}^L \cG$ is the ring of regular functions on the disjoint union
$\bigsqcup_{\ffi=({}^L \cM,\ffi_{{}^L \cM}) \in \fB^\vee_\stab({}^L \cG)} \ffi_{{}^L \cM}$.
\end{defn}

We will attach to each inertial equivalence class for $\Phi_\re (\cG (F))$ 
an inertial infinitesimal datum, as follows:

\begin{defn} \label{defn:inft support}
For every cuspidal inertial equivalence class \\
$\fs^\vee = (\cL\rtimes \bW_F,X_\nr ({}^L \cL)\cdot (\phi,\rho)) \in \fB^\vee(\cG(F))$, we set
\[
\inf(\fs^\vee) := (\cM^\vee \rtimes \bW_F , (X_\nr ({}^L \cM)\cdot\lambda_\phi)_{\cM} )_{\cG^\vee},
\]
where $\cM^\vee \rtimes \bW_F $ is a Levi L-subgroup of ${}^L \cG$ which  minimally contains
$\lambda_\phi(\bW_F)$.
\end{defn}

We remark that if $\phi$ has nontrivial restriction to $\SL_2(\C)$, then we may have
$\cM^\vee\rtimes \bW_F \subsetneq \cL^\vee\rtimes \bW_F$ and 
$X_\nr ({}^L \cL) \subsetneq X_\nr ({}^L \cM)$.

For every $\ffi=[\cM^\vee \rtimes \bW_F,\lambda]_{\cG^\vee}\in\fB^\vee_\stab({}^L \cG)$ we set:
\[
\Phi_\re ({}^L\cG)^{\ffi} := \left\{(\phi,\rho) \in\Phi_\re ({}^L\cG)\,:\, \begin{array}{ll}
\lambda_\phi \,\, \text{is minimally contained in } \cM^\vee\rtimes \bW_F  \\
\text{and} \; \lambda_\phi\in (X_\nr ({}^L \cM)\cdot\lambda)_{\cM^\vee}
\end{array}
\right\}.
\]
In this way, we obtain a partition of the set $\Phi_\re({}^L \cG)$ (a "stable Bernstein decomposition"):
\begin{equation} \label{eqn:infBdec}
\Phi_\re ({}^L\cG) = \bigsqcup\nolimits_{\ffi\in\fB^\vee_\stab({}^L \cG)}\Phi_\re ({}^L\cG)^{\ffi}.
\end{equation}
It is worth to observe that, in contrast with Section \ref{sec:Langlands}, the above
definitions involve only the Langlands parameter $\phi\in\Phi({}^L\cG)$ and not the enhancement
of $\phi$. In particular $(\phi,\rho)$ and $(\phi,\rho')$ are always contained in the same
$\Phi_\re ({}^L\cG)^{\ffi}$. Consequently the decomposition \eqref{eqn:infBdec} is coarser than
the Bernstein decomposition of $\Phi_\re ({}^L \cG)$ from \eqref{eqn:Bdec}. However, under the 
local Langlands conjecture, it is a union of L-packets. Indeed, let 
$\ffi=[\cM^\vee \rtimes \bW_F,\lambda]_{\cG^\vee}\in\fB^\vee_\stab({}^L \cG)$. 
From the definition of $\ffi$ we see that 
\[
\Phi_\re ({}^L\cG)^{\ffi}=\bigsqcup_{\alpha \in H^1(F,\cG^*_\ad)} \bigsqcup_{
(\lambda\chi)_{\cM^\vee} \in \ffi_{{}^L \cM}}  \bigsqcup_{\phi \in \Phi({}^L \cG) , 
(\lambda_{\phi})_{\cG^\vee}=(\lambda\chi)_{\cG^\vee}} \Pi_{\phi}(\cG_{\alpha}(F)). 
\]
Define
\[
\fB^\vee({}^L\cG) \,:=\,\bigsqcup\nolimits_{\alpha\in H^1(F,\cG^*_\ad)} \fB^\vee(\cG_\alpha (F)) .
\]

\begin{thm} \label{thm:coarser partition}
For $\ffi\in\fB^\vee_\stab({}^L \cG)$, we write $\fB^\vee({}^L \cG)_\ffi :=
\left\{\fs^\vee\in\fB^\vee({}^L\cG)\,:\, \inf(\fs^\vee) = \ffi\right\}$. Then
\[
\Phi_\re ({}^L \cG)^{\ffi} = \bigsqcup\nolimits_{\fs^\vee \in \fB^\vee({}^L \cG)_\ffi}
\Phi_\re({}^L \cG)^{\fs^\vee}.
\]
\end{thm}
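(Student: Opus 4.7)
Disjointness of the right-hand side is immediate from the Bernstein
decomposition \eqref{eqn:Bdec}, so only the set equality requires proof. Since each side
is a union of Bernstein components $\Phi_e({}^L\cG)^{\fs^\vee}$, this reduces to proving,
for each $\fs^\vee \in \fB^\vee({}^L\cG)$, the equivalence
\[
\Phi_e({}^L\cG)^{\fs^\vee} \subseteq \Phi_e({}^L\cG)_\ffi
\quad \Longleftrightarrow \quad
\inf(\fs^\vee) = \ffi .
\]

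The essential point is the following claim: \emph{for every $(\phi,\rho) \in
\Phi_e({}^L\cG)^{\fs^\vee}$ with representative
$\fs^\vee = [\cL, X_\nr({}^L\cL)\cdot(\varphi, \epsilon)]_{\cG^\vee}$, the L-parameter
$\lambda_\phi$ is $\cG^\vee$-conjugate to some element of
$X_\nr({}^L\cL_\varphi) \cdot \lambda_\varphi$.} With this in hand,
Definition~\ref{defn:inft support} identifies
$\inf(\fs^\vee) = [\cL_\varphi, X_\nr({}^L\cL_\varphi) \cdot \lambda_\varphi]_{\cG^\vee}$,
which matches exactly the orbit defining $\Phi_e({}^L\cG)_\ffi$: if $\lambda_\phi$ lies
in this orbit and also in the orbit defining an ambient $\ffi$, the two orbits coincide
by disjointness, so both directions of the equivalence follow at once.

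The main obstacle is establishing this claim. Writing $\lambda_\phi(w) =
\phi|_{\bW_F}(w) \cdot \exp(\tfrac{\log|w|}{2} Y_\phi)$ and $\lambda_\varphi(w) =
\varphi|_{\bW_F}(w) \cdot \exp(\tfrac{\log|w|}{2} Y_\varphi)$, with
$Y_\phi, Y_\varphi \in \mathrm{Lie}(G_\phi)$ the neutral elements of Jacobson-Morozov
$\mathfrak{sl}_2$-triples containing $y = \log u_\phi$ and $v = \log u_\varphi$, and
invoking $\phi|_{\bW_F} = \varphi|_{\bW_F}$ from Proposition~\ref{prop:5.4}, the claim
reduces to the following: after conjugation in $G_\phi$ so that $Y_\phi$ and $Y_\varphi$
lie in a common maximal torus, $Y_\phi - Y_\varphi \in \mathrm{Lie}(Z(\cL_\varphi^\vee))$,
whence $\exp(\tfrac{\log|w|}{2}(Y_\phi - Y_\varphi))$ represents an unramified character
of $\cL_\varphi$. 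This structural fact is a consequence of the generalized Springer
cuspidal quasi-support construction in \cite[\S 5]{AMS}: the nilpotent $v$ is built from
$y$ inside a quasi-Levi $M \subseteq G_\phi$ with $Z(M)^\circ$ determining $\cL_\varphi^\vee$,
so $Y_\phi - Y_\varphi$ centralizes both $\phi|_{\bW_F}$ and the root subgroups of
$\cL_\varphi^\vee$ and hence factors through its center. Compatibility with the central
character formula in Theorem~\ref{thm:5.13}(b), specialized at $\vec z = q_F^{1/2} \cdot
\mathbf{1}$ so that $\tilde\phi = \lambda_\phi$, serves as an independent consistency check.
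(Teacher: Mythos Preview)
Your overall strategy coincides with the paper's: both reduce the theorem to a single fact relating the infinitesimal character $\lambda_\phi$ of an enhanced L-parameter $(\phi,\rho)$ to that of its cuspidal support $(\varphi,q\epsilon) = {}^L\Psi(\phi,\rho)$. The paper's proof is one line: it cites \cite[(108)]{AMS}, which states the exact identity $\lambda_\phi = \lambda_\varphi$ (as $\cG^\vee$-conjugacy classes). Your weaker claim --- that $\lambda_\phi$ lies in the $X_\nr({}^L\cL_\varphi)$-orbit of $\lambda_\varphi$ --- would also suffice, and indeed follows from this stronger equality.

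The gap is in your justification of this key claim. First, there is an imprecision: you write that ``$Z(M)^\circ$ determin[es] $\cL_\varphi^\vee$'', but Proposition~\ref{prop:5.4} says $\cL^\vee \rtimes \bW_F = Z_{\cG^\vee \rtimes \bW_F}(Z(M)^\circ)$; it is $\cL^\vee$, not the possibly smaller $\cL_\varphi^\vee$, that is recovered from $Z(M)^\circ$. (This is not fatal, since $X_\nr({}^L\cL) \subset X_\nr({}^L\cL_\varphi)$.) More substantially, the assertion that $Y_\phi - Y_\varphi$ lies in the centre of any Levi after a common diagonalization is exactly the nontrivial content: that the semisimple parts of Jacobson--Morozov triples for $u_\phi$ in $G_\phi$ and for $v$ in $M$ are $G_\phi$-conjugate is a structural result about the cuspidal support map in generalized Springer theory, not an immediate formal consequence of ``$v$ being built from $y$ inside $M$''. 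This is precisely what \cite[(108)]{AMS} records, and your sketch does not supply an independent argument for it. Your final remark about Theorem~\ref{thm:5.13}(b) is a correct observation (the paper makes the same one just after that theorem), but as you say it is a consistency check, not a proof.
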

\begin{proof}
Use that for any enhanced Langlands parameter $(\phi,\rho) \in \Phi_\re({}^L \cG)$, the 
infinitesimal character $\lambda_\phi$ of $\phi$ coincides with the infinitesimal 
character $\lambda_\varphi$ of its cuspidal support $(\varphi,q\epsilon)$ \cite[(108)]{AMS}.
\end{proof}

This theorem implies that \eqref{eqn:infBdec} is a partition of $\Phi_\re ({}^L \cG)$
in subsets which are at the same time unions of Bernstein components and unions of L-packets.

Combining Theorems~\ref{thm:coarser partition} and \ref{thm:5.13}, we obtain:
\begin{cor} \label{cor:Heckes}
For every $\ffi\in\fB^\vee_\stab({}^L \cG)$ and every $\vec{z} \in \R_{>0}^d$, there
is a canonical bijection
\[
\Phi_\re ({}^L \cG)^{\ffi} \longleftrightarrow
\bigsqcup\nolimits_{\fs^\vee \in \fB^\vee({}^L \cG)_\ffi} \Irr_{\vec{z}} (\cH (\mf s^\vee,\vec{\mb z})).
\]
\end{cor}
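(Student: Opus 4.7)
The plan is to combine Theorem~\ref{thm:coarser partition} with Theorem~\ref{thm:5.13}.(a) in a straightforward assembly. First, Theorem~\ref{thm:coarser partition} gives the partition
\[
\Phi_e({}^L\cG)_{\ffi} \;=\; \bigsqcup\nolimits_{\fs^\vee \in \fB^\vee_\ffi} \Phi_e({}^L\cG)^{\fs^\vee}.
\]
Using the decomposition $\Phi_e({}^L \cG) = \bigsqcup_{\alpha} \Phi_e(\cG_\alpha(F))$ recalled at the start of Section~\ref{sec:stable}, each Bernstein component $\Phi_e({}^L\cG)^{\fs^\vee}$ in $\fB^\vee_\ffi$ coincides with the component $\Phi_e(\cG_\alpha(F))^{\fs^\vee}$ for the unique inner twist $\cG_\alpha$ on which $\fs^\vee$ is supported (the Kottwitz character $\zeta_{\cG_\alpha}$ is recovered from the central character of the cuspidal enhancement attached to $\fs^\vee$).

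Next, I would invoke Theorem~\ref{thm:5.13}.(a): for each $\fs^\vee \in \fB^\vee_\ffi$ and each $\vec{z} \in \R_{>0}^d$, the map $(\phi,\rho) \mapsto \bar{M}(\phi,\rho,\vec{z})$ is a canonical bijection
\[
\Phi_e(\cG_\alpha(F))^{\fs^\vee} \;\longleftrightarrow\; \Irr_{\vec{z}}\bigl(\cH(\fs^\vee,\vec{\mb z})\bigr).
\]
Taking the disjoint union over $\fs^\vee \in \fB^\vee_\ffi$ of these canonical bijections yields the claimed bijection
\[
\Phi_e({}^L\cG)_{\ffi} \;\longleftrightarrow\; \bigsqcup\nolimits_{\fs^\vee \in \fB^\vee_\ffi} \Irr_{\vec{z}}\bigl(\cH(\fs^\vee,\vec{\mb z})\bigr).
\]
There is no genuine obstacle here: once Theorem~\ref{thm:coarser partition} is in hand, the corollary is a bookkeeping consequence of Theorem~\ref{thm:5.13}.(a), with canonicity inherited from the canonicity of the index set $\fB^\vee_\ffi$ and of each individual bijection. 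The only mild point to verify explicitly is that the matching between $\fs^\vee$ and the inner twist $\alpha$ is well-defined and compatible with the inertial infinitesimal datum $\ffi$, which is immediate from Definition~\ref{defn:inft support} and the observation that the modified cuspidal support map ${}^L\Psi$ preserves both relevance and the infinitesimal character.
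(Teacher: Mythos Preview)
Your proposal is correct and takes essentially the same approach as the paper: the paper's own proof is a single sentence stating that the corollary follows by combining Theorems~\ref{thm:coarser partition} and~\ref{thm:5.13}, and you have simply spelled out that combination in detail, including the harmless bookkeeping about which inner twist $\cG_\alpha$ supports a given $\fs^\vee$.
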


\begin{rem}
It is natural to expect that a certain compatibility should exist between the algebras
$\cH (\mf s^\vee,\vec{\mb z})$ when $\fs^\vee$ runs over the set $\fB^\vee({}^L \cG)_\ffi$,
for a fixed $\ffi=[\cM^\vee \rtimes \bW_F,\lambda]_{\cG^\vee}$.
A naive guess would be that there exist "spectral transfer morphisms" (as introduced for
affine Hecke algebras by Opdam \cite{Opd2}) between the algebras $\cH (\mf s^\vee,\vec{\mb z})$ for
$\fs^\vee\in \fB^\vee({}^L \cG)_\ffi$, the role of the lowest algebra being played by an algebra
$\cH (\mf s_1^\vee,\vec{\mb z})$, with $\fs_1^\vee=[\cM^\vee \rtimes \bW_F,\lambda,1]_{\cG^\vee}$.
\end{rem}

\section{Examples}
\label{sec:examples}

In this section we will work out some affine Hecke algebras attached to Bernstein
components of Langlands parameters. In the examples that we consider the local
Langlands correspondence is known, and it matches Bernstein components on the
Galois side with Bernstein components on the $p$-adic side. We will compare the
Hecke algebras associated to Bernstein components that correspond under the LLC.

All our examples are inner forms of split groups, so $X_\nr ({}^L \mc L) =
\rZ(\mc L^\vee)^\circ$ and we may replace ${}^L \mc G$ by $\mc G^\vee$.

\subsection{Inner twists of $\GL_n (F)$} \label{subsec:GL(D)} \
\label{par:innGLn}

Recall that $F$ is a local non-archimedean field, and let $q_F$ be the cardinality
of its residue field.
Let $D$ be a division algebra with centre $F$ and $\dim_F (D) = d^2$. Take
$m \in N$ and consider $\cG (F) = \GL_m (D)$. It is an inner form of $\GL_n (F)$
with $n = m d$. In fact $\cG (F)$ becomes an inner twist if we regard $D$, the
Hasse invariant $h (D) \in \{z \in \C^\times : z^d = 1 \}$ or
the associated character $\chi_D$ of $\rZ (\SL_n (\C))$ as part of the data.
Up to conjugacy every Levi subgroup of $\cG (F)$ is of the form
\[
\cL (F) = \prod\nolimits_j \GL_{m_j} (D) \quad \text{with } \sum\nolimits_j m_j = m .
\]
Let $(\phi = \bigoplus_j \phi_j, \rho = \otimes_j \rho_j ) \in \Phi_\cusp (\cL (F))$.
In \cite[Example 6.11]{AMS} we worked out the shape of cuspidal Langlands
parameters $(\phi_j,\rho_j)$ for $\GL_{m_j} (D)$. Namely
\begin{itemize}
\item $\phi_j = \phi_j |_{\mb W_F} \otimes S_{d_j}$ where
$S_{d_j}$ is the irreducible $d_j$-dimensional representation of $\SL_2 (\C)$
and $\phi_j |_{\mb W_F}$ is an irreducible representation of dimension $m_j d / d_j$.
(This says that $\phi_j$ is discrete.)
\item $\cS_{\phi_j} = \rZ(\SL_{m_j d}(\C))$ and $\rho_j$ is the character associated
to $\GL_{m_j}(D)$, that is, $\rho_j (\exp ( 2\pi i k / (m_j d)) I_{m_j d}) = h(D)^k$.
(So $(\phi_j,\rho_j)$ is relevant for $\GL_{m_j}(D)$.)
\item lcm $(d, m_j d / d_j) = m_j d$, or equivalently gcd$(d,m_j d / d_j) = d / d_j$.
(This gua\-ran\-tees cuspidality.)
\end{itemize}
It is known that two irreducible representation $\phi_j$ and $\phi_k$ of $\mb W_F$
are isomorphic up to an unramified character twist if and only if their restrictions to
$\mb I_F$ are isomorphic. Hence we can adjust the indexing so that
$\phi |_{\mb I_F}= \bigoplus_{i} \phi_i^{\oplus e_i} |_{\mb I_F}$. Because the restriction
of each $\phi_i$ to $\mb I_F$ decomposes as sum of irreducible representations of $\mb I_F$
with multiplicity one, we find that $R(J^{\circ},T)\cong \prod\nolimits_i A_{e_i - 1}$.
To determine the Hecke algebra of the associated Bernstein component
$\fs^\vee$ of $\Phi_\re (\cG (F))$, we make a simplifying assumption: if $m_i = m_j$ and
$\phi_i$ differs from $\phi_j$ by an unramified twist, then $\phi_i = \phi_j$.

We adjust the indexing so that
\[
\cL (F) = \prod\nolimits_i \GL_{m_i}(D)^{e_i} ,\quad \phi = \bigoplus\nolimits_i
\phi_i^{\oplus e_i}, \quad \rho = \bigotimes\nolimits_i \rho_i^{\otimes e_i} ,
\]
where $\phi_i$ and $\phi_j$ are not inertially equivalent if $i \neq j$. Let $\fs_i^\vee$
be the Bernstein component of $\Phi_\re (\GL_{m_i e_i}(D))$ determined by
$(\phi_i^{\oplus e_i}, \rho_i^{\otimes e_i})$.
Choose an isomorphism $M_{d e_i m_i}(\C) \cong M_{m_i d / d_i}(\C) \otimes M_{d_i e_i}(\C)$
and let $1_m$ be the multiplicative unit of the matrix algebra $M_m (\C)$. Then
\begin{align*}
& G_\phi = \rZ_{\SL_n (\C)} (\phi (\mb W_F)) \cong \SL_n (\C) \cap \prod\nolimits_i
( 1_{m_i d /d_i} \otimes \GL_{d_i e_i}(\C) ) = \SL_n (\C) \cap \prod\nolimits_i G_{\phi,i} ,\\
& M \cong \SL_n (\C) \cap \prod\nolimits_i ( 1_{m_i d /d_i} \otimes \GL_{d_i}(\C)^{e_i} ), \\
& T \cong \SL_n (\C) \cap \prod\nolimits_i ( 1_{m_i d /d_i} \otimes \rZ(\GL_{d_i}(\C) )^{e_i} )
,\quad R(G_\phi,T) \cong \prod\nolimits_i A_{e_i - 1}, \\
& T_i = \big\{ \phi_i \otimes \chi_i \in \Phi (\GL_{m_i}(D)) :
\chi_i \in X_\nr ({}^L \GL_{m_i}(D)) \big\}  \big/ \mu_{t_{\phi_i}}(\C) , \\
& T_{\fs^\vee} = \prod\nolimits_i T_{\fs_i^\vee} = \prod\nolimits_i T_i^{e_i} ,\quad
W_{\fs^\vee} = W_{\fs^\vee, \phi} \cong \prod\nolimits_i S_{e_i} .
\end{align*}
Here $\mu_k$ denotes the functor of taking $k$-th roots of unity and $t_{\phi_i}$ 
denotes the number of unramified twists $z_i \in X_\nr ({}^L \GL_{m_i}(D))$
such that $z_i \phi_i \cong \phi_i$ in $\Phi_\cusp (\GL_{m_i}(D))$. The cyclic group
$\mu_{t_{\phi_i}}(\C)$ is naturally embedded in the onedimensional complex torus
$X_\nr ({}^L \GL_{m_i}(D))$.
Furthermore we can decompose $u_\phi = \prod_i u_{\phi,i}$, where $u_{\phi,i}$ belongs to
the unique distinguished unipotent class of $1_{m_i d /d_i} \otimes \GL_{d_i}(\C)^{e_i}$.
By \cite[2.13]{Lus3} this implies $c(\alpha) = 2 d_i$ for all $\alpha \in R(G_{\phi,i}T,T)$.
Then $\lambda (\alpha) = t_{\phi_i} d_i$ on $R(G_{\phi,i}T,T)$,
whereas $\lambda^*$ does not occur. We conclude that
\begin{equation}\label{eq:6.13}
\cH (\fs^\vee, \vec{\mb z}) = \cH (\mc R_{\fs^\vee}, \lambda, \vec{\mb z}) \cong
\bigotimes\nolimits_i \cH (\GL_{e_i d_i}(\C), \GL_{d_i}(\C)^{e_i},
v_i, \rho_i^{\otimes e_i}, \mb{z}_i) ,
\end{equation}
a tensor product of affine Hecke algebras of type $\GL_{e_i}$ with parameters
$\mb{z}_i^{t_{\phi_i} d_i}$. The most appropriate specialization of \eqref{eq:6.13} is at
$\mb{z}_i = q_F^{1/2}$. Indeed this recovers the exact parameters found by S\'echerre in
\cite[Th\'eor\`eme 4.6]{Sec3}, see \eqref{eq:6.16}.\\

Now we consider Hecke algebras on the $p$-adic side. By the local Langlands correspondence
for $\GL_{m_i}(D)$ (see \cite[\S 11]{HiSa} and \cite[\S 2]{ABPS3}), $(\phi_i,\rho_i)$
is associated to a unique essentially square-integrable representation $\sigma_i \in
\Irr (\GL_{m_i}(D))$. Moreover the condition lcm$(d,m_i d / d_i) = m_i d$ guarantees that
$\sigma_i$ is supercuspidal, by \cite[Th\'eor\`eme B.2.b]{DKV}. (This is a formal consequence
of the Jacquet--Langlands correspondence, so in view of \cite{Bad} it also holds in positive
characteristic.) Hence
\[
(\phi_i^{\oplus e_i}, \rho_i^{\otimes e_i}) \in \Phi_\cusp (\GL_{m_i}(D)^{e_i}) \quad
\text{corresponds to} \quad \sigma_i^{\otimes e_i} \in \Irr_\cusp (\GL_{m_i}(D)^{e_i}) .
\]
Let $\fs_i$ denote the inertial equivalence class for $\GL_{m_i e_i}(D)$ determined by\\
$(\GL_{m_i}(D)^{e_i}, \sigma_i^{\otimes e_i})$. In \cite[Th\'eor\`eme 5.23]{SeSt4} a
$\fs_i$-type $(J_i,\tau_i)$ was constructed. The Hecke algebra for $(J_i,\tau_i)$
was analysed in \cite[Th\'eor\`eme 4.6]{Sec3}, S\'echerre found an isomorphism
\begin{equation}\label{eq:6.4}
\mc H (\GL_{m_i e_i}(D), J_i, \tau_i) \cong \mc H (\GL_{e_i},q_F^{f_i}),
\end{equation}
where the right hand side denotes an affine Hecke algebra of type $\GL_{e_i}$ with parameter
$q_F^{f_i}$ (for a suitable $f_i \in \N$ depending only on $\sigma_i$ or $\phi_i$, see below).
From the explicit description in \cite[\S 4]{Sec3} one sees readily that the isomorphism
\eqref{eq:6.4} respects the natural Hilbert algebra structures on both sides.

\begin{rem}
Let $t_{\sigma_i}$ denote the torsion number of $\sigma_i$, {\it i.e.,} the number of unramified
characters $\chi_i$ of $\GL_{m_i}(D)$ such that $\chi_i \otimes \sigma_i \cong \sigma_i$.
It equals $t_{\phi_i}$. \end{rem}

If $D = F$, then $f_i=t_{\sigma_i}$. In general, $f_i=s_{\sigma_i}\,t_{\sigma_i}$, where 
$s_{\sigma_i}$ is the reducibility number of $\sigma_i$, as defined in \cite[Introduction]{SeSt6}
(see also \cite[Theorem~4.6]{SecU0}). The number $s_{\sigma_i}$ coincides with the invariant
introduced in \cite[Th\'eor\`eme~B.2.b]{DKV} (as it follows for instance from \cite[Eqn.~(1.1) and
Definition~2.2]{BHLS}), itself equal to the integer $d_i$. Hence $f_i$ admits the following
description in terms of Langlands parameters:
\begin{equation}\label{eq:6.16}
f_i = s_{\sigma_i} t_{\sigma_i} = d_i t_{\phi_i}.
\end{equation}
Write $\mc M (F) = \prod_i \GL_{m_i}(D)^{e_i} ,\; \sigma = \bigotimes_i
\sigma_i^{\otimes e_i}$ and let $\fs$ be the inertial equivalence class of
$(\mc M (F), \sigma)$ for $\GL_m (D)$. In \cite[Theorem C]{SeSt6} a $\fs$-type $(J,\tau)$
was constructed, as a cover of the product of the types $(J_i,\tau_i)$ for $\fs_i$.
Moreover it was shown that
\begin{equation}\label{eq:6.5}
\mc H (\GL_m (D), J, \tau) \cong \bigotimes\nolimits_i \mc H (\GL_{e_i},q_F^{f_i}).
\end{equation}
Since \eqref{eq:6.4} was an isomorphism of Hilbert algebras, so is \eqref{eq:6.5}.
Notice that the right hand side is also the specialization of $\mc H(\fs^\vee,\vec{z})$
at $\mb {z}_i = q_F^{1/2}$. Thus there are equivalences of categories
\begin{equation}\label{eq:6.1}
\Rep (\GL_m (D))^\fs \cong \Mod \Big( \bigotimes\nolimits_i
\mc H (\GL_{e_i},q_F^{f_i}) \Big) \cong \Mod \Big( \mc H (\fs^\vee,\vec{\mb z}) /
\big( \{ \mb{z}_i - q_F^{1/2} \}_i \big) \Big). \hspace{-3mm}
\end{equation}
It was shown in \cite[\S 5.4]{BaCi} that, since these equivalences come from
isomorphisms of Hilbert algebras, they preserve temperedness of representations. Then
\cite[Lemma 16.5]{ABPS5} proves that \eqref{eq:6.1} maps essentially square-integrable
representations to essentially discrete series representations and conversely.

The torus underlying $\bigotimes\nolimits_i \mc H (\GL_{e_i},q_F^{f_i})$ is
$T_{\fs} = [ \mc M (F) , \sigma ]_{\mc M (F)}$, which by the LLC for $\GL_{m_i}(D)$
is naturally isomorphic to the torus $T_{\fs^\vee}$ underlying
$\mc H (\fs^\vee,\vec{\mb z})$.
Then \cite[Theorem 4.1]{ABPS4} shows that, with the interpretation as
in Lemma \ref{lem:5.12} (which highlights the tori in these affine Hecke algebras),
the equivalences \eqref{eq:6.1} become canonical. This means in essence that we use
the local Langlands correspondence for supercuspidal representations as input.
With Theorem \ref{thm:5.13} we obtain canonical bijections
\begin{equation}\label{eq:6.2}
\Irr (\GL_m (D))^\fs \longleftrightarrow \Irr \big( \mc H (\fs^\vee,\vec{\mb z}) /
\big( \{ \mb{z}_i - q_F^{1/2} \}_i \big) \big)
\longleftrightarrow \Phi_\re (\GL_m (D))^{\fs^\vee}.
\end{equation}

\begin{prop}\label{lem:6.1}
The union of the bijections \eqref{eq:6.2} over all Bernstein components for
$\GL_m (D)$ equals the local Langlands correspondence for $\GL_m (D)$.
\end{prop}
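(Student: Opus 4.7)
The strategy is to characterize the bijection \eqref{eq:6.2} by properties that uniquely determine the LLC for $\GL_m(D)$. The LLC for $\GL_m(D)$, due to Hiraga--Saito on the basis of Harris--Taylor--Henniart and the Jacquet--Langlands correspondence, is the unique family of bijections $\Irr(\GL_{m'}(D)) \longleftrightarrow \Phi_e(\GL_{m'}(D))$ for $m' \leq m$ satisfying: (i) agreement with the LLC of \cite[\S 11]{HiSa} on supercuspidal representations of all standard Levi subgroups; (ii) compatibility with parabolic induction; and (iii) matching of essentially square-integrable representations with discrete enhanced L-parameters. Indeed, the Langlands classification (extended to $\GL_m(D)$ via the Jacquet--Langlands transfer) presents every $\pi \in \Irr(\GL_m(D))$ as the unique Langlands quotient of a standard module parabolically induced from an essentially tempered representation, which is in turn a summand of a parabolically induced essentially square-integrable representation of a Levi; an analogous decomposition of enhanced L-parameters is supplied by cuspidal support.

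Property (i) holds by construction of \eqref{eq:6.2}: when $\mc M = \cG$ the bijection reduces to the LLC for supercuspidals used as input. Property (iii) follows from Theorem \ref{thm:5.13}(d), combined with the fact (noted immediately after \eqref{eq:6.1}) that the Hilbert algebra isomorphism \eqref{eq:6.5} preserves essentially square-integrable and essentially discrete series representations in both directions. Property (ii) is the crux: on the Galois side it is Lemma \ref{lem:5.14}, while on the $p$-adic side it follows from the fact that the type $(J,\tau)$ of \cite[Theorem C]{SeSt6} is a cover of $\prod_i (J_i,\tau_i)$ in the sense of Bushnell--Kutzko, so that \eqref{eq:6.5} realizes parabolic induction at the level of Hecke algebras. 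By \cite[Theorem 4.1]{ABPS4} the identification $T_{\fs} \cong T_{\fs^\vee}$ used to make \eqref{eq:6.2} canonical is precisely the one induced by the LLC for supercuspidals, so the two operations of parabolic induction are intertwined by \eqref{eq:6.2}. Uniqueness of the LLC then forces \eqref{eq:6.2} to coincide with the LLC.

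\textbf{Main obstacle.} The delicate point is the compatibility with parabolic induction, namely the verification that Lemma \ref{lem:5.14} (Galois side) and the cover structure of \cite[Theorem C]{SeSt6} (p-adic side) correspond under the isomorphism \eqref{eq:6.5}. This amounts to tracing through the construction of the covers and of \cite[Theorem 4.1]{ABPS4} to confirm that the specialization $\mb z_i = q_F^{f_i/2}$ from \eqref{eq:6.2} is the one producing the correct match; in particular, that no spurious twist by an unramified character or by the Iwahori--Matsumoto involution appears, so that bounded L-parameters are matched with tempered representations in the same direction on both sides. Once this bookkeeping is in place, the result is a direct application of the uniqueness of the LLC for $\GL_m(D)$.
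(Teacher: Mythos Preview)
Your overall architecture matches the paper's: reduce via compatibility with parabolic induction (and Langlands quotients) to the case of essentially square-integrable representations of Levi subgroups. The paper cites \cite[Corollary 8.4]{BuKu} to establish that the equivalences \eqref{eq:6.1} commute with parabolic induction on the $p$-adic side, which resolves what you call the ``main obstacle''. So that part of your plan is fine and is exactly what the paper does.

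The genuine gap is in your final step. Your properties (i)--(iii) do not uniquely determine the LLC. After reducing to a single block $\GL_{m_i e_i}(D)$ with Bernstein component $\fs_i$, the essentially square-integrable representations are the generalized Steinbergs $\mathrm{St}((\chi_i \sigma_i)^{\otimes e_i})$, parametrized by the unramified twist $\chi_i$; the discrete L-parameters in $\fs_i^\vee$ are $t_i \phi_i \otimes S_{e_i}$, parametrized by $t_i$. Property (iii) only tells you these two \emph{sets} correspond under \eqref{eq:6.2}; it does not pin down which $\chi_i$ goes to which $t_i$. An arbitrary permutation of the unramified twists would still satisfy (i)--(iii) as you have stated them. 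What is missing is precisely central-character (equivalently, cuspidal-support) compatibility: one must know that the $\mc O(T_{\fs_i})^{W_{\fs_i}}$-character of the Hecke module associated to $\mathrm{St}((\chi_i\sigma_i)^{\otimes e_i})$ agrees, under $T_{\fs_i} \cong T_{\fs_i^\vee}$, with the central character computed in Theorem \ref{thm:5.13}.b for the candidate L-parameter.

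This is exactly the step the paper performs explicitly rather than by appeal to abstract uniqueness: having established that \eqref{eq:6.2} sends $\mathrm{St}(\sigma')$ to \emph{some} discrete parameter $\psi_i \otimes S_{e_i}$ with $\psi_i$ an unramified twist of $\phi_i$, the paper compares the cuspidal support \eqref{eq:6.3} on the $p$-adic side with the central character formula of Theorem \ref{thm:5.13}.b on the Galois side to deduce $\psi_i = t_i \phi_i$. If you want to keep your abstract formulation, you should strengthen (iii) to include equivariance under $X_\nr({}^L\cG)$ (Theorem \ref{thm:5.13}.e) together with agreement at one base point, or replace it by central-character compatibility; either way you will end up doing the same computation the paper does.
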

\begin{proof}
In \cite[\S 2]{ABPS3} the LLC for $\GL_m (D)$ was constructed by starting with
irreducible essentially square-integrable representations of Levi subgroups, then
applying parabolic induction and finally taking Langlands quotients. In the context of
types and covers thereof, \cite[Corollary 8.4]{BuKu} shows that the maps \eqref{eq:6.1}
commute with parabolic induction. They also commute with taking Langlands quotients,
because for these groups every Langlands quotient is the unique irreducible
quotient of a suitable representation.

Thus we have reduced the claim to the case of irreducible essentially square-integrable
representations. From \cite[\S B.2]{DKV} we see that $\Rep (\GL_m (D))^\fs$ only
contains such representations if $m_1 e_1 = m$. We may just as well consider the group
$\GL_{m_i e_i}(D)$, which we prefer because then we can stick to the above notation.
All its irreducible essentially square-integrable representations are generalized
Steinberg representations built from $T_{\fs_i}$. By construction the bijection
\eqref{eq:6.2} for $\GL_{m_i}(D)^{e_i}$ sends $T_{\fs_i}$ to $T_{\fs_i^\vee}$.

Let $\chi_i \in X_\nr (\GL_{m_i}(D))$, with Langlands parameter
$t_i \in X_\nr ({}^L \GL_{m_i}(D))$. The generalized Steinberg representation
St$(\sigma')$ based on $\sigma' = (\chi_i \sigma_i)^{\otimes e_i}$ is the irreducible
essentially square-integrable subrepresentation of the parabolic induction of
\begin{equation}\label{eq:6.3}
\nu_i^{(1-e_i)/2} \chi_i \sigma_i \otimes \cdots \otimes \nu_i^{(e_i - 1)/2} \chi_i \sigma_i
\end{equation}
to $\prod_i \GL_{m_i e_i}(D)$, where $\nu_i$ denotes the absolute value of
reduced norm map for $\GL_{m_i}(D)$. There is a unique such subrepresentation by
\cite[Th\'eor\`eme B.2.b]{DKV}. By definition \cite[(12)]{ABPS3} St$(\sigma')$
has Langlands parameter $t_i \phi_i \otimes S_{e_i}$.

Now we plug St$(\sigma')$ in \eqref{eq:6.2} and we use the property discussed under
\eqref{eq:6.1}. Thus we end up with an essentially discrete series representation of
$\mc H (\fs^\vee,\vec{\mb z}) / \big( \{ \mb{z}_i - q_F^{1/2} \}_i \big)$. By
Theorem \ref{thm:5.13} it corresponds to a discrete element of
$\Phi_\re (\GL_{m_i e_i}(D))^{\fs_i^\vee}$. Its enhancement $\rho_i$ is uniquely determined
by the requirement that it is relevant for $\GL_{m_i e_i}(D)$, so we can ignore that and
focus on the L-parameter. The image of $\mb W_F$ under this
L-parameter is contained in $\GL_{m_i}(D)^{e_i,\vee} = \GL_{m_i d}(\C)^{e_i}$, so it can
only be discrete if it is of the form $\psi_i \otimes \pi_{e_i,\SL_2 (\C)}$ for
some irreducible $m_i d$-dimensional representation of $\mb W_F$. Since the cuspidal
support of the enhanced L-parameter lies in $T_{\fs_i^\vee}$, $\psi_i$ must be an
unramified twist of $\phi_i$. From \eqref{eq:6.3} and the expression for the central
character of $M(\psi_i \otimes \pi_{e_i,\SL_2 (\C)}, \rho_i, z_i)$ given in
Theorem \ref{thm:5.13}.b we deduce that $\psi_i = t_i \phi_i$. Thus \eqref{eq:6.2} agrees
with the local Langlands correspondence for essentially square-integrable representations.
\end{proof}

\subsection{Inner twists of $\SL_n (F)$} \
\label{par:innSLn}

This paragraph is largely based on \cite{ABPS3,ABPS4}.
We keep the notations from the previous paragraph. For any subgroup of $\GL_m (D)$,
we indicate the subgroup of elements of reduced norm 1 by a $\sharp$. Thus
\[
\mc G^\sharp (F) = \GL_m (D)^\sharp = \{ g \in \GL_m (D) : \mathrm{Nrd}(g) = 1\}
= \SL_m (D) .
\]
The inner twists of $\GL_n (F)$ are in bijection with the inner twists of $\SL_n (F)$, via 
\[
\GL_m (D) \leftrightarrow \GL_m (D)^\sharp = \SL_m (D) .
\] 
The L-parameters for $\GL_m (D)^\sharp$ are the same as for $\GL_m (D)$, only their image is
considered in $\PGL_n (\C)$. In particular every discrete L-parameter
\[
\phi^\sharp \colon\mb W_F \times \SL_2 (\C) \to \PGL_n (\C)
\]
lifts to an irreducible $n$-dimensional representation of $\mb W_F \times \SL_2 (\C)$.
The local Langlands correspondence for these groups was worked out in \cite{HiSa,ABPS3}.
It provides a bijection between the Bernstein components on both sides of the LLC,
which will use implicitly as $\fs^\sharp \leftrightarrow \fs^{\sharp \vee}$.

Let $\phi = \otimes_i \phi_i^{\otimes e_i}$ be as before, and let $\phi^\sharp \in
\Phi (\mc L^\sharp (F))$ be the obtained by composition with the projection
$\GL_n (\C) \to \PGL_n (\C)$. Every Bernstein component contains L-parameters of this form.
There is a central extension
\[
1 \to \mc Z_{\phi^\sharp} \to \mc S_{\phi^\sharp} \to \mc R_{\phi^\sharp} \to 1
\]
where $\mf R_{\phi^\sharp} = \pi_0 (\rZ_{\PGL_n (\C)}(\mathrm{im} \phi^\sharp))$ and
\[
\mc Z_{\phi^\sharp} = \rZ(\SL_n (\C)) / \rZ (\SL_n (\C)) \cap \rZ_{\SL_n (\C)}(\phi^\sharp)^\circ .
\]
Let $\rho^\sharp$ be an enhancement of $\phi^\sharp$. The restriction $\rho =
\rho^\sharp |_{\mc Z_{\phi^\sharp}}$ is an enhancement of $\phi$, so as before we may
assume that it has the form $\rho = \otimes_i \rho_i^{\otimes e_i}$. Cuspidality
of $(\phi^\sharp,\rho^\sharp)$ depends only $(\phi,\rho)$, it holds whenever
$\rho_i$ is associated to the inner twist $\GL_{m_i}(D)$ of $\GL_n (F)$ via the
Kottwitz isomorphism. We assume that this is the case, and that
$(\phi^\sharp,\rho^\sharp) \in \Phi_\cusp (\mc L^\sharp (F))$. We note that
$\mc G_\sc^\vee$ is the same for $\GL_m (D)$ and $\SL_m (D)$, and that $\phi$ and
$\phi^\sharp$ have the same connected centralizer. Consequently
\begin{align*}
& G_{\phi^\sharp}^\circ = G_\phi^\circ, \quad G_{\phi^\sharp} / G_{\phi^\sharp}^\circ
\cong \mf R_{\phi^\sharp} ,\quad M_{\phi^\sharp}^\circ = M_\phi^\circ ,\\
& R(G_{\phi^\sharp}^\circ, T) = \prod\nolimits_i A_{e_i - 1} ,\quad \lambda (\alpha) =
t_{\phi_i} d_i \; \forall \alpha \in R(G_{\phi,i}T, T) \subset R (G_{\phi^\sharp}^\circ, T) .
\end{align*}
Let $\mf s^{\sharp \vee}$ be the inertial equivalence class for $\Phi_\re (\GL_m (D)^\sharp)$
determined by $(\phi^\sharp,\rho^\sharp)$. (In spite of the notation $\fs^\vee$ does
not determine it uniquely.) Then
\[
T_{\fs^{\sharp \vee}} = \big( \prod\nolimits_i T_{\phi_i}^{e_i} \big) \big/ \rZ(\GL_n (\C)),
\quad W_{\fs^{\sharp \vee}}^\circ \cong \prod\nolimits_i S_{e_i} .
\]
The cuspidal local system $q\cE$ associated to $(\phi^\sharp,\rho^\sharp)$ satisfies
\[
\mf R_{q\cE} \cong W_{\fs^{\sharp \vee}} / W_{\fs^{\sharp \vee}}^\circ =
\mf R_{\fs^{\sharp \vee}} \cong \mf R_{\phi^\sharp} .
\]
The algebra
\begin{equation}\label{eq:6.14}
\mc H (\mc R_{\fs^{\sharp \vee}}, \lambda, \vec{\mb z}) = \mc H (
G_{\phi^\sharp}^\circ, M_{\phi^\sharp}^\circ, v, \rho, \vec{\mb z})
\end{equation}
is a subalgebra of $\mc H (\mc R_{\fs^{\vee}}, \lambda, \vec{\mb z})$, corresponding
to the projection $T_{\fs^\vee} \to T_{\fs^{\sharp \vee}}$. It is contained in
\[
\mc H (\fs^{\vee \sharp}, \vec{\mb z}) = \mc H (\mc R_{\fs^{\sharp \vee}}, \lambda,
\vec{\mb z}) \rtimes \C [\mf R_{\phi^\sharp}, \natural_{\phi^\sharp}] .
\]
Here the twisted group algebra and the 2-cocycle
$\natural_{\phi^\sharp} = \natural_{\fs^{\sharp \vee}}$ are given by
\[
\C [\mf R_{\phi^\sharp}, \natural_{\phi^\sharp}] = p_\rho \C[\mc S_{\phi^\sharp}] ,
\]
while the action of $\mf R_{\phi^\sharp}$ on \eqref{eq:6.14} comes from its natural
action on $\mc R_{\fs^{\sharp \vee}}$.

For better comparison with the $p$-adic side we also determine the graded Hecke
algebras attached to $\mf s^{\sharp \vee}$. Let $(\phi_b^\sharp,\rho^\sharp) \in
\Phi_\cusp (\mc L^\sharp (F))$ be an unramified twist of $(\phi^\sharp,\rho^\sharp)$
which is bounded. Let $W_{\phi_b^\sharp}$ be the stabilizer of $\phi_b^\sharp$ in
$W_{\fs^{\sharp \vee}}$.
Then $W_{\phi_b^\sharp}^\circ = W(G_{\phi_b^\sharp}^\circ,T)$ is the subgroup of
$W_{\phi_b^\sharp} \cap W_{\fs^{\sharp \vee}}^\circ$ generated by the reflections it
contains. The parabolic subgroup of $G_{\phi_b^\sharp}^\circ$ generated by
$M_{\phi_b^\sharp}^\circ$ and upper triangular matrices determines a group
$\mf R_{\phi_b^{\sharp \vee}}$ such that
\[
W_{\phi_b^\sharp} = W_{\phi_b^\sharp}^\circ \rtimes \mf R_{\phi_b^\sharp} .
\]
The 2-cocycle $\natural_{\phi_b^\sharp}$ on $W_{\phi_b^\sharp}$ is the restriction of
$\natural_{\fs^{\sharp \vee}} \colon W_{\fs^{\sharp \vee}}^2 \to \C^\times$. The root system
$R_{\phi_b^\sharp}$ is again a product of systems of type A, namely
$\prod_j A_{\epsilon_j - 1}$ if $\phi_b^\sharp = \otimes_j \phi_j^{\epsilon_j}$.
Then 
\[
W_{\phi_b^\sharp}^\circ \cong\prod\nolimits_j S_{\epsilon_j} \quad \text{and} \quad
\mf t_{\fs^{\sharp \vee}} = \Lie (T_{\fs^{\sharp \vee}}) =
\big( \sum\nolimits_i \Lie (T_{\phi_i}^{e_i}) \big) \big/ \rZ(\mf{gl}_n (\C)) .
\]
It follows that
\begin{equation}\label{eq:6.15}
\mh H (\phi_b, v, q\epsilon, \vec{\mb r}) \cong \mh H (\mf t_{\fs^{\sharp \vee}},
W_{\phi_b^\sharp}, \vec{\mb r}, \natural_{\phi_b^\sharp}) \cong \mh H (\mf t_{\fs^{\sharp \vee}} ,
W_{\phi_b^\sharp}, \vec{\mb r}) \rtimes \C [\mf R_{\phi_b^\sharp}, \natural_{\phi_b^\sharp}] .
\end{equation}
The Hecke algebras for Bernstein components of $\SL_m (D)$ were computed in
\cite{ABPS4}. They are substantially more complicated than their counterparts for
$\GL_m (D)$, and in particular do not match entirely with the above affine
Hecke algebras for Langlands parameters. To describe them, we need some
notations. Let $\mc P$ be a parabolic subgroup of $\GL_m (D)$, with Levi
factor $\mc M$. Consider the inertial equivalence classes $\fs_{\mc M} =
[\mc M,\sigma]_{\mc M}$ and $\fs = [\mc M ,\sigma]_{\GL_m (D)}$.
Recall from \eqref{eq:6.5} that $\mc H (\GL_m (D))^\fs$ is Morita equivalent with
\[
\mc H (\mc R_\fs, \lambda, q_\fs) =
\bigotimes\nolimits_i \mc H (\GL_{e_i}, q_F^{f_i}) .
\]
We need the groups
\begin{align*}
& X^{\mc M}(\fs) = \big\{ \gamma \in \Irr \big( \mc M / \mc M^\sharp \rZ(\GL_m (D)) \big) :
\gamma \otimes \sigma \in \fs_{\mc M} \big\} , \\
& X^{\GL_m (D)}(\fs) = \big\{ \gamma \in \Irr \big( \GL_m (D) / \GL_m (D)^\sharp \rZ(\GL_m (D))
\big) : \gamma \otimes I_{\mc P}^{\GL_m (D)}(\sigma) \in \fs \big\} , \\
& W_\fs^\sharp = \big\{ w \in \rN_{\GL_m (D)}(\mc M) / (\mc M) : \exists \gamma \in
\Irr \big( \mc M / \mc M^\sharp \rZ(\GL_m (D)) \big) : w (\gamma \otimes \sigma) \in
\fs_{\mc M} \big\} .
\end{align*}
By \cite[Lemma 2.3]{ABPS4} $W_\fs^\sharp = W_\fs \rtimes \mf R_\fs^\sharp$ for
a suitable subgroup $\mf R_\fs^\sharp$, and 
\[
X^{\GL_m (D)}(\fs) / X^{\mc M}(\fs) \cong \mf R_\fs^\sharp
\]
by \cite[Lemma 2.4]{ABPS4}. The group $X^{\GL_m (D)}(\fs)$ acts naturally on 
$T_\fs \rtimes W_\fs$.

Let $\sigma^\sharp$ be an irreducible constituent of $\sigma |_{\mc M^\sharp}$.
Every inertial equivalence class for $\SL_m (D) = \GL_m (D)^\sharp$ is of the
form $\fs^\sharp = [\mc M^\sharp, \sigma^\sharp]_{\GL_m (D)^\sharp}$.
By \cite[Theorem 1]{ABPS4} there exists a finite dimensional projective
representation $V_\mu$ of $X^{\GL_m} \fs)$ such that
$\mc H (\GL_m (D)^\sharp )^{\fs^\sharp}$ is Morita equivalent with one direct summand of
\begin{equation}\label{eq:6.6}
\big( \mc H (\mc R_\fs, \lambda, q_\fs ) \otimes \End_\C (V_\mu)
\big)^{X^{\mc M}(\fs) X_\nr (\mc M / \mc M^\sharp)} \rtimes \mf R_\fs^\sharp .
\end{equation}
The other direct summands correspond to different constituents of $\sigma |_{\mc M^\sharp}$.
In \eqref{eq:6.6} the group
\[
X_\nr (\mc M / \mc M^\sharp) = \{ \chi \in X_\nr (\mc M) : \mc M^\sharp \subset \ker \chi \}
\]
acts only via translations of $T_\fs$. We denote the quotient torus
$T_\fs / X_\nr (\mc M / \mc M^\sharp)$ by $T_\fs^\sharp$ and its Lie algebra by
$\mf t_\fs^\sharp$.

From now on we will be more sketchy. The below can be made precise, but for that one would
have to delve into some of the technicalities of \cite{ABPS4}, which are not so
relevant for this paper. Although it is not so easy to write down all direct summands of
\eqref{eq:6.6} explicitly, we can say that they look like
\begin{equation}\label{eq:6.7}
\big( \mc H (X^* (T_\fs^\sharp), R_\fs, X_* (T_\fs^\sharp), R_\fs^\vee, \lambda, q_\fs)
\otimes \End_\C (V_{\mu^\sharp}) \big)^{X^{\mc M}(\fs,\sigma^\sharp)} \rtimes \mf R_{\fs^\sharp}
\end{equation}
for suitable $X^{\mc M}(\fs,\sigma^\sharp) \subset X^{\mc M}(\fs)$ and
$V_{\mu^\sharp} \subset V_\mu$. (From the below argument for graded Hecke algebras one sees
approximately how \eqref{eq:6.7} arises from \eqref{eq:6.6}.)
This algebra need not be Morita equivalent to a twisted affine Hecke algebra as studied
in this paper. The problem comes from the simultaneous action of $X^{\mc M}(\fs,\sigma^\sharp)$
on $T_\fs^\sharp$ and $V_{\mu^\sharp}$: if that is complicated, it prevents \eqref{eq:6.7} from
being Morita equivalent to a similar algebra without $\End_\C (V_{\mu^\sharp})$. If we consider
\eqref{eq:6.7} as a kind of algebra bundle over $T_\fs^\sharp$, then these remarks mean that
$V_{\mu^\sharp}$ could introduce some extra twists in this bundle, which take the algebra
outside the scope of this paper. Examples can be constructed by combining the ideas in
\cite[Examples 5.2 and 5.5]{ABPS4}.

That being said, the other data involved in \eqref{eq:6.7} are as desired. It was checked
in \cite[Lemma 5.5]{ABPS7} that:
\begin{enumerate}
\item[(i)] The underlying torus $T_{\fs^\sharp} = T_\fs^\sharp / X^{\mc M}(\fs,\sigma^\sharp)$
is naturally isomorphic to $T_{\fs^{\sharp \vee}} = \Phi_\re (\mc M^\sharp )^{
[\mc M^\sharp,\sigma^\sharp]_{\mc M^\sharp}}$.
\item[(ii)] $W_\fs \rtimes \mf R_{\fs^\sharp} = W_{\fs^\sharp}$ is isomorphic to
$W_{\fs^{\sharp \vee}} = W_{\fs^\vee} \rtimes \mf R_{\fs^{\sharp \vee}}$.
\item[(iii)] The space of irreducible representations of \eqref{eq:6.7} is isomorphic to a
twisted extended quotient
\[
( T_{\fs^\sharp} \q W_{\fs^\sharp} )_{\kappa_{\sigma^\sharp}}
\cong (T_{\fs^{\sharp \vee}} \q W_{\fs^{\sharp \vee}} )_{\kappa_{\sigma^\sharp}} ,
\]
and the 2-cocycle $\kappa_{\sigma^\sharp}$ of $W_{\fs^\sharp}$ is equivalent to the
2-cocycle $\natural_{\fs^{\sharp \vee}}$ of $W_{\fs^{\sharp \vee}}$.
\end{enumerate}
Let us also discuss the graded Hecke algebras which can be derived from \eqref{eq:6.6}
and \eqref{eq:6.7}.
The algebra $\mc O (T_\fs^\sharp )^{X^{\mc M}(\fs) W_\fs}$ is naturally contained
in the centre of \eqref{eq:6.6}. This entails that we can localize at suitable subsets
of $T_\fs^\sharp / W_\fs^\sharp X^{\mc M}(\fs)$. Fix $t \in (T_\fs^\sharp)_\uni$.
By localization at a small neighborhood of $U$ of $W_\fs^\sharp X^{\mc M}(\fs) t
(T_\fs^\sharp )_{\rs}$, we can effectively replace $X^{\mc M}(\fs)$ by the stabilizer
of $X^{\mc M}(\fs)_t$, and $\mf R_\fs^\sharp$ by the stabilizer $\mf R_\fs^\sharp (t)$
of $W_\fs X^{\mc M}(\fs) t$. Then \eqref{eq:6.6} is transformed into the algebra
\begin{equation}\label{eq:6.8}
C_{an}(U)^{X^{\mc M}(\fs) W_\fs^\sharp}
\underset{\mc O (T_\fs^\sharp)^{X^{\mc M}(\fs) W_\fs^\sharp}}{\otimes}
\big( \mc H (\mc R_\fs^\sharp, \lambda, q_\fs)
\otimes \End_\C (V_\mu) \big)^{X^{\mc M}(\fs)_t} \rtimes \mf R_{\fs^\sharp}(t)
\end{equation}
where $\mc R_\fs^\sharp = (X^* (T_\fs^\sharp), R_\fs, X_* (T_\fs^\sharp), R_\fs^\vee)$.
But $X^{\mc M}(\fs)$ acts by translations on $T_\fs^\sharp$, so $X^{\mc M}(\fs)_t$
consists of all the elements that fix $T_\fs^\sharp$ entirely. From the description of
the actions on \eqref{eq:6.6} in \cite[Lemma 4.11]{ABPS4} we see that $X^{\mc M}(\fs)_t$
acts only on $\End_\C (V_\mu)$. Then
\begin{equation}\label{eq:6.9}
\End_\C (V_\mu)^{X^{\mc M}(\fs)_t} = \End_{X^{\mc M}(\fs)_t} (V_\mu) \cong
\bigoplus\nolimits_{\mu^\sharp} \End_\C (V_{\mu^\sharp})
\end{equation}
is a finite dimensional semisimple algebra. The direct summands of \eqref{eq:6.6} and
of \eqref{eq:6.8} are
in bijection with the $\mf R_\fs^\sharp (t)$-orbits on the set of direct summands
of \eqref{eq:6.9}. That holds for any $t \in (T_\fs^\sharp)_\uni$, in particular for s
ome $t$ with $\mf R_\fs^\sharp (t) = 1$, so in fact the direct summands
$\End_\C (V_{\mu^\sharp})$ of \eqref{eq:6.9} parametrize the direct summands of
\eqref{eq:6.6} and of \eqref{eq:6.8}. Thus \eqref{eq:6.8} is a direct sum of algebras
\begin{equation}\label{eq:6.10}
C_{an}(U)^{X^{\mc M}(\fs) W_\fs^\sharp}
\underset{\mc O (T_\fs^\sharp)^{X^{\mc M}(\fs) W_\fs^\sharp}}{\otimes}
\big( \mc H (\mc R_\fs^\sharp, \lambda, q_\fs)
\otimes \End_\C (V_{\mu^\sharp}) \big) \rtimes \mf R_{\fs^\sharp}(t) .
\end{equation}
Here $(\mu^\sharp,V_{\mu^\sharp})$ is a projective representation of
$\mf R_{\fs^\sharp}(t)$. In such situations there is a Morita equivalent algebra embedding
\[
\begin{array}{ccc}
\C [\mf R_{\fs^\sharp}(t) ,\natural] & \to &
\End_\C (V_{\mu^\sharp}) \rtimes \mf R_{\fs^\sharp}(t) \\
r & \mapsto & \mu^\sharp (r)^{-1} r ,
\end{array}
\]
for a suitable 2-cocycle $\natural$. Via this method \eqref{eq:6.10}
is Morita equivalent with
\begin{equation}\label{eq:6.11}
C_{an}(U)^{X^{\mc M}(\fs) W_\fs^\sharp}
\underset{\mc O (T_\fs^\sharp)^{X^{\mc M}(\fs) W_\fs^\sharp}}{\otimes}
\mc H (\mc R_\fs^\sharp, \lambda, q_\fs) \rtimes \C [\mf R_{\fs^\sharp}(t) , \natural] .
\end{equation}
From the property (iii) of the algebra \eqref{eq:6.7} we see that $\natural$ has to be
the restriction of $\natural_{\fs^{\sharp \vee}}$ to $\mf R_{\fs^\sharp}(t)^2$.
By Theorems \ref{thm:4.5}.a and \ref{thm:4.7}.a the algebra \eqref{eq:6.11} is Morita
equivalent with
\begin{equation}
C_{an}(U)^{X^{\mc M}(\fs) W_\fs^\sharp}
\underset{\mc O (\mf t_\fs^\sharp)^{X^{\mc M}(\fs) W_\fs^\sharp}}{\otimes}
\mh H (\mf t_\fs^\sharp, W(R_\fs)_t , q_\fs)
\rtimes \C [\mf R_{\fs^\sharp}(t) , \natural_{\fs^{\sharp \vee}}] .
\end{equation}
Hence the equivalence between $\Rep (\SL_m (D))^{\fs^\sharp} \cong
\Mod \big( \mc H (\GL_m (D)^\sharp)^{\fs^\sharp} \big)$ and the module category of
\eqref{eq:6.7} restricts to an equivalence between
\begin{align*}
& \Mod_{f,W_\fs^\sharp X^{\mc M}(\fs) t (T_\fs^\sharp)_{\rs}}
\big( \mc H \big(\GL_m (D)^\sharp)^{\fs^\sharp} \big) \big) \quad \text{and} \\
& \Mod_{f,(\mf t_\fs^\sharp)_{\rs}} \big( \mh H (\mf t_\fs^\sharp, W(R_\fs)_t , q_\fs)
\rtimes \C [\mf R_{\fs^\sharp}(t) , \natural_{\fs^{\sharp \vee}}] \big) .
\end{align*}
Every finite length representation in $\Rep (\SL_m (D))^{\fs^\sharp}$ decomposes
canonically as a direct sum of generalized weight spaces for
$\mc O (T_\fs^\sharp)^{X^{\mc M}(\fs) W_\fs^\sharp}$, so by varying $t$ in
$(T_\fs^\sharp )_\uni$ we can describe all such representations in terms of these
equivalences of categories. In this sense
\begin{equation}\label{eq:6.12}
\mh H (\mf t_\fs^\sharp, W(R_\fs)_t , q_\fs)
\rtimes \C [\mf R_{\fs^\sharp}(t) , \natural_{\fs^{\sharp \vee}}]
\end{equation}
is the graded Hecke algebra attached to $(\fs^\sharp,t)$. Suppose that $t$
corresponds to $(\phi_b^\sharp, \rho^\sharp) \in \Phi_\cusp (\mc L^\sharp (F))$,
where $\mc M = \mc L (F)$. Then we can compare \eqref{eq:6.12} with \eqref{eq:6.15}.
Using the earlier comparison results (i), (ii) and (iii), we see that \eqref{eq:6.12}
is the specialization of \eqref{eq:6.15} at $\vec{\mb r} = \log (q_\fs)$.

We conclude that, for a Bernstein component $\fs^\sharp$ of $\SL_m (D)$, corresponding
to a Bernstein component $\fs^{\sharp \vee}$ of enhanced L-parameters:
\begin{itemize}
\item The twisted graded Hecke algebras attached to $\fs^\sharp$ and to $\fs^{\sharp \vee}$
are isomorphic.
\item The twisted affine Hecke algebras attached to $\fs^\sharp$ and to $\fs^{\sharp \vee}$
need not be isomorphic, but they are sufficiently close, so that their categories
of finite length modules are equivalent.
\end{itemize}

\subsection{Pure inner twists of classical groups} \
\label{par:classical}

Take $n\in\N$ and let $\cG^*_n$ be a $F$-split connected classical group of rank $n$.
That is, $\cG^*_n$ is one the following groups:
\begin{enumerate}
\item[(i)]
$\Sp_{2n}$, the symplectic group in $2n$ variables defined over $F$,
\item[(ii)]
$\SO_{2n+1}$, the split special orthogonal group in $2n \!+\! 1$ variables defined over $F$,
\item[(iii)]
$\SO_{2n}$, the split special orthogonal group in $2n$ variables defined over $F$,
\end{enumerate}
Let $V^*$ be a finite dimensional $F$-vector space equipped with a non-degenerate symplectic
or orthogonal form such that $\cG^*_n(F)$ equals $\Sp(V^*)$ or $\SO(V^*)$.
The pure inner twists $\cG_n$ of $\cG^*_n$ correspond bijectively to forms $V$ of the space
$V^*$ with its bilinear form $\langle\,,\,\rangle$ \cite[\S 29D--E]{KMRT}.
If $\cG^*_n(F)=\Sp(V^*)$, then the pointed set $H_1(F,\cG^*_n)$ has only one element and there
are no nontrivial pure inner twists of $\cG^*_n$.  If $\cG^*_n(F)=\SO(V^*)$, then elements of
$H_1(F,\cG^*_n)$ correspond bijectively to the isomorphism classes of orthogonal
spaces $V$ over $F$ with $\dim(V)=\dim(V^*)$ and $\disc(V)=\disc(V^*)$.
The corresponding pure inner twist of $\cG^*_n(F)$ is the special orthogonal group $\SO(V)$.

Let $\cG_n(F)$ be a pure inner twist of $\cG_n^*(F)$ (we allow $\cG_n(F)=\cG_n^*(F)$).
It is known (see for instance \cite{ChGo}), that up to conjugacy every Levi subgroup
of $\cG_n(F)$ is of the form
\begin{equation}\label{eq:6.19}
\cL(F) = \cG_{n^{-}}(F)\,\times\,\prod\nolimits_j \GL_{m_j}(F),
\end{equation}
where $\sum_j m_j + n^{-} = n$ and
$\cG_{n^{-}}(F)$ is an inner twist of the split connected classical group
$\cG_{n^{-}}^*$ defined over $F$, of rank $n^{-}$, which has the same type as $\cG_n^*(F)$.
There is a natural embedding $\Std_{{}^L\cG_{}}$ of ${}^L\cG_{}$
into $\GL_{N^{\vee}}(\C) \rtimes \mb W_F$, where $N^\vee=2n+1$ if $\cG_{n}^*=\Sp_{2n}$,
and $N^\vee=2n$ otherwise.

Let $(\phi,\rho)\in\Phi_\cusp(\cL (F))$. The factorization \eqref{eq:6.19} leads to
\begin{equation}\label{eq:6.17}
\Std_{{}^L\cG} \circ \phi = \varphi \,\oplus \,\bigoplus\nolimits_j (\phi_j \oplus \phi_{j}^{\vee}).
\end{equation}
Because we consider only pure inner twists in this section, it would be superfluous to
replace $\mc G^\vee$ by $\mc G^\vee_\sc$. We refrain from doing so in this section,
and we use the objects, which before where defined in terms of $\mc G^\vee_\sc$, now
with the same definition involving just $\mc G^\vee$. For instance, instead of the group
$\mc S_\phi$ defined in Definition \ref{defn:5.2}, we will take the component group
$\pi_0 (\rZ_{\cL^\vee}(\phi))$ and we use a variation on $\Phi_\re ({}^L \cG)$ with that component
group. The restriction of an enhancement $\rho$ to the center of $\cL^{\vee}$ still determines
the relevance. For instance, if the restriction to $\rZ(\cL^{\vee})$ is
trivial, then it corresponds to the split form, otherwise it corresponds to a non-split form.
Hence, we can decompose $\rho = \varrho\,\otimes\,\bigotimes\nolimits_j \rho_j,$ where
$(\varphi,\varrho)\in\Phi_\cusp(\cG_{n^{-}}(F))$ and
$(\phi_j,\rho_j)\in \Phi_\cusp (\GL_{m_j}(F))$ for each $j$.

Let $I_{\phi}^{+}$ (resp. $I_{\phi}^{-}$) be the set of (classes of) self-dual irreducible
representations of $\bW_F$ which occur in $\Std_{{}^L \cG} \circ \phi$ and which factor
through a group of the type of $\cG^{\vee}$ (resp. of opposite type of $\cG^{\vee}$).
Let $I_{\phi}^{0}$ be a set of (classes of) non self-dual irreducible representations of
$\bW_F$ which occur in $\Std_{{}^L \cG} \circ \phi$, such that if $\tau \in I_{\phi}^{0}$
then $\tau^{\vee} \not \in I_{\phi}^{0}$, and maximal for this property.
We denote the irreducible $a$-dimensional representation of $\SL_n (\C)$ by $S_a$.

On the one hand $(\phi_j,\rho_j)$ satisfy the conditions stated in Paragraph \ref{subsec:GL(D)},
i.e. $\phi_{j}$ is an irreducible representation of $\bW_F$ and $\rho_{j}$ is the trivial
representation of $\pi_0(\rZ_{\GL_{m_j}(\C)}(\phi_j))$.
On the other hand, by \cite[Proposition 3.6]{Mou} we have
\begin{equation}\label{eq:6.18}
\Std_{{}^L \cG_{n^{-}}} \circ \varphi=\bigoplus_{\tau \in I_{\varphi}^{+}}
\bigoplus_{a \text{ odd}, a=1}^{a_\tau}(\tau\otimes S_a)\,\oplus\,
\bigoplus_{\tau\in I_{\varphi}^{-}}\bigoplus_{a \text{ even},a=2}^{a_\tau}(\tau\otimes S_a ),
\end{equation}
where $a_\tau \in \Z_{\geq 0}$. As introduced by M\oe glin, let $\Jord(\varphi)$ be the set 
of pairs $(\tau,a)$ with $\tau \in \Irr(\bW_F)$, $a \in \Z_{>0}$ such that $\tau\boxtimes S_{a}$
is an irreducible subrepresentation of $\Std_{{}^L \cG_{n^{-}}} \circ \varphi$.

The group $\mathcal{S}_{\phi}$ is isomorphic to $(\Z/2\Z)^p$ for some integer $p$. It is
generated by elements of order two, by $z_{\tau,a} z_{\tau',a'}$ where
$(\tau,a),(\tau',a') \in \Jord(\phi)$ without hypothesis on the
parity of $a$ and by $z_{\tau,a}$ when $a$ is even. The character $\rho$ satisfies
$\rho(z_{\tau,2i-1}z_{\tau,2i+1})=-1$ for all $\tau \in I_{\varphi}^{+}$ and
$i \in \llbracket 1, \frac{a_{\tau}-1}{2} \rrbracket$ and $\rho(z_{\tau,2i})=(-1)^i$
for all $\tau\in I_{\varphi}^{-} $ and $i \in \llbracket 1, \frac{a_{\tau}}{2} \rrbracket$.

If $\tau$ is an irreducible representation of $\mb W_F$
and of dimension $m$ such that $\tau |_{\mb I_F} \cong \tau^{\vee} |_{\mb I_F} $, then
$\tau \cong \tau^{\vee}z$ with $z \in X_\nr ({}^L \GL_{m}(F))$. Replacing $\tau$ by $\tau z^{1/2}$
(where $z^{1/2}$ is any square root of $z$), we can assume that $\tau \cong \tau^{\vee}$.
In the following, for all $j$  we assume that, if $\phi_{j}^{\vee}$ is inertially equivalent to
$\phi_j$, then $\phi_{j}^{\vee} \cong \phi_{j}$. Note that a self-dual irreducible 
representation of $\mb W_F$ is necessarily of symplectic-type or of orthogonal-type.

We choose a basepoint $\phi$ (inside its inertial equivalence class) as follows:
\begin{itemize}
\item if $m_i = m_j$ and $\phi_i$ differs from $\phi_j$ by an unramified twist, then $\phi_i = \phi_j $;
\item if $\phi_{i}^{\vee}$ is an unramified twist of $\phi_{i}$,
then we can assume that $\phi_{i}^{\vee} \cong \phi_{i} $;
\item if $\phi_{i}^{\vee} \cong \phi_{j}$, then $i=j$.
\end{itemize}
For an irreducible representation
$\tau$ of $\mb W_F$, we will denote by $e_{\tau}$ the number of times that $\tau$ appears in a
$\GL$ factor of $\mathcal{L}^{\vee}$, by $\ell_{\tau}$ the multiplicity of $\tau$ in
$\varphi |_{\mb W_F}$. With the above choice of $\phi$, 
for all $\tau \in I_{\phi}^{+} \sqcup I_{\phi}^{-} \sqcup I_{\phi}^{0}$:
\begin{align*}
& \phi=\bigoplus_{\tau \in I_{\phi}^{+} \sqcup I_{\phi}^{-}} 2e_{\tau} \tau \oplus
\bigoplus_{\tau \in I_{\phi}^{0}} e_{\tau} (\tau \oplus \tau^{\vee}) \oplus \varphi  ,\\
& \restriction{\phi}{\bW_F}=\bigoplus_{\tau \in I_{\phi}^{+} \sqcup I_{\phi}^{-}}
(2e_{\tau}+\ell_{\tau}) \tau \oplus \bigoplus_{\tau \in I_{\phi}^{0}} e_{\tau}
(\tau \oplus \tau^{\vee}) , \\
& G_\phi \cong  \prod_{\tau \in I_{\phi}^{-}} \Sp_{2e_{\tau}+\ell_{\tau}}(\C) \times
\prod_{\tau \in I_{\phi}^{+},\, \dim \tau \text{ even} \hspace{-15mm}} \O_{2e_{\tau}+\ell_{\tau}}(\C) 
\times S\left( \prod_{\tau \in I_{\phi}^{+}, \,\dim \tau \text{ odd} \hspace{-15mm}}
\O_{2e_{\tau}+\ell_{\tau}}(\C) \right) \times
\prod_{\tau \in I_{\phi}^{0}} \GL_{e_{\tau}}(\C) , \\
& M \cong \prod_{\tau \in I_{\phi}^{-}} \big( (\C^{\times})^{e_{\tau}} \times \Sp_{\ell_{\tau}}(\C) \big)
\times \prod_{\tau \in I_{\phi}^{+}}  (\C^{\times})^{e_{\tau}} \times \\
& \hspace{8mm} \prod_{\tau \in I_{\phi}^{+}, \dim \tau \text{ even} \hspace{-12mm}} 
\O_{\ell_{\tau}}(\C) \times S\left( \prod_{\tau \in I_{\phi}^{+}, \dim \tau \text{ odd} \hspace{-12mm}}
\O_{\ell_{\tau}}(\C) \right) \times \prod_{\tau \in I_{\phi}^{0}} (\C^{\times})^{e_{\tau}} .
\end{align*}
Here $S(H)$, for a matrix group $H$, means the elements of determinant 1 in $H$.
The above expression for $G_\phi^\circ$ naturally factors as
$\prod_{\tau \in I_{\phi}^{-} \sqcup I_{\phi}^{+} \sqcup I_{\phi}^{0}} G_\tau^\circ$,
and similarly for $M^\circ$. This is an almost direct factorization of $G_\phi^\circ$ in the 
sense of \eqref{eq:0.1}. With that we can write
\begin{equation}\label{eq:2.1}
T \cong \prod_{\tau \in I_{\phi}^{-} \sqcup I_{\phi}^{+} \sqcup I_{\phi}^{0}}
(\C^{\times})^{e_{\tau}}, \quad R(G_{\phi}^{\circ},T) \cong 
\prod_{\tau \in I_{\phi}^{-} \sqcup I_{\phi}^{+} \sqcup I_{\phi}^{0}} R(G_{\tau}^{\circ}T,T) .
\end{equation}
Let us record the root systems $R_\tau = R (G_\tau^\circ T,T)$:
\[
\renewcommand{\arraystretch}{1.3}
\begin{array}{|*4{>{\displaystyle}c|}}
\cline{1-4}
  & \text{condition} & R_\tau & R_{\tau,\red} \\
\cline{1-4}
 \multirow{3}*{$\tau \in I_{\phi}^{-}$} & e_\tau =0 &
\varnothing & \varnothing \\
\cline{2-4}
  & e_\tau \neq 0, \ell_\tau =0 & C_{e_\tau} & C_{e_\tau}  \\
 \cline{2-4}
  & e_\tau \neq 0, \ell_\tau \neq 0 & BC_{e_\tau} & B_{e_\tau}  \\
\cline{1-4}
 \multirow{3}*{$\tau \in I_{\phi}^{+}$} & e_\tau =0 & \varnothing &
\varnothing \\
\cline{2-4}
  & e_\tau \neq 0, \ell_\tau =0 & D_{e_\tau} & D_{e_\tau}  \\
\cline{2-4}
 & e_\tau \neq 0, \ell_\tau \neq 0 & B_{e_\tau} & B_{e_\tau} \\
\cline{1-4}
 \multirow{2}*{$\tau \in I_{\phi}^{0}$} & e_\tau \leqslant 1 & \varnothing &
\varnothing \\
\cline{2-4}
 & e_\tau \geqslant 2 & A_{e_\tau-1} & A_{e_\tau -1}  \\
\cline{1-4}
\end{array} 
\]
To justify the above choice of a basepoint $\phi$, we need to check that $G_\phi^\circ$ detects
as many roots as possible. Let us consider the restriction $\phi |_{\mb I_F}$:
\begin{align*}
\Std_{{}^L \cG} \,\circ\, \phi |_{\mb I_F} & = \varphi |_{\mb I_F} \,\oplus \,\bigoplus\nolimits_j
(\phi_j |_{\mb I_F} \oplus \phi_{j}^{\vee} |_{\mb I_F}) \\
& = \bigoplus_{\tau \in I^{+}_{\phi} \sqcup I^{-}_{\phi}} (2e_{\tau}+\ell_{\tau}) \tau|_{\mb I_F} \oplus
\bigoplus_{\tau \in I_{\phi}^{0}} e_{\tau} (\tau|_{\mb I_F} \oplus \tau^{\vee}|_{\mb I_F}) .
\end{align*}
We have assumed that for $\tau \in I_{\phi}^{0}$, 
$\tau |_{\mb I_F} \not\cong \tau^{\vee}|_{\mb I_F}$ and we know that an irreducible 
representation $\tau$ of $\mb W_F$ decomposes upon restriction to $\mb I_F$ as
\begin{equation}\label{eq:6.21}
\tau |_{\mb I_F}=\theta \oplus \theta^{\Fr_F} \oplus \ldots \oplus \theta^{\Fr_F^{t_{\tau}-1}},
\end{equation}
for some irreducible representation  $\theta$ of $\mb I_F$. Here for all
$w \in \mb I_F, \,\, \theta^{\Fr_F^k}(w)=\theta(\Fr_F^{-k} w \Fr_F^k)$. If we assume
$\tau |_{\mb I_F} \cong \tau^{\vee} |_{\mb I_F}$, then $\theta^{\vee} \cong \theta^{\Fr_F^{i}}$
for some integer $i$ between $0$ and $t_{\tau}-1$. Then we have
$\theta \cong {\theta^{\Fr_F^{i}}}^{\vee} \cong \theta^{\Fr_F^{2i}}$. This implies that $i=0$ or
$t_{\tau}$ is even and $i=t_{\tau}/2$. In the first case, $\theta^{\vee} \cong \theta$ and in the 
second case $\theta^{\vee} \cong \theta^{\Fr_F^{t_{\tau}/2}}$. We denote by $I_\phi^{+ +}$
(resp. $I_\phi^{- -}$) the subset of $I_\phi^+$ (resp. $I_\phi^-$) corresponding to the 
first case, and define $I_\phi^{+-}$ as the remaining subset of $I_\phi^+ \cup I_\phi^-$.
For any $\tau$, let $\tau'$ be a twist of $\tau$ by an unramified character of $\mb W_F$, such
that $\tau'$ is self-dual but not isomorphic to $\tau$. For $\tau \in I_\phi^{+-}$ the type
of $\tau'$ is opposite to that of $\tau$, which motivates the superscipt $+-$. The three sets 
$I_\phi^{+ +}, I_\phi^{- -}, I_\phi^{+-}$ are considered modulo the relation $\tau \sim \tau'$.
We find that
\begin{equation}\label{eq:2.2}
\begin{aligned}
J^\circ = Z_{\mathcal{G}^{\vee}}(\phi |_{\mb I_F})^{\circ} \cong & \prod_{\tau \in 
I^{--}_{\phi}} \Sp_{2e_{\tau}+\ell_{\tau}+\ell_{\tau'}}(\C)^{t_{\tau}} \times \
\prod_{\tau \in I^{++}_{\phi}} \SO_{2e_{\tau}+\ell_{\tau}+\ell_{\tau'}}(\C)^{t_{\tau}} \times \\
& \prod_{\tau \in I^{+-}_{\phi}} \GL_{2e_{\tau}+\ell_{\tau}+\ell_{\tau'}}(\C)^{t_{\tau}/2} 
\times \prod_{\tau \in I_{\phi}^{0}} \GL_{e_{\tau}}(\C)^{t_{\tau}} .
\end{aligned}
\end{equation}
For all $\tau \in I_{\phi}^{+ +}$, we have an embedding of $(\C^{\times})^{e_{\tau}}$ into
$(\C^{\times})^{e_{\tau}} \times \SO_{\ell_{\tau}+\ell_{\tau'}}(\C)^{t_\tau}$ and the latter is 
embedded diagonally as Levi subgroup in  $\SO_{2e_{\tau}+\ell_{\tau}+\ell_{\tau'}}(\C)^{t_{\tau}}$. 
We have the same kind of embedding for $\tau \in I_{\phi}^{- -}$ and $\tau \in I_{\phi}^{0}$. 
For $\tau \in I_{\phi}^{+-}$, the embedding of $(\C^{\times})^{e_{\tau}}$ in 
$\GL_{2e_{\tau}+\ell_{\tau}+\ell_{\tau'}}(\C)$ is given by
\[
(z_1,\ldots,z_{e_{\tau}}) \mapsto
\diag(z_1,\ldots,z_{e_{\tau}},1,\ldots,1,z_{e_{\tau}}^{-1},\ldots,z_1^{-1}),
\]
with $\ell_{\tau} + \ell_{\tau'}$ times $1$ in the middle.

From \eqref{eq:2.2} we see that $R(J^\circ ,T)$ is a union of irreducible components 
$R(J^\circ ,T)_\tau$. Comparing these data with the earlier description from \eqref{eq:2.1} and 
the subsequent table, we deduce that $R(J^\circ,T)_{\tau,\red} = R (G_\phi^\circ,T)_{\tau,\red}$ 
for all $\tau$. Hence $R(J^\circ,T)_\red = R (G_\phi^\circ,T)_\red$, as required for a good 
basepoint $\phi$. In particular $W_{\mf s^\vee}^\circ \cong W(G_\phi^\circ,T)$.

We note that $\rZ( \cL^\vee )^\circ = T$, see \eqref{eq:2.1}. Since $\phi_j \colon\mb W_F \times 
\SL_2 (\C) \to \GL_{m_j}(\C)$ is cuspidal, it is irreducible and trivial on $\SL_2 (\C)$. 
Thus we can write
\[
\cL^\vee = \cG_{n^-}^\vee (\C) \times \prod_j \GL_{m_j} (\C) =
\mc G_{n^-}^\vee (\C) \times \prod_{\tau \in I_{\phi}^{-} \sqcup I_{\phi}^{+}
\sqcup I_{\phi}^{0}} \GL_{\dim(\tau)}(\C)^{e_\tau} .
\]
It follows from \eqref{eq:6.21} that $\dim (\tau) = t_{\tau} \dim (\theta)$ with
$\theta \in \Irr (\mb I_F)$, and that
\[
X_\nr ( {}^L \mc L)_\phi \cong \prod\nolimits_{\tau \in I_{\phi}^{-} \sqcup I_{\phi}^{+}
\sqcup I_{\phi}^{0}} \mu_{t_{\tau}}(\C)^{e_\tau} .
\]
Here $\mu_k$ denotes the functor of taking the $k$-th roots of unity in ring.
In particular $t_{\tau}$ equals $|X_\nr ({}^L \GL_m )_\tau|$, the number of unramified characters
$\chi$ such that $\chi \tau \equiv \tau$. 
In the following table, which stems largely from \cite[\S 4.1]{Mou}, we describe the root systems
and the Weyl groups. We may omit the cases $e_\tau = 0$, because there all the root systems and 
Weyl groups are trivial.
\small
\[
\renewcommand{\arraystretch}{1.3}
\begin{array}{|*6{>{\displaystyle}c|}}
\cline{1-6}
\tau \in & M_{\tau}, M_{\tau'} & \text{condition} & \! R (J^\circ,T)_\tau \!\! & W
_{M_{\tau}^{\circ}}^{G_{\tau}^{\circ}} & W_{M_{\tau}}^{G_{\tau}} \\
\cline{1-6} \cline{1-6}
\multirow{2}*{$I_\phi^{- -}$} & \multirow{2}*{$(\C^{\times})^{e_\tau}\times 
\Sp_{\ell}(\C)$} & \ell_\tau =0 & C_{e_\tau} & S_{e_\tau} \rtimes (\Z/2\Z)^{e_\tau} & 
S_{e_\tau} \rtimes (\Z/2\Z)^{e_\tau} \\
 \cline{3-6}
 &  & \ell_\tau \neq 0 & BC_{e_\tau} & S_{e_\tau} \rtimes (\Z/2\Z)^{e_\tau} & 
 S_{e_\tau} \rtimes (\Z/2\Z)^{e_\tau} \\
\cline{1-6}
\multirow{2}*{$I_\phi^{+-}$} & (\C^{\times})^{e_\tau} \times \Sp_\ell (\C) , & \ell_\tau =0 & 
C_{e_\tau} & S_{e_\tau} \rtimes (\Z/2\Z)^{e_\tau} & 
S_{e_\tau} \rtimes (\Z/2\Z)^{e_\tau} \\
 \cline{3-6}
 & (\C^{\times})^{e_\tau} \times \O_{\ell}(\C) & \ell_\tau \neq 0 & 
 BC_{e_\tau} & S_{e_\tau} \rtimes (\Z/2\Z)^{e_\tau} & S_{e_\tau} \rtimes (\Z/2\Z)^{e_\tau} \\
\cline{1-6}
\multirow{2}*{$I_\phi^{+ +}$} & 
\multirow{2}*{$(\C^{\times})^{e_\tau} \times \O_{\ell}(\C)$} & \ell_\tau =0 & D_{e_\tau} & 
\! S_{e_\tau} \rtimes (\Z/2\Z)^{e_\tau-1} \!\!\! & S_{e_\tau} \rtimes (\Z/2\Z)^{e_\tau} \\
 \cline{3-6}
 &  & \ell_\tau \neq 0 & B_{e_\tau} & S_{e_\tau} \rtimes (\Z/2\Z)^{e_\tau} & 
S_{e_\tau} \rtimes (\Z/2\Z)^{e_\tau} \\
\cline{1-6}
\multirow{2}*{$I_\phi^{0}$} & \multirow{2}*{$(\C^{\times})^{e_\tau}$} & 
e_\tau \leqslant 1 & \varnothing & \{1\} & \{1\} \\
\cline{3-6}
&  & e_\tau \geqslant 2 & A_{e_\tau-1} & S_{e_\tau} & S_{e_\tau} \\
\cline{1-6}
\end{array} 
\]
\normalsize
For all $\tau \in I_{\phi}^{+ +}$ such that $\ell_{\tau}=0 \neq e_\tau$, take
\[
r_\tau = \text{diag} \big( 1,\ldots,1, \matje{0}{1}{1}{0}, 1, \ldots, 1 \big) \in 
O_{2 e_\tau}(\C) \setminus SO_{2 e_\tau}(\C) .
\]
It normalizes $M,T,\phi$ and generates $W_{M_{\tau}}^{G_{\tau}} \big/ 
W_{M_{\tau}^{\circ}}^{G_{\tau}^{\circ}}$. The finite group $\cR_{\mf s^\vee}$ is 
generated by such elements $r_\tau$. More precisely, let
\begin{align*}
& C := \{ \tau \in I_{\phi}^{+} \mid \ell_{\tau}=0 \}, \\
& C_{\text{even}} := \{ \tau  \in C \mid \dim \tau \text{ even} \}, \\
& C_{\text{odd}} := \{ \tau  \in C \mid \dim \tau \text{ odd} \}.
\end{align*}
It was shown in \cite[\textsection 4.1]{Mou} that:
\begin{itemize}
\item if $\cG=\Sp_{N}$ or $\cG=\SO_{N}$ with $N$ odd, then
\[
\cR_{\mf s^\vee} \cong \prod\nolimits_{\tau \in C} \langle r_{\tau} \rangle ;
\]
\item if $\cG=\SO_{N}$ and $\cL=\GL_{d_1}^{\ell_1} \times \ldots \times \GL_{d_r}^{\ell_r}
\times \SO_{N'}$ with $N$ even and $N' \geqslant 4$, then
\[
\cR_{\mf s^\vee} \cong \prod\nolimits_{\tau \in C} \langle r_{\tau} \rangle ;
\]
\item if $\cG=\SO_{N}$ and $\cL=\GL_{d_1}^{\ell_1} \times \ldots \times \GL_{d_r}^{\ell_r}$
with $N$ even, then
\[
\cR_{\mf s^\vee} \cong \prod\nolimits_{\tau \in C_{\text{even}}} \langle r_{\tau} \rangle \times
\langle r_{\tau}r_{\tau'} \mid \tau,\tau' \in C_{\text{odd}} \rangle .
\]
\end{itemize}
From the shape of $M_{\tau}^{\circ}$ we can describe the unipotent element $v_{\tau}$:
\[
\renewcommand{\arraystretch}{1.3}
\begin{array}{|c|c|c|}
\hline
M_{\tau}^{\circ} & v_{\tau} & \ell \\ \hline
(\C^{\times})^{e} \times \Sp_{\ell_\tau}(\C) & 
(1^{e}) \times (2,4,\ldots, 2d-2,2d) & \ell_\tau =d(d+1) \\ \hline
(\C^{\times})^{e} \times \SO_{\ell_\tau}(\C) & 
(1^{e}) \times (1,3,\ldots, 2d-3,2d-1) & \ell_\tau =d^{2} \\ \hline
(\C^{\times})^{e}  & (1^{e})  &  \\\hline
\end{array}
\]
To be complete, let us describe the cuspidal representations of $A_{M_{\tau}^{\circ}}(v_{\tau})$.
We have
\[
A_{M_{\tau}^{\circ}}(v_{\tau}) \cong \left\{ \begin{array}{ll}
(\Z/2\Z)^{d}=\langle z_{\tau,2a}, a \in \llbracket 1,d \rrbracket  \rangle &
\text{if $\tau \in I_{\phi}^{-}$} \\
(\Z/2\Z)^{d-1}=\langle z_{\tau,2a-1}z_{\tau,2a+1}, a \in \llbracket 1,d-1 \rrbracket \rangle &
\text{if $\tau \in I_{\phi}^{+}$}
\end{array} \right. .
\]
Moreover, the cuspidal irreducible representation $\epsilon_{\tau}$ of $A_{M_{\tau}^{\circ}}(v_{\tau})$
satisfies
\[
\epsilon_{\tau}(z_{\tau,2a}) = (-1)^a \text{ if } \tau \in I_{\phi}^{-} \quad \text{and} \quad
\epsilon_{\tau}(z_{\tau,2a-1}z_{\tau,2a+1}) = -1 \text{ if } \tau \in I_{\phi}^{+}.
\]
For all $\tau \in I_{\phi}^{+} \sqcup I_{\phi}^{-}$, denote by $a_{\tau}$ the biggest part of the
partition of $v_{\tau}$ and by $a_{\tau}'$ the biggest part of the partition of $v_{\tau'}$.
In case $v_{\tau'}=1$, we will assume that $a_{\tau}'=0$ if
$\tau \in I_{\phi}^{-}$ and $a_{\tau}'=-1$ if $\tau \in I_{\phi}^{+}$
(this is compatible with Proposition \ref{prop:5.15}).

Finally, we consider the parameter functions. The number $m_\alpha$ from Definition \ref{def:3}
equals $t_\tau$ unless $\tau \in \Irr (\mb W_F )_\phi^{+-}, \ell_\tau = 0$ and $\alpha$ is a long
root in a type $C$ root system, then $m_\alpha = t_\tau / 2$. Recall that $R_{\mf s^\vee}$ consists 
of the roots $m_\alpha \alpha$ with $\alpha \in R(J^\circ,T)_\red$. Multiplication by $m_\alpha$
does not change the type of $R(J^\circ,T)_\tau$, only in the exceptional case, there $C_{e_\tau}$ is
turned into $B_{e_\tau}$. 

If $\alpha \in R_{\tau,\red}$ is not a short root in a type $B$ root system, then by 
\cite[2.13]{Lus3} $c(\alpha)=2$, so  $\lambda(\alpha) = m_\alpha$. 
For the simple short root $\alpha_{\tau} \in R_{\tau,\red}$ we have 
$c(\alpha_{\tau})=a_{\tau}+1, c^*(\alpha_{\tau})=a_{\tau}'+1$ and $m_\alpha = t_\tau$. Hence
\[
\lambda(\alpha_{\tau}) = (a_{\tau}+a_{\tau}' + 2) t_{\tau} / 2 \quad \text{and} \quad
\lambda^{*}(\alpha_{\tau})= |a_{\tau}-a_{\tau}'| t_{\tau} / 2 .
\]
We conclude that
\begin{equation}\label{eq:6.20}
\cH (\fs^\vee, \vec{\mb z}) = 
\cH (\mc R_{\fs^\vee}, \lambda,\lambda^{*}, \vec{\mb z}) \rtimes \C[\cR_{\mf s^\vee}] .
\end{equation}
Via the specialization of $\mb{z}_{\tau}$ at $q_{F}^{1/2}$, \eqref{eq:6.20}
becomes the extended affine Hecke algebra given in \cite{Hei}.
Moreover, it was shown in \cite{Hei} that there is an equivalence of categories between
$\Rep(\cG(F))^{\fs}$ and the right modules over
$\cH (\fs^\vee, \vec{\mb z}) / \big( \{ \mb z_\tau- q_{F}^{1/2}\}_\tau \big)$.
Together with the LLC for $\cG (F)$ we get bijections
\begin{equation}
\Irr \Big( \cH (\fs^\vee,\vec{\mb z}) / \big( \{ \mb z_\tau- q_{F}^{1/2}\}_\tau \big) \Big)
\longleftrightarrow \Irr(\cG(F))^{\fs} \longleftrightarrow \Phi_\re(\cG(F))^{\fs^{\vee}} .
\end{equation}
It does not seem unlikely that this works out to the same bijection as in Theorem \ref{thm:5.13}.a.
But at present that is hard to check, because the LLC is not really explicit.

\begin{ex}
We consider an example that illustrates many of the above aspects.
Let $\tau \colon \mb W_F \rightarrow \GL_4(\C)$ be an irreducible representation of $\mb W_F$, self-dual
of symplectic type and let $\varphi \colon \mb W_F \times \SL_2(\C) \rightarrow \SO_{37}(\C)$ be defined by
\[
\Std_{GL_{37}} \circ \varphi = 1 \boxtimes (S_5 \oplus S_3 \oplus S_1)
\oplus \xi \boxtimes (S_3 \oplus S_1) \oplus \tau \boxtimes (S_4 \oplus S_2),
\]
with $\xi \colon\mb W_F \rightarrow \C^{\times}$ an unramified quadratic character. We have
\[
\rZ_{\SO_{37}(\C)}(\varphi |_{\mb W_F} )^{\circ} \cong \SO_{9}(\C) \times \SO_4(\C) \times \Sp_6(\C),
\]
and $\varphi$ defines a $L$-packet $\Pi_\varphi (\Sp_{36}(F))$ with
$2^5$ elements, of which two are supercuspidal.
Let $\sigma \in \Pi_\varphi (\Sp_{36}(F))$ be supercuspidal, corresponding to
an enhanced Langlands parameter $(\varphi,\varepsilon)$ with $\varepsilon$ cuspidal.
Consider $\mathcal{G}(F)=\Sp_{58}(F)$, the Levi subgroup
\[
\mathcal{L}(F)=\GL_4(F)^{2} \times \GL_1(F)^3 \times \Sp_{36}(F)
\]
and an irreducible supercuspidal representation
$\pi_\tau^{\otimes 2} \boxtimes 1^{\otimes 3} \boxtimes \sigma$ of $\mathcal{L}(F)$. The cuspidal pair
$\mathfrak{s}=[\mathcal{L}(F),\pi_\tau^{\otimes 2} \boxtimes 1^{\otimes 3} \boxtimes \sigma]$ of
$\mathcal{G}(F)$ admits $\mathfrak{s}^{\vee}=[\mathcal{L}^{\vee},\phi,\varepsilon]$ as dual inertial
equivalence class, where $\phi \colon \mb W_F \times \SL_2(\C) \rightarrow \mathcal{L}^{\vee}$,
\[
\cL^\vee = \GL_4 (\C)^2 \times \GL_1 (\C)^3 \times \SO_{37}(\C) 
\quad \text{and} \quad
\Std_{\mathcal{L}^{\vee}} \circ \phi=(\tau \oplus \tau^{\vee})^{\oplus 2}\oplus
(1\oplus 1^{\vee})^{\oplus 3} \oplus \varphi .
\]
We assume that $\tau|_{\mb I_F}=\theta \oplus \theta^{\Fr_F} $ with $\theta^{\vee} \cong \theta$,
so $t_\tau = 2$. We first compute $W_{\mathfrak{s}^{\vee}}^{\circ} $:
\begin{align*}
& \phi |_{\mb I_F} = \tau |_{\mb I_F}^{\oplus 4} \oplus 1|_{\mb I_F}^{\oplus 6} \oplus
1|_{\mb I_F}^{\oplus 9} \oplus \xi|_{\mb I_F}^{\oplus 4} \oplus \tau|_{\mb I_F}^{\oplus 6}=
\theta^{\oplus 10} {\oplus \theta^{\Fr_F}}^{\oplus 10} \oplus 1^{\oplus 19} , \\
& J^\circ = \rZ_{\mathcal{G}^{\vee}}(\phi |_{\mb I_F})^{\circ} \cong \Sp_{10}(\C)^{2} \times \SO_{19}(\C) .
\end{align*}
The torus $T$ is decomposed as $T=(\C^{\times})^{2} \times (\C^{\times})^{3}$. The first part
$(\C^{\times})^{2}$ is embedded in an obvious way in $(\C^{\times})^{2} \times \Sp_{6}(\C)$
and then in $\Sp_{10}(\C)^{2}$ diagonally as Levi subgroup. The second part $(\C^{\times})^{3}$
is embedded in $(\C^{\times})^{3} \times \SO_{13}(\C)$ and then in $\SO_{19}(\C)$ as Levi subgroup
as well. The root system $R(J^{\circ},T)$ (resp. $R(J^{\circ},T)_\red$) is
$BC_{2}\times B_{3}$ (resp. $B_2 \times B_3$), so $W_{\mathfrak{s}^{\vee}}^{\circ}=W_{B_2} \times W_{B_3}$.

From the above discussion, we can see that $\phi$ is already a basepoint. If we denote by
$\phi'$ the parameter defined by $\phi'=(\tau' \oplus \tau^{'\vee})^{\oplus 2}\oplus
(\xi \oplus \xi^{\vee})^{\oplus 3} \oplus \varphi$, then $\phi'$ is another basepoint.
Indeed, we have:
\begin{align*}
\phi |_{\mb W_F} &= \tau^{\oplus 10} \oplus 1^{\oplus 15} \oplus \xi^{\oplus 4}\\
G_{\phi}^{\circ}= \rZ_{\mathcal{G}^{\vee}}(\phi |_{\mb W_F})^{\circ} &
\cong \Sp_{10}(\C) \times \SO_{15}(\C) \times \SO_4(\C) \\
M_{\phi}^{\circ}= \rZ_{\mathcal{L}^{\vee}}(\phi |_{\mb W_F})^{\circ} & \cong \big( (\C^{\times})^{2}
\times \Sp_{6}(\C) \big) \times \big( (\C^{\times})^{3}\times \SO_9(\C) \big) \times \SO_4(\C) \\
\phi' |_{\mb W_F} &= \tau'^{\oplus 4} \oplus \tau^{\oplus 6} \oplus 1^{\oplus 9} \oplus \xi^{\oplus 10}\\
G_{\phi'}^{\circ}= \rZ_{\mathcal{G}^{\vee}}(\phi' |_{\mb W_F})^{\circ} &
\cong \Sp_4 (\C) \times \Sp_6 (\C) \times \SO_{9}(\C) \times \SO_{10}(\C) \\
M_{\phi'}^{\circ}= \rZ_{\mathcal{L}^{\vee}}(\phi' |_{\mb W_F})^{\circ} & \cong (\C^{\times})^{2}\times 
\Sp_{6}(\C)  \times \SO_9(\C) \times \big( (\C^{\times})^{3} \times \SO_4(\C) \big) .
\end{align*}
Here $\mf R_{\mathfrak{s}^{\vee}}$ is trivial, so $W_{\mathfrak{s}^{\vee}}=W_{\mathfrak{s}}^{\circ}$.
Denote by $\alpha_1,\alpha_2$ (resp. $\beta_1,\beta_2, \beta_3$) the simple roots of $B_2$
(resp. $B_3$) with $\alpha_2$ (resp. $\beta_3$) the short root. Then $a_1 = a'_\xi = 5$, 
$a_\xi = a'_1 = 3$, $a_\tau = 4$ and $a'_\tau = 0$. The parameters are given by
$\lambda(\alpha_1) = t_{\tau} = 2$, $\lambda(\beta_1) = \lambda(\beta_2) = 1$ and
\[
\lambda(\alpha_2) = t_{\tau} \frac{4 + 2}{2} = 6 ,\,
\lambda(\beta_3)=\frac{5+3 + 2}{2} = 5 ,\, \lambda^*(\alpha_2) = t_{\tau} \frac{4}{2} = 4 ,\, 
\lambda^{*}(\beta_3)=\frac{5-3}{2}=1.
\]
Specializing $\vec{\mb z}$ to $q_F^{1/2}$, the quadratic relations in the Hecke algebra become
\begin{align*}
& (N_{s_{\alpha_1}} - q_F^2 )(N_{s_{\alpha_1}}+ q_F^{-2}) = 0, \,\,
(N_{s_{\alpha_2}} - q_F^3 )(N_{s_{\alpha_2}}+ q_F^{-3} ) = 0, \\
& (N_{s_{\beta_3}} - q_F^{5/2})(N_{s_{\beta_3}} + q_F^{-5/2}) =0 , \,\,
(N_{s_{\beta_i}} - q_F^{1/2})(N_{s_{\beta_i}} + q_F^{-1/2}) = 0 \quad (i = 1,2).
\end{align*}
\end{ex}

\appendix
\section{}

In this appendix we prove a number theoretic result which probably has been known for a 
long time, but for which we could not find a reference. Let $\mb W_F$ be the Weil group of 
the non-archimedean local field $F$, $\mb I_F$ the 
inertia subgroup and $\mb P_F$ the wild inertia subgroup of $\mb W_F$. Let $\Fr_F \in \mb W_F$ 
a geometric Frobenius element and let $q_F$ be the cardinality of the residue field of $F$. 

\begin{lem}\label{lem:5.17}
$Z(\mb W_F) = Z(\mb I_F) = Z(\mb P_F) = \{\mr{id}\}$.
\end{lem}
\begin{proof}
According to \cite[\S 3]{Jan}, $\mb P_F$ is a free pro-$p$ group on more than one generator.
In particular its centre is trivial. 

It follows from \cite[Corollary 1 to Proposition IV.2.9]{Ser} that an arbitrary element $x$ of 
$\mb I_F \setminus \mb P_F$ does not commute with some elements of 
$\mb P_F$. Namely, we apply [ibid] to the Galois group of some finite Galois extension $E/F$, 
which we choose so large that $x$ ends up in the ramification group $\mr{Gal}(E/F)_0$
but not in $\mr{Gal}(E/F)_1$. Then [ibid] says that $x$ does not commute with most elements of
$\mr{Gal}(E/F)_1$, and we can lift that noncommutativity back to $\mb I_F = \mr{Gal}(F_s/F)_0$.
Hence $Z(\mb I_F) = \{\mr{id}\}$.

The group $\mb I_F / \mb P_F$ is isomorphic to $\hat \Z / \Z_p$ and the conjugation action of
$\Fr_F^{-1}$ on it equals raising elements to the power $q_F$ \cite{Iwa}. Hence $\Fr_F^n x$ with
$x \in \mb I_F, n \in \Z \setminus \{0\}$ does not commute (in $\mb W_F / \mb P_F$) with most
elements of $\mb I_F / \mb P_F$. As $\mb W_F = \cup_{n \in \Z} \Fr_F^n \mb I_F$, we find that
$Z(\mb W_F) = \{ \mr{id} \}$.
\end{proof}

Lemma \ref{lem:5.17} is used in the definition of $X_\nr ({}^L \mc G)$ in \eqref{eq:Xnr},
and was already used in the same way in \cite{AMS}.

\end{document}